\documentclass[a4paper,10pt]{amsart}

\usepackage[T1]{fontenc}
\usepackage[utf8]{inputenc}
\usepackage[english]{babel}

\usepackage{xcolor}
\usepackage{amsmath}
\usepackage{amsthm}
\usepackage{amsxtra}
\usepackage{amsfonts,amssymb}
\usepackage{mathtools}
\usepackage{mathdots}

\usepackage{hyperref}

\usepackage{dsfont}

\usepackage{caption}

\usepackage{young}
\usepackage{ytableau}

\newtheorem{Th}{Theorem}[section]
\newtheorem{Prop}[Th]{Proposition}
\newtheorem{Lm}[Th]{Lemma}

\theoremstyle{definition}
\newtheorem{Def}[Th]{Definition}

\newtheorem{Rem}{Remark}[section]
\newtheorem{Exa}{Example}[section]

\numberwithin{equation}{section}

\newcommand{\Stab}{\operatorname{Stab}}
\newcommand{\Ind}{\operatorname{Ind}}

\newcommand{\diag}{\operatorname{diag}}

\newcommand{\Tab}{\operatorname{Tab}}
\newcommand{\Aut}{\operatorname{Aut}}
\newcommand{\supp}{\operatorname{supp}}
\newcommand{\nat}{\mathrm{nat}}
\newcommand{\sym}{\mathrm{sym}}
\newcommand{\sgn}{\operatorname{sgn}}
\newcommand{\reg}{\mathrm{reg}}
\newcommand{\triv}{\mathrm{triv}}
\newcommand{\Tr}{\operatorname{Tr}}
\newcommand{\tr}{\operatorname{tr}}

\newcommand{\ex}{\operatorname{ex}}
\newcommand{\Char}{\operatorname{Char}}
\newcommand{\exchar}{\ex\Char}

\newcommand{\Fix}{\operatorname{Fix}}

\title[Characters of the infinite wreath product]{Indecomposable characters of the wreath product of a torus by the infinite block-diagonal symmetric group
} % $\mathbb{T}\wr\mathfrak{S}_{\infty}$
\author{Nikolay Nessonov} 
\address{B.~Verkin Institute for Low Temperature Physics and Engineering of the National Academy of Sciences of Ukraine, 47 Nauky Ave., Kharkiv, 61103, Ukraine}
\email{n.nessonov@gmail.com}

\author{Nhok Tkhai Shon Ngo} 
\address{Institute of Science and Technology Austria (ISTA),
Am Campus 1,
3400 Klosterneuburg,
Austria}
\email{nhoktkhaishon.ngo@ista.ac.at}

\date{\today}

\begin{document}

\begin{abstract}
In this paper we obtain a complete description of all indecomposable characters (central positive-definite functions) of ``block-diagonal'' inductive limits of semidirect products $\mathbb{T}^N\rtimes\mathfrak{S}_{N}$ of the compact $N$-dimensional torus $\mathbb{T}^{N}$ with the symmetric group $\mathfrak{S}_{N}$. The classification is proved by means of a variant of the ergodic method of Vershik and Kerov, namely by studying the weak limits of characters of finite-dimensional subgroups. As an application of our results, we give another proof of the classification of indecomposable characters of the infinite unitary group with block diagonal embeddings.

\medskip
		
\emph{Keywords:} character, factor representation, infinite-dimensional group.
\end{abstract}

\maketitle

\setcounter{tocdepth}{1}

\tableofcontents

\section{Introduction}\label{sect:intro}

\subsection{Wreath products of an abelian group by the symmetric groups.}
Fix a second-countable compact abelian group $A$. For a positive integer $N$ denote by $\mathfrak{S}_{N}$ the group of all permutations of the set $\{1,2,\ldots,N\}$. Then, an element $s\in\mathfrak{S}_{N}$ naturally acts on the $N$-fold Cartesian product $A^{N}$ by permuting the coordinates $A^N\ni \left( a_1,a_2,\ldots,a_N\right)\mapsto\left( a_{s^{-1}(1)},a_{s^{-1}(2)},\ldots, a_{s^{-1}(N)}\right)\in A^N$. Using this action one constructs the semidirect product $G_N(A)=A^{N}\rtimes\mathfrak{S}_{N}$. The resulting group $G_N(A)$ is called the \emph{wreath product} of $A$ by $\mathfrak{S}_N$ and often denoted by $A\wr\mathfrak{S}_N$. It is clear that the group $G_N(A)$ is compact and hence its representation theory is well understood. In particular, the irreducible representations of $G_N(A)$ are classified by the functions $\lambda\colon\widehat{A}\to\mathbb{Y}$ with $\sum_{\alpha\in\widehat{A}}|\lambda(\alpha)|=N$, where $\widehat{A}$ is the \emph{Pontryagin dual group} of $A$ and $\mathbb{Y}=\bigcup_{k\ge 0}\mathbb{Y}_k$ is the set of all integer partitions (or, equivalently, \emph{Young diagrams}, see Appendix \ref{sect:appendix} for more details).
Moreover, using the formula of Frobenius, one can calculate the corresponding irreducible characters of $G_N(A)$ in terms of $\widehat{A}$ and characters of symmetric groups.

\subsubsection{Infinite-dimensional versions.}
The problem of classifying the indecomposable characters becomes more difficult if one considers infinite-dimensional analogues of the group $G_N(A)$.
For instance, one can consider the inductive limit $\bigcup_{N\ge 1}G_N(A)$ of the sequence
\begin{equation}\label{eq:standard_ind_limit}
G_{1}(A)\to G_2(A)\to\ldots\to G_{N}(A)\to G_{N+1}(A)\to\ldots\to\bigcup_{N\ge 1}G_N(A),
\end{equation}
where we use the natural embeddings $A^{N}\to A^{N+1}$ and $\mathfrak{S}_{N}\to\mathfrak{S}_{N+1}$ given by
\begin{equation*}
(a_1,\ldots,a_N)\in A^N\mapsto (a_1,\ldots,a_N,1_A)\in A^{N+1},~\text{where}~ 1_A ~\text{is identity of}~ A,
\end{equation*}
and $\sigma\in\mathfrak{S}_N$ is considered to be an permutation  $\tilde{\sigma}\in\mathfrak{S}_{N+1}$ such that $\tilde{\sigma}(k)=\sigma(k)$ if $k\leq N$ and $\tilde{\sigma}(N+1)=N+1$.

It is a remarkable fact that character theory can be built for $\mathfrak{S}_\infty=\bigcup\limits_{N=1}^\infty\mathfrak{S}_N$. The corresponding results were discovered by E. Thoma \cite{Thoma}. In this case the indecomposable characters are parameterized by countable sets of positive numbers (called \emph{Thoma parameters}) satisfying certain inequalities.
It turns out that the classification of indecomposable characters for the inductive limit \eqref{eq:standard_ind_limit} is very similar to those obtained by Thoma.

Namely, the first author and Dudko obtained a Thoma-type classification of the indecomposable characters in this case (see \cite[Theorem 9]{Dudko_Nessonov}). In fact, they considered a more general case of $\bigcup_{N\ge 1}(\Gamma\wr\mathfrak{S}_N)$, where $\Gamma$ is any (possibly non-abelian) separable topological group. The argument in \cite{Dudko_Nessonov} relied on the multiplicativity property of indecomposable characters and operator-algebraic methods.
Besides that, Hirai, Hirai and Hora in \cite{Hirai_Hirai_Hora_1} and \cite{Hirai_Hirai_Hora_2} described all weakly converging sequences of characters of finite-dimensional groups $G_{N}(T)$ for an arbitrary (not necessarily abelian) compact group $T$. Using this and the fact that any indecomposable character on the inductive limit can be approximated by a sequence of characters of groups $G_N(T)$, the authors found all indecomposable characters of the group $\bigcup_{N\ge 1} G_N(T)$. The results of Hirai, Hirai and Hora extend a result of Boyer \cite{Boyer_2005} who considered the case of groups $G_N(T)$ for a finite group $T$.
Finally, let us note that that the idea of approximating an indecomposable character of an ``infinite-dimensional group'' by the characters of finite-dimensional subgroups is essentially a variant of the \emph{ergodic method} originally due to Vershik and Kerov \cite{Vershik_Kerov}.

We note, however, that the works of Boyer \cite{Boyer_2005}, Hirai, Hirai, Hora \cite{Hirai_Hirai}, \cite{Hirai_Hirai_Hora_1}, \cite{Hirai_Hirai_Hora_2} and the first author with Dudko \cite{Dudko_Nessonov} on inductive limits of wreath products with symmetric groups concern only the natural embeddings of the form $A^{N}\rtimes\mathfrak{S}_{N}\hookrightarrow A^{N+1}\rtimes\mathfrak{S}_{N+1}$.
In the present paper we study the indecomposable characters of inductive limits of groups $G_N(A)$ with respect to the \emph{block-diagonal embeddings} which we define in the next section.

\subsection{Limits with respect to block-diagonal embeddings and UHF $\mathbb{C}^*$-algebras}\label{subsect:block_diag_unitary}
Choose a sequence of positive integers $\widehat{\mathbf{n}}=\{n_k\}_{k=1}^{\infty}$, where $n_k>1$, and set $N_k=n_1\ldots n_k$. Then, the uniformly hyperfinite (UHF) $\mathbb{C}^*$-algebra associated to $\widehat{\mathbf{n}}$ is the norm closure of the union of an increasing chain $\operatorname{Mat}_{N_1}\stackrel{\mathfrak{i}_{1,2}}{\rightarrow} \operatorname{Mat}_{N_2}\stackrel{\mathfrak{i}_{2,3}}{\rightarrow}\ldots$ of full matrix algebras, where $\operatorname{Mat}_{N_i}$ is the algebra of $N_i\times N_i$ complex matrices, $\operatorname{Mat}_{N_i}\ni a\rightarrow\mathfrak{i}_{i,i+1}(a)=  a\otimes I_{n_i}\in \operatorname{Mat}_{N_{i+1}}$ and $ I_{n_i}$  is the identity in the $\operatorname{Mat}_{n_i}$, see Glimm \cite{Glimm}.

Motivated by this idea, one can construct similar embeddings for groups and form the corresponding inductive limits. For example, by considering the unitary subgroups $\operatorname{U}_{N_i}$ of $\operatorname{Mat}_{N_i}$, we obtain a sequence of embeddings
\begin{equation}\label{eq:ind_lim_unitary}
\operatorname{U}_{N_1}\to\operatorname{U}_{N_2}\to\ldots\to\operatorname{U}_{{N_i}}\to\ldots
\end{equation}
These embeddings are examples of block-diagonal embeddings for unitary groups. More generally, we have embeddings $\operatorname{U}_{N}\hookrightarrow\operatorname{U}_{MN}$ which on the level of matrices\footnote{This matrix interpretation also justifies the adjective ``block-diagonal''.} look like as follows:
\begin{equation*}
\operatorname{U}_{N}\ni u\operatorname\mapsto
\begin{pmatrix}
u & & &
\\
& u & &
\\
& & \ddots &
\\
& & & u
\end{pmatrix}\in\operatorname{U}_{MN}.
\end{equation*}
Generalizing this construction for other groups yields new examples of infinite-dimensional or \emph{big} groups. For example, one can consider the subgroup of diagonal matrices or the subgroup of permutation matrices in $\operatorname{U}_{N_i}$ and use the same embeddings. The corresponding inductive limits generate inside $\varinjlim \operatorname{U}_{N_i}$ the wreath product of a torus $\mathbb{T}$ by the infinite block-diagonal symmetric group. This is the main object of interest in this paper. We discuss next this construction and its generalizations in more detail.

\subsubsection{Block-diagonal embeddings.}\label{Block-diagonal embeddings}
Recall that $A$ is a compact abelian group. Let us define the block-diagonal embeddings for groups $G_{N}(A)=A^{N}\rtimes\mathfrak{S}_{N}$. For any positive integers $M$ and $N$ the block-diagonal embedding $G_{N}(A)\to G_{MN}(A)$ is induced by the following embeddings $A^{N}\to A^{MN}$ and $\mathfrak{S}_{N}\to\mathfrak{S}_{MN}$:
\begin{equation*}
\begin{split}
&A^N\ni(a_1,\ldots,a_N)
\mapsto(\underbrace{a_1,\ldots,a_N,\ldots,a_1,\ldots,a_N}_{MN})\in A^{MN},
\\
&\mathfrak{S}_N\ni s\mapsto \tilde s\in\mathfrak{S}_{MN},~\text{where}~\tilde s(jN+k)=jN+ s(k),\quad j=0,\ldots,M-1,~k=1,\ldots,N.
\end{split}
\end{equation*}
Note that these embeddings are compatible with each other in the following sense: for any positive integers $N_1$, $N_2$, $N_3$ such that $N_1$ divides $N_2$ and $N_2$ divides $N_3$, the block-diagonal embedding $G_{N_1}(A)\to G_{N_3}(A)$ is a composition of block-diagonal embeddings $G_{N_1}(A)\to G_{N_2}(A)$ and $G_{N_2}(A)\to G_{N_3}(A)$.

To construct the inductive limit, we again fix a sequence positive integers $\widehat{\mathbf{n}}=\{n_k\}_{k=1}^{\infty}$ with $n_k>1$ and put $N_k=n_1\ldots n_k$. The discussion above implies that there are compatible sequences of embeddings
\begin{equation*}
\begin{array}{rcccccccc}
A^{N_1}
&\hookrightarrow&
A^{N_2}
&\hookrightarrow& 
\cdots 
&\hookrightarrow&
A^{N_k}
&\hookrightarrow&
\cdots,
\\[0.35em]
\mathfrak{S}_{N_1}
&\hookrightarrow&
\mathfrak{S}_{N_2}
&\hookrightarrow& 
\cdots 
&\hookrightarrow&
\mathfrak{S}_{N_k}
&\hookrightarrow&
\cdots,
\\[0.35em]
G_{N_1}(A)
&\hookrightarrow&
G_{N_2}(A)
&\hookrightarrow& 
\cdots 
&\hookrightarrow&
G_{N_k}(A)
&\hookrightarrow&
\cdots.
\end{array}
\end{equation*}
We denote the corresponding inductive limits by $A^{\widehat{\mathbf{n}}}$, $\mathfrak{S}_{\widehat{\mathbf{n}}}$ and $G_{\widehat{\mathbf{n}}}(A)$, respectively. We will regard $A^{\widehat{\mathbf{n}}}$ and $\mathfrak{S}_{\widehat{\mathbf{n}}}$ as subgroups of $G_{\widehat{\mathbf{n}}}(A)$. 
The topological group $G_{\widehat{\mathbf{n}}}(A)$ is not locally compact, hence its representation theory is not covered by the standard theory (see Section \ref{sect:prelim} for more details). Note also that the center of $G_{\widehat{\mathbf{n}}}(A)$ is isomorphic to $A$ since the same holds for $G_N(A)$ (central elements are given by elements $(a,a,\ldots,a)\in A^N$). We can naturally identify an element $\mathfrak{t}=(a_1,a_2,\ldots, a_{N_k})\in A^{N_k}$ with the function $\mathfrak{t}$ on $\mathbb{X}_N=\{1,2,\ldots,N\}$ defined by $\mathfrak{t}(i)=a_i$, $i\in\mathbb{X}_N$. Thus, $A^{\widehat{\mathbf{n}}}$ can be identified with the abelian group of $A$-valued functions on the compact space $\mathbb{X}_{\widehat{\mathbf{n}}}=\prod_{j\ge 1}\mathbb{X}_{n_j}$. In turn, the elements of the group $\mathfrak{S}_{\widehat{\mathbf{n}}}$ can be viewed as automorphisms of the compact space $\mathbb{X}_{\widehat{\mathbf{n}}}$. Namely, $\sigma \in\mathfrak{S}_{N_k}\subset\mathfrak{S}_{\widehat{\mathbf{n}}}$ acts on $x=(x_{N_k},y)\in \mathbb{X}_{N_k}\times(\mathbb{X}_{n_{k+1}}\times\mathbb{X}_{n_{k+2}}\times\ldots)=\mathbb{X}_{\widehat{\mathbf{n}}}$ by $x=(x_{N_k},y)\mapsto x=(\sigma(x_{N_k}),y)$ (see more details in Section \ref{sect:measure_space_interpret} for case $A=\mathbb{T}$). 

The goal of the present paper is to classify all $\mathrm{II}_1$ factor-representations of the resulting group $G_{\widehat{\mathbf{n}}}(A)$ in the case $A=\mathbb{T}$ (however, we believe that our methods can be extended to more general cases, see Section \ref{sect:gen_ab_group}). This is essentially equivalent to the classification of indecomposable characters (normalized central positive-definite functions, see Section \ref{sect:intro_indec_har_sph_rep} below) on this group. Our approach uses a version of the \emph{ergodic method} due to Vershik and Kerov, namely, the approximation by characters of the pre-limit finite-dimensional subgroups.

\subsubsection{The case of one-dimensional torus.}\label{subsubsect:group_realizations_torus}
Now let us discuss the case when $A=\mathbb{T}=\{z\in\mathbb{C}:|z|=1\}$ in a more detail. In this case the Pontryagin dual of $A$ is $\widehat{\mathbb{T}}\simeq\mathbb{Z}$. The group $G_N(\mathbb{T})=\mathbb{T}^{N}\rtimes\mathfrak{S}_{N}$ can be realized as the group of $N\times N$ matrices which have exactly one non-zero entry in each row and column and whose all non-zero entries belong to $\mathbb{T}$. In other words, $G_N(\mathbb{T})$ is isomorphic to the normalizer of the subgroup of diagonal matrices inside the unitary group $\operatorname{U}_{N}$.
Note that under this interpretation the block-diagonal embedding $\mathbb{T}^{N}\rtimes\mathfrak{S}_{N}\hookrightarrow\mathbb{T}^{MN}\rtimes\mathfrak{S}_{MN}$ is induced from that of $\operatorname{U}_{N}\hookrightarrow\operatorname{U}_{MN}$ defined in the beginning of Section~\ref{subsect:block_diag_unitary}.

There is also an alternative realization of the group $G_{\widehat{\mathbf{n}}}(\mathbb{T})$. Namely, $G_{\widehat{\mathbf{n}}}(\mathbb{T})$ can be identified with a group of certain unitary operators acting on a Hilbert space $L^2(X,\nu)$, where $X=[0,1)$ is equipped with the standard Lebesgue measure $\nu$. Indeed, for an element $\mathfrak{t}=(t_1,\ldots,t_N)\in\mathbb{T}^{N}$ one considers the following piecewise constant function $f_{\mathfrak{t}}\colon X\to\mathbb{T}$:
\begin{equation*}
f_{\mathfrak{t}}(x)=
\begin{cases}
t_1,& x\in[0,\frac{1}{N}),
\\
t_2,& x\in[\frac{1}{N},\frac{2}{N}),
\\
\ldots
\\
t_{N},& x\in[\frac{N-1}{N},1).
\end{cases}
\end{equation*}
Then, an element $\mathfrak{t}\in\mathbb{T}^{N}$ can be identified with the operator of multiplication by $f_{\mathfrak{t}}$ on $L^2(X,\nu)$ (this is a unitary operator since $|f_{\mathfrak{t}}|\equiv 1$). On the other hand, the symmetric group $\mathfrak{S}_{N}$ acts naturally on $X=[0,1)$ via the rearrangements of half-intervals $[\frac{k}{N},\frac{k+1}{N})$, $k=0,1,\ldots,N-1$. The induced action of $\mathfrak{S}_{N}$ on $L^2(X,\nu)$ realizes $\mathfrak{S}_N$ as a subgroup of unitary operators on $L^2(X,\nu)$. Thus, we obtain a realization of $G_N(\mathbb{T})$ in the unitary group of $L^2(X,\nu)$. It is not difficult to check that these realizations for different $N$ are compatible with respect to the block-diagonal embeddings defined above. In fact, this interpretation of $G_{\widehat{\mathbf{n}}}(\mathbb{T})$ is closely related to the construction with the product space $\mathbb{X}_{\widehat{\mathbf{n}}}$ discussed in previous section.

\subsection{Indecomposable characters of $G_{\widehat{\mathbf{n}}}(\mathbb{T})$ and associated spherical representations of the Gelfand pair $(G_{\widehat{\mathbf{n}}}(\mathbb{T}), \mathfrak{S}_{\widehat{\mathbf{n}}})$.}\label{sect:intro_indec_har_sph_rep}

Let $G$ be a topological group. Recall that a positive definite function $\chi\colon G\to\mathbb{C}$ is called a \emph{(normalized) character} of $G$ if the following properties hold:
\begin{itemize}
\item 
$\chi$ is {\it central}, i.e., $\chi(gh)=\chi(hg)$ for all $g,h\in G$;

\item 
$\chi(1_G)=1$, where $1_G$ is the identity element of $G$.
\end{itemize}
A character $\chi$ is called {\it indecomposable} if it cannot be expressed as $\alpha\chi_1+(1-\alpha)\chi_2$ for some $\alpha\in(0,1)$ and two distinct  characters $\chi_1$ and $\chi_2$. Equivalently, $\chi$ is indecomposable if and only if the corresponding \emph{GNS (Gelfand--Naimark--Segal) representation} $(\pi_\chi, \mathcal{H}_\chi,\xi_\chi)$ is a \emph{factor representation} (see Appendix \ref{sect:characters_spherical_func} for more details).

Consider the diagonal subgroup $\diag(G)=\{(g,g):g\in G\}$ of $G\times G$. It is well-known that under some suitable conditions on $G$ the pair $(G\times G,\diag(G))$ is a \emph{Gelfand pair} (see \cite{Borodin_Olshanski} or Appendix \ref{sect:appendix_spherical_func_approx} for precise definitions). The characters of the group $G$ are directly related to spherical functions of $(G\times G,\diag(G))$. Namely, we have the following general fact (for the proof we refer to Appendix \ref{sect:characters_spherical_func}).

\begin{Prop}\label{prop:char_spherical_double}
Let $\chi$ be an indecomposable character of the group $G$ and let $(\pi_\chi, \mathcal{H}_\chi,\xi_\chi)$ be the associated Gelfand--Naimark--Segal representation. Then, there 
exists an antilinear involutive isometry $S\colon\mathcal{H}_{\chi}\to\mathcal{H}_{\chi}$ such that $\pi_{\chi}(G)'=S\pi_{\chi}(G)''S$, where $\pi_{\chi}(G)''$ denotes the von 
Neumann algebra generated by the operators $\pi_{\chi}(g)$, $g\in G$, and $\pi_{\chi}(G)'$ denotes its commutant. Moreover, the formula 
$\pi_{\chi}^{(2)}(g_1,g_2)=\pi_{\chi}(g_1)S\pi_{\chi}(g_2)S$ defines in $\mathcal{H}_{\chi}$ an irreducible spherical representation of the Gefland pair $(G\times 
G,\diag(G))$ with spherical vector $\xi_{\chi}$. The spherical function of $\pi_{\chi}^{(2)}$ is given by the formula 
$\varphi(g_1,g_2)=\langle\pi_{\chi}^{(2)}(g_1,g_2)\xi_{\chi},\xi_{\chi}\rangle=\chi(g_1g_2^{-1})$.
\end{Prop}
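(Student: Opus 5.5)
The plan is the standard Tomita--Takesaki argument for traces. In the GNS construction, vectors $\pi_\chi(g)\xi_\chi$, $g\in G$, span a dense subspace of $\mathcal{H}_\chi$. Define $S_0$ on this span by
\[
S_0\Bigl(\sum_i c_i\pi_\chi(g_i)\xi_\chi\Bigr)=\sum_i \overline{c_i}\,\pi_\chi(g_i^{-1})\xi_\chi .
\]
Centrality gives $\chi(h^{-1}g)=\chi(gh^{-1})$. Hence
\[
\langle \pi_\chi(g^{-1})\xi_\chi,\pi_\chi(h^{-1})\xi_\chi\rangle=\chi(hg^{-1})=\chi(g^{-1}h)=\overline{\langle \pi_\chi(g)\xi_\chi,\pi_\chi(h)\xi_\chi\rangle}.
\]
So $S_0$ is well defined, antilinear and isometric. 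It extends to an antilinear involutive isometry $S$ (antiunitary with $S^2=1$). By construction $S\xi_\chi=\xi_\chi$.

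Next compare $S\pi_\chi(g)S$ with right multiplication. On the dense set we have
\[
S\pi_\chi(g)S\,\pi_\chi(h)\xi_\chi=S\pi_\chi(gh^{-1})\xi_\chi=\pi_\chi(hg^{-1})\xi_\chi .
\]
So $\rho(g):=S\pi_\chi(g)S$ is a unitary representation acting by ``right translation'' $h\mapsto hg^{-1}$, and it clearly commutes with every $\pi_\chi(g')$. This gives $S\pi_\chi(G)''S\subseteq\pi_\chi(G)'$.

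The reverse inclusion is the main obstacle. I would argue via the trace: $\tau(x)=\langle x\xi_\chi,\xi_\chi\rangle$ is a faithful normal tracial state on $M=\pi_\chi(G)''$, and $\xi_\chi$ is cyclic and separating for $M$. Then $(M,\mathcal{H}_\chi,\xi_\chi)$ is the standard form $L^2(M,\tau)$. Here $S$ restricts to $x\xi_\chi\mapsto x^*\xi_\chi$, which is the modular conjugation $J$, and $JMJ=M'$ is the standard fact (proved via the commutation theorem for the finite case, e.g.\ in Dixmier or Takesaki). To keep it self-contained, I would use the elementary finite-trace version. Take $y\in M'$. Then $y\xi_\chi$ defines a bounded ``right multiplier'', and one checks $JyJ$ commutes with $\rho(G)$, hence lies in $\rho(G)'$. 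By the symmetric argument, $\rho(G)'=\pi_\chi(G)''$, since $\xi_\chi$ is cyclic for $\rho(G)$ as well. This gives $y\in JMJ$. Note this step does not even need indecomposability; factoriality enters only for irreducibility.

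For the second part, $\pi^{(2)}_\chi(g_1,g_2)=\pi_\chi(g_1)\rho(g_2)$ is a representation of $G\times G$ because $\pi_\chi$ and $\rho$ commute. The vector $\xi_\chi$ is $\diag(G)$-invariant:
\[
\pi_\chi(g)S\pi_\chi(g)S\xi_\chi=\pi_\chi(g)\pi_\chi(g^{-1})\xi_\chi=\xi_\chi .
\]
The spherical function is
\[
\langle\pi_\chi(g_1)\pi_\chi(g_2^{-1})\xi_\chi,\xi_\chi\rangle=\chi(g_1g_2^{-1}).
\]
Irreducibility follows because the commutant of $\pi^{(2)}_\chi(G\times G)$ is $M'\cap(M')'=M'\cap M$. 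This is trivial exactly when $\chi$ is indecomposable, which is the stated equivalence with factoriality. Cyclicity of $\xi_\chi$ is inherited from $\pi_\chi$, so $\pi^{(2)}_\chi$ is an irreducible spherical representation.
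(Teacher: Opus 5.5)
The rest of your proposal is sound and matches the paper in substance: the construction of $S$, the inclusion $S\pi_\chi(G)''S\subseteq\pi_\chi(G)'$, $\diag(G)$-invariance of $\xi_\chi$, and the formula $\chi(g_1g_2^{-1})$. Defining $S$ on the span of $\pi_\chi(g)\xi_\chi$ via centrality, rather than on $M\xi_\chi$ via the trace as the paper does, is a harmless variation.

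The gap is the reverse inclusion $\pi_\chi(G)'\subseteq S\pi_\chi(G)''S$ in your self-contained version, and that sketch is circular. Since $\rho(G)''=SMS$, you have $\rho(G)'=SM'S$. So the claim $\rho(G)'=\pi_\chi(G)''$ is literally equivalent to $M'=SMS$, which is the statement being proved. The symmetric version of your first argument gives only the easy inclusion $M\subseteq\rho(G)'$.

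Likewise, for $y\in M'$, the fact that $SyS$ commutes with $\rho(G)$ is automatic. It only places $SyS$ in $(SMS)'$, which is not yet known to equal $M$. To get $SyS\in M$ you must show that $SyS$ commutes with all of $M'$. Your irreducibility step inherits the gap, since it computes the commutant $M'\cap SM'S$ as $M'\cap M$ using this equality. If you simply cite the commutation theorem $JMJ=M'$ for $L^2(M,\tau)$, the proof is complete, but the self-contained sketch does not establish it.

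The missing idea is that $S$ also implements the adjoint on $M'\xi_\chi$. Take $y\in M'$ and $x$ in the linear span of $\pi_\chi(G)$, where $Sx\xi_\chi=x^*\xi_\chi$ by your definition. Using $\langle Su,v\rangle=\langle Sv,u\rangle$,
\[
\langle Sy\xi_\chi,x\xi_\chi\rangle=\langle Sx\xi_\chi,y\xi_\chi\rangle=\langle x^*\xi_\chi,y\xi_\chi\rangle=\langle\xi_\chi,xy\xi_\chi\rangle=\langle\xi_\chi,yx\xi_\chi\rangle=\langle y^*\xi_\chi,x\xi_\chi\rangle .
\]
Hence $Sy\xi_\chi=y^*\xi_\chi$, or equivalently, the vector state is a trace on $M'$. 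Now let $y,a',b'\in M'$. Both $SyS\,a'b'\xi_\chi$ and $a'\,SyS\,b'\xi_\chi$ equal $a'b'y^*\xi_\chi$. Since $\xi_\chi$ is cyclic for $M'\supseteq\rho(G)$, this gives $SyS\in M''=M$, that is, $M'\subseteq SMS$.

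This is exactly the paper's route. It proves that $a'\mapsto\langle a'\xi_\chi,\xi_\chi\rangle$ is tracial on $M'$ by approximating $a'\xi_\chi$ by vectors $Sa_nS\xi_\chi$. It then introduces $S'\colon a'\xi_\chi\mapsto (a')^*\xi_\chi$, checks that $S'=S$, and reruns the first inclusion argument with $M'$ in place of $M$.
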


The following proposition shows that indecomposable characters of the group $G_{\widehat{\mathbf{n}}}(\mathbb{T})$ are strongly related to spherical functions of $(G_{\widehat{\mathbf{n}}}(\mathbb{T}),\mathfrak{S}_{\widehat{\mathbf{n}}})$. However, we do not use this fact in the proof of main results, and we refer the interested reader to Appendix \ref{sect:characters_spherical_func} for the proof.

\begin{Prop}\label{prop:ind_char_and_spher_func}
Let $\chi$ be an indecomposable character on $G_{\widehat{\mathbf{n}}}(\mathbb{T})$ and let $(\pi_{\chi},\mathcal{H}_{\chi},\xi_{\chi})$ be the associated GNS representation.
Define an injective homomorphism $\Gamma\colon G_{\widehat{\mathbf{n}}}(\mathbb{T})\to G_{\widehat{\mathbf{n}}}(\mathbb{T})\times G_{\widehat{\mathbf{n}}}(\mathbb{T})$ by the formula $\Gamma(\mathfrak{t}s)=(\mathfrak{t}s,s)$ for $\mathfrak{t}\in\mathbb{T}^{\widehat{\mathbf{n}}}$ and $s\in\mathfrak{S}_{\widehat{\mathbf{n}}}$. 
Then, in the notation of Proposition \ref{prop:char_spherical_double}, the restriction of representation $\pi=\pi_{\chi}^{(2)}\circ\Gamma$ to the subspace\footnote{For a subset $D$ of vectors in a Hilbert space, we denote by $[D]$ the closed linear span of $D$.} $[\pi_{\chi}(\mathbb{T}^{\widehat{\mathbf{n}}})\xi]$ is an irreducible spherical representation of $G_{\widehat{\mathbf{n}}}(\mathbb{T})$. Moreover, the restrictions to $\mathbb{T}^{\widehat{\mathbf{n}}}$ of $\chi$ and the spherical function of $\pi$ coincide.
\end{Prop}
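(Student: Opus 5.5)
The plan is to work throughout with the operators of Proposition \ref{prop:char_spherical_double}. Write $\pi(\mathfrak{t}s)=\pi_\chi(\mathfrak{t}s)S\pi_\chi(s)S$ and let $\mathcal{H}_0=[\pi_\chi(\mathbb{T}^{\widehat{\mathbf{n}}})\xi_\chi]$.

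The first step is invariance of $\mathcal{H}_0$, together with the fact that $\xi_\chi$ is $\mathfrak{S}_{\widehat{\mathbf{n}}}$-fixed. Centrality of $\chi$ gives $\pi_\chi(g)\xi_\chi=S\pi_\chi(g)S\,\xi_\chi$. Indeed, in the standard construction $S$ is the closure of $\pi_\chi(g)\xi_\chi\mapsto\pi_\chi(g^{-1})\xi_\chi$, so $S\pi_\chi(g)S\xi_\chi=S\pi_\chi(g)\xi_\chi=\pi_\chi(g^{-1})\xi_\chi$. Replacing $g$ by $g^{-1}$ as needed, we get $\pi(s)\xi_\chi=\pi_\chi(s)S\pi_\chi(s)S\xi_\chi=\pi_\chi(s)\pi_\chi(s^{-1})\xi_\chi=\xi_\chi$. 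Moreover, $S\pi_\chi(s)S$ lies in $\pi_\chi(G)'$, so it commutes with $\pi_\chi(\mathfrak{t})$. Hence
\[
\pi(s)\pi_\chi(\mathfrak{t})\xi_\chi=\pi_\chi(s\mathfrak{t}s^{-1})\,\pi(s)\xi_\chi=\pi_\chi(s\mathfrak{t}s^{-1})\xi_\chi\in\mathcal{H}_0 .
\]
Since $\pi(\mathfrak{t})=\pi_\chi(\mathfrak{t})$ obviously preserves $\mathcal{H}_0$, the subspace $\mathcal{H}_0$ is $\pi$-invariant. Also $\xi_\chi\in\mathcal{H}_0$ is cyclic for $\pi|_{\mathcal{H}_0}$ and fixed by $\mathfrak{S}_{\widehat{\mathbf{n}}}$, so the restriction is spherical with respect to $\mathfrak{S}_{\widehat{\mathbf{n}}}$. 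Finally, $\langle\pi(\mathfrak{t})\xi_\chi,\xi_\chi\rangle=\chi(\mathfrak{t})$, which is the last claim of the proposition.

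The main obstacle is irreducibility. A cyclic spherical representation is irreducible exactly when the fixed vectors of $\mathfrak{S}_{\widehat{\mathbf{n}}}$ in $\mathcal{H}_0$ form the line $\mathbb{C}\xi_\chi$; this holds provided $(G_{\widehat{\mathbf{n}}}(\mathbb{T}),\mathfrak{S}_{\widehat{\mathbf{n}}})$ is a Gelfand pair in the sense of the Appendix, or more directly by showing that any commuting operator is determined by its value on $\xi_\chi$. So the key claim is that every $\pi(\mathfrak{S}_{\widehat{\mathbf{n}}})$-invariant vector is a multiple of $\xi_\chi$.

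To prove the key claim, take $\eta\in\mathcal{H}_0$ fixed by all $\pi(s)$. Let $P$ be the projection onto the fixed vectors of $\pi^{(2)}(\diag G)$. By Proposition \ref{prop:char_spherical_double} that representation is irreducible spherical, so $P$ has range $\mathbb{C}\xi_\chi$. I would approximate $\eta$ by its averages over the compact subgroups $\diag(\mathbb{T}^{N_k})$, i.e.\ by $E_k\eta=\int_{\mathbb{T}^{N_k}}\pi_\chi(\mathfrak{t})S\pi_\chi(\mathfrak{t})S\eta\,d\mathfrak{t}$. Since $\pi^{(2)}(s,s)=\pi(s)$, each $E_k\eta$ is still $\mathfrak{S}_{N_k}$-invariant (the torus average is normalized by $\mathfrak{S}_{N_k}$), and it is invariant under $\diag(G_{N_k}(\mathbb{T}))$. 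The decreasing projections $E_k$ converge strongly to the projection onto $\diag(\bigcup_k\mathbb{T}^{N_k}\rtimes\mathfrak{S}_{N_k})$-invariants, which is $P$. It then remains to show that $E_k\eta$ stays close to $\eta$, i.e.\ that $\eta$ is already nearly invariant under $S\pi_\chi(\mathfrak{t})S$ relative to $\pi_\chi(\mathfrak{t})$.

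For this last point I would use that on $\mathcal{H}_0$ the commutant action $S\pi_\chi(\mathfrak{t})S$ coincides with $\pi_\chi(\mathfrak{t})^{*}$ twisted. Concretely, since $\mathbb{T}^{\widehat{\mathbf{n}}}$ is abelian, $S\pi_\chi(\mathfrak{t})S\,\pi_\chi(\mathfrak{u})\xi_\chi=\pi_\chi(\mathfrak{u}\mathfrak{t}^{-1})\xi_\chi$, so $\pi_\chi(\mathfrak{t})S\pi_\chi(\mathfrak{t})S$ acts as the identity on $\mathcal{H}_0$. Then $E_k\eta=\eta$ exactly, hence $\eta=P\eta\in\mathbb{C}\xi_\chi$, and irreducibility follows.
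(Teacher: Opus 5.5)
Your conclusion is right, but one step is false as written. The projections $E_k$ (averages over $\diag(\mathbb{T}^{N_k})$) decrease strongly to the projection onto the $\pi^{(2)}(\diag\mathbb{T}^{\widehat{\mathbf{n}}})$-invariant vectors, not onto the $\pi^{(2)}(\diag G_{\widehat{\mathbf{n}}})$-invariant ones. By your own last paragraph, all of $\mathcal{H}_0$ lies in the range of $\lim_k E_k$, and $\mathcal{H}_0\neq\mathbb{C}\xi_\chi$ in general. For example, for $\chi=\phi_1$ one has $|\chi(\mathfrak{t})|<1$ for non-constant $\mathfrak{t}$, which is impossible if $\pi_\chi(\mathfrak{t})\xi_\chi\in\mathbb{C}\xi_\chi$. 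The $\mathfrak{S}_{N_k}$-invariance of $E_k\eta$ is inherited from $\eta$; it is not a property of $E_k$.

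The repair is immediate and makes the averaging superfluous. Since $\pi^{(2)}(\mathfrak{t},\mathfrak{t})$ is the identity on $\mathcal{H}_0$ and $\pi^{(2)}(s,s)=\pi(s)$, a $\pi(\mathfrak{S}_{\widehat{\mathbf{n}}})$-fixed vector $\eta\in\mathcal{H}_0$ is fixed by $\diag(\mathbb{T}^{\widehat{\mathbf{n}}})$ and by $\diag(\mathfrak{S}_{\widehat{\mathbf{n}}})$. Hence it is fixed by $\diag(G_{\widehat{\mathbf{n}}})$, so $\eta\in\mathbb{C}\xi_\chi$. Then any $T$ commuting with $\pi|_{\mathcal{H}_0}$ satisfies $T\xi_\chi=c\,\xi_\chi$, and cyclicity of $\xi_\chi$ gives $T=c$. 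Separately, your opening identity should read $S\pi_\chi(g)S\xi_\chi=\pi_\chi(g^{-1})\xi_\chi$, as you then compute.

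With that fix, your proof takes a genuinely different route from the paper's. The paper lets $P$ be the projection onto $\mathcal{H}_0$ and writes $P\pi^{(2)}(g,h)P=P\pi(s)P\cdot P\pi_\chi(h^{-1}g)P$ for $h=\mathfrak{t}s$, using that $\diag(\mathbb{T}^{\widehat{\mathbf{n}}})$ acts trivially on $\mathcal{H}_0$. It then identifies $P\pi_\chi(x)P=E_{\mathbb{T}}(\pi_\chi(x))P$ via the conditional expectation onto $\pi_\chi(\mathbb{T}^{\widehat{\mathbf{n}}})''$. So the von Neumann algebra generated by $P\pi(G_{\widehat{\mathbf{n}}})P$ contains the compression of $\pi^{(2)}(G_{\widehat{\mathbf{n}}}\times G_{\widehat{\mathbf{n}}})$, which is $\mathcal{B}(P\mathcal{H}_\chi)$ by irreducibility of $\pi^{(2)}$.

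That route needs only irreducibility of $\pi^{(2)}$, but it uses conditional expectations. It also leaves the one-dimensionality of the $\mathfrak{S}_{\widehat{\mathbf{n}}}$-fixed space implicit, via the Gelfand pair property. Your fixed-vector route is more elementary and yields $\mathcal{H}_0^{\mathfrak{S}_{\widehat{\mathbf{n}}}}=\mathbb{C}\xi_\chi$ and irreducibility at once. The price is a slightly stronger input: that the $\pi^{(2)}(\diag G_{\widehat{\mathbf{n}}})$-fixed space is exactly $\mathbb{C}\xi_\chi$. This follows from factoriality of $\pi_\chi(G_{\widehat{\mathbf{n}}})''$, or from the Gelfand property of the double.
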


\begin{Rem}
This proposition together with the classification of spherical functions of $(G_{\widehat{\mathbf{n}}}(\mathbb{T}),\mathfrak{S}_{\widehat{\mathbf{n}}})$ (see below) already restricts the list of possible options for $\chi$. However, it is unclear whether this fact can be used for the effective classification of indecomposable characters of $G_{\widehat{\mathbf{n}}}(\mathbb{T})$.
\end{Rem}

\subsection{Main results.}
This paper can be regarded as a sequel to our previous work in \cite{Nessonov_Ngo}. There we gave a classification of indecomposable characters for the group $\mathfrak{S}_{\widehat{\mathbf{n}}}$~-- an analogue of infinite symmetric group for block-diagonal embeddings defined as the inductive limit of the sequence $\{\mathfrak{S}_{N_k}\}_{k\ge 1}$ with block-diagonal embeddings. Since the characters of $\mathfrak{S}_{\widehat{\mathbf{n}}}$ appear in the classification for\footnote{From now on the compact abelian group $A$ is assumed to be $\mathbb{T}$ unless stated otherwise.} $G_{\widehat{\mathbf{n}}}=G_{\widehat{\mathbf{n}}}(\mathbb{T})$ we recall the classification from \cite{Nessonov_Ngo} below.

As in \cite{Nessonov_Ngo}, denote by $\chi_{\nat}$ and $\sgn_{\infty}$ the \emph{natural} and the \emph{sign} characters of $\mathfrak{S}_{\widehat{\mathbf{n}}}$, respectively. These are defined as follows: for any $\sigma\in\mathfrak{S}_{N_k}$ we put
\begin{equation*}
\chi_{\nat}(\sigma)=\frac{1}{N_k}\#\Fix_{N_k}(\sigma),\quad\sgn_{\infty}(\sigma)=\lim_{k\to\infty}\sgn_{N_k}(\sigma).
\end{equation*}
Here $\Fix_{N_k}(\sigma)\subset\mathbb{X}_{N_k}=\{1,2,\ldots,N_k\}$ is the fixed-point set of permutation $\sigma\in\mathfrak{S}_{N_k}$, and $\sgn_{N_k}(\sigma)$ is the sign of the image of $\sigma$ in $\mathfrak{S}_{N_k}$.
Note that $\sgn_{\infty}$ is a non-trivial (multiplicative) character if and only if the sequence $\widehat{\mathbf{n}}$ has only finitely many even elements.
The main result of \cite{Nessonov_Ngo} can be stated as follows:
\begin{Prop}[{\cite[Theorem 1.2]{Nessonov_Ngo}}]\label{prop:inf_sym_indec_char}
The set $\exchar(\mathfrak{S}_{\widehat{\mathbf{n}}})$ of indecomposable characters of the group $\mathfrak{S}_{\widehat{\mathbf{n}}}$ is 
\begin{equation*}
\exchar(\mathfrak{S}_{\widehat{\mathbf{n}}})=\big\{\chi_{\nat}^{r},\,\sgn_{\infty}\cdot\chi_{\nat}^{r}:r\in\mathbb{Z}_{\ge 0}\cup\{\infty\}\big\}.
\end{equation*}
Here, the character $\chi_{\nat}^{\infty}$ is the regular character $\chi_{\reg}$ on $\mathfrak{S}_{\widehat{\mathbf{n}}}$.
\end{Prop}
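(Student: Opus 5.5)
We will prove the classification of $\exchar(\mathfrak{S}_{\widehat{\mathbf{n}}})$ by the ergodic method: every indecomposable character is a pointwise limit of normalized irreducible characters $\chi^{\lambda^{(k)}}/\dim\lambda^{(k)}$ of $\mathfrak{S}_{N_k}$, and we then identify all such limits. The proof splits into two parts: checking that the listed functions are indecomposable characters, and showing there are no others.

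\textbf{The listed functions are indecomposable characters.} For $r\in\mathbb{Z}_{\ge 0}$ the character $\chi_{\nat}^r$ restricted to $\mathfrak{S}_{N_k}$ is $N_k^{-r}$ times the character of the permutation representation on $\mathbb{X}_{N_k}^{r}$. Hence it is positive definite and central, and it is consistent along the block-diagonal embeddings: a point of $\mathbb{X}_{N_{k+1}}$ is fixed by $\tilde\sigma$ iff its $\mathbb{X}_{N_k}$-component is fixed by $\sigma$, so $\#\Fix_{N_{k+1}}(\tilde\sigma)/N_{k+1}=\#\Fix_{N_k}(\sigma)/N_k$. As $r\to\infty$, $\chi_{\nat}^r$ tends to $\delta_e=\chi_{\reg}$, since $\chi_{\nat}(\sigma)<1$ for $\sigma\ne e$. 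Multiplying by the one-dimensional character $\sgn_\infty$ preserves both positivity and centrality. For indecomposability we will use multiplicativity, i.e.\ $\chi(\sigma\tau)=\chi(\sigma)\chi(\tau)$ for suitably disjoint commuting $\sigma,\tau$. Concretely, take $\sigma\in\mathfrak{S}_{N_k}$ and a conjugate $\tau$ supported in another block of $\mathfrak{S}_{N_{m}}$, and verify factorization of $\chi_{\nat}^r$ on such pairs. This is weaker than in the $\mathfrak{S}_\infty$ case, because here supports always intersect across blocks. So instead we will compute the GNS factor directly: $\chi_{\nat}^r$ is realized via the action on $(\mathbb{X}_{\widehat{\mathbf{n}}})^r$ with product measure, and ergodicity of the diagonal action of $\mathfrak{S}_{\widehat{\mathbf{n}}}$ (it acts on coordinates independently on the tail) gives factoriality.

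\textbf{Every indecomposable character is on the list.} Let $\chi$ be indecomposable. By the Vershik--Kerov ergodic theorem, there are $\lambda^{(k)}\vdash N_k$ with $\chi(\sigma)=\lim_k \chi^{\lambda^{(k)}}(\sigma)/\dim\lambda^{(k)}$ for $\sigma\in\mathfrak{S}_{N_j}$ fixed. The key observation is that the image of a fixed $\sigma\in\mathfrak{S}_{N_j}$ in $\mathfrak{S}_{N_k}$ has cycle type with multiplicities proportional to $N_k/N_j$. Its support is therefore of macroscopic size, a positive fraction of $N_k$, rather than bounded as in Thoma's setting. We therefore need asymptotics of normalized characters on permutations whose cycles all have bounded length but which have $\asymp N$ cycles. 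We plan to use the Frobenius/Kerov-type formula in terms of free cumulants, or the estimate $|\chi^\lambda(\sigma)/\dim\lambda|\le(\max(\lambda_1/N,\lambda'_1/N)+o(1))^{c|\supp\sigma|}$ (Larsen--Shalev/Féray--Śniady bounds). Reducing via the test element $\sigma$ equal to a transposition embedded, which becomes a product of $N_k/N_j$ disjoint transpositions, the value $\chi(\sigma)$ must be the limit of $(\text{normalized content sum})^{N_k/N_j}$-type quantities. This forces the following dichotomy. Either the diagram $\lambda^{(k)}$ has one row (or column) of length $N_k-r$ with $r$ fixed, giving the limit $\chi_{\nat}^r$ (or $\sgn\cdot\chi_{\nat}^r$ after conjugation when $\sgn_\infty$ is nontrivial). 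Or the largest row and column are $N_k-r_k$ with $r_k\to\infty$, giving $\delta_e$. To identify the limit in the first case, we use the branching $\lambda=(N-r,\mu)$, whose normalized character on $\sigma$ is asymptotically $\chi_{\nat}(\sigma)^r$. This follows from expressing the permutation character on $\mathbb{X}_N^r$ as a sum over such $\lambda$ with the top one dominant.

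The main obstacle is the uniform character estimate on permutations of macroscopic support. Precisely, we must show that if neither $\lambda_1^{(k)}$ nor $(\lambda^{(k)})'_1$ is $N_k-O(1)$, then the normalized character tends to $0$ on every nontrivial embedded $\sigma$. We would attack this first via the Roichman/Müller--Schlage-Puchta exponential bounds $|\chi^\lambda(\sigma)|/\dim\lambda\le\max(\lambda_1/N,\lambda_1'/N,q)^{c\,|\supp\sigma|}$. If these are insufficient near $\lambda_1/N\to1$, we would refine them with a direct Murnaghan--Nakayama computation for $\lambda=(N-r,\mu)$, $r\to\infty$ slowly.
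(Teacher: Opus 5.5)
Your overall scheme is the right one: approximate by normalized irreducible characters $\chi_{\lambda^{(k)}}$ of $\mathfrak{S}_{N_k}$, then split according to whether $m_k=N_k-\max\{\lambda^{(k)}_1,\lambda^{(k)\prime}_1\}$ stays bounded. The step you call the main obstacle is actually routine. For $\sigma\neq e$ write $\alpha_1=\chi_{\nat}(\sigma)<1$, so $\#\supp\sigma=(1-\alpha_1)N_k$. Roichman's bound then gives $|\chi_{\lambda^{(k)}}(\sigma)|\le\max\{a^{cN_k},e^{-cm_k}\}$ with $c=b(1-\alpha_1)>0$, and this is exactly Lemma~\ref{lem:sym_char_lim}(1).

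The first genuine gap is identifying the limit in the bounded case $\lambda^{(k)}=(N_k-r,\mu)$, $\mu\vdash r$. Decomposing the permutation character of $\mathbb{X}_N^r$ does not isolate this $\lambda$ once $r\ge 2$. Its constituents with $|\lambda|-\lambda_1=r$ are all the $(N-r,\nu)$, $\nu\vdash r$, each with multiplicity $\dim\nu$ and dimension $\sim\dim\nu\,N^r/r!$. So you only get $\chi_{\nat}(\sigma)^r=\sum_{\nu\vdash r}\frac{(\dim\nu)^2}{r!}\chi_{(N-r,\nu)}(\sigma)+O(1/N)$: a weighted average of $p(r)$ limits, none of which dominates. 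The content-sum heuristic also fails in this regime, because normalized characters are not approximately multiplicative over $\asymp N$ disjoint cycles. For $\lambda=(N-1,1)$ and a product of $cN$ disjoint transpositions the value tends to $1-2c$, not $e^{-2c}$.

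A repair in your spirit is to use $\Ind_{\mathfrak{S}_{N-r}\times\mathfrak{S}_r}^{\mathfrak{S}_N}(\triv\otimes\rho_\mu)$ instead of the permutation module. By Pieri, $(N-r,\mu)$ is its only constituent with $|\lambda|-\lambda_1=r$, and the boundedly many others have dimension $O(N^{r-1})$. Its normalized character is $\binom{N}{r}^{-1}\sum_{|A|=r,\ \sigma(A)=A}\chi_\mu(\sigma|_A)\to\alpha_1^r$, since the $\sigma$-invariant $r$-sets not contained in $\Fix_N(\sigma)$ number only $O(N^{r-1})$.

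The paper takes the statement from \cite{Nessonov_Ngo} and reproduces the computation as Lemma~\ref{lem:sym_char_lim}(2), arguing differently. It represents the conjugacy class by a product of distinct Coxeter generators with increasing indices. It evaluates diagonal matrix elements in the Young basis via Lemma~\ref{lem:matrix_elem_formula}. It then shows that only tableaux whose first row contains the support of this element contribute, and that these make up a proportion $\alpha_1^{|\mu|}$ of all tableaux.

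The second gap is in the indecomposability argument. Ergodicity of an action does not make the GNS representation of $g\mapsto\nu(\Fix g)$ a factor representation. A free ergodic $\mathbb{Z}$-action, such as an irrational rotation, gives the regular character $\delta_0$ of $\mathbb{Z}$, which is decomposable. You would need something like Vershik's criterion (ergodic \emph{and} totally non-free). The diagonal action on $\mathbb{X}_{\widehat{\mathbf{n}}}^r$ is not totally non-free, since $(x,y)$ and $(y,x)$ have the same stabilizer. You would have to pass to the action on $r$-point subsets, which has the same fixed-point character, and check both properties there. In addition, $\chi_{\reg}$ needs a separate ICC argument.

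The route used in the paper avoids factoriality altogether. Once the limit analysis shows that every indecomposable character lies in the countable list, a non-extreme listed function would have to be a countable convex combination of other listed ones. This is ruled out by evaluating on elements with $\sgn_\infty=1$, for example products of $3$-cycles. On these $\chi_{\nat}$ takes a dense set of values in $[0,1]$, so two power series in $\chi_{\nat}$ can be compared coefficientwise. Evaluating on odd elements then separates $\sgn_\infty\chi_{\nat}^r$ from $\chi_{\nat}^r$. Compare \cite[Lemma 6.1]{Nessonov_Ngo} and, for $G_{\widehat{\mathbf{n}}}$, Lemmas~\ref{lem:indecomp_main} and~\ref{lem:lin_independence_phi_func}.
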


To state the classification theorem for indecomposable characters of $G_{\widehat{\mathbf{n}}}$ we need to introduce more notation. For any $M\in\mathbb{Z}$ and $g=(\mathfrak{t},\sigma)\in G_{N_k}\subset G_{\widehat{\mathbf{n}}}$ define (see also \eqref{eq:phi_M_formula})
\begin{equation*}
\phi_M(g)=\frac{1}{N_k}\sum_{\substack{x\in\mathbb{X}_{N_k}\\\sigma(x)=x}}\mathfrak{t}(x)^{M}.
\end{equation*}
Note that for $M=0$ we get $\phi_0(g)=\chi_{\nat}(\sigma)$. Now we can state the main result of the present paper.

\begin{Th}[{Theorem \ref{thm:classification_char}}]\label{thm:main_th_intro}
Indecomposable characters of the infinite wreath product $G_{\widehat{\mathbf{n}}}$ are parameterized by a (possibly empty) unordered collection of non-zero integers $M_1,\ldots,M_l$ and an indecomposable character $\psi$ of $\mathfrak{S}_{\widehat{\mathbf{n}}}$.
More precisely, if $\chi$ is an indecomposable character on $G_{\widehat{\mathbf{n}}}$, then there exist $M_1,\ldots,M_l\in\mathbb{Z}\setminus\{0\}$ and $\psi\in\exchar(\mathfrak{S}_{\widehat{\mathbf{n}}})$ such that for any $g=(\mathfrak{t},\sigma)\in G_{\widehat{\mathbf{n}}}$ we have
\begin{equation*}
\chi(g)=\chi_{\mathbf{m},\psi}(g):=\psi(\sigma)\phi_{M_1}(g)\cdots\phi_{M_l}(g),
\end{equation*}
where $\mathbf{m}=(M_1,\ldots,M_l)$.
\end{Th}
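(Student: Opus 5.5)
The plan is to use the ergodic method. Every indecomposable character $\chi$ of $G_{\widehat{\mathbf{n}}}$ is a pointwise limit of normalized irreducible characters $\chi^{\Lambda_k}/\dim\Lambda_k$ of the groups $G_{N_k}$, where $\Lambda_k\colon\mathbb{Z}\to\mathbb{Y}$ has $\sum_M|\Lambda_k(M)|=N_k$. This is the Vershik--Kerov approximation, proved by averaging over $G_{N_k}$ the matrix coefficients of the factor representation. So we must (a) identify all possible limits of such normalized characters along the block-diagonal tower, and (b) check that each $\chi_{\mathbf{m},\psi}$ is an indecomposable character.

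For (b), positive-definiteness and centrality come from a concrete realization. $\phi_M$ is the character of the natural representation of $G_{\widehat{\mathbf{n}}}$ on $L^2(\mathbb{X}_{\widehat{\mathbf{n}}})\cong L^2([0,1),\nu)$, twisted by $\mathfrak{t}\mapsto\mathfrak{t}^M$, with respect to the trace on the hyperfinite II$_1$ factor generated by the block-diagonal matrices. Products of such functions and a character $\psi$ of $\mathfrak{S}_{\widehat{\mathbf{n}}}$ (pulled back via $G_{\widehat{\mathbf{n}}}\to\mathfrak{S}_{\widehat{\mathbf{n}}}$) are again characters. For indecomposability I would use multiplicativity on "disjointly supported" elements. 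Fix $g\in G_{N_k}$ and let $g^{(j)}$ be copies of $g$ embedded into $G_{N_{k+j}}$ so that they act in independent blocks. Then $\chi(g\,g^{(j)})\to\chi(g)^2$ is a criterion for indecomposability, and for the functions $\phi_M$, $\chi_{\nat}$, $\sgn_\infty$ it is checked directly. Distinctness of the characters for distinct data follows by restricting to the torus $\mathbb{T}^{\widehat{\mathbf{n}}}$. There, $\chi_{\mathbf{m},\psi}$ restricted to constants $(z,\ldots,z)$ equals $z^{M_1+\dots+M_l}$, and on general $\mathfrak{t}$ it gives the product of the integrals $\int \mathfrak{t}^{M_i}\,dx$, which determines the multiset $\{M_i\}$. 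Then $\psi$ is read off on $\mathfrak{S}_{\widehat{\mathbf{n}}}$, using Proposition~\ref{prop:inf_sym_indec_char}.

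For (a), write $g=(\mathfrak{t},\sigma)\in G_{N_j}$ and view it in $G_{N_k}$, $k\ge j$. Its cycle structure in $G_{N_k}$ is $N_k/N_j$ copies of that in $G_{N_j}$. By the Frobenius formula, $\chi^{\Lambda}(g)$ is a sum over distributions of the cycles of $g$ among the colours $M\in\mathbb{Z}$, weighted by $\prod_{\text{cycles }c}\prod (\text{cycle product})^{M}$ times characters $\chi^{\Lambda(M)}$ of symmetric groups. I would first analyse the restriction to the torus. On $\mathbb{T}^{N_k}$ the normalized character is the average of $\mathfrak{t}^{\mu}$ over the orbit of a weight vector $\mu$ with $|\Lambda(M)|$ coordinates equal to $M$. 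Restricting $\mathfrak{t}$ from $\mathbb{T}^{N_j}$, a law of large numbers shows that the limit is $\prod_M\big(\int\mathfrak{t}^{M}\big)^{\text{(mult)}}$ only if the colour frequencies $|\Lambda_k(M)|/N_k$ behave suitably. Since the limit must be continuous in the inductive-limit topology and multiplicative on independent blocks, the frequencies must concentrate: either one colour $M_0$ has frequency tending to $1$, or the limit fails to be positive-definite. I will call $M_0$ the dominant colour.

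The remaining colours have total size $o(N_k)$, but their exact finite counts matter. The key step, which I expect to be the main obstacle, is to show that each non-dominant colour $M\neq M_0$ contributes only a bounded number of boxes $r_M$ in the limit. In that case the limit factorizes as $\phi_{M_0}$-free (central) part $\times\prod_M\phi_M^{r_M}$ times a limit of $\chi^{\Lambda_k(M_0)}/\dim$. The latter is an indecomposable character of $\mathfrak{S}_{\widehat{\mathbf{n}}}$ and is classified by Proposition~\ref{prop:inf_sym_indec_char}. The point is to reconcile the central factor $z^{M_0N_k}$ with the normalization; I would handle it by twisting by the one-dimensional character $\det^{M_0}$ to reduce to $M_0=0$. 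Afterwards, powers of $\chi_{\nat}$ inside $\psi$ absorb the colour-$0$ factors $\phi_0=\chi_{\nat}$.

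To prove boundedness, I would estimate the Frobenius sum via the Murnaghan--Nakayama rule as in \cite{Nessonov_Ngo}. The estimate compares $\dim$ ratios $\dim\Lambda(M)\dim\Lambda(M_0)\binom{N}{|\Lambda(M)|}/\dim\Lambda$ with the cycle contributions of an element consisting of $N_k/N_j$ identical cycles. If $|\Lambda_k(M)|\to\infty$, the contribution of $\mathfrak{t}^M$ on these many cycles forces the limit to vanish on a neighbourhood in $\mathbb{T}^{N_j}$, which contradicts continuity at the identity.
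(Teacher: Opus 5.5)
Your architecture matches the paper's: approximation by normalized irreducible characters of $G_{N_k}$, reduction on the torus to weight vectors with finitely many non-zero entries, Frobenius asymptotics in which only placements of the non-zero colours on fixed points of $\sigma$ survive, a symmetric-group limit for the remaining colour, and multiplicativity for indecomposability. However, the torus step contains a wrong reduction and an unproved claim.

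\textbf{The torus step.} The twist by $\det^{M_0}$ is not available. Under the block-diagonal embedding, $\det_{N_k}$ restricts to $\det_{N_j}^{N_k/N_j}$ on $G_{N_j}$. So there is no one-dimensional character of $G_{\widehat{\mathbf n}}$ to twist by, and on a central element $(z,\ldots,z)$ the factor $z^{M_0N_k}$ diverges; the twisted sequence cannot converge along with the original one. The correct statement is that $M_0\neq 0$ never occurs. Indeed $\chi_{\pi_k}(z,\ldots,z)=z^{S_k}$ with $S_k=\sum_x\mathfrak m_k(x)$, and pointwise convergence forces $S_k$ to be eventually constant (Lemma \ref{t_powers_conv}). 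Then $M_0=\lim S_k/N_k=0$ once the remaining weights are bounded.

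Moreover, ``the frequencies must concentrate'' is asserted, not proved. A frequency argument does not exclude weight vectors whose values spread over a growing range with bounded multiplicities, where no colour has positive frequency. Also, pointwise limits of positive-definite functions are positive-definite, so the obstruction is discontinuity, not loss of positive-definiteness.

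The paper does all of this on the torus alone, where $\chi_{\pi_k}=\varphi_{\mathfrak m_k}$, reading $\varphi_{\mathfrak m_k}(\mathfrak t^\theta_{N_1,K_1})$ off the polynomial $\prod_x(1+zt^{\mathfrak m_k(x)})$. Averaging $|\cos K\theta|$ over $[0,\delta]$ and using continuity at $\theta=0$ bounds the spread $\max|\mathfrak m_k(x)-\mathfrak m_k(x')|$. The bound $\bigl(\alpha^2+2\alpha(1-\alpha)|\cos\tfrac{(m-m')\theta}{2}|+(1-\alpha)^2\bigr)^L$ then excludes two values with unbounded multiplicities (Proposition \ref{prop:sph_func_conv_seq}). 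No Murnaghan--Nakayama or dimension-ratio estimate is needed for the step you call the main obstacle.

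\textbf{Two further steps need an argument.} First, the factor that survives for the dominant colour is the normalized character $\chi_{\lambda(k;0)}$ of $\mathfrak S_{N_k-d}$ on the cycle type $(\alpha_1N_k-d,\alpha_2N_k,\ldots)$. This is not a character of $\mathfrak S_{N_k}$ restricted along the block-diagonal tower. So you cannot simply declare its limit an indecomposable character of $\mathfrak S_{\widehat{\mathbf n}}$ and cite Proposition \ref{prop:inf_sym_indec_char}. The paper computes this limit directly in Lemma \ref{lem:sym_char_lim}, using Roichman's bound and Okounkov--Vershik diagonal matrix elements.

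Second, in (b), commuting conjugate copies $g^{(j)}$ need not exist. The centralizer of $G_{N_m}$ in $G_{N_{m'}}$ is $1\otimes G_{N_{m'}/N_m}$, whose elements have every cycle repeated $N_m$ times. For $\widehat{\mathbf n}=(2,3,3,\ldots)$, no such element is conjugate to the transposition of $\mathfrak S_{N_1}$. You would also need a copy of $g^{-1}$ rather than $g$ to produce $|\chi(g)|^2$. The paper avoids both problems in two moves:
\begin{itemize}
\item it fixes $p$ and $\ell(\mathbf m)$ via the $\mathfrak S_{\widehat{\mathbf n}}$ classification;
\item on the torus, it extends $\phi_{\mathbf m}$ by $L^1$-continuity to step functions on $N$ equal pieces for every $N$, where $\phi_{\mathbf m}(\mathfrak t_1\mathfrak t_2)=|\phi_{\mathbf m}(\mathfrak t)|^2$ holds exactly, and then applies Lemma \ref{lem:indecomp_mult} to the countable candidate list.
\end{itemize}
Your positive-definiteness argument is correct and shorter than the paper's: $\phi_M$ is the pull-back of $\tr$ along $(\mathfrak t,\sigma)\mapsto$ the monomial matrix with entries $\mathfrak t^M$.
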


\begin{Rem}
Using the realization of the group $G_{\widehat{\mathbf{n}}}$ inside $\operatorname{U}(L^2(X,\nu))$ (see Section \ref{subsubsect:group_realizations_torus}), we may regard an element $g=(f,\sigma)\in G_{\widehat{\mathbf{n}}}$ as a pair consisting of a function $f\colon X\to\mathbb{T}$ and a transformation $\sigma\colon X\to X$. Then, the formula above can be written as follows:
\begin{equation*}
\chi_{\mathbf{m},\psi}(g)=\psi(\sigma)\prod_{i=1}^{l}\left(\int_{\Fix(\sigma;X)}f(x)^{M_i}d\nu(x)\right).
\end{equation*}
In particular, if $\mathbf{m}$ is empty, then $\chi_{\mathbf{m},\psi}(g)=\psi(\sigma)$. The central elements of $G_{\widehat{\mathbf{n}}}$ correspond to constant functions $t\cdot\mathds{1}_{X}$, $t\in\mathbb{T}$ and the restriction of $\chi_{\mathbf{m},\psi}$ to the center is $\chi_{\mathbf{m},\psi}(t\cdot\mathds{1}_{X})=t^{M}$, where $M=M_1+\ldots+M_l$.
\end{Rem}

In the course of proving the classification of characters, we also describe all spherical functions of the generalized Gelfand pair $(G_{\widehat{\mathbf{n}}},\mathfrak{S}_{\widehat{\mathbf{n}}})$ (see Section \ref{sect:prelim}  and Appendix \ref{sect:appendix_spherical_func_approx} for precise definitions). 

\begin{Th}[Theorem \ref{thm:spherical_func}]\label{thm:sph_th_intro}
Spherical functions of the Gelfand pair $(G_{\widehat{\mathbf{n}}},\mathfrak{S}_{\widehat{\mathbf{n}}})$ are parameterized by unordered (possibly empty) collections of non-zero integers.
More precisely, for any spherical function $\varphi$ of $(G_{\widehat{\mathbf{n}}},\mathfrak{S}_{\widehat{\mathbf{n}}})$ there exist $M_1,\ldots,M_l\in\mathbb{Z}\setminus\{0\}$ such that for any $g=(\mathfrak{t},\sigma)\in G_{\widehat{\mathbf{n}}}$ we have
\begin{equation*}
\varphi(g)=\phi_{\mathbf{m}}(\mathfrak{t})=\phi_{M_1}(\mathfrak{t})\cdots\phi_{M_l}(\mathfrak{t}),
\end{equation*}
where  $\mathbf{m}=(M_1,\ldots,M_l)$.
\end{Th}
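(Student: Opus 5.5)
Since the spherical functions of $(G_{\widehat{\mathbf{n}}},\mathfrak{S}_{\widehat{\mathbf{n}}})$ are bi-$\mathfrak{S}_{\widehat{\mathbf{n}}}$-invariant, it suffices to compute $\varphi$ on $\mathbb{T}^{\widehat{\mathbf{n}}}$: every $g=(\mathfrak{t},\sigma)$ equals $\mathfrak{t}\sigma$, so $\varphi(g)=\varphi(\mathfrak{t})$. The plan is the ergodic method, in the spherical-function version from Appendix \ref{sect:appendix_spherical_func_approx}. Every spherical function is a pointwise limit of normalized spherical functions $\varphi_k$ of the finite Gelfand pairs $(G_{N_k},\mathfrak{S}_{N_k})$. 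For these finite pairs, the spherical representations are the irreducibles of $G_{N_k}$ containing an $\mathfrak{S}_{N_k}$-fixed vector. By Frobenius reciprocity these are the constituents of $\Ind_{\mathfrak{S}_{N_k}}^{G_{N_k}}\mathbf{1}\cong L^2(\mathbb{T}^{N_k})$. Decomposing $L^2(\mathbb{T}^{N_k})$ along the characters $\mathbf{c}\in\mathbb{Z}^{N_k}$ of the torus, each $\mathfrak{S}_{N_k}$-orbit of a multiset $\mathbf{c}$ gives one irreducible spherical representation. Its spherical function is the orbit average
\begin{equation*}
\varphi_{\mathbf{c}}(\mathfrak{t})=\frac{1}{N_k!}\sum_{s\in\mathfrak{S}_{N_k}}\prod_{x=1}^{N_k}\mathfrak{t}(s(x))^{c_x},
\end{equation*}
which is the normalized monomial symmetric function $m_{\mathbf{c}}(\mathfrak{t})/\#\text{orbit}$.

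Next I would analyze the limits. Take $\mathfrak{t}\in\mathbb{T}^{N_j}$ embedded into $\mathbb{T}^{N_k}$; it is constant on the $N_j$ classes of residues, each of size $N_k/N_j$. Let $\mathbf{c}^{(k)}$ have nonzero entries $M_1,\ldots,M_{l_k}$ (with multiplicity). The orbit average equals the expectation of $\prod_i \mathfrak{t}(\text{class of } y_i)^{M_i}$, where $y_1,\ldots,y_{l_k}$ are distinct uniformly random positions. Suppose $l_k$ stays bounded and the multiset of nonzero values stabilizes along a subsequence. Then the positions become asymptotically independent uniform, and the limit is $\prod_i\bigl(\frac{1}{N_j}\sum_x\mathfrak{t}(x)^{M_i}\bigr)=\prod_i\phi_{M_i}(\mathfrak{t})$. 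This is exactly $\phi_{\mathbf{m}}(\mathfrak{t})$, since $\sigma$ is trivial on $\mathfrak{t}$.

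The main obstacle is to rule out every other behavior: either $l_k\to\infty$, or $l_k$ bounded but some $|M_i^{(k)}|\to\infty$. In both cases I expect the limit to be the $\delta$-type function that vanishes off $\{\mathfrak{t}\equiv\text{const}\}$, and that function is not continuous. To show this I would compute the limit on one-parameter test elements such as $\mathfrak{t}_\theta=(e^{i\theta},1,\ldots,1)\in\mathbb{T}^{N_j}$.

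For the case $|M_i^{(k)}|\to\infty$ with the rest bounded, that position contributes $\frac{1}{N_j}\sum_x\mathfrak{t}(x)^{M}$. This quantity has no pointwise limit in $\theta$ unless it is handled along a subsequence. Any pointwise limit as a function of $\theta$ is non-continuous at $0$ by Riemann--Lebesgue, while spherical functions must be continuous. That gives a contradiction.

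For $l_k\to\infty$, the product of many nearly independent factors of modulus $\le 1$, each with average $|\cdot|<1$ for $\theta\ne0$, forces the limit to $0$ for all small $\theta\ne0$ while $\varphi(1)=1$. This again contradicts continuity. Combinatorially, it requires a bound for the correlation between the positions, say $\mathrm{Var}$-type estimates for sampling without replacement, uniformly in $l_k=o(N_k)$. The case $l_k\sim cN_k$ also needs treating, via a direct large-deviation bound.

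Finally I would check that each $\phi_{\mathbf{m}}$ is positive definite and bi-invariant. It is indeed a spherical function, being such a limit along a stabilizing sequence. The unordered collection $\mathbf{m}$ is recovered from $\phi_{\mathbf{m}}$ by Fourier analysis on $\mathbb{T}^{N_j}$ for large $j$, which gives the parametrization.
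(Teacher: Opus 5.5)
Your proposal has two genuine gaps.

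\textbf{The case $l_k\to\infty$.} Your argument for this branch is false for a whole class of sequences. Counting the nonzero entries of $\mathbf{c}^{(k)}$ is not the right normalization, because $\varphi_{\mathbf{c}+c\mathbf{1}}(\mathfrak{t})=\bigl(\prod_x\mathfrak{t}(x)\bigr)^{c}\varphi_{\mathbf{c}}(\mathfrak{t})$.

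Take, for example, $\mathbf{c}^{(k)}\equiv c\neq 0$. Then $l_k=N_k\to\infty$, yet $\varphi_k(\mathfrak{t})=\prod_x\mathfrak{t}(x)^{c}$ is a multiplicative character. On your test element $\mathfrak{t}_\theta\in\mathbb{T}^{N_j}$ it equals $e^{ic\theta N_k/N_j}$, which has modulus $1$ for every $\theta$. The random positions exhaust $\mathbb{X}_{N_k}$, so they are far from independent, and no variance or large-deviation bound can make $\varphi_k(\mathfrak{t}_\theta)$ small. The same holds whenever almost all entries equal a fixed $c\neq0$. The expected $\delta$-type limit does not appear; such sequences are excluded only because $\varphi_k$ fails to converge, and your proposal has no mechanism for this.

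The paper closes this in Step 1 of Proposition \ref{prop:sph_func_conv_seq}. On the central elements $\mathfrak{t}\equiv e^{i\theta}$ one has $\varphi_k=e^{i\theta\sum_x c^{(k)}_x}$, so Lemma \ref{t_powers_conv} makes $\sum_x c^{(k)}_x=S$ eventually constant. Combine this with two further facts:
\begin{itemize}
\item the bounded spread of Step 2;
\item Step 4, that no two \emph{arbitrary} distinct values $m\neq m'$ can both have unbounded multiplicity (Lemma \ref{lim_ineq_lemma}, which is not formulated relative to $0$).
\end{itemize}
Together these force the dominant value to be $0$ (Step 5). Only after that is your picture of finitely many nonzero entries justified.

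\textbf{The converse direction.} You write that $\phi_{\mathbf{m}}$ ``is indeed a spherical function, being such a limit along a stabilizing sequence.'' This does not follow. A pointwise limit of spherical functions of $(G_{N_k},\mathfrak{S}_{N_k})$ is only a normalized bi-invariant positive-definite function. Irreducibility of the GNS representation, i.e.\ extremality, is not inherited by limits; the paper stresses this for the analogous approximation of characters. Without extremality you do not get the claimed parametrization.

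What you must show is this: if $\phi_{\mathbf{m}}=\sum_{\mathbf{k}}\alpha_{\mathbf{k}}\phi_{\mathbf{k}}$ with $\alpha_{\mathbf{k}}\ge 0$ and $\sum\alpha_{\mathbf{k}}=1$, then $\alpha_{\mathbf{k}}=\delta_{\mathbf{k},\mathbf{m}}$. (By the first half of the theorem, every spherical constituent has the form $\phi_{\mathbf{k}}$.) Your Fourier-analytic remark gives linear independence of finitely many $\phi_{\mathbf{k}}$, but the combination may be countably infinite. The paper handles this in one of two ways:
\begin{itemize}
\item the multiplicativity $\phi_{\mathbf{k}}(\mathfrak{t}_1\mathfrak{t}_2)=|\phi_{\mathbf{k}}(\mathfrak{t})|^2$ together with Lemma \ref{lem:indecomp_mult};
\item the moment trick $\int_{\mathbb{T}}z^k\,d\mu(z)=q^{k^2}$, which splits the series into finite pieces (Lemma \ref{lem:lin_independence_phi_func}).
\end{itemize}
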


Finally, let us mention that another motivation for classifying the indecomposable characters of the group $G_{\widehat{\mathbf{n}}}$ was our desire to obtain a new proof of the classification of indecomposable characters of the infinite block-diagonal unitary group $\operatorname{U}_{\widehat{\mathbf{n}}}=\varinjlim\operatorname{U}_{N_i}$ (see \eqref{eq:ind_lim_unitary}). This problem was addressed previously by Boyer \cite{Boyer_1993} in the case $n_i=2$ for all $i$, and by Enomoto--Izumi \cite{Enomoto_Izumi} who consider a more general setting. We recover this classification as a consequence of our results in Section \ref{sect:ind_char_unitary} using the observation that $G_{\widehat{\mathbf{n}}}$ is the normalizer of $\mathbb{T}^{\widehat{\mathbf{n}}}$ in $\operatorname{U}_{\widehat{\mathbf{n}}}$.

\begin{Th}[{Theorem \ref{thm:indec_char_inf_unitary}, see also \cite[Proposition 1]{Boyer_1993}, \cite[Theorems 1.2 and 1.5]{Enomoto_Izumi}}]
Let $\tr$ be the unique tracial state on the inductive limit of matrix algebras $\varinjlim\operatorname{Mat}_{N_i}$ and regard $\operatorname{U}_{\widehat{\mathbf{n}}}=\varinjlim\operatorname{U}_{N_i}$ as the unitary subgroup of $\varinjlim\operatorname{Mat}_{N_i}$. Then, the indecomposable characters of the infinite block-diagonal unitary group $\operatorname{U}_{\widehat{\mathbf{n}}}$ are of the form $\tr^p\overline{\tr}^q$ for non-negative integers $p,q$.
\end{Th}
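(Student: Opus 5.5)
We plan to deduce the classification for $\operatorname{U}_{\widehat{\mathbf{n}}}$ from Theorem \ref{thm:main_th_intro}, using that $G_{\widehat{\mathbf{n}}}$ sits inside $\operatorname{U}_{\widehat{\mathbf{n}}}$ as the normalizer of the diagonal torus $\mathbb{T}^{\widehat{\mathbf{n}}}$, and that every element of $\operatorname{U}_{N_k}$ is conjugate within $\operatorname{U}_{N_k}$ to an element of $\mathbb{T}^{N_k}$.

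\textbf{Step 1: a character is determined by its values on the torus.} Let $\chi$ be an indecomposable character of $\operatorname{U}_{\widehat{\mathbf{n}}}$. By centrality, $\chi$ is constant on conjugacy classes of each $\operatorname{U}_{N_k}$. Every unitary matrix is diagonalizable by a unitary, so $\chi$ is determined by its restriction to $\mathbb{T}^{\widehat{\mathbf{n}}}$.

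\textbf{Step 2: form of the restriction.} The restriction $\chi|_{G_{\widehat{\mathbf{n}}}}$ is a character of $G_{\widehat{\mathbf{n}}}$, though possibly a decomposable one. We therefore decompose it into indecomposable characters of $G_{\widehat{\mathbf{n}}}$. By Theorem \ref{thm:main_th_intro} together with a Choquet-type decomposition (the space of characters is a simplex in the spirit of the ergodic method), we get
\begin{equation*}
\chi(\mathfrak{t})=\sum_{\mathbf{m}}c_{\mathbf{m}}\,\phi_{\mathbf{m}}(\mathfrak{t}),\qquad \mathfrak{t}\in\mathbb{T}^{\widehat{\mathbf{n}}},
\end{equation*}
with $c_{\mathbf{m}}\ge 0$ and $\sum_{\mathbf{m}} c_{\mathbf{m}}=1$. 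On $\mathbb{T}^{\widehat{\mathbf{n}}}$ the value $\psi(e)=1$, so the factor $\psi$ drops out.

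\textbf{Step 3: conjugation invariance kills most terms.} On the torus we have $\phi_M(\mathfrak{t})=\int_X f^M\,d\nu=\tr(u^M)$ for the diagonal unitary $u$. This expression is invariant under the full unitary conjugation, in particular under the Weyl group and under non-permutation conjugations. The extra constraint from $\operatorname{U}$ is positive definiteness on $\operatorname{U}_{\widehat{\mathbf{n}}}$. The key point is that $\tr(u^M)$ with $|M|\ge 2$ is not a character of the unitary group, whereas $\tr(u)$ and $\overline{\tr(u)}=\tr(u^{-1})$ are: they are the normalized traces of the natural representation and of its conjugate. To exploit this, restrict $\chi$ to each $\operatorname{U}_{N_k}$. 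There it becomes a class function, which we expand in irreducible characters $s_\lambda/\dim$ of $\operatorname{U}_{N_k}$; positive definiteness forces nonnegative coefficients. We then compare this expansion with the power-sum expansion of $\prod_i p_{M_i}/N_k^{l}$. For instance, $p_2$ equals a difference of Schur functions, so its coefficient on the antisymmetric part is negative. The plan is to show, via a limit $k\to\infty$ argument, that any mixture involving some $|M_i|\ge2$ produces negative Schur coefficients at finite level that survive. The conclusion of this step is that only $\mathbf{m}$ with all $M_i=\pm1$ can carry weight, that is, $\phi_{\mathbf{m}}=\tr^p\overline{\tr}^q$.

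\textbf{Step 4: indecomposability.} The functions $\tr^p\overline{\tr}^q$ are characters of $\operatorname{U}_{\widehat{\mathbf{n}}}$: they are normalized traces of the representations $V^{\otimes p}\otimes\overline V^{\otimes q}$ built from the GNS representation of $\tr$. They are also linearly independent extreme points, because their restrictions to $\mathbb{T}^{\widehat{\mathbf{n}}}$ are indecomposable for $G_{\widehat{\mathbf{n}}}$. Hence an indecomposable $\chi$ equals one of them. Conversely, each is indecomposable: a decomposition of one of them on $\operatorname{U}$ would restrict to a decomposition on $G_{\widehat{\mathbf{n}}}$, and by Steps 1–3 the pieces would again be mixtures of the same form, contradicting the linear independence of the $\phi_{\mathbf{m}}$.

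The main obstacle is Step 3: ruling out the terms with some $|M_i|\ge 2$ rigorously. Positivity must be checked uniformly, since the mixture could in principle cancel the negative coefficients. I would first try a moment argument. Evaluate $\chi$ on scalar-free elements built from $\mathbb{T}$-valued functions taking values in roots of unity, and use the Weyl integration formula at level $N_k$ to test against a specific irreducible representation, namely an antisymmetric-type $\lambda$. In the $k\to\infty$ limit this isolates $c_{\mathbf{m}}$ times a negative number.
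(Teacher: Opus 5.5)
Your Step 3 is only a plan, and the obstacle you flag there is real. At level $N_k$ you would have to show that some irreducible character of $\operatorname{U}_{N_k}$ gets a negative coefficient in $\sum_{\mathbf m}c_{\mathbf m}N_k^{-\ell(\mathbf m)}\prod_i p_{M_i}$ whenever $c_{\mathbf m}>0$ for some $\mathbf m$ with an entry $|M_i|\ge 2$. But the coefficient of a fixed signature is a series over infinitely many $\mathbf m$: every $\mathbf m$ with the same $\sum_i M_i$ may contribute, for example after padding by pairs $(M,-M)$. Your sketch gives no mechanism showing that a negative contribution from a bad $\mathbf m$ survives against the others as $k\to\infty$. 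The paper mentions this Schur-positivity route only in a remark, as a more complicated alternative that needs asymptotics of tensor-product multiplicities, and only for a single $\mathbf m_0$. Worse, your Step 2 discards exactly the information that makes the problem easy: by restricting to the torus you throw away the $\mathfrak S_{\widehat{\mathbf n}}$-factor $\psi$.

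The paper instead evaluates $\chi$ at a permutation matrix $\sigma\in\mathfrak S_{N_i}$ in two ways. Write $\chi|_{G_{\widehat{\mathbf n}}}=\sum_{\psi,\mathbf m}\alpha_{\psi,\mathbf m}\,\psi\,\phi_{\mathbf m}$, with the entries of $\mathbf m$ non-zero and $\gamma_{\mathbf m}=\sum_\psi\alpha_{\psi,\mathbf m}$ (your $c_{\mathbf m}$).

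First way: every $\psi\in\exchar(\mathfrak S_{\widehat{\mathbf n}})$ is real and at most $1$, so $\chi(\sigma)=\sum\alpha_{\psi,\mathbf m}\psi(\sigma)\chi_{\nat}(\sigma)^{\ell(\mathbf m)}\le\sum_{\mathbf m}\gamma_{\mathbf m}\chi_{\nat}(\sigma)^{\ell(\mathbf m)}$.

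Second way: conjugate $\sigma$ inside $\operatorname{U}_{N_i}$ to a diagonal unitary and use your torus formula $\phi_M(u)=\tr(u^M)$. This gives $\chi(\sigma)=\sum_{\mathbf m}\gamma_{\mathbf m}\prod_i\chi_{\nat}(\sigma^{M_i})$.

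Now take $\sigma$ a single $p$-cycle with $p$ prime and set $\epsilon=p/N_i<1$. Then $\chi_{\nat}(\sigma^M)=1$ if $p\mid M$ and $1-\epsilon$ otherwise. Subtracting the two expressions gives $\sum_{\mathbf m}\gamma_{\mathbf m}\bigl[(1-\epsilon)^{a(\mathbf m)}-(1-\epsilon)^{\ell(\mathbf m)}\bigr]\le 0$, where $a(\mathbf m)=\#\{i:p\nmid M_i\}$. Each bracket is non-negative, and it is strictly positive exactly when $p$ divides some $M_i$. So all such $\gamma_{\mathbf m}$ vanish, and letting $p$ vary leaves only $M_i=\pm1$.

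So what excludes $|M|\ge 2$ is conjugation invariance under non-permutation unitaries combined with the $G_{\widehat{\mathbf n}}$-classification on $\mathfrak S_{\widehat{\mathbf n}}$, not positive-definiteness on $\operatorname{U}_{N_k}$. The paper first reduces to a single $\mathbf m_0$: it extends $\chi$ to $\operatorname{U}(R)$, restricts to $\operatorname{U}_{2^\infty}$, and uses multiplicativity with Lemma \ref{lem:indecomp_mult}. It then runs this comparison with $l$ disjoint $p$-cycles. As shown above, the comparison also works termwise for mixtures, so it slots directly into your Step 3.

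The converse half of your Step 4 is fine and does not even need Step 3. On $G_{\widehat{\mathbf n}}$, $\tr^p\overline{\tr}^q$ restricts to $\phi_1^p\phi_{-1}^q$, which is indecomposable there. By Step 1, two characters of $\operatorname{U}_{\widehat{\mathbf n}}$ that agree on $G_{\widehat{\mathbf n}}$ coincide, so any decomposition on $\operatorname{U}_{\widehat{\mathbf n}}$ would be trivial.
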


\subsection{Methods.} 
The classification of indecomposable characters for standard inductive limits of wreath products was achieved either by the character approximation (as in \cite{Boyer_2005} and \cite{Hirai_Hirai}) together with the careful study of the branching rules, or by using the operator-algebraic methods (as in \cite{Dudko_Nessonov}).
In our work we also use the character approximation (the Vershik--Kerov approach). However, we avoid the use of branching rules because for the block-diagonal embeddings the corresponding combinatorics seems to be rather involved. Instead, we analyze the converging sequences of finite-dimensional characters and calculate directly the limits for special elements of the group $G_{\widehat{\mathbf{n}}}(\mathbb{T})$. In the course of our computations we also classify weak limits of spherical functions of Gelfand pairs $(G_{N_k},\mathfrak{S}_{N_k})$ and, as a byproduct, obtain the list of spherical functions of $(G_{\widehat{\mathbf{n}}},\mathfrak{S}_{\widehat{\mathbf{n}}})$. The approximation theorem for characters and spherical functions holds in general settings and we give a proof of a version of it in the Appendix.

\subsection{Contents.} The remainder of the paper is organized as follows.

In Section 2 we discuss in detail the construction of the group $G_{\widehat{\mathbf{n}}}(\mathbb{T})$ and how it can be realized in the unitary group of the Hilbert space of a certain measure space $(\mathbb{X}_{\widehat{\mathbf{n}}},\nu_{\widehat{\mathbf{n}}})$.

In Section 3 we compute the possible weak limits of spherical functions of Gelfand pairs $(G_{N},\mathfrak{S}_{N})$. These computations allow us to classify all spherical functions of the generalized Gelfand pair $(G_{\widehat{\mathbf{n}}},\mathfrak{S}_{\widehat{\mathbf{n}}})$ and they are also used in the proof of the main theorem.

In Section 4 we give an overview of the representation theory of semidirect products of a compact abelian group and a finite group and, in particular, we obtain the character formulas for the group $G_N(\mathbb{T})$.

Section 5 is devoted to the main technical part of the proof of Theorem \ref{thm:classification_char}: we calculate the possible weak limits of irreducible characters of groups $G_{N_k}$ and thus obtain the list of all potential indecomposable characters.

In Section 6 we complete the proof of the classification of indecomposable characters of $G_{\widehat{\mathbf{n}}}$.

In Section 7 we discuss a construction of a $\mathrm{II}_{1}$ factor representation of $G_{\widehat{\mathbf{n}}}$ that realizes the character $\phi_1(g)$. Using this construction we show how the characters $\chi_{\mathbf{m},\psi}$ can be realized as matrix elements of certain unitary representations of $G_{\widehat{\mathbf{n}}}$.

In Section 8 we show how to derive from our main theorem the classification of indecomposable characters of $\operatorname{U}_{\widehat{\mathbf{n}}}=\varinjlim\operatorname{U}_{N_i}$ originally due to Boyer \cite{Boyer_1993}.

In Section 9 we give some speculations for the case when $A$ is an arbitrary compact abelian group. 
We hope to complete the details in the future work.

Finally, in the Appendix we prove several technical lemmas used in the proofs. In particular, we explain the calculation of limits of characters of symmetric groups~-- the argument is essentially a minor modification of the one from \cite[Section 5]{Nessonov_Ngo}, but we included it for completeness. Besides that, we record proofs of general approximation theorems for characters and spherical functions that apply to inductive limits of arbitrary compact groups (although these results seem to be known to experts, we were not able to find these facts in the form we needed in the literature.)

\subsection{Acknowledgements.} The authors were partially supported by the ``Long-term program of support of the Ukrainian
research teams at the Polish Academy of Sciences carried out in collaboration with the U.S.
National Academy of Sciences with the financial support of external partners''. 
The second author thanks Artem Dudko for discussions and hospitality during his visit to IM PAN in August 2025.
During the work on this project the second author
was also partially supported by the Austrian Science Fund (FWF) grant ``Geometry of the tip of the global
nilpotent cone'' no. 10.55776/P35847 and by the Austrian Academy of Sciences DOC Fellowship ``Big algebras in classical types''. For open access purposes, the authors have applied a CC BY public copyright license
to any author-accepted manuscript version arising from this submission.

\section{Preliminaries}\label{sect:prelim}

\subsection{Notation.}  Denote by $\mathfrak{S}_{N}$ the group of all permutations of the set $\mathbb{X}_{N}=\{1,2,\ldots,N\}$. Let $\mathcal{F}_{N}(\mathbb{Z})$ and $\mathcal{F}_{N}(\mathbb{T})$ be the abelian groups of functions on $\mathbb{X}_{N}$ with values in $\mathbb{Z}$ and $\mathbb{T}=\{z\in\mathbb{C}:|z|=1\}$, respectively (the group operation is defined pointwise). Equip the multiplicative group $\mathcal{F}_{N}(\mathbb{T})$ with $N$-dimensional torus topology.

Note that groups $\mathcal{F}_{N}(\mathbb{T})\simeq\mathbb{T}^N$ and $\mathcal{F}_{N}(\mathbb{Z})\simeq\mathbb{Z}^N$ are \emph{Pontryagin dual} to each other. Namely, any $\mathfrak{m}\in\mathcal{F}_{N}(\mathbb{Z})$ determines a multiplicative character $R_{\mathfrak{m}}$ on $\mathcal{F}_{N}(\mathbb{T})$:
\begin{equation*}
R_{\mathfrak{m}}(\mathfrak{t})=\prod_{x\in\mathbb{X}_{N}}\mathfrak{t}(x)^{\mathfrak{m}(x)},~\mathfrak{t}\in\mathcal{F}_{N}(\mathbb{T}).
\end{equation*}
\subsection{The group $G_N$ and the Gelfand pair $(G_N,\mathfrak{S}_N)$.}
The natural action of $\mathfrak{S}_{N}$ on $\mathbb{X}_{N}$ induces a natural $\mathfrak{S}_{N}$-action on $\mathcal{F}_{N}(\mathbb{T})$ and $\mathcal{F}_{N}(\mathbb{Z})$: for any $\sigma\in\mathfrak{S}_{N}$, $\mathfrak{t}\in\mathcal{F}_{N}(\mathbb{T})$ and $\mathfrak{m}\in\mathcal{F}_{N}(\mathbb{Z})$ define
\begin{equation*}
\sigma\cdot\mathfrak{t}:=\mathfrak{t}\circ\sigma^{-1},~\sigma\cdot\mathfrak{m}:=\mathfrak{m}\circ\sigma^{-1}.
\end{equation*}
Denote by $G_{N}=\mathcal{F}_{N}(\mathbb{T})\rtimes\mathfrak{S}_{N}$ the semidirect product of $\mathcal{F}_{N}(\mathbb{T})$ and $\mathfrak{S}_{N}$. More explicitly, the group $G_{N}$ is defined as follows:
\begin{itemize}
\item
as a topological space, $G_{N}$ is the Cartesian product $\mathcal{F}_{N}(\mathbb{T})\times\mathfrak{S}_{N}$,

\item
the group operation is defined via the formula $(\mathfrak{t}_1,\sigma_1)\cdot(\mathfrak{t}_2,\sigma_2):=(\mathfrak{t}_1(\sigma_1\cdot\mathfrak{t}_2),\sigma_1\sigma_2)$.
\end{itemize}
Alternatively, one can regard $G_N$ as the group of $N\times N$ matrices which have exactly one non-zero element in each row and each column and whose non-zero entries belong to $\mathbb{T}$.

We identify $\mathfrak{S}_{N}$ and $\mathcal{F}_{N}(\mathbb{T})$ with the subgroups $\{(1_\mathbb{T},\sigma):\sigma\in\mathfrak{S}_{N}\}$ and $\{(\mathfrak{t},1_{\mathfrak{S}_N}):\mathfrak{t}\in\mathcal{F}_{N}(\mathbb{T})\}$ of the group $G_{N}$.
Denote by $\mathbf{C}^*[G_N]$ the group $\mathbf{C}^*$-algebra of $G_N$. Set $E=\frac{1}{N!}\sum\limits_{s\in\mathfrak{S}_N}s$. It is not difficult to see that $E$ is a self-adjoint idempotent in $\mathbf{C}^*[G_N]$.

\begin{Prop}\label{prop:g_n_sym_n_gelfand_pair}
The $*$-algebra $E\,\mathbf{C}^*[G_N]\,E$ is abelian, i.e., the pair $(G_{N},\mathfrak{S}_{N})$ is a Gelfand pair.
\end{Prop}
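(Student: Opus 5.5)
The plan is to apply Gelfand's trick: find an involutive anti-automorphism of $G_N$ that fixes every $\mathfrak{S}_N$-double coset. Such a map produces an anti-automorphism of $E\,\mathbf{C}^*[G_N]\,E$ that acts as the identity there, so the algebra is commutative. First I identify the double cosets $\mathfrak{S}_N\backslash G_N/\mathfrak{S}_N$. An element is $(\mathfrak{t},\sigma)=\mathfrak{t}\sigma$, and multiplying on the right by $\mathfrak{S}_N$ removes $\sigma$. Multiplying on the left by $s$ sends $\mathfrak{t}\mapsto s\cdot\mathfrak{t}=\mathfrak{t}\circ s^{-1}$, up to a right factor in $\mathfrak{S}_N$. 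Hence every double coset has the form $\mathfrak{S}_N\mathfrak{t}\,\mathfrak{S}_N$, and two such cosets coincide exactly when $\mathfrak{t}$ and $\mathfrak{t}'$ differ by a permutation of coordinates. In other words, the double cosets are parametrized by unordered $N$-tuples (multisets) of elements of $\mathbb{T}$.

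For the anti-automorphism I take $\theta(g)=g^{-1}$ composed with nothing further. On $(\mathfrak{t},\sigma)=\mathfrak{t}\sigma$ it gives $\theta(\mathfrak{t}\sigma)=\sigma^{-1}\mathfrak{t}^{-1}$, which lies in the double coset of $\mathfrak{t}^{-1}$. That is not the coset of $\mathfrak{t}$, so inversion alone fails. I therefore compose with the continuous automorphism $\alpha(\mathfrak{t},\sigma)=(\overline{\mathfrak{t}},\sigma)$, which is complex conjugation on the torus part. This is an automorphism because conjugation commutes with the permutation action. The map $\theta=\alpha\circ\mathrm{inv}$ is then an involutive anti-automorphism, and $\theta(\mathfrak{t}\sigma)=\sigma^{-1}\,\overline{\mathfrak{t}^{-1}}=\sigma^{-1}\mathfrak{t}\in\mathfrak{S}_N\mathfrak{t}\,\mathfrak{S}_N$. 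So $\theta$ preserves each double coset; in matrix terms, $\theta$ is just the transpose.

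Next I transfer this to the algebra. Since $\theta$ is a homeomorphism preserving Haar measure (both the measure on $\mathbb{T}^N$ and counting measure are invariant), it extends to an anti-automorphism $\Theta$ of $L^1(G_N)$, and hence of $\mathbf{C}^*[G_N]$, via $\Theta(f)=f\circ\theta$. Because $\theta$ fixes $\mathfrak{S}_N$ setwise, $\Theta(E)=E$, so $\Theta$ maps $E\,\mathbf{C}^*[G_N]\,E$ to itself. It is convenient to work in the dense subalgebra of continuous functions $f$ on $G_N$ that are $\mathfrak{S}_N$-bi-invariant. Such an $f$ is constant on each double coset, so $f\circ\theta=f$. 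Then for bi-invariant $f_1,f_2$ we get $f_1*f_2=\Theta(f_1*f_2)=\Theta(f_2)*\Theta(f_1)=f_2*f_1$. Commutativity passes to the closure $E\,\mathbf{C}^*[G_N]\,E$ by density and continuity of multiplication.

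The main obstacle is technical rather than conceptual. One has to check that $E\,C(G_N)\,E$, which equals the bi-invariant continuous functions, is dense in $E\,\mathbf{C}^*[G_N]\,E$. One also has to make sure $\Theta$ is well defined on the $\mathbf{C}^*$-completion. The first follows because $C(G_N)$ is dense in $\mathbf{C}^*[G_N]$ and compression by $E$ is continuous. For the second, I would avoid the completion entirely: I would verify commutativity on the dense $*$-subalgebra, where $\Theta$ is given explicitly by composition with $\theta$, and then pass to the limit.
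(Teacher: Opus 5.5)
Your proof is correct, but it takes a different route from the paper's. You use Gelfand's trick with the transpose $\theta(\mathfrak{t}\sigma)=\sigma^{-1}\mathfrak{t}$, and $\theta$ does fix every double coset.

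The paper uses no anti-automorphism. It reduces to elements $Eg_1E$, $Eg_2E$ with $g_1,g_2\in\mathcal{F}_N(\mathbb{T})$, using $G_N=\mathcal{F}_N(\mathbb{T})\mathfrak{S}_N$. It then introduces the averages $\mathcal{R}(g)=\frac{1}{N!}\sum_{s\in\mathfrak{S}_N}sgs^{-1}$. Each $\mathcal{R}(g)$ commutes with $\mathfrak{S}_N$, so $EgE=E\mathcal{R}(g)E=\mathcal{R}(g)E$. Hence $Eg_1Eg_2E=E\mathcal{R}(g_1)\mathcal{R}(g_2)E=E\mathcal{R}(g_2)\mathcal{R}(g_1)E=Eg_2Eg_1E$. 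The middle equality holds simply because the $\mathcal{R}(g_i)$ are averages of elements of the abelian normal subgroup $\mathcal{F}_N(\mathbb{T})$.

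In effect the paper identifies the spherical Hecke algebra with a compression of the $\mathfrak{S}_N$-invariant part of the commutative algebra of the torus. That argument is purely algebraic and needs no Haar-measure bookkeeping. It also directly explains why the spherical functions are the symmetrized characters $\varphi_{\mathfrak{m}}$.

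Your route needs a few analytic checks, all of which you handle correctly:
\begin{itemize}
\item that $f\mapsto f\circ\theta$ reverses convolution, which uses that $\theta$ preserves Haar measure on the unimodular group $G_N$;
\item that bi-invariant continuous functions are dense in $E\,\mathbf{C}^*[G_N]\,E$.
\end{itemize}
In exchange, working with bi-invariant continuous functions keeps you inside $\mathbf{C}^*[G_N]$, whereas the paper's manipulations with $EgE$ implicitly take place in the multiplier algebra.

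Neither argument is more general than the other. For any semidirect product $A\rtimes K$ with $A$ abelian, composing group inversion with inversion on $A$ gives the same double-coset-preserving involution $ak\mapsto k^{-1}a$, so your trick covers exactly the class the averaging argument covers.
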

\begin{proof}
Since any element of $G_{N}$ can be decomposed as a product $f\sigma$ with $f\in\mathcal{F}_{N}(\mathbb{T})$ and $\sigma\in\mathfrak{S}_{N}$, to prove the proposition it suffices to show that for any $g_1,g_2\in\mathcal{F}_N(\mathbb{T})$ the elements $Eg_1E$ and $Eg_2E$ commute, i.e., the equality
\begin{equation*}
Eg_1Eg_2E=Eg_2Eg_1E ~\text{ holds}.
\end{equation*}
Let us first introduce the elements $\mathcal{R}(g_i)=\frac{1}{N!}\sum\limits_{s\in\mathfrak{S}_N}sg_is^{-1}$. Then,
\begin{equation*}\label{sect:equalities_E}
s\mathcal{R}(g_i)=\mathcal{R}(g_i)s ~\text{for all}~ s\in\mathfrak{S}_N ~\text{and}~ Eg_iE=E\mathcal{R}(g_i)E=E\mathcal{R}(g_i)=\mathcal{R}(g_i)E.
\end{equation*}
Since $g_1,g_2\in\mathcal{F}_N(\mathbb{T})$, it follows from the definition of the group $G_N$ that
\begin{equation*}\label{sect: commute}
\mathcal{R}(g_1)\mathcal{R}(g_2)=\mathcal{R}(g_2)\mathcal{R}(g_1).
\end{equation*}
Now we obtain from \eqref{sect:equalities_E} that
\begin{equation*}
Eg_1Eg_2E=E\mathcal{R}(g_1)E\cdot E\mathcal{R}(g_2)E=E\mathcal{R}(g_1)\mathcal{R}(g_2)E\stackrel{\eqref{sect: commute}}{=}E\mathcal{R}(g_2)\mathcal{R}(g_1)E=Eg_2Eg_1E,
\end{equation*}
which finishes the proof of the proposition.
\end{proof}

Let $\Pi$ be an irreducible unitary representation of the group $G_N$ in a Hilbert space $\mathcal{H}$. Recall that $\Pi$ is called $\mathfrak{S}_{N}$-\emph{spherical} if the subspace $\mathcal{H}^{\mathfrak{S}_N}=\left\{\eta\in\mathcal{H}:\Pi(s)\eta=\eta~~\text{for all}~s\in\mathfrak{S}_N \right\}$ is one-dimensional. In this case the function $\varphi$ on $G_N$ defined by $\varphi(g)=\langle\Pi(g)\xi,\xi\rangle_{\mathcal{H}}$, where $\xi\in\mathcal{H}^{\mathfrak{S}_N}$ and $\|\xi\|=1$, is called a {\it spherical function}.
\begin{Prop}
$\mathfrak{S}_{N}$-spherical functions of the Gelfand pair $(G_N,\mathfrak{S}_{N})$ are of the form
\begin{equation}\label{spherical_finite}
\varphi_{\mathfrak{m}}(\mathfrak{t})=\frac{1}{N!}\sum_{\sigma\in\mathfrak{S}_{N}}R_{\sigma\cdot\mathfrak{m}}(\mathfrak{t})=\frac{1}{N!}\sum_{\sigma\in\mathfrak{S}_{N}}\prod_{x\in\mathbb{X}_{N}}\mathfrak{t}(x)^{\mathfrak{m}(\sigma(x))}=\frac{1}{N!}\sum_{\sigma\in\mathfrak{S}_{N}}\prod_{x\in\mathbb{X}_{N}}\mathfrak{t}(\sigma(x))^{\mathfrak{m}(x)}.
\end{equation}
\end{Prop}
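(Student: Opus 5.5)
The plan is to use the decomposition of an irreducible representation $\Pi$ of $G_N$ under the abelian normal subgroup $\mathcal{F}_N(\mathbb{T})$. Then a spherical vector can be written as an $\mathfrak{S}_N$-average of a single weight vector, and the matrix coefficient becomes the orbit average in \eqref{spherical_finite}. We also check that the displayed function is $\mathfrak{S}_N$-biinvariant, which determines it on all of $G_N$ from its values on $\mathcal{F}_N(\mathbb{T})$.

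\textbf{Step 1: weight vectors.} Let $\Pi$ be irreducible and $\mathfrak{S}_N$-spherical, with unit spherical vector $\xi$. Since $\mathcal{F}_N(\mathbb{T})$ is a compact abelian group with dual $\mathcal{F}_N(\mathbb{Z})$, we decompose $\mathcal{H}=\bigoplus_{\mathfrak{m}}\mathcal{H}_{\mathfrak{m}}$ into weight spaces, on which $\Pi(\mathfrak{t})$ acts by $R_{\mathfrak{m}}(\mathfrak{t})$. From the multiplication rule, $\sigma^{-1}\mathfrak{t}\sigma=\sigma^{-1}\cdot\mathfrak{t}$, so $\Pi(\sigma)\mathcal{H}_{\mathfrak{m}}=\mathcal{H}_{\sigma\cdot\mathfrak{m}}$. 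Write $\xi=\sum_{\mathfrak{m}}\xi_{\mathfrak{m}}$. Invariance of $\xi$ gives $\Pi(\sigma)\xi_{\mathfrak{m}}=\xi_{\sigma\cdot\mathfrak{m}}$.

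\textbf{Step 2: a single orbit.} The support of $\xi$ is a union of $\mathfrak{S}_N$-orbits. We show it is exactly one orbit $\mathfrak{S}_N\cdot\mathfrak{m}$.
\begin{itemize}
\item The projection onto $\bigoplus_{\mathfrak{m}'\in\mathfrak{S}_N\mathfrak{m}}\mathcal{H}_{\mathfrak{m}'}$ commutes with $\Pi(G_N)$. By irreducibility, $\mathcal{H}$ is supported on a single orbit.
\item Let $\mathrm{Stab}(\mathfrak{m})$ be the stabilizer of $\mathfrak{m}$ in $\mathfrak{S}_N$. By Mackey theory, $\Pi=\mathrm{Ind}_{\mathcal{F}_N(\mathbb{T})\rtimes\mathrm{Stab}(\mathfrak{m})}(R_{\mathfrak{m}}\otimes\rho)$ for an irreducible $\rho$ of $\mathrm{Stab}(\mathfrak{m})$. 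Frobenius reciprocity gives $\dim\mathcal{H}^{\mathfrak{S}_N}=\dim\rho^{\mathrm{Stab}(\mathfrak{m})}$. So sphericity forces $\rho$ trivial, and the spherical vector is then unique up to scalar.
\item Equivalently, $\xi_{\mathfrak{m}}$ lies in the $\mathrm{Stab}(\mathfrak{m})$-fixed part of $\mathcal{H}_{\mathfrak{m}}$, and $\xi=\frac{1}{|\mathrm{Stab}(\mathfrak{m})|}\sum_{\sigma}\Pi(\sigma)\xi_{\mathfrak{m}}$ up to normalisation.
\end{itemize}

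\textbf{Step 3: computing the coefficient.} For $\mathfrak{t}\in\mathcal{F}_N(\mathbb{T})$, orthogonality of distinct weight spaces gives
\begin{equation*}
\langle\Pi(\mathfrak{t})\xi,\xi\rangle=\sum_{\mathfrak{m}'\in\mathfrak{S}_N\mathfrak{m}}R_{\mathfrak{m}'}(\mathfrak{t})\,\|\xi_{\mathfrak{m}'}\|^2 .
\end{equation*}
Since $\Pi(\sigma)$ is unitary and $\Pi(\sigma)\xi_{\mathfrak{m}}=\xi_{\sigma\cdot\mathfrak{m}}$, all the norms $\|\xi_{\mathfrak{m}'}\|$ are equal, each to $|\mathfrak{S}_N\mathfrak{m}|^{-1/2}$. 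Hence $\varphi=\frac{1}{|\mathfrak{S}_N\mathfrak{m}|}\sum_{\mathfrak{m}'}R_{\mathfrak{m}'}$. Rewriting the orbit sum as a sum over all of $\mathfrak{S}_N$ gives $\frac{1}{N!}\sum_\sigma R_{\sigma\cdot\mathfrak{m}}$. The two alternative forms in \eqref{spherical_finite} follow by reindexing $\sigma\mapsto\sigma^{-1}$ and substituting $x\mapsto\sigma(x)$.

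\textbf{Step 4: all of $G_N$, and the converse.} Since $\varphi(\mathfrak{t}\sigma)=\langle\Pi(\mathfrak{t})\Pi(\sigma)\xi,\xi\rangle=\varphi(\mathfrak{t})$, the function is determined by its restriction to $\mathcal{F}_N(\mathbb{T})$. Conversely, for every $\mathfrak{m}$ the induced representation $\mathrm{Ind}(R_{\mathfrak{m}}\otimes\mathrm{triv})$ is irreducible and spherical, so every $\varphi_{\mathfrak{m}}$ does occur.

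The main obstacle is Step 2: justifying irreducibility of the induced representation and that the fixed space is one-dimensional. This is standard Mackey theory for the semidirect product of a compact abelian group with a finite group, and Proposition \ref{prop:g_n_sym_n_gelfand_pair} already guarantees that the fixed space has dimension at most one.
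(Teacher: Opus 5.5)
Your proposal is correct and follows essentially the same route as the paper, which uses Mackey theory to identify the spherical irreducibles as $\Ind(R_{\mathfrak{m}}\otimes\triv)$ and then reads off the matrix coefficient. Your weight-space computation in Steps 1 and 3 makes explicit the step the paper leaves as ``the formula follows''. In fact it already yields the formula using only irreducibility, so Mackey theory and Frobenius reciprocity are needed only for the converse direction in Step 4.
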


\begin{proof}
An irreducible representation of $G_N$ with a $\mathfrak{S}_N$-spherical vector is isomorphic to a representation of the form $\Ind_{\mathcal{F}_N(\mathbb{T})}^{G_N}(R_{\mathfrak{m}}\otimes\triv_{\mathfrak{S}_N})$, where $R_{\mathfrak{m}}$, $\mathfrak{m}\in\mathcal{F}_N(\mathbb{Z})$, is a one-dimensional representation of $\mathcal{F}_N(\mathbb{T})\simeq\mathbb{T}^N$, and $\triv_{\mathfrak{S}_n}$ is the trivial representation of $\mathfrak{S}_N$ (see also Section \ref{sect:irred_char_wreath_prod}). The formula for the spherical function follows.
\end{proof}

\subsection{Diagonal embeddings and inductive limit.}\label{diagonal_emb_X}

For any positive integers $N$ and $M$ define the map  $\mathfrak{p}_{_{MN,M}}\colon\mathbb{X}_{MN}\to\mathbb{X}_{M}$ as follows:
\begin{equation*}
\mathfrak{p}_{_{MN,M}}(Mq+r)=r,~\text{where}~0\le q\le N-1,~1\le r\le M.
\end{equation*}
For any $s\in\mathfrak{S}_M$ define $\mathfrak{i}_{_{M,MN}}(s)\in\mathfrak{S}_{MN}$ as
\begin{equation*}
(\mathfrak{i}_{_{M,MN}}(s))(Mq+r)=Mq+s(r).
\end{equation*}
The map $\mathfrak{i}_{_{M,MN}}\colon\mathfrak{S}_M\to\mathfrak{S}_{MN}$ is called the \emph{diagonal embedding} of the group $\mathfrak{S}_M$ into the group $\mathfrak{S}_{MN}$.
\begin{Rem}\label{rem:x_mn_as_product}
Note that if one identifies the set $\mathbb{X}_{M}\times\mathbb{X}_{N}$ with $\mathbb{X}_{MN}$ via the map $(x,y)\mapsto x+M(y-1)$, then $\mathfrak{p}_{_{MN,M}}\colon\mathbb{X}_{M}\times\mathbb{X}_{N}\to\mathbb{X}_{M}$ is the projection on the first coordinate.
\end{Rem}

Fix an arbitrary sequence $\widehat{\mathbf{n}}=\{n_{j}\}_{j=1}^{\infty}$ of positive integers that are greater than $1$. Set $N_k=n_1n_2\ldots n_k$. We use notation $\mathfrak{p}^{(k)}:=\mathfrak{p}_{_{N_{k+1},N_k}}$ and $\mathfrak{i}^{(k)}:=\mathfrak{i}_{_{N_{k},N_{k+1}}}$ for brevity.
Consider the sequence of natural maps
\begin{equation*}
\ldots\stackrel{\mathfrak{p}^{(k)}}{\longrightarrow}\mathbb{X}_{N_k}\stackrel{\mathfrak{p}^{(k-1)}}{\longrightarrow}\mathbb{X}_{N_{k-1}}
\stackrel{\mathfrak{p}^{(k-2)}}{\longrightarrow}\ldots\stackrel{\mathfrak{p}^{(1)}}{\longrightarrow}\mathbb{X}_{N_1}.
\end{equation*}
It gives the chain of the corresponding dual maps
\begin{equation*}
\mathcal{F}_{N_1}(\mathbb{T})\stackrel{\iota^{(1)}}{\longrightarrow}\ldots\stackrel{\iota^{(k-1)}}{\longrightarrow}\mathcal{F}_{N_{k-1}}(\mathbb{T})\stackrel{\iota^{(k-1)}}{\longrightarrow}\mathcal{F}_{N_k}(\mathbb{T})\stackrel{\iota^{(k+1)}}{\longrightarrow}\ldots,
\end{equation*}
where $(\iota^{(k)}f)(x)=f(\mathfrak{p}^{(k)}(x))$ for any $f\in\mathcal{F}_{N_k}(\mathbb{T})$ and $x\in\mathbb{X}_{N_{k+1}}$.
On the other hand, we also have the following sequence of embeddings of symmetric groups:
\begin{equation*}
\mathfrak{S}_{N_1}\stackrel{\mathfrak{i}^{(1)}}{\longrightarrow}\ldots
\stackrel{\mathfrak{i}^{(k-2)}}{\longrightarrow}\mathfrak{S}_{N_{k-1}}
\stackrel{\mathfrak{i}^{(k-1)}}{\longrightarrow}
\mathfrak{S}_{N_k}\stackrel{\mathfrak{i}^{(k)}}{\longrightarrow}\ldots.
\end{equation*}
The maps above define the chain of the embeddings
\begin{equation*}
G_{N_1}\to\ldots\to G_{N_{k-1}}\to G_{N_k}\to\ldots.
\end{equation*}
We denote by $\mathcal{F}_{\widehat{\mathbf{n}}}(\mathbb{T})$, $\mathfrak{S}_{\widehat{\mathbf{n}}}$ and $G_{\widehat{\mathbf{n}}}$ the inductive limits of the corresponding chains with these diagonal embeddings. Further we identify $\mathcal{F}_{N_k}(\mathbb{T})$, $\mathfrak{S}_{N_k}$ and $G_{N_k}$ with their images in $\mathcal{F}_{\widehat{\mathbf{n}}}(\mathbb{T})$, $\mathfrak{S}_{\widehat{\mathbf{n}}}$ and $G_{\widehat{\mathbf{n}}}$, respectively. Note that $\mathcal{F}_{\widehat{\mathbf{n}}}(\mathbb{T})$, $\mathfrak{S}_{\widehat{\mathbf{n}}}$ and $G_{\widehat{\mathbf{n}}}$ are topological groups with the inductive limit topology. In particular, $\mathfrak{S}_{\widehat{\mathbf{n}}}$ is a discrete group.

\subsection{Action of $\mathfrak{S}_{\widehat{\mathbf{n}}}$ on $(\mathbb{X}_{\widehat{\mathbf{n}}},\nu_{\widehat{\mathbf{n}}})$.}\label{sect:measure_space_interpret}
For each $m$ endow the finite set $\mathbb{X}_m$ with the uniform probability measure $\nu_m$. Let $(\mathbb{X}_{\widehat{\mathbf{n}}},\nu_{\widehat{\mathbf{n}}})$ be the product measure space of spaces $(\mathbb{X}_{n_k},\nu_{n_k})$.
Then, the group $\mathfrak{S}_{\widehat{\mathbf{n}}}$ naturally acts on $\mathbb{X}_{\widehat{\mathbf{n}}}$ and can be viewed as a certain subgroup of measure preserving transformations of the space $(\mathbb{X}_{\widehat{\mathbf{n}}},\nu_{\widehat{\mathbf{n}}})$. Indeed, we can regard the space $\mathbb{X}_{\widehat{\mathbf{n}}}=\prod\limits_{j=1}^{\infty}\mathbb{X}_{n_j}$ as the space of sequences $x=(x_1,x_2,\ldots)$, where $x_k\in\mathbb{X}_{n_k}$ for each $k$. Besides that, we identify $\mathbb{X}_{n_1}\times\ldots\times\mathbb{X}_{n_k}$ with $\mathbb{X}_{n_1n_2\ldots n_k}=\mathbb{X}_{N_k}$ (see Remark \ref{rem:x_mn_as_product}).
Under this identifications, an element $\sigma\in\mathfrak{S}_{N_r}\subset\mathfrak{S}_{\widehat{\mathbf{n}}}$ acts on $\mathbb{X}_{\widehat{\mathbf{n}}}$ as follows:
\begin{equation}\label{eq:sym_group_action}
\sigma((y,z))=(\sigma(y),z),~y\in\mathbb{X}_{N_r},~z\in\prod_{j=r+1}^{\infty}\mathbb{X}_{n_j}.
\end{equation}
See also Section \ref{sect:realizations} and \cite[Section 2]{Nessonov_Ngo} for more details about this construction.

In a similar way the group $\mathcal{F}_{\widehat{\mathbf{n}}}(\mathbb{T})$ can be identified with a subgroup of the group of all measurable functions $f\colon\mathbb{X}_{\widehat{\mathbf{n}}}\to\mathbb{T}$. The elements of the group $\mathcal{F}_{\widehat{\mathbf{n}}}(\mathbb{T})$ naturally act on the Hilbert space $L^2(\mathbb{X}_{\widehat{\mathbf{n}}},\nu_{\widehat{\mathbf{n}}})$ as multiplication operators. Together with the induced action of $\mathfrak{S}_{\widehat{\mathbf{n}}}$ on $L^2(\mathbb{X}_{\widehat{\mathbf{n}}},\nu_{\widehat{\mathbf{n}}})$, this yields a realization of $G_{\widehat{\mathbf{n}}}$ inside $U(L^2(\mathbb{X}_{\widehat{\mathbf{n}}},\nu_{\widehat{\mathbf{n}}}))$. In fact, this construction is basically equivalent to the one from Section \ref{subsubsect:group_realizations_torus}.

\section{Limits of spherical functions of $(G_{N_k},\mathfrak{S}_{N_k})$}\label{sect:spherical_func}

In this section we describe all weak (pointwise) limits of spherical functions of Gelfand pairs $(G_{N_k},\mathfrak{S}_{N_k})$. This computation will be used to classify all spherical functions of the Gelfand pair $(G_{\widehat{\mathbf{n}}},\mathfrak{S}_{\widehat{\mathbf{n}}})$ and later in a similar calculation for the characters of $G_{\widehat{\mathbf{n}}}$.

\subsection{Spherical representations.}
Consider a unitary representation $\Pi$ of the group $G_{\widehat{\mathbf{n}}}$ which acts in a Hilbert space $\mathcal{H}$. Recall that $\Pi$ is called $\mathfrak{S}_{\widehat{\mathbf{n}}}$-\emph{spherical} if representation $\Pi$ is irreducible and the subspace
\begin{equation*}
\mathcal{H}^{\mathfrak{S}_{\widehat{\mathbf{n}}}}=\left\{\eta\in\mathcal{H}: \Pi(s)\eta=\eta~\text{for all}~s\in\mathfrak{S}_{\widehat{\mathbf{n}}}\right\}
\end{equation*}
is one-dimensional. In this case the corresponding \emph{spherical function} $\varphi\colon G_{\widehat{\mathbf{n}}}\to\mathbb{C}$ is defined as
\begin{equation*}
\varphi(g)=\langle\Pi(g)\eta,\eta\rangle_{\mathcal{H}},~\text{where}~\eta\in\mathcal{H}^{\mathfrak{S}_{\widehat{\mathbf{n}}}}~\text{and}~\|\eta\|=1.
\end{equation*}
The spherical function $\varphi$ is $\mathfrak{S}_{\widehat{\mathbf{n}}}$-biinvariant, i.e., the equality $\varphi(s_1gs_2)=\varphi(g)$ holds for all $s_1,s_2\in \mathfrak{S}_{\widehat{\mathbf{n}}}$ and $g\in G_{\widehat{\mathbf{n}}}$. In particular, for any $g=(\mathfrak{t},s)\in G_{\widehat{\mathbf{n}}}$, where $\mathfrak{t}\in \mathcal{F}_{\widehat{\mathbf{n}}}(\mathbb{T})$ and $s\in \mathfrak{S}_{\widehat{\mathbf{n}}}$, we have $\varphi(g)=\varphi(\mathfrak{t})$.

The pair $(G_{\widehat{\mathbf{n}}},\mathfrak{S}_{\widehat{\mathbf{n}}})$ is a Gelfand pair as the inductive limit of Gelfand pairs $(G_{N_k},\mathfrak{S}_{N_k})$ (see \cite[Proposition 8.15]{Borodin_Olshanski} or Appendix \ref{sect:appendix_spherical_func_approx}). The classification of the corresponding spherical functions is given in the theorem below.

\begin{Th}\label{thm:spherical_func}
Spherical functions of $(G_{\widehat{\mathbf{n}}},\mathfrak{S}_{\widehat{\mathbf{n}}})$ are of the form
\begin{equation*}
\varphi(g)=\prod_{j=1}^{l}\left(\frac{1}{N_k}\sum_{x\in\mathbb{X}_{N_k}}\mathfrak{t}(x)^{M_j}\right),~g=(\mathfrak{t},\sigma)\in G_{N_k},~k\ge 1
\end{equation*}
for some (possible empty) collection of non-zero integers $M_1$, \ldots, $M_l$.
\end{Th}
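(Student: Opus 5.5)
We will use the ergodic method in its approximation form for Gelfand pairs (Appendix~\ref{sect:appendix_spherical_func_approx}). Every spherical function $\varphi$ of $(G_{\widehat{\mathbf{n}}},\mathfrak{S}_{\widehat{\mathbf{n}}})$ is the pointwise limit, on each $G_{N_r}$, of a sequence of spherical functions $\varphi_{\mathfrak{m}^{(k)}}$ of $(G_{N_k},\mathfrak{S}_{N_k})$, with $\mathfrak{m}^{(k)}\in\mathcal{F}_{N_k}(\mathbb{Z})$ as in \eqref{spherical_finite}. Since $\varphi$ is bi-invariant, it suffices to compute the limit of $\varphi_{\mathfrak{m}^{(k)}}(\mathfrak{t})$ for fixed $\mathfrak{t}\in\mathcal{F}_{N_r}(\mathbb{T})$, embedded into $\mathcal{F}_{N_k}(\mathbb{T})$ by constancy on the fibres of $\mathbb{X}_{N_k}\to\mathbb{X}_{N_r}$.

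\textbf{Averaging over a random partition.} Write $\mathfrak{t}=\sum_{y\in\mathbb{X}_{N_r}}$ (values $t_y$ on fibres $F_y$, each of size $N_k/N_r$). Then $\varphi_{\mathfrak{m}}(\mathfrak{t})=\mathbb{E}\prod_y t_y^{S_y}$, where $S_y=\sum_{x\in\sigma(F_y)}\mathfrak{m}(x)$ and $\sigma$ is uniform in $\mathfrak{S}_{N_k}$. In other words, the multiset of values of $\mathfrak{m}$ is split uniformly at random into $N_r$ equal groups, and we take the joint characteristic function of the group sums. Let $a_k=\#\{x:\mathfrak{m}^{(k)}(x)\neq0\}$.

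\textbf{Key dichotomy (main obstacle).} First suppose $a_k$ stays bounded along a subsequence. Passing to a further subsequence, the multiset of nonzero values stabilises to $\{M_1,\dots,M_l\}$ (if the values were unbounded, the limit would fail to be continuous or would collapse; see below). The positions of the nonzero entries are then asymptotically independent and uniform among the $N_r$ blocks, because the probability of a collision in the sampling without replacement tends to $1$ as $N_k\to\infty$. Hence the limit is
\[
\prod_j\frac{1}{N_r}\sum_y t_y^{M_j},
\]
which is exactly the claimed formula. Now suppose $a_k\to\infty$. We must show that the limit is not a spherical function, or is ruled out. The idea is to test on $\mathfrak{t}$ with $t_y=e^{i\theta_y}$ and to show that the sums $S_y$ have spreading distributions mod $2\pi$ multiples. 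Choose a generic $\theta$, or let $\mathfrak{t}$ equal $z$ on one half of the blocks and $1$ on the other. The differences of the group sums then behave like a sum of many nonzero exchangeable terms. A concentration/anti-concentration argument (for example, swapping two nonzero entries between blocks gives a factor of $\frac{1}{2}(1+\mathrm{Re}\,z^{m-m'})$-type contractions) should force the limit to equal $0$ whenever $\mathfrak{t}$ is not central. Continuity of $\varphi$ on the torus $\mathcal{F}_{N_r}(\mathbb{T})$, together with $\varphi(1)=1$, then gives a contradiction. The same continuity argument excludes the case of nonzero values that are unbounded but few in number. What I would try first is this: use the $\mathfrak{S}$-invariance to express $|\varphi_{\mathfrak{m}}(\mathfrak{t})|^2$ via pairs of blocks, and bound it by a product of independent "transposition" factors, each strictly less than $1$, of which there are about $a_k$.

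Finally, we check that each $\phi_{\mathbf{m}}$ really is spherical. It is positive definite, being a limit of $\varphi_{\mathfrak{m}^{(k)}}$ with $\mathfrak{m}^{(k)}$ having the entries $M_1,\dots,M_l$ and zeros elsewhere; by the computation above this limit equals $\phi_{\mathbf{m}}$. Extremality follows from the multiplicativity property of spherical functions on Gelfand pairs: $\phi_{\mathbf{m}}$ satisfies the asymptotic functional equation
\[
\lim_{k}\frac{1}{|\mathfrak{S}_{N_k}|}\sum_{s}\varphi(\mathfrak{t}_1s\mathfrak{t}_2)=\varphi(\mathfrak{t}_1)\varphi(\mathfrak{t}_2),
\]
which is visible from the product form. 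Alternatively, one can invoke the appendix criterion that limits of spherical functions along an approximating sequence are spherical.
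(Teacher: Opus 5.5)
Your reduction is the same as the paper's. You use the approximation theorem (Proposition~\ref{prop:sph_func_approx}) and then compute the limit explicitly when the nonzero values of $\mathfrak{m}^{(k)}$ form a fixed finite multiset (Lemma~\ref{lem:lim_sph_func_formula}). The substance of the theorem, however, is Proposition~\ref{prop:sph_func_conv_seq}: a sequence $\varphi_{\mathfrak{m}^{(k)}}$ with a \emph{continuous} pointwise limit must eventually have a fixed finite multiset of nonzero values. You leave this as a heuristic (``should force'', ``what I would try first''), both for $a_k\to\infty$ and for bounded $a_k$ with unbounded values. So the gap sits exactly where the work is.

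The heuristic is also false as stated. If $\mathfrak{m}^{(k)}$ equals $2$ on $L_k$ points and $-2$ on $L_k$ points with $L_k\to\infty$, then $\varphi_{\mathfrak{m}^{(k)}}(\mathfrak{t})=1$ for every $\mathfrak{t}$ with values in $\{\pm1\}$. So the limit does not vanish at all non-central elements. If $\mathfrak{m}^{(k)}\equiv c\neq0$ (so $a_k=N_k$), then $|\varphi_{\mathfrak{m}^{(k)}}|\equiv1$, and there are no pairs of distinct values to produce your contracting ``transposition factors''.

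The missing argument has three steps.

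(i) Test on central elements $f_\theta\equiv e^{i\theta}$. There $\varphi_{\mathfrak{m}^{(k)}}(f_\theta)=e^{i\theta\sum_x\mathfrak{m}^{(k)}(x)}$, so by Lemma~\ref{t_powers_conv} the total sum is eventually a constant $S$. This rules out a dominant nonzero value, and at the end it forces the dominant value to be $0$, since $S/N_k\to0$.

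(ii) Bound the spread $\max_{x,x'}|\mathfrak{m}^{(k)}(x)-\mathfrak{m}^{(k)}(x')|$ uniformly in $k$. Pairing one entry $m$ with one entry $m'$ in the generating function of Lemma~\ref{spher_func_t_element} gives a factor $|\cos\frac{(m-m')\theta}{2}|$ at $\mathfrak{t}^\theta_{N_1,K_1}$. Because $m-m'$ may change with $k$, this factor is not bounded away from $1$ at a fixed $\theta$. The paper therefore averages over $\theta\in[0,\delta]$ using \eqref{cos_integral}, lets $\delta\to0$, and invokes continuity at the identity. Together with (i), this confines all values to a fixed finite set.

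(iii) Only then does your pairing idea apply; it is exactly Lemma~\ref{main_ineq_lemma} with $L$ pairs of a fixed couple $m\neq m'$. Via Lemma~\ref{lim_ineq_lemma} it shows that no two distinct values can both have unbounded multiplicity. The contradiction is $\varphi(\mathfrak{t}^\theta_{N_1,K_1})=0$ for all sufficiently small $\theta\neq0$ against $\varphi(1)=1$, not vanishing at every non-central point.

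For the converse direction, your asymptotic functional equation is a valid alternative to the paper's linear-independence argument (Lemma~\ref{lem:lin_independence_phi_func}). You would still need to spell out why $\frac{1}{N_k}\sum_x\mathfrak{t}_1(x)^M\mathfrak{t}_2(s^{-1}x)^M$ concentrates for random $s$. There is, however, no appendix criterion saying that limits of spherical functions are spherical. The appendix proves only the opposite implication, and the paper stresses that such limits need a separate indecomposability argument.
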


We prove this classification using the approximation technique. Namely, the following proposition shows that any indecomposable spherical function of $(G_{\widehat{\mathbf{n}}},\mathfrak{S}_{\widehat{\mathbf{n}}})$ can be approximated by a sequence of spherical functions of $(G_{N_k},\mathfrak{S}_{N_k})$.

\begin{Prop}\label{prop:sph_func_approx}
Let $\Pi$ be an $\mathfrak{S}_{\widehat{\mathbf{n}}}$-spherical representation of the group $G_{\widehat{\mathbf{n}}}$ and let $\varphi$ be the corresponding spherical function.
Then, there exists an increasing sequence $\{k_l\}_{l=1}^{\infty}$ of positive integers and a sequence $\{ \mathfrak{m}_{k_l}\in\mathcal{F}_{N_{k_l}}(\mathbb{Z})\}_{l=1}^{\infty}$ such that $\varphi$ is the weak (pointwise) limit of the sequence of spherical functions $\{\varphi_{\mathfrak{m}_{k_l}}\}_{l=1}^{\infty}$ of groups $\{G_{N_{k_l}}\}_{k=1}^{\infty}$.

\end{Prop}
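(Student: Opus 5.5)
Let $\eta\in\mathcal{H}^{\mathfrak{S}_{\widehat{\mathbf{n}}}}$ be the unit spherical vector of $\Pi$. The plan is to approximate $\eta$ by $\mathfrak{S}_{N_k}$-invariant vectors lying in irreducible $G_{N_k}$-subrepresentations. For each $k$ put $\mathcal{H}_k=\mathcal{H}^{\mathfrak{S}_{N_k}}$. These subspaces decrease, and $\bigcap_k\mathcal{H}_k=\mathcal{H}^{\mathfrak{S}_{\widehat{\mathbf{n}}}}=\mathbb{C}\eta$. In particular $\eta\in\mathcal{H}_k$ for every $k$.

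First decompose the restriction $\Pi|_{G_{N_k}}$ into isotypic components $\mathcal{H}=\bigoplus_{\rho}\mathcal{H}(\rho)$ over irreducible representations $\rho$ of the compact group $G_{N_k}$. By Proposition~\ref{prop:g_n_sym_n_gelfand_pair}, every irreducible $\rho$ has at most a one-dimensional space of $\mathfrak{S}_{N_k}$-fixed vectors. Hence for each spherical $\rho\simeq\Ind_{\mathcal{F}_{N_k}(\mathbb{T})}^{G_{N_k}}(R_{\mathfrak{m}}\otimes\triv)$ the projection of $\eta$ onto $\mathcal{H}(\rho)$ lies in a single irreducible copy of $\rho$. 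This is because $\mathcal{H}(\rho)^{\mathfrak{S}_{N_k}}\simeq\mathbb{C}\otimes\operatorname{Hom}$-multiplicity space, so a fixed vector sits inside one copy. As a consequence,
\begin{equation*}
\varphi(g)=\langle\Pi(g)\eta,\eta\rangle=\sum_{\mathfrak{m}}c_{\mathfrak{m}}^{(k)}\varphi_{\mathfrak{m}}(g),\qquad g\in G_{N_k},
\end{equation*}
where $c^{(k)}_{\mathfrak{m}}\ge0$ and $\sum_{\mathfrak{m}}c^{(k)}_{\mathfrak{m}}=1$. The sum runs over $\mathfrak{S}_{N_k}$-orbits of $\mathfrak{m}\in\mathcal{F}_{N_k}(\mathbb{Z})$. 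So $\varphi|_{G_{N_k}}$ is a convex combination of spherical functions of $(G_{N_k},\mathfrak{S}_{N_k})$, i.e.\ a point of a Choquet-type simplex.

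The main step is to extract single terms from these convex combinations. Two routes are available. The first is to use indecomposability: $\varphi$ is an extreme point among $\mathfrak{S}_{\widehat{\mathbf{n}}}$-biinvariant positive-definite normalized functions, since $\Pi$ is irreducible and the pair is Gelfand. The standard Vershik--Kerov / Olshanski argument then applies. The coefficients $c^{(k)}$ form a coherent system of measures on the sets $\Omega_k$ of orbits, and $\varphi$ extreme means the corresponding measure on the path space is ergodic. Applying the martingale (reverse) convergence theorem to the $\sigma$-algebras of tails shows that for almost every path $\{\mathfrak{m}_k\}$ the functions $\varphi_{\mathfrak{m}_k}$ converge pointwise to $\varphi$ on the countable-union group $G_{\widehat{\mathbf{n}}}$.

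The second route, which I would try first since it is more elementary and likely what the appendix does, is the following. For each $g\in G_{N_j}$ and $k\ge j$, compute
\begin{equation*}
\sum_{\mathfrak{m}}c^{(k)}_{\mathfrak{m}}|\varphi_{\mathfrak{m}}(g)-\varphi(g)|^2 = \sum_{\mathfrak{m}}c^{(k)}_{\mathfrak{m}}|\varphi_{\mathfrak{m}}(g)|^2-|\varphi(g)|^2.
\end{equation*}
Here $\sum_{\mathfrak{m}}c^{(k)}_{\mathfrak{m}}|\varphi_{\mathfrak{m}}(g)|^2=\|P_k\Pi(g)\eta\|^2$, where $P_k$ is the projection onto the span of the components carrying $\eta$ averaged over $\mathfrak{S}_{N_k}$. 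More concretely it equals $\langle E_k\Pi(g)\eta,\Pi(g)\eta\rangle$-type quantities with $E_k$ the projection onto $\mathcal{H}_k$. Indeed, $\varphi_{\mathfrak{m}}(g)\eta_{\mathfrak{m}}=E_k\Pi(g)\eta_{\mathfrak{m}}$ in each irreducible block. Since $E_k\to E_\infty$ strongly, with $E_\infty$ the projection onto $\mathbb{C}\eta$, we get $\|E_k\Pi(g)\eta\|^2\to|\varphi(g)|^2$. Hence the variance tends to $0$ as $k\to\infty$, for every fixed $g$.

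It remains to pass from this variance bound to a single sequence. The variance tends to zero for every $g$ in a countable dense subset of $G_{\widehat{\mathbf{n}}}$; take $\mathbb{T}$-coordinates rational and use equicontinuity of spherical functions on each compact $G_{N_j}$. A diagonal argument then gives, for each $l$, an index $k_l$ and some $\mathfrak{m}_{k_l}$ with $|\varphi_{\mathfrak{m}_{k_l}}(g_i)-\varphi(g_i)|<1/l$ for $i\le l$. Chebyshev's inequality with respect to the probability weights $c^{(k_l)}$ handles finitely many $g_i$ at once. Pointwise convergence on all of $G_{\widehat{\mathbf{n}}}$ follows from density, using that all spherical functions of $(G_{N_j},\mathfrak{S}_{N_j})$ restricted to $\mathcal{F}_{N_j}(\mathbb{T})$ are uniformly Lipschitz in the coordinates. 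That last point is the obstacle: the exponents $\mathfrak{m}$ are unbounded, so equicontinuity fails. I would instead avoid density altogether. The identity $\varphi_{\mathfrak{m}}(g)\eta_{\mathfrak{m}}=E_k\Pi(g)\eta_{\mathfrak{m}}$ gives the variance bound for every $g$. One then chooses the sequence by a measure-theoretic argument: the $\mathcal{F}_{N_j}(\mathbb{T})$-integrated variance $\int\sum_{\mathfrak{m}} c^{(k)}_{\mathfrak{m}}|\varphi_{\mathfrak{m}}-\varphi|^2\,d\mathfrak{t}$ tends to $0$ by dominated convergence. This yields a.e.\ convergence, which is then upgraded to everywhere convergence by the ergodic-method argument of the first route. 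That upgrade is the most delicate point of the proof.
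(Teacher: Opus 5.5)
Your Hilbert-space core is correct and is the same mechanism as the paper's proof, which reduces via Proposition~\ref{prop:g_n_sym_n_gelfand_pair} to Proposition~\ref{prop:general_approx_sph_func} in the appendix. The shared steps are the convex decomposition of $\varphi|_{G_{N_k}}$, the identity $\sum_{\mathfrak{m}}c^{(k)}_{\mathfrak{m}}|\varphi_{\mathfrak{m}}(g)-\varphi(g)|^2=\|E_k\Pi(g)\eta\|^2-|\varphi(g)|^2$, and the strong convergence $E_k\to E_\infty$. The paper runs this computation with $\eta$ replaced by $\int f(g)\Pi(g)\eta\,dg$, $f\in L^1(G_{N_i})$.

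The gap is the final step, which you leave open: producing one sequence $\mathfrak{m}_{k_l}$ with $\varphi_{\mathfrak{m}_{k_l}}(g)\to\varphi(g)$ for \emph{every} $g$. The upgrade you delegate to the first route cannot do this. Reverse-martingale convergence of $\varphi_{\mathfrak{m}_k}(g)$ along a random path holds only off a null set of paths that depends on $g$. Although $G_{\widehat{\mathbf{n}}}$ is a countable union of compact groups, it is itself uncountable, unlike $\mathfrak{S}_\infty$ in Vershik--Kerov. So that route gives nothing beyond the for-each-$g$ and almost-every-$g$ statements you already have. Convergence on a countable dense set of points is also insufficient, even for positive-definite functions. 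For example, $t^{l!}\to 1$ at every root of unity, yet $t^{l!}$ does not converge on all of $\mathbb{T}$ by Lemma~\ref{t_powers_conv}.

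The missing ingredient is that positive-definiteness provides the upgrade by itself. On normalized continuous positive-definite functions on a compact group, weak-$*$ convergence in $L^\infty$ implies uniform convergence; this is Raikov's theorem, the paper's Lemma~\ref{lem:weak_comp_convergence}. With it, your integrated-variance idea finishes the proof without any ergodic method:
\begin{itemize}
\item Dominated convergence gives $\int_{G_{N_j}}\sum_{\mathfrak{m}}c^{(k)}_{\mathfrak{m}}|\varphi_{\mathfrak{m}}-\varphi|^2\,dg\to 0$ for each $j$.
\item Chebyshev for the probability weights $c^{(k_l)}$ gives $\mathfrak{m}_{k_l}$ with $\|\varphi_{\mathfrak{m}_{k_l}}-\varphi\|_{L^2(G_{N_j})}<1/l$ for all $j\le l$.
\item Since everything is bounded by $1$, this yields weak-$*$ convergence on each $G_{N_j}$.
\item Because $\varphi|_{G_{N_j}}$ is continuous and positive-definite, the convergence is then uniform on each $G_{N_j}$.
\end{itemize}
The paper makes the same selection by pairing with a countable dense family $\{f_{i,j}\}\subset L^1(G_{N_i})$ instead.

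If you want the upgrade self-contained, use the inequality $|\psi(g)-\psi(gh)|^2\le 2(1-\operatorname{Re}\psi(h))$, valid for any normalized positive-definite $\psi$. Average over $h$ in a neighbourhood $U$ of the identity on which $\operatorname{Re}\varphi>1-\epsilon$, and use weak-$*$ convergence against $\mathbf{1}_U$ and its translates. This gives $\limsup_l|\varphi_{\mathfrak{m}_{k_l}}(g)-\varphi(g)|\le 2\sqrt{2\epsilon}$ for every $g$.
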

\begin{proof}
This is a consequence of Proposition \ref{prop:g_n_sym_n_gelfand_pair} and Proposition \ref{prop:general_approx_sph_func}.
\end{proof}

Therefore, Theorem \ref{thm:spherical_func} is reduced to analyzing all possible weak limits of sequences of spherical functions of $(G_{N_k},\mathfrak{S}_{N_k})$. The description of such weak limits (see Proposition \ref{prop:sph_func_conv_seq}) occupies the most part of Section~\ref{sect:spherical_func}.

\subsection{Auxiliary lemmas.} In this subsection we prove several technical estimates which are used in the proof of Proposition \ref{prop:sph_func_conv_seq}.

\begin{Lm}
For any non-zero real number $K$ and any $\delta>0$ the following inequality holds:
\begin{equation}\label{cos_integral}
\frac{1}{\delta}\int_{0}^{\delta}\lvert\cos K\theta\rvert\, d\theta\le\frac{2}{\pi}+\frac{1}{|K|\delta}.
\end{equation}
\end{Lm}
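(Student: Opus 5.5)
The plan is to substitute and then use the periodicity of $|\cos|$. Since $|\cos K\theta|$ depends only on $|K|$, we may assume $K>0$. The substitution $u=K\theta$ turns the left-hand side of \eqref{cos_integral} into
\begin{equation*}
\frac{1}{K\delta}\int_{0}^{K\delta}\lvert\cos u\rvert\,du .
\end{equation*}
Put $L=K\delta>0$. It then suffices to prove
\begin{equation*}
\int_0^L|\cos u|\,du\le \frac{2}{\pi}L+1 \quad\text{for all } L>0,
\end{equation*}
because dividing by $L$ gives exactly $\frac{2}{\pi}+\frac{1}{K\delta}$.

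The function $|\cos u|$ has period $\pi$ and integral $2$ over each period. Write $L=j\pi+r$ with $j\in\mathbb{Z}_{\ge 0}$ and $0\le r<\pi$. Then
\begin{equation*}
\int_0^L|\cos u|\,du=2j+\int_0^r|\cos u|\,du .
\end{equation*}
Since $2j=\frac{2}{\pi}(L-r)$, the required inequality reduces to the one-variable estimate
\begin{equation*}
F(r):=\int_0^r|\cos u|\,du-\frac{2}{\pi}r\le 1 \quad\text{for } r\in[0,\pi].
\end{equation*}

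To bound $F$, note that it is concave on $[0,\pi/2]$, since there $F'(r)=\cos r-\frac{2}{\pi}$ is decreasing. Hence its maximum on $[0,\pi/2]$ is attained at $r_0=\arccos(2/\pi)$, where
\begin{equation*}
F(r_0)=\sin r_0-\frac{2}{\pi}r_0<1 .
\end{equation*}
On $[\pi/2,\pi]$ we have $F'(r)=|\cos r|-\frac{2}{\pi}$, and a direct computation gives $F(\pi/2)=0$ and $F(\pi)=0$. The function $F$ is concave there as well, with maximum at $\pi-r_0$. By the symmetry $|\cos(\pi-u)|=|\cos u|$ this maximum equals $2-\frac{2}{\pi}(\pi-r_0)-\sin r_0$, which simplifies to $F(r_0)$. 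Thus $F\le \sin r_0<1$ on the whole interval.

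A cruder alternative avoids the calculus: $F(r)\le\int_0^r|\cos u|\,du\le 1$ for $r\le\pi/2$. For $r>\pi/2$ one has $F(r)=2-\sin r-\frac{2}{\pi}r\le 2-\frac{2}{\pi}r<1$. There is no real obstacle here; the only step needing care is this elementary bound on the remainder $F(r)$ over a single period.
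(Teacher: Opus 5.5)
Your argument is correct, and it spells out the ``direct calculation'' that the paper invokes without details: you reduce to $\int_0^L|\cos u|\,du\le \frac{2}{\pi}L+1$ by periodicity and then bound the remainder over a single period. There is one slip in your calculus version. On $[\pi/2,\pi]$ you have $F(r)=2-\sin r-\frac{2}{\pi}r$ with $F''(r)=\sin r\ge 0$, so $F$ is convex there, not concave, and $\pi-r_0$ is a minimum. Its value is $F(\pi-r_0)=\frac{2}{\pi}r_0-\sin r_0=-F(r_0)$, not $F(r_0)$. Hence $F\le 0$ on that half, the conclusion $F<1$ still holds, and your cruder alternative is fully rigorous as written.
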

\begin{proof}
This follows from the direct calculation of the integral on the left.
\end{proof}

\begin{Lm}\label{t_powers_conv}
Let $\{q_k\}_{k=1}^{\infty}$ be a sequence of integers such that the sequence of functions $\{t^{q_k}\}_{k=1}^{\infty}$ of functions on $\mathbb{T}$ is pointwise convergent. Then, the sequence $\{q_k\}_{k=1}^{\infty}$ is eventually constant, i.e., there exists a positive integer $L$ such that $q_k=q_L$ for all $k>L$.
\end{Lm}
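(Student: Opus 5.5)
The plan is to use the lemma just proved, inequality \eqref{cos_integral}, together with a dominated convergence argument. Write $t=e^{i\theta}$. Suppose the sequence $t^{q_k}$ converges pointwise on $\mathbb{T}$ to some function $f$. Then the differences $t^{q_{k+1}}-t^{q_k}$ tend to $0$ pointwise, and hence so do the products
\begin{equation*}
t^{q_{k+1}}\,\overline{t^{q_k}}=t^{q_{k+1}-q_k}.
\end{equation*}
Indeed,
\begin{equation*}
\bigl|t^{q_{k+1}}\overline{t^{q_k}}-1\bigr|=\bigl|t^{q_{k+1}}-t^{q_k}\bigr|\to 0.
\end{equation*}
So, setting $p_k=q_{k+1}-q_k\in\mathbb{Z}$, it suffices to show the following: if $t^{p_k}\to 1$ for every $t\in\mathbb{T}$, then $p_k=0$ for all large $k$. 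Once this is established, the sequence $q_k$ is eventually constant, as required.

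To prove the reduced claim, suppose instead that $p_k\neq 0$ for infinitely many $k$. Pass to that subsequence. Since $e^{ip_k\theta}\to 1$ pointwise, its real part also converges, so $\cos(p_k\theta)\to 1$ for every $\theta\in[0,2\pi)$. Fix any $\delta>0$, for instance $\delta=2\pi$. The functions $|\cos p_k\theta|$ are bounded by $1$, so dominated convergence on $[0,\delta]$ gives
\begin{equation*}
\frac{1}{\delta}\int_0^\delta|\cos p_k\theta|\,d\theta\to 1.
\end{equation*}
On the other hand, \eqref{cos_integral} bounds the same average by $\frac{2}{\pi}+\frac{1}{|p_k|\delta}$.

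To finish, split into two cases. If $|p_k|\to\infty$ along the subsequence, this bound tends to $\frac{2}{\pi}<1$, which is a contradiction. If $|p_k|$ does not tend to infinity, pass to a further subsequence on which $p_k=p$ is a fixed nonzero integer. Then $e^{ip\theta}=1$ for all $\theta$, which is absurd (take $\theta=\pi/|p|$, giving $e^{ip\theta}=-1$). In either case we reach a contradiction, so $p_k=0$ eventually.

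I do not expect a real obstacle. The only points needing care are the reduction from convergence of $t^{q_k}$ to convergence of the ratios $t^{q_{k+1}-q_k}\to1$, and the application of dominated convergence, which is valid because all the functions involved are bounded by $1$ on a finite interval.
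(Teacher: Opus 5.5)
Your proof is correct, but it finishes differently from the paper. The paper also argues by contradiction via dominated convergence. It takes two subsequences $k_i,k_i'$ with $q_{k_i}\neq q_{k_i'}$ and computes $\int_{\mathbb{T}}|f|^2\,d\mu$ in two ways. One way gives $\lim_i\int_{\mathbb{T}}t^{q_{k_i}-q_{k_i'}}\,d\mu=0$, by orthogonality of the characters $t^n$ on the whole circle; the other gives $\lim_i\int_{\mathbb{T}}|t^{q_{k_i}}|^2\,d\mu=1$.

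Your reduction to $t^{p_k}\to 1$ with $p_k=q_{k+1}-q_k$ is the same device with $k_i'=k_i+1$. From there, orthogonality would end the argument in one line: $0=\int_{\mathbb{T}}t^{p_k}\,d\mu\to 1$. You instead use the $|\cos|$-average bound \eqref{cos_integral} together with a case split on whether $|p_k|\to\infty$. With your choice $\delta=2\pi$ the case split is not needed: since $|p_k|\ge 1$, the bound is at most $\frac{2}{\pi}+\frac{1}{2\pi}=\frac{5}{2\pi}<1$ uniformly in $k$.

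What your route buys is locality. With the case split, your argument works verbatim for any $\delta>0$: \eqref{cos_integral} handles $|p_k|\to\infty$, and for fixed $p\neq 0$ the function $e^{ip\theta}$ is not identically $1$ on any interval. It therefore proves the lemma under the weaker hypothesis that $t^{q_k}$ converges only on an arc $\{e^{i\theta}:0\le\theta\le\delta\}$, in the spirit of Step~2 of Proposition~\ref{prop:sph_func_conv_seq}. The paper's argument, as written, uses the full circle, which is all that Step~1 requires, and it is shorter.
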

\begin{proof}
Let $\mu$ be the probability Haar measure on $\mathbb{T}$.
Denote by $f$ the pointwise limit of functions $\{t^{q_k}\}_{k=1}^{\infty}$ on $\mathbb{T}$. Assume that the sequence $\{q_k\}_{k=1}^{\infty}$ is not stationary. Then, there exist two subsequences $\{k_i\}_{i=1}^{\infty}$ and $\{k_{i}'\}_{i=1}^{
\infty}$ such that $q_{k_i}\neq q_{k_i'}$ for all $i$. Consider functions $f_i(t)=t^{q_{k_i}}$ and $g_i(t)=t^{q_{k_i'}}$ for each $i\ge 1$. Then, clearly both sequences of functions $\{f_i(t)\overline{g_i(t)}\}_{i=1}^{\infty}$ and $\{|f_i(t)|^2\}_{i=1}^{\infty}$ converge pointwise to $|f(t)|^2$. In this case, the Lebesgue's dominated convergence theorem implies
\begin{equation*}
\begin{split}
&\int_{\mathbb{T}}|f|^2d\mu(t)=\lim_{i\to\infty}\int_{\mathbb{T}}f_i(t)\overline{g_i(t)}d\mu(t)=\lim_{i\to\infty}\int_{\mathbb{T}}t^{q_{k_i}-q_{k_i'}}d\mu(t)=0,~\text{and}
\\
&\int_{\mathbb{T}}|f(t)|^2d\mu(t)=\lim_{i\to\infty}\int_{\mathbb{T}}|f_i(t)|^2d\mu(t)=\lim_{i\to\infty}\int_{\mathbb{T}}1\,d\mu(t)=1,
\end{split}
\end{equation*}
which is a contradiction. Therefore, $\{q_k\}_{k=1}^{\infty}$ must be eventually constant.
\end{proof}

\begin{Def}
For any positive integers $N$ and $K$ such that $1\le K\le N$ and any real number $\theta$ define an element $\mathfrak{t}^{\theta}_{N,K}\in\mathcal{F}_N(\mathbb{T})$ as follows:
\begin{equation}\label{t_N_K_function}
\mathfrak{t}^\theta_{N,K}(x)=
\begin{cases}
e^{i\theta},&~x\in\{1,\ldots,K\}\subset \mathbb{X}_N,
\\
1,&~x\in\{K+1,\ldots,N\}\subset \mathbb{X}_N.
\end{cases}
\end{equation}
\end{Def}

\begin{Lm}\label{spher_func_t_element}
Let $\mathfrak{m}\in \mathcal{F}_N(\mathbb{Z})$. Then, for $t=e^{i\theta}$ we have
\begin{equation*}
\varphi_{\mathfrak{m}}(\mathfrak{t}^{\theta}_{N,K})=\binom{N}{K}^{-1}\sum_{A\in\mathcal{S}(\mathbb{X}_{N},K)}
t^{^{\left({\sum\limits_{x\in A}\mathfrak{m}(x)}\right)}},
\end{equation*}
where $\mathcal{S}(\mathbb{X}_{N},K)$ is the set of all $K$-element subsets of $\mathbb{X}_{N}$.
In particular, $\varphi_{\mathfrak{m}}(\mathfrak{t}_{N,K})$ equals to the coefficient of $z^K$ term of the product
\begin{equation*}
\binom{N}{K}^{-1}\prod_{x\in\mathbb{X}_{N}}\left(1+zt^{\mathfrak{m}(x)}\right).
\end{equation*}
\end{Lm}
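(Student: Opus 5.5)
We start from the formula \eqref{spherical_finite} for the spherical function and substitute $\mathfrak{t}=\mathfrak{t}^{\theta}_{N,K}$. Using the second form there, for each $\sigma\in\mathfrak{S}_N$ we have
\begin{equation*}
R_{\sigma\cdot\mathfrak{m}}(\mathfrak{t}^{\theta}_{N,K})=\prod_{x\in\mathbb{X}_N}\mathfrak{t}^{\theta}_{N,K}(x)^{\mathfrak{m}(\sigma(x))}=t^{\sum_{x=1}^{K}\mathfrak{m}(\sigma(x))}=t^{\sum_{y\in\sigma(\{1,\ldots,K\})}\mathfrak{m}(y)}.
\end{equation*}
This holds because $\mathfrak{t}^{\theta}_{N,K}$ equals $t=e^{i\theta}$ on $\{1,\ldots,K\}$ and $1$ elsewhere. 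So each summand depends on $\sigma$ only through the $K$-element set $A=\sigma(\{1,\ldots,K\})\in\mathcal{S}(\mathbb{X}_N,K)$.

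Next we group the sum over $\mathfrak{S}_N$ according to the value of $A$. The map $\sigma\mapsto\sigma(\{1,\ldots,K\})$ is surjective onto $\mathcal{S}(\mathbb{X}_N,K)$. Each fiber is a coset of the stabilizer $\mathfrak{S}_K\times\mathfrak{S}_{N-K}$, so it has exactly $K!(N-K)!$ elements. Therefore
\begin{equation*}
\varphi_{\mathfrak{m}}(\mathfrak{t}^{\theta}_{N,K})=\frac{K!(N-K)!}{N!}\sum_{A\in\mathcal{S}(\mathbb{X}_N,K)}t^{\sum_{x\in A}\mathfrak{m}(x)}=\binom{N}{K}^{-1}\sum_{A\in\mathcal{S}(\mathbb{X}_N,K)}t^{\sum_{x\in A}\mathfrak{m}(x)},
\end{equation*}
which is the first claim.

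For the generating-function statement, expand the product $\prod_{x\in\mathbb{X}_N}(1+zt^{\mathfrak{m}(x)})$. Each term corresponds to choosing a subset $A\subset\mathbb{X}_N$ of factors from which we take $zt^{\mathfrak{m}(x)}$. That term is $z^{|A|}t^{\sum_{x\in A}\mathfrak{m}(x)}$, so the coefficient of $z^K$ is exactly $\sum_{A\in\mathcal{S}(\mathbb{X}_N,K)}t^{\sum_{x\in A}\mathfrak{m}(x)}$. Multiplying by $\binom{N}{K}^{-1}$ gives the claim.

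There is no real obstacle here. The only point needing care is the fiber count $K!(N-K)!$, together with keeping the convention $\sigma\cdot\mathfrak{m}=\mathfrak{m}\circ\sigma^{-1}$ consistent with the chosen form of \eqref{spherical_finite}. Any of the three equivalent forms gives the same reindexing, since summing over $\sigma$ or over $\sigma^{-1}$ produces the same result.
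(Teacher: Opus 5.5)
Your proof is correct and follows the paper's own argument: substitute $\mathfrak{t}^{\theta}_{N,K}$ into \eqref{spherical_finite}, group the permutations by the set $\sigma(\{1,\ldots,K\})$ using the fiber count $K!(N-K)!$, and read off the $z^K$ coefficient of the product (the paper calls this step Vieta's formulas). One small notational point: the termwise identity $R_{\sigma\cdot\mathfrak{m}}(\mathfrak{t})=\prod_x\mathfrak{t}(x)^{\mathfrak{m}(\sigma(x))}$ holds only after reindexing $\sigma\mapsto\sigma^{-1}$ in the sum, not for each $\sigma$ separately; you already noted this at the end.
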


\begin{proof}
By definition, we have
\begin{equation*}
\begin{aligned}
\varphi_{\mathfrak{m}}(\mathfrak{t}^{\theta}_{N,K})
&=\frac{1}{N!}\sum_{\sigma\in\mathfrak{S}_{N}}\prod_{x\in\mathbb{X}_{N}}\mathfrak{t}^{\theta}_{N,K}(x)^{\mathfrak{m}(\sigma(x))}=
\frac{1}{N!}\sum_{\sigma\in\mathfrak{S}_{N}}\exp\left\{i\theta\cdot\sum_{x=1}^{K}\mathfrak{m}(\sigma(x))\right\}=
\\
&=\frac{K!\,(N-K)!}{N!}\cdot\sum_{A\in\mathcal{S}(\mathbb{X}_{N},K)}\exp\left\{i\theta\cdot\sum_{x\in A}\mathfrak{m}(x)\right\}=\binom{N}{K}^{-1}\sum_{A\in\mathcal{S}(\mathbb{X}_{N},K)}\exp\left\{i\theta\cdot\sum_{x\in A}\mathfrak{m}(x)\right\},
\end{aligned}
\end{equation*}
where we used that there are exactly $K!\cdot(N-K)!$ permutations in $\mathfrak{S}_{N}$ with prescribed set $\{\sigma(1),\ldots,\sigma(K)\}$. The second assertion of the lemma follows now from the formula above and the Vieta's formulas.
\end{proof}

\begin{Lm}\label{main_ineq_lemma}
Let $\mathfrak{m}\in\mathcal{F}_{N}(\mathbb{Z})$ and let positive integers $N$, $K$ and $L$ be such that $1\le K\le N$. Suppose that the inequality
$
\min\{\#\{x\in\mathbb{X}_{N}:\mathfrak{m}(x)=m\},\#\{x\in\mathbb{X}_{N}:\mathfrak{m}(x)=m'\}\}\ge L
$ holds for some distinct $m,m'\in\mathbb{Z}$.
If $t=e^{i\theta}\in\mathbb{T}$, then
\begin{equation*}
|\varphi_{\mathfrak{m}}(\mathfrak{t}^{\theta}_{N,K})|\le\binom{N}{K}^{-1}\sum_{\substack{l_1,l_2\ge 0,\\l_1+2l_2\le 2L}}\binom{L}{l_1,l_2,L-l_1-l_2}\binom{N-2L}{K-l_1-2l_2}\left|2\cos\frac{(m-m')\theta}{2}\right|^{l_1}.
\end{equation*}
\end{Lm}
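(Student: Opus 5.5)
Our plan is to start from Lemma \ref{spher_func_t_element}, which says that $\varphi_{\mathfrak{m}}(\mathfrak{t}^{\theta}_{N,K})$ is $\binom{N}{K}^{-1}$ times the coefficient of $z^K$ in $\prod_{x\in\mathbb{X}_N}(1+zt^{\mathfrak{m}(x)})$. Fix sets $P=\{p_1,\ldots,p_L\}\subset\mathfrak{m}^{-1}(m)$ and $Q=\{q_1,\ldots,q_L\}\subset\mathfrak{m}^{-1}(m')$, each of exactly $L$ elements; the hypothesis guarantees they exist. Pair them as $(p_j,q_j)$. Then factor the generating polynomial as
\begin{equation*}
\prod_{x\in\mathbb{X}_N}(1+zt^{\mathfrak{m}(x)})=\Bigl((1+zt^{m})(1+zt^{m'})\Bigr)^{L}\cdot\prod_{x\notin P\cup Q}(1+zt^{\mathfrak{m}(x)}).
\end{equation*}
The second product has $N-2L$ factors.

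Next we expand a single pair factor. Writing $t=e^{i\theta}$,
\begin{equation*}
(1+zt^{m})(1+zt^{m'})=1+z\,(t^{m}+t^{m'})+z^2t^{m+m'},
\end{equation*}
and $t^{m}+t^{m'}=t^{(m+m')/2}\cdot 2\cos\frac{(m-m')\theta}{2}$. Raising to the $L$-th power by the multinomial theorem, choose the middle term $l_1$ times and the $z^2$ term $l_2$ times. This gives a sum over $l_1,l_2\ge0$ with $l_1+l_2\le L$ of
\begin{equation*}
\binom{L}{l_1,l_2,L-l_1-l_2}\,z^{l_1+2l_2}\times(\text{unimodular factor})\times\Bigl(2\cos\tfrac{(m-m')\theta}{2}\Bigr)^{l_1}.
\end{equation*}

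For the remaining product over $x\notin P\cup Q$, the coefficient of $z^j$ is a sum of $\binom{N-2L}{j}$ unimodular terms, so its absolute value is at most $\binom{N-2L}{j}$. Extracting the coefficient of $z^K$ from the full product means taking $j=K-l_1-2l_2$, which yields a convolution. Applying the triangle inequality then gives exactly the claimed bound, using the convention $\binom{a}{b}=0$ for $b<0$ or $b>a$. The constraint $l_1+l_2\le L$ is implied by the nonvanishing of the multinomial coefficient, so the stated range $l_1+2l_2\le 2L$ is harmless.

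There is no real obstacle here; the one idea is to group the $m$- and $m'$-factors in pairs so that the cosine appears. The only care needed is the bookkeeping of phases. The moduli of $t^{(m+m')/2}$ and $t^{m+m'}$ are $1$ even when $(m+m')/2$ is a half-integer power, provided one consistently uses $e^{i\theta(m+m')/2}$. We should also check that the index range and the binomial conventions match the statement.
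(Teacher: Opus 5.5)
Your proposal is correct and follows essentially the same argument as the paper. You group the $L$ pairs of $m$- and $m'$-factors into $\bigl(1+(t^{m}+t^{m'})z+t^{m+m'}z^2\bigr)^{L}$ and use $|t^m+t^{m'}|=2\bigl|\cos\frac{(m-m')\theta}{2}\bigr|$. Your direct triangle inequality on the expanded convolution is exactly what the paper packages as its Claim: the coefficients of a product are bounded in modulus by those of the product with absolute-valued coefficients.
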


\begin{proof}
In the proof we will use the following simple observation:

\medskip

\noindent
\textbf{Claim.}
{\em
Let $p(z)=p_mz^m+\ldots+p_0$ and $q(z)=q_nz^n+\ldots+q_0$ be two polynomials with complex coefficients. Then, for any $d$ the absolute value of the coefficient of $z^d$ term of $p(z)q(z)$ does not exceed the coefficient of $z^d$ term of the polynomial
$(|p_m|z^m+\ldots+|p_0|)(|q_n|z^n+\ldots+|q_0|)$.
}

\medskip

Choose any two $L$-element subsets $X$ and $X'$ in $\mathbb{X}_{N}$ such that for all $x\in X$ and $x'\in X'$ we have $\mathfrak{m}(x)=m$ and $\mathfrak{m}(x')=m'$. Then, by Lemma \ref{spher_func_t_element}, $\varphi_{\mathfrak{m}}(\mathfrak{t}^{\theta}_{N,K})$ is the coefficient of $z^{K}$ term of the product
\begin{equation*}
\begin{aligned}
\binom{N}{K}^{-1}\prod_{x\in\mathbb{X}_{N}}\left(1+t^{\mathfrak{m}(x)}z\right)
&=\binom{N}{K}^{-1}(1+t^{m}z)^{L}(1+t^{m'}z)^{L}\prod_{x\in\mathbb{X}_{N}\setminus (X\cup X')}(1+t^{\mathfrak{m}(x)}z)=
\\
&=\binom{N}{K}^{-1}(1+(t^{m}+t^{m'})z+t^{m+m'}z^2)^{L}\prod_{x\in\mathbb{X}_{N}\setminus (X\cup X')}(1+t^{\mathfrak{m}(x)}z).
\end{aligned}
\end{equation*}
The claim above implies that $|\varphi_{\mathfrak{m}}(\mathfrak{t}^{\theta}_{N,K})|$ does not exceed the coefficient $c_K$ of $z^K$ term of the polynomial
\begin{equation*}
\binom{N}{K}^{-1}(1+z\lvert t^{m}+t^{m'}\rvert+z^2)^{L}(1+z)^{N-2L}.
\end{equation*}
Since $\lvert t^{m}+t^{m'}\rvert=\lvert e^{im\theta}+e^{im'\theta}\rvert=2\lvert\cos\frac{1}{2}(m-m')\theta\rvert$, we have
\begin{equation*}
c_K=\binom{N}{K}^{-1}\sum_{\substack{l_1,l_2\ge 0,\\l_1+2l_2\le 2L}}\binom{L}{l_1,l_2,L-l_1-l_2}\binom{N-2L}{K-l_1-2l_2}\left\lvert2\cos\frac{(m-m')\theta}{2}\right\rvert^{l_1}.
\end{equation*}
This completes the proof of the lemma.
\end{proof}

\begin{Lm}\label{lim_ineq_lemma}
Let a sequence $\{\mathfrak{m}_k\}_{k=1}^{\infty}$, where $\mathfrak{m}_k\in\mathcal{F}_{N_k}(\mathbb{Z})$ for each $k$, be such that the sequence $\{\varphi_{\mathfrak{m}_k}\}_{k=1}^{\infty}$ of functions on $\mathcal{F}_{\widehat{\mathbf{n}}}(\mathbb{T})$ weakly converges, i.e., $\lim\limits_{k\to\infty}\varphi_{\mathfrak{m}_k}(f)$ exists for all $f\in \mathcal{F}_{\widehat{\mathbf{n}}}(\mathbb{T})=\bigcup\limits_{k}\mathcal{F}_{N_k}(\mathbb{T})$. Set $\varphi =\lim\limits_{k\to\infty}\varphi_{\mathfrak{m}_k}$. Fix an arbitrary positive integer $K_l<N_l$. If there exists a subsequence $\left\{ k_j\right\}_{j=0}^\infty$ such that $l<k_j<k_{j+1}$ for all $j$ and the inequality
\begin{equation}\label{m_occurences}
\min\{\#\{x\in\mathbb{X}_{N_{k_j}}:\mathfrak{m}_{k_j}(x)=m\},\#\{x\in\mathbb{X}_{N_{k_j}}:\mathfrak{m}_{k_j}(x)=m'\}\}\ge L ~\text{holds for all}~j=0,1,2,\ldots,
\end{equation}
then
$|\varphi(\mathfrak{t}^{\theta}_{N_{l},K_{l}})|\le\left(\alpha^2+2\alpha(1-\alpha)\left|\cos\frac{(m-m')\theta}{2}\right|+(1-\alpha)^2\right)^{L}$, where $\alpha=\frac{K_{l}}{N_{l}}$.
\end{Lm}
\begin{proof}
Fix $j$. Considering   $\mathfrak{t}^{\theta}_{N_{l},K_{l}}\in \mathcal{F}_{N_{l}}(\mathbb{T})$ as an element of the group $ \mathcal{F}_{N_{k_j}}(\mathbb{T})$, where $l<k_j$, we have
\begin{equation}
\#\left\{ x\in \mathbb{X}_{N_{k_j}}: \mathfrak{t}^{\theta}_{N_{l},K_{l}}(x)=e^{i\theta}\right\}=K_{l}\frac{N_{k_j}}{N_{l}}=\alpha N_{k_j}.
\end{equation}
It follows from Lemma \ref{main_ineq_lemma}  that
\begin{equation*}
|\varphi_{\mathfrak{m}_{k_j}}(\mathfrak{t}^{\theta}_{N_{l},K_{l}})|\le \binom{N_{k_j}}{\alpha N_{k_j}}^{-1}\sum_{\substack{l_1,l_2\ge 0,\\l_1+2l_2\le 2L}}\binom{L}{l_1,l_2,L-l_1-l_2}\binom{N_{k_j}-2L}{\alpha N_{k_j}-l_1-2l_2}\left|2\cos\frac{(m-m')\theta}{2}\right|^{l_1}.
\end{equation*}
By assumption, as $j\to\infty$ the left hand-side converges to $|\varphi(\mathfrak{t}^{\theta}_{N_{l},K_{l}})|$. For the right-hand side note that
\begin{multline*}
\lim_{j\to\infty}\binom{N_{k_j}}{\alpha N_{k_j}}^{-1}\sum_{\substack{l_1,l_2\ge 0,\\l_1+2l_2\le 2L}}\binom{L}{l_1,l_2,L-l_1-l_2}\binom{N_{k_j}-2L}{\alpha N_{k_j}-l_1-2l_2}\left|2\cos\frac{(m-m')\theta}{2}\right|^{l_1}=
\\
=\sum_{\substack{l_1,l_2\ge 0,\\l_1+2l_2\le 2L}}\binom{L}{l_1,l_2,L-l_1-l_2}\alpha^{l_1+2l_2}(1-\alpha)^{2L-l_1-2l_2}\left|2\cos\frac{(m-m')\theta}{2}\right|^{l_1}=
\\
=\left(\alpha^2+2\alpha(1-\alpha)\left|\cos\frac{(m-m')\theta}{2}\right|+(1-\alpha)^2\right)^{L},
\end{multline*}
and the lemma follows.
\end{proof}

\subsection{Classification of weakly converging sequences of spherical functions.}

Next lemma essentially describes the limiting spherical function for a particular choice of the sequence $\{\mathfrak{m}_k\}_{k=1}^{\infty}$ in Proposition \ref{prop:sph_func_approx}.

\begin{Lm}\label{lem:lim_sph_func_formula}
For arbitrary integers $M_1,\ldots,M_l$ define for all sufficiently large $k$ the element $\mathfrak{m}_k\in\mathcal{F}_{N_k}(\mathbb{Z})$ as
\begin{equation*}
\mathfrak{m}_k(x)=
\begin{cases}
M_x,&~x\in\{1,2,\ldots,l\}\subset\mathbb{X}_{N_k},
\\
0,&~x\in\{l+1,\ldots,N_k\}.
\end{cases}
\end{equation*}
Fix any $\mathfrak{t}\in\mathcal{F}_{N_r}(\mathbb{T})\subset\mathcal{F}_{\widehat{\mathbf{n}}}(\mathbb{T})$. Then,
\begin{equation*}
\lim_{k\to\infty}\varphi_{\mathfrak{m}_k}(\mathfrak{t})=\prod_{j=1}^{l}\int_{\mathbb{X}_{\widehat{\mathbf{n}}}}\mathfrak{t}(x)^{M_j}d\nu_{\widehat{\mathbf{n}}}(x).
\end{equation*}
\end{Lm}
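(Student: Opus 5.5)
We compute $\varphi_{\mathfrak{m}_k}(\mathfrak{t})$ directly from \eqref{spherical_finite}. Regard $\mathfrak{t}\in\mathcal{F}_{N_r}(\mathbb{T})$ as an element of $\mathcal{F}_{N_k}(\mathbb{T})$ for $k\ge r$; it is obtained by pulling back along the projection $\mathbb{X}_{N_k}\to\mathbb{X}_{N_r}$, so every value $a=\mathfrak{t}(y)$, $y\in\mathbb{X}_{N_r}$, is attained on exactly $N_k/N_r$ points of $\mathbb{X}_{N_k}$. Since $\mathfrak{m}_k$ vanishes outside $\{1,\ldots,l\}$, the third expression in \eqref{spherical_finite} gives
\begin{equation*}
\varphi_{\mathfrak{m}_k}(\mathfrak{t})=\frac{1}{N!}\sum_{\sigma\in\mathfrak{S}_{N}}\prod_{j=1}^{l}\mathfrak{t}(\sigma(j))^{M_j},\qquad N=N_k.
\end{equation*}
The summand depends only on the ordered $l$-tuple $(\sigma(1),\ldots,\sigma(l))$ of distinct points. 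Each such tuple arises for exactly $(N-l)!$ permutations, so
\begin{equation*}
\varphi_{\mathfrak{m}_k}(\mathfrak{t})=\frac{1}{N(N-1)\cdots(N-l+1)}\sum_{\substack{(x_1,\ldots,x_l)\in\mathbb{X}_N^l\\ x_i\ \text{distinct}}}\prod_{j=1}^{l}\mathfrak{t}(x_j)^{M_j}.
\end{equation*}

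Next we compare this with the sum over all $l$-tuples, distinct or not. The unrestricted sum factorizes as
\begin{equation*}
\prod_{j=1}^{l}\Big(\sum_{x\in\mathbb{X}_N}\mathfrak{t}(x)^{M_j}\Big)=N^l\prod_{j=1}^{l}\Big(\frac{1}{N_r}\sum_{y\in\mathbb{X}_{N_r}}\mathfrak{t}(y)^{M_j}\Big),
\end{equation*}
where the equality uses the fiber-size observation above. The last factor is exactly $\int_{\mathbb{X}_{\widehat{\mathbf{n}}}}\mathfrak{t}^{M_j}\,d\nu_{\widehat{\mathbf{n}}}$, because $\mathfrak{t}$ depends only on the first $N_r$-coordinate and $\nu_{\widehat{\mathbf{n}}}$ projects to the uniform measure on $\mathbb{X}_{N_r}$. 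The tuples with some coincidence $x_i=x_j$ number at most $\binom{l}{2}N^{l-1}$, and each term has modulus $1$ since $|\mathfrak{t}|=1$. Hence the restricted sum differs from the unrestricted one by $O(l^2N^{l-1})$.

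Finally, we divide by $N(N-1)\cdots(N-l+1)=N^l(1+O(l^2/N))$ and let $N=N_k\to\infty$. Both error terms vanish, and the limit equals $\prod_{j}\int\mathfrak{t}^{M_j}\,d\nu_{\widehat{\mathbf{n}}}$, which is the claim.

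There is no real obstacle here. The only points needing care are the orbit-counting normalization and the identification of the normalized sum over $\mathbb{X}_{N_k}$ with the integral, which is independent of $k$. If some $M_j=0$, its factor is simply $1$, so the formula is consistent in that case as well.
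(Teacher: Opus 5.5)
Your proof is correct and follows the paper's argument. Both write $\varphi_{\mathfrak{m}_k}(\mathfrak{t})$ as the normalized sum $\frac{(N_k-l)!}{N_k!}\sum\mathfrak{t}(x_1)^{M_1}\cdots\mathfrak{t}(x_l)^{M_l}$ over ordered $l$-tuples of distinct points, and both then show that the distinctness constraint is asymptotically negligible. The only difference is in that last step: the paper groups the tuples by their image in $\mathbb{X}_{N_r}^l$ and passes to the limit in the exact falling-factorial coefficients, whereas you compare with the factorizing unrestricted sum and bound the coincident tuples by $\binom{l}{2}N_k^{l-1}$, which also gives an explicit $O(1/N_k)$ rate.
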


\begin{proof}
For all $k>r$ we have
\begin{equation}\label{phi_m_t_formula}
\begin{aligned}
\varphi_{\mathfrak{m}_k}(\mathfrak{t})
&=\frac{1}{N_k!}\sum_{\sigma\in\mathfrak{S}_{N_k}}\prod_{x\in\mathbb{X}_{N_k}}\mathfrak{t}(\sigma(x))^{\mathfrak{m}_k(x)}
=\frac{1}{N_k!}\sum_{\sigma\in\mathfrak{S}_{N_k}}\mathfrak{t}(\sigma(1))^{M_1}\ldots\mathfrak{t}(\sigma(l))^{M_l}=
\\
&=\frac{(N_k-l)!}{N_k!}\sum_{\substack{\text{distinct}\\x_1,\ldots,x_l\in\mathbb{X}_{N_k}}}\mathfrak{t}(x_1)^{M_1}\ldots
\mathfrak{t}(x_l)^{M_l},
\end{aligned}
\end{equation}where the last summation is over all $l$-tuples $(x_1,\ldots,x_l)$ of distinct elements in $\mathbb{X}_{N_k}$.
 Let $i\in\mathbb{X}_{N_r}$ and $t_i=\mathfrak{t}(i)$, where $r<k$. Then,
\begin{equation*}
\varphi_{\mathfrak{m}_k}(\mathfrak{t})=\frac{(N_k-l)!}{N_k!}
\sum_{I=(i_1,\ldots,i_l)\in\mathbb{X}_{N_r}^l}\frac{(N_k/N_r)!}{(N_k/N_r-a_1(I))!}
\ldots\frac{(N_k/N_r)!}{(N_k/N_r-a_{N_r}(I))!}\,t_{i_1}^{M_1}\ldots t_{i_l}^{M_l},
\end{equation*}
where for a given $l$-tuple $I=(i_1,\ldots,i_l)$ we put $a_j(I)=\#\{s:i_s=j\}$ for all $j\in\mathbb{X}_{N_r}$ (note that $a_1(I)+\ldots+a_{N_r}(I)=l$). Indeed, formula for the coefficients follow from the fact that the multiset $\{\mathfrak{t}(x):x\in\mathbb{X}_{N_k}\}$ contains $N_k/N_r$ copies of each $t_i$, $i\in\mathbb{X}_{N_r}$.
As $k\to\infty$ the expression above converges to
\begin{equation*}
\lim_{k\to\infty}\varphi_{\mathfrak{m}_k}(\mathfrak{t}_k)=\frac{1}{N_r^l}\sum_{i_1,\ldots,i_l\in\mathbb{X}_{N_r}}t_{i_1}^{M_1}\ldots t_{i_l}^{M_l}=\prod_{j=1}^{l}\Bigg(\frac{1}{N_r}\sum_{x\in\mathbb{X}_{N_r}}t_x^{M_j}\Bigg)=\prod_{j=1}^{l}\int_{\mathbb{X}_{\widehat{\mathbf{n}}}}
\mathfrak{t}(x)^{M_j}d\nu_{\widehat{\mathbf{n}}}(x),
\end{equation*}
as claimed.
\end{proof}

The main goal of this subsection is to describe all weakly converging subsequences of spherical functions.

\begin{Prop}\label{prop:sph_func_conv_seq}
Consider a sequence $\{\mathfrak{m}_k\}_{k=1}^{\infty}$, where $\mathfrak{m}_k\in\mathcal{F}_{N_k}(\mathbb{Z})$. Denote by $\varphi_{\mathfrak{m}_k}$ the spherical function on $G_{N_k}$ that corresponds to $\mathfrak{m}_k\in\mathcal{F}_{N_k}(\mathbb{Z})$ (see \eqref{spherical_finite}).
Suppose that for any $f\in\mathcal{F}_{\widehat{\mathbf{n}}}(\mathbb{T})=\bigcup\limits_k\mathcal{F}_{N_k}(\mathbb{T})$ there exists $\lim\limits_{k\to \infty} \varphi_{\mathfrak{m}_k}(f)=\varphi(f)$.  If the function $\varphi\colon \mathcal{F}_{\widehat{\mathbf{n}}}(\mathbb{T})\to\mathbb{C}$ is continuous, then there exists a finite collection of non-zero integers $M_1,\ldots,M_l$ such that for all sufficiently large $k$ there exists an element $\sigma\in\mathfrak{S}_{N_k}$ satisfying
\begin{equation*}
\mathfrak{m}_k(\sigma(1))=M_1,\, \mathfrak{m}_k(\sigma(2))=M_2,\,\ldots,\,\mathfrak{m}_{k}(\sigma(l))=M_l,\,\ldots,\, \mathfrak{m}_{k}(\sigma(l+1))=0, \ldots,\,\mathfrak{m}_{k}(\sigma(N_k))=0.
\end{equation*}
\end{Prop}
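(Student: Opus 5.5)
The plan is to show three things in turn. First, for large $k$ all but a bounded number of points $x\in\mathbb{X}_{N_k}$ carry a common value $c_k$ of $\mathfrak{m}_k$. Second, $c_k=0$ eventually. Third, the remaining finite multiset of non-zero values stabilizes. Throughout I use the elements $\mathfrak{t}^{\theta}_{N_l,K_l}$ with $\alpha=K_l/N_l\in(0,1)$ fixed (say $l=1$, $K_1=1$). Since $\varphi$ is continuous and $\varphi(1)=1$, there is $\delta>0$ with $|\varphi(\mathfrak{t}^{\theta}_{N_l,K_l})|\ge \tfrac12$ for $\theta\in[0,\delta]$.

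\emph{Step 1 (bounded deviation).} I generalize Lemma \ref{main_ineq_lemma} from two blocks of equal values to $L$ disjoint pairs $\{x_i,x_i'\}$ with $\mathfrak{m}_k(x_i)\neq\mathfrak{m}_k(x_i')$; set $d_i=\mathfrak{m}_k(x_i)-\mathfrak{m}_k(x_i')$. The same Claim, applied to the factors $(1+t^{\mathfrak{m}(x_i)}z)(1+t^{\mathfrak{m}(x_i')}z)$, bounds $|\varphi_{\mathfrak{m}_k}(\mathfrak{t}^\theta_{N_l,K_l})|$ by a coefficient. Passing to the limit exactly as in Lemma \ref{lim_ineq_lemma}, this should give
\begin{equation*}
|\varphi(\mathfrak{t}^{\theta}_{N_l,K_l})|\le\prod_{i=1}^{L}\Bigl(1-2\alpha(1-\alpha)\bigl(1-\bigl|\cos\tfrac{d_i\theta}{2}\bigr|\bigr)\Bigr),
\end{equation*}
provided such matchings of size $L$ exist along a subsequence of $k$. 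The $d_i$ may depend on $k$, so I would take logarithms and use $|\varphi|\ge\tfrac12$. This gives $\sum_i\bigl(1-|\cos\tfrac{d_i\theta}{2}|\bigr)\le C(\alpha)$ for all $\theta\in[0,\delta]$, and I then average over $[0,\delta]$. By inequality \eqref{cos_integral}, each term has average at least $1-\frac{2}{\pi}-\frac{2}{|d_i|\delta}$. This is bounded below uniformly once $|d_i|$ is large; for the finitely many small $|d_i|$ the average is bounded below by an explicit positive constant depending only on $\delta$. Hence $L$ is bounded uniformly in $k$. A maximal matching of pairs with distinct values having bounded size forces all but a bounded number $B$ of points to share a value $c_k$.

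\emph{Step 2 ($c_k=0$).} I evaluate at the central element $\mathfrak{t}\equiv e^{i\theta}$. There $\varphi_{\mathfrak{m}_k}(\mathfrak{t})=e^{i\theta S_k}$ with $S_k=\sum_x\mathfrak{m}_k(x)$, so Lemma \ref{t_powers_conv} makes $S_k$ eventually constant. Since $S_k=c_kN_k+\sum_{x\in D_k}(\mathfrak{m}_k(x)-c_k)$ with $\#D_k\le B$, this alone does not force $c_k=0$. I would therefore also use $\mathfrak{t}^{\theta}_{N_1,1}$. Expanding via Lemma \ref{spher_func_t_element}, the $\approx(N_k-B)\alpha$ points with value $c_k$ contribute a phase $e^{i\theta c_k\alpha N_k}$ times bounded corrections. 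Comparing with the central element, whose phase is $e^{i\theta c_kN_k}$, and applying Lemma \ref{t_powers_conv}, should show that $c_kN_k(1-\alpha)$ is eventually constant modulo bounded terms. Since $N_k\to\infty$, this forces $c_k=0$. This is the step whose bookkeeping I expect to be the most delicate.

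\emph{Step 3 (stabilization of the multiset).} Now $\mathfrak{m}_k$ has at most $B$ non-zero values. Passing to subsequences, I may assume the number of non-zero values is a fixed $l'$, and that each value either is eventually constant or tends to $\pm\infty$. If some value tends to infinity, then evaluating at $\mathfrak{t}^\theta_{N_1,1}$ and computing as in Lemma \ref{lem:lim_sph_func_formula}, the limit contains a factor $\alpha e^{iM_k\theta}+(1-\alpha)$ with $M_k\to\infty$. The argument of Lemma \ref{t_powers_conv} (an $L^2$ computation in $\theta$) contradicts pointwise convergence. So along any subsequence the multiset is eventually a fixed $\{M_1,\dots,M_l\}$, and by Lemma \ref{lem:lim_sph_func_formula} the limit equals $\prod_j\int\mathfrak{t}^{M_j}\,d\nu_{\widehat{\mathbf{n}}}$. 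These products, viewed as Fourier polynomials on $\mathcal{F}_{N_r}(\mathbb{T})$ for large $r$, determine the multiset, because the coefficient of $\prod_j \mathfrak{t}(x_j)^{M_j}$ with distinct $x_j$ is nonzero. Hence all subsequences yield the same multiset, so it stabilizes for the full sequence, and a suitable $\sigma\in\mathfrak{S}_{N_k}$ arranges these values in the first $l$ positions.

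The main obstacle is Step 1 with $k$-dependent differences $d_i$: the uniform-in-$d$ averaging of $|\cos|$ via \eqref{cos_integral} is exactly what must be made to work. The second delicate point is extracting $c_k=0$ in Step 2.
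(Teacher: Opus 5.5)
\textbf{Gap in Step 2.} Your Step 1 is a legitimate alternative to the paper's Steps 2 and 4. It handles a single estimate over $L$ disjoint pairs with $k$-dependent differences and makes it uniform by averaging $|\cos|$ in $\theta$. To be rigorous, write the bound at finite $k$, take $\limsup_k$ pointwise in $\theta$, and apply reverse Fatou before averaging.

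The problem is that Step 1 bounds only the \emph{number} of points with $\mathfrak{m}_k(x)\neq c_k$, not the \emph{size} of the deviations $e_x=\mathfrak{m}_k(x)-c_k$. So $S_k=c_kN_k+\sum_{x\in D_k}e_x$ being eventually constant is fully compatible with $c_k\neq 0$. Moreover, the factor $\prod_{x\in D_k}\bigl((1-\alpha)+\alpha t^{e_x}\bigr)$ you call a ``bounded correction'' can contain frequencies of order $N_k$.

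For example, take $c_k=1$ with a single exceptional value $S-N_k+1$. Then exactly
$\varphi_{\mathfrak{m}_k}(\mathfrak{t}^{\theta}_{N_1,1})=(1-\alpha)t^{\alpha N_k}+\alpha t^{S-(1-\alpha)N_k}$. There is no dominant phase to compare with the central value $t^{S}$. Lemma \ref{t_powers_conv} concerns pure monomials $t^{q_k}$ and does not apply to such sums. Hence your conclusion that ``$c_kN_k(1-\alpha)$ is eventually constant modulo bounded terms'' does not follow from the comparison you describe.

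\textbf{The missing ingredient} is the paper's Step 2: a uniform bound $\max_{x,x'}|\mathfrak{m}_k(x)-\mathfrak{m}_k(x')|<D$. You can obtain it from your own $L=1$ estimate by using continuity at $\theta=0$ at full strength, i.e.\ letting $|\varphi|\ge 1-\varepsilon$ on $[0,\delta_\varepsilon]$ with $\varepsilon\to0$ instead of fixing $|\varphi|\ge\frac12$. A pair with $|d|\to\infty$ forces the average of $|\varphi(\mathfrak{t}^{\theta}_{N_1,K_1})|$ over every $[0,\delta]$ to be at most $1-2\alpha(1-\alpha)(1-2/\pi)$, which contradicts $\varphi(1)=1$.

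With bounded spread, $|c_k|N_k\le |S|+BD$ immediately gives $c_k=0$. All values then lie in a finite set, and the escape-to-infinity part of your Step 3 becomes unnecessary.

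Alternatively, you can keep unbounded deviations and use the device from your Step 3. By Lemma \ref{spher_func_t_element}, $\varphi_{\mathfrak{m}_k}(\mathfrak{t}^{\theta}_{N_1,1})$ is a trigonometric polynomial in $\theta$ with nonnegative coefficients. Its coefficient at frequency $c_k\alpha N_k$ is at least $\binom{N_k-B}{\alpha N_k}/\binom{N_k}{\alpha N_k}\to(1-\alpha)^{B}$. If $c_k\neq0$, this mass escapes to infinity. That contradicts the $L^2(d\theta)$ convergence to $\varphi(\mathfrak{t}^{\theta}_{N_1,1})$, which follows from pointwise convergence by dominated convergence.
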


\begin{proof}
By assumption $\varphi$ is continuous and it is clear that $\varphi(1)=1$.
We divide the proof of the proposition into several steps.

\medskip

\noindent
\textbf{Step 1.} \emph{There exists a constant $S\in\mathbb{Z}$ such that
\begin{equation}\label{estimate_of_sum_m_k}
\sum_{x\in\mathbb{X}_{N_k}}\mathfrak{m}_k(x)=S
\end{equation}
for all sufficiently large $k$.}

\medskip

Indeed, if $f_\theta(x) =e^{i\theta}$ for all $x\in \mathbb{X}_{N_k}$, then the sequence $\varphi_{\mathfrak{m}_k}(f_\theta)$ converges to $\varphi(f_\theta)$ for all $\theta$ as $k\to\infty$. However, by Lemma \ref{spher_func_t_element} we have
\begin{equation*}
\varphi_{\mathfrak{m}_k}(f_\theta)=\exp\left\{i\theta\cdot\sum_{x\in\mathbb{X}_{N_k}}\mathfrak{m}_k(x)\right\}
\end{equation*}
Applying Lemma \ref{t_powers_conv} yields that the sequence $\{\sum_{x\in\mathbb{X}_{N_k}}\mathfrak{m}_k(x)\}_{k=1}^{\infty}$ is eventually constant.

\medskip

\noindent
\textbf{Step 2.} \emph{There exists a constant $D>0$ such that
\begin{equation}\label{estimate_m-m'}
\sup_{k\ge 1}\max_{x,x'\in\mathbb{X}_{N_k}}|\mathfrak{m}_k(x)-\mathfrak{m}_k(x')|<D.
\end{equation}}

\medskip

Take any $k$ and any distinct $m,m'\in\mathfrak{m}_k(\mathbb{X}_{N_k})$ and choose an arbitrary positive integer $K_1\in(0,N_1)$. Applying Lemma \ref{main_ineq_lemma} for $N=N_k$, $K=N_k\cdot K_1/N_1$ and $\mathfrak{m}_k\in\mathcal{F}_{N_k}(\mathbb{Z})$ and $L=1$, we obtain the inequality\footnote{Recall that we identify an element $\mathfrak{t}^\theta_{N_1,K_1}\in\mathcal{F}_{N_1}(\mathbb{T})$ with the element $\mathfrak{t}^\theta_{N_k,K_1N_k/N_1}\in\mathcal{F}_{N_k}(\mathbb{T})$.}
\begin{equation*}
\begin{split}
|\varphi_{\mathfrak{m}_k}(\mathfrak{t}^\theta_{N_1,K_1})|
&=|\varphi_{\mathfrak{m}_k}(\mathfrak{t}^\theta_{N_k,K_1N_k/N_1})|\le
\\
&\le\binom{N_k}{K_1N_k/N_1}^{-1}\left(\binom{N_k-2}{K_1N_k/N_1}+\binom{N_k-2}{K_1N_k/N_1-1}\left\lvert2\cos\frac{(m-m')\theta}{2}\right\rvert+\binom{N_k-2}{K_1N_k/N_1-2}\right),
\end{split}
\end{equation*}
where $\mathfrak{t}^\theta_{N_1,K_1}$ is defined by \eqref{t_N_K_function} with $t=e^{i\theta}$.
Integrating the inequality above over $\theta\in[0,\delta]$ for a positive $\delta>0$ and using the estimate from Lemma \ref{cos_integral} yields
\begin{equation*}
\begin{split}
\frac{1}{\delta}\int_{0}^{\delta}|\varphi_{\mathfrak{m}_k}(\mathfrak{t}^\theta_{N_1,K_1})|d\theta
&\le\frac{\binom{N_k-2}{K_1N_k/N_1}}{\binom{N_k}{K_1N_k/N_1}}+\frac{\binom{N_k-2}{K_1N_k/N_1-2}}{\binom{N_k}{K_1N_k/N_1}}+\frac{\binom{N_k-2}{K_1N_k/N_1-1}}{\binom{N_k}{K_1N_k/N_1}}\cdot\frac{1}{\delta}\int_{0}^{\delta}\left\lvert\cos\frac{(m-m')\theta}{2}\right\rvert d\theta\le
\\
&\le\frac{\binom{N_k-2}{K_1N_k/N_1}}{\binom{N_k}{K_1N_k/N_1}}+\frac{\binom{N_k-2}{K_1N_k/N_1-2}}{\binom{N_k}{K_1N_k/N_1}}+\frac{\binom{N_k-2}{K_1N_k/N_1-1}}{\binom{N_k}{K_1N_k/N_1}}\cdot\left(\frac{2}{\pi}+\frac{1}{|m-m'|\delta}\right).
\end{split}
\end{equation*}
Put $\alpha=K_1/N_1\in(0,1)$.
Now we use the fact that $m$ and $m'$ were arbitrary distinct elements of $\mathfrak{m}_k(\mathbb{X}_{N_k})$ and take the lower limits of both sides as $k\to\infty$. We obtain the following inequality (a similar calculation is done in the proof of Lemma \ref{lim_ineq_lemma}):
\begin{equation*}
\liminf_{k\to\infty}\frac{1}{\delta}\int_{0}^{\delta}|\varphi_{\mathfrak{m}_k}(\mathfrak{t}^\theta_{N_1,K_1})|d\theta\le\alpha^2+(1-\alpha)^2+2\alpha(1-\alpha)\Bigg(\frac{2}{\pi}+\frac{1}{\delta}\cdot\frac{1}{\limsup\limits_{k\to\infty}\max\limits_{x,x'\in\mathbb{X}_{N_k}}|\mathfrak{m}_k(x)-\mathfrak{m}_k(x')|}\Bigg).
\end{equation*}
On the other hand, since $\{\varphi_{\mathfrak{m}_k}\}_{k}$ converges pointwise to $\varphi$, the dominated convergence theorem implies that
\begin{equation*}
\liminf_{k\to\infty}\frac{1}{\delta}\int_{0}^{\delta}|\varphi_{\mathfrak{m}_k}(\mathfrak{t}^\theta_{N_1,K_1})|d\theta=\lim_{k\to\infty}\frac{1}{\delta}\int_{0}^{\delta}|\varphi_{\mathfrak{m}_k}(\mathfrak{t}^\theta_{N_1,K_1})|d\theta=\frac{1}{\delta}\int_{0}^{\delta}|\varphi(\mathfrak{t}^\theta_{N_1,K_1})|d\theta.
\end{equation*}
Consequently, if we assume that
\begin{equation}\label{eq:sup_infinite}
\sup_{k\ge 1}\max_{x,x'\in\mathbb{X}_{N_k}}|\mathfrak{m}_k(x)-\mathfrak{m}_k(x')|=+\infty
\end{equation}
then for any $\delta>0$ we have
\begin{equation}\label{eq:inequality_to_infinity}
\frac{1}{\delta}\int_{0}^{\delta}|\varphi(\mathfrak{t}^\theta_{N_1,K_1})|d\theta\le\alpha^2+(1-\alpha)^2+2\alpha(1-\alpha)\cdot\frac{2}{\pi}=1-2\alpha(1-\alpha)(1-2/\pi).
\end{equation}
Hence, taking into account  the continuity of $\varphi$, we obtain the following contradiction:
\begin{equation}
1=\varphi(1)=\lim_{\delta\to 0}\frac{1}{\delta}\int_{0}^{\delta}|\varphi(\mathfrak{t}^\theta_{N_1,K_1})|d\theta=1-2\alpha(1-\alpha)(1-2/\pi)<1.
\end{equation}
Thus, the assumption \eqref{eq:sup_infinite} is false, and the inequality \eqref{estimate_m-m'} holds.

\medskip

\noindent
\textbf{Step 3.}
\emph{The union $\bigcup_{k\ge 1}\mathfrak{m}_k(\mathbb{X}_{N_k})$ is a finite set.}

\medskip

Applying \eqref{estimate_of_sum_m_k} and \eqref{estimate_m-m'}, we get
\begin{equation*}
|\mathfrak{m}_k(x)|\le\left|\mathfrak{m}_k(x)-\frac{S}{N_k}\right|+\frac{S}{N_k}=\frac{1}{N_k}\left|\sum_{x'\in\mathbb{X}_{N_k}}(\mathfrak{m}_k(x)-\mathfrak{m}_k(x'))\right|+\frac{S}{N_k}\le D+\frac{S}{N_k}.
\end{equation*}
Therefore, $\sup_{x\in\mathbb{X}_{N_k},k\ge 1}|\mathfrak{m}_k(x)|$ is finite.

\medskip

\noindent
\textbf{Step 4.} \emph{There are no distinct integers $m$ and $m'$ such that
\begin{equation*}
\limsup_{k\to\infty}\min\{\#\{x\in\mathbb{X}_{N_k}:\mathfrak{m}_k(x)=m\},\#\{x\in\mathbb{X}_{N_k}:\mathfrak{m}_k(x)=m'\}\}=+\infty.
\end{equation*}
}

\medskip

Indeed, assume that such $m$ and $m'$ exist. Then, there exists an increasing sequence $\{k_j\}_{j=0}^\infty$ of positive integers such that
\begin{equation*}
L_j=\min\{\#\{x\in\mathbb{X}_{N_{k_j}}:\mathfrak{m}_{k_j}(x)=m\},\#\{x\in\mathbb{X}_{N_{k_j}}:\mathfrak{m}_{k_j}(x)=m'\}\}\to\infty ~\text{when}~j\to\infty.
\end{equation*}
Let $k_0=1$ and choose any positive integer $K_1$ with $0<K_1<N_1$. Then, the condition \eqref{m_occurences} in Lemma \ref{lim_ineq_lemma} holds for any positive integer $L$. Therefore, for any $L\ge 1$ we have
\begin{equation}\label{L_infinity}
|\varphi(\mathfrak{t}^{\theta}_{N_1,K_1})|\le\left(\alpha^2+2\alpha(1-\alpha)\left|\cos\frac{(m-m')\theta}{2}\right|+(1-\alpha)^2\right)^{L},
\end{equation}
where $\alpha=K_1/N_1\in(0,1)$. Now note that if $\cos\frac{1}{2}(m-m')\theta\neq \pm 1$, then
\begin{equation}
\alpha^2+2\alpha(1-\alpha)\left|\cos\frac{(m-m')\theta}{2}\right|+(1-\alpha)^2<\alpha^2+2\alpha(1-\alpha)+(1-\alpha)^2=1.
\end{equation}
Thus, for any $\theta$ with $\cos\frac{1}{2}(m-m')\theta\neq \pm 1$ we obtain from \eqref{L_infinity} that $\varphi(\mathfrak{t}^{\theta}_{N_1,K_1})=0$. But this contradicts the continuity of $\varphi(\mathfrak{t}^{\theta}_{N_1,K_1})$ and the fact that $\varphi(\mathfrak{t}^{\theta}_{N_1,K_1})|_{\theta=0}=1$.

\medskip

\noindent
\textbf{Step 5.} \label{M_1, M_2_ldots_constants}\emph{There exist a finite collection of non-zero integers $M_1$, \ldots, $M_l$ and a subsequence $\{k_i\}_{i=1}^{\infty}$ such that for all $i$ the sequence
\begin{equation*}
(\mathfrak{m}_{k_i}(1),\ldots,\mathfrak{m}_{k_i}(N_{k_i}))
\end{equation*}
is a permutation of the sequence $(M_1,\ldots,M_l,0\ldots,0)$.
}

\medskip

By Step 3, there exists a finite subset $V\subset\mathbb{Z}$ such that $\mathfrak{m}_k(\mathbb{X}_{N_k})\subset V$ for all $k$. Set
$\mathcal{C}_k(m)=\{x\in\mathbb{X}_{N_k}:\mathfrak{m}_k(x)=m\}$.

Observe that for every $k$ we have $\sum_{m\in V}\#\mathcal{C}_k(m)=\#\mathbb{X}_{N_k}=N_k$. Since $V$ is finite and $\lim\limits_{k\to\infty}N_k=\infty$, there exists $m_0\in V$ and a subsequence $\{k_i\}_{i=1}^{\infty}$ such that for all $k$
\begin{equation*}
\lim_{i\to\infty}\#\mathcal{C}_{k_i}(m_0)=\infty.
\end{equation*}
Without loss of generality we may assume that $k_i=i$ for all $i$. Let us prove that $m_0=0$. Indeed, by Step 4, there exists a constant $C>0$ such that
\begin{equation*}
\sum_{m\in V, m\neq m_0}\#\mathcal{C}_{k}(m)<C.
\end{equation*}
In particular, $\mathcal{C}_k(m_0)\ge N_k-C$ for all $k$. Since $V$ is finite we obtain
\begin{equation*}
\lim_{k\to\infty}\frac{1}{N_k}\sum_{x\in\mathbb{X}_{N_k}}\mathfrak{m}_k(x)=m_0.
\end{equation*}
On the other hand, by Step 1, we have
\begin{equation*}
\lim_{k\to\infty}\frac{1}{N_k}\sum_{x\in\mathbb{X}_{N_k}}\mathfrak{m}_k(x)=\lim_{k\to\infty}\frac{S}{N_k}=0,
\end{equation*}
hence, $m_0=0$.

Thus, for all $k$ the sequence $\{\mathfrak{m}_k(x)\}_{x\in\mathbb{X}_{N_k}}$ contains at most $C$ non-zero elements. The statement now follows from the fact that each $\mathfrak{m}_k(x)$ belongs to the finite set $V$.

\medskip

\noindent
\textbf{Step 6.} \emph{The statement of Step 5 holds for all large $k$, i.e., there exist a finite collection of non-zero integers $M_1,\ldots,M_l$ such that for all large $k$ the sequence
\begin{equation*}
(\mathfrak{m}_k(1),\ldots,\mathfrak{m}_{k}(N_k))
\end{equation*}
is a permutation of the sequence $(M_1,\ldots,M_l,0,\ldots,0)$.}

\medskip

Observe the argument of Step 5 applies to any subsequence of the original sequence $\{N_{k}\}_{k=1}^{
\infty}$. The statement now follows from Lemma \ref{lem:lim_sph_func_formula} and the fact that different multisets $(M_1,\ldots,M_l)$ produce different limiting functions in Lemma \ref{lem:lim_sph_func_formula}.
\end{proof}

\subsection{Classification of spherical functions.} Now we can complete the classification of spherical functions of the Gelfand pair $(G_{\widehat{\mathbf{n}}},\mathfrak{S}_{\widehat{\mathbf{n}}})$.

\begin{proof}[Proof of Theorem \ref{thm:spherical_func}]
Using Proposition \ref{prop:sph_func_conv_seq} and Proposition \ref{prop:sph_func_approx}, we obtain that any spherical function of the Gelfand pair $(G_{\widehat{\mathbf{n}}},\mathfrak{S}_{\widehat{\mathbf{n}}})$ must be of the form indicated in Theorem \ref{thm:spherical_func}. Thus, it remains to show that each of these functions is indeed an indecomposable spherical function. The indecomposability is a direct consequence of Lemma \ref{lem:lin_independence_phi_func} or Steps 2 and 3 of the proof of Lemma \ref{lem:indecomp_main}.
\end{proof}

\section{Irreducible representations and characters of $\mathbb{T}^N\rtimes\mathfrak{S}_{N}$}\label{sect:irred_char_wreath_prod}

In this section we give explicit formulas for irreducible characters of the compact group $G_{N}\simeq\mathbb{T}^{N}\rtimes\mathfrak{S}_{N}$.

\subsection{General facts.}
Let us recall a few facts from the representation theory of semidirect products $G=A\rtimes H$, where $A$ is a compact abelian group and $H$ is a finite group.
Denote by $\psi(h)$ the corresponding action of $h\in H$ on $A$. In other words, $\psi\colon H\to\Aut(A)$ is a group homomorphism and the group structure on $A\rtimes H$ (which is just $A\times H$ as a set) is defined as follows:
\begin{equation*}
(a_1,h_1)\cdot(a_2,h_2)=(a_1\psi(h_1)(a_2),h_1h_2).
\end{equation*}

All irreducible representations of $A$ are one-dimensional and parameterized by multiplicative characters (i.e., group homomorphisms) $\alpha\colon A\to\mathbb{T}$. Multiplicative characters on $A$ form an abelian group called the \emph{Pontryagin dual} group, denoted by $\widehat{A}$. The action $\psi$ of $H$ on $A$ induces the  action $\widehat{\psi}$ of $H$ on the dual group $\widehat{A}$:
\begin{equation*}
\widehat{\psi}(h)(\alpha)(a)=\alpha(\psi(h^{-1})(a)).
\end{equation*}
Let $1_H$ be the identity of $H$, and let  $1_A$ be the identity of $A$.
We identify subgroups $\{(a,1_H):a\in A\}$ and $\{(1_A,h):h\in H\}$ of $A\rtimes H$ with $A$ and $H$, respectively. Note that $A$ is a normal subgroup of $A\rtimes H$ and the action of $\psi(h)$ is given by the conjugation by $h$:
\begin{equation*}
\psi(h)(a)=hah^{-1},~a\in A.
\end{equation*}

Now let us recall the description of irreducible unitary representations of the group $G=A\rtimes H$. Fix any $\alpha\in\widehat{A}$ and let $\Stab(\alpha)=\left\{h\in H: \widehat{\psi}(h)(\alpha)=\alpha\right\}$ be its stabilizer in $H$. For a unitary irreducible representation $\rho$ of $\Stab(\alpha)$ in Hilbert space $\mathcal{H}_\rho$ define representation $\alpha\otimes\rho$ of the subgroup $A\times \Stab(\alpha)\subset G$ by the formula
\begin{equation*}
(\alpha\otimes\rho)(ah)\,\eta=\alpha(a)\rho(h)\,\eta,~\text{where}~\eta\in\mathcal{H}_\rho, a\in A,h\in\Stab(\alpha).
\end{equation*}
Denote by $\mathfrak{C}_\alpha=H/\Stab(\alpha)$ the space of left $\Stab(\alpha)$-cosets in $H$. Then, it is known (see \cite[Ch. 17, Theorem 4]{Barut_Raczka} or \cite[\S 13.3, Theorem 1]{Kirillov}) that any irreducible unitary representation $\pi$ of $A\rtimes H$ is equivalent to a representation of the form
\begin{equation*}
\pi=\Ind_{A\times\Stab(\alpha)}^{A\rtimes H}\alpha\otimes\rho.
\end{equation*}
Note that the induced representation $\pi$ has dimension $\#\mathfrak{C}_\alpha\cdot\dim\rho$. Fix a left transversal $\mathfrak{i}\colon\mathfrak{C}_\alpha\to H$, i.e., an injective map such that $\mathfrak{i}(c)\in c$ for each left coset $c\in\mathfrak{C}_{\alpha}$.
The induced character formula \cite[\S 13.6, Theorem 1]{Kirillov} implies that the normalized character $\chi_\pi$ of the  representation $\pi$ is given by
\begin{equation}\label{char_semidirect_pr}
\chi_{\pi}(g)=\frac{1}{\#\mathfrak{C}_\alpha}\sum_{\substack{c\in\mathfrak{C}_\alpha\\\mathfrak{i}(c)^{-1}\,g\, \mathfrak{i}(c)\in A\times\Stab(\alpha)}}(\alpha\otimes\chi_{\rho})(\mathfrak{i}(c)^{-1}\,g\,\mathfrak{i}(c)),~g\in A\rtimes H,
\end{equation}
where $\chi_{\rho}$ is the normalized character of $\rho$.

\begin{Rem}
The character $\chi_{\rho}$ is normalized in a sense that $\chi_{\rho}(1_{G})=1$, i.e., we define $\chi_{\rho}(g)=\frac{\Tr\rho(g)}{\Tr\rho(e)}$. For $g=a\in A$ the formula above simplifies to
\begin{equation*}
\chi_{\pi}(a)=\frac{1}{\#\mathfrak{C}_\alpha}\sum_{\substack{c\in\mathfrak{C}_\alpha}}\alpha(\mathfrak{i}(c)^{-1}\,a\,\mathfrak{i}(c)).
\end{equation*}
\end{Rem}

\subsection{Case of $G_{N}\simeq\mathbb{T}^{N}\rtimes\mathfrak{S}_{N}$.} Let us apply the results of the previous subsection for the group $G_{N}=\mathcal{F}_{N}(\mathbb{T})\rtimes\mathfrak{S}_{N}$. In this case the group $H=\mathfrak{S}_{N}$ acts on an abelian group $A=\mathcal{F}_{N}(\mathbb{T})\simeq\mathbb{T}^{N}$ by permutations.

\subsubsection{Description of irreducible representations.}
Note that for any $\mathfrak{m}\in\mathcal{F}_{N}(\mathbb{Z})\simeq\widehat{\mathbb{T}^{N}}$ the stabilizer subgroup $\Stab(\mathfrak{m})$ is a \emph{Young subgroup}, i.e., it is isomorphic to a direct product of symmetric groups. More precisely, if we define
\begin{equation*}
Y_j(\mathfrak{m})=\mathfrak{m}^{-1}(\{j\})=\left\{x\in \mathbb{X}_{N}:\mathfrak{m}(x)=j\right\},
\end{equation*}
then $\mathbb{X}_{N}=\bigsqcup_{j\in\mathbb{Z}}Y_j(\mathfrak{m})$ and
\begin{equation*}
\Stab(\mathfrak{m})=\prod_{j\in\mathbb{Z}}\mathfrak{S}_{Y_{j}(\mathfrak{m})},~\text{where}~\mathfrak{S}_{Y_{j}(\mathfrak{m})}~\text{is the group of all permutations of}~Y_{j}(\mathfrak{m}).
\end{equation*}
Since for all but finitely many $j\in\mathbb{Z}$ the set $Y_{j}(\mathfrak{m})$ is empty, the product on the right is actually finite. It is clear that each element $s\in\Stab(\mathfrak{m})$ is a product $s=\prod\limits_js_j$, where $s_j\in\mathfrak{S}_{Y_{j}(\mathfrak{m})}$. Define  homomorphism $\mathfrak{p}_j$ from $\Stab(\mathfrak{m})$ onto $\mathfrak{S}_{Y_{j}(\mathfrak{m})}$ by $\mathfrak{p}_j(s)=s_j$.

The irreducible representations of the symmetric group $\mathfrak{S}_{d_j}$, where $d_j=\#Y_j(\mathfrak{m})$, are parameterized by partitions $\lambda(j)\vdash d_j$. Denote by $\rho_{\lambda(j)}$ the irreducible representation of $\mathfrak{S}_{d_j}$ which corresponds to $\lambda(j)$.

Let $\pi$ be any irreducible representation of the group $G_{N}$. Then, there exist $\mathfrak{m}\in\mathcal{F}_{N}(\mathbb{Z})$ and an irreducible representation $\rho$ of the group $\Stab(\mathfrak{m})$ in Hilbert space $\mathcal{H}_\rho$ such that
\begin{equation*}
\pi\simeq\Ind_{\mathcal{F}_{N}(\mathbb{T})\times\Stab(\mathfrak{m})}^{G_{N}}(R_{\mathfrak{m}}\otimes\rho),
\end{equation*}
where $R_{\mathfrak{m}}$ is one-dimensional representation of the abelian group $\mathcal{F}_{N}(\mathbb{T})$ of the form $R_{\mathfrak{m}}(f)= \prod\limits_{x\in \mathbb{X}_N} f(x)^{\mathfrak{m}(x)}$, $f\in \mathcal{F}_{N}(\mathbb{T})$.

Since each irreducible representations $\rho$ of $\Stab(\mathfrak{m})$ is the tensor product of irreducible representations of $\mathfrak{S}_{Y_{j}(\mathfrak{m})}$, i.e., $\rho=\bigotimes_{j\in\mathbb{Z}}\rho_{\lambda(j)}$ $j\in\mathbb{Z}$, where  $\rho_{\lambda(j)}$ is an irreducible representation of $\mathfrak{S}_{Y_{j}(\mathfrak{m})}=\mathfrak{S}_{d_j}$, we obtain that for any $s\in \Stab(\mathfrak{m})$ we have
\begin{equation*}
\chi_{\rho}(s)=\prod_{j\in\mathbb{Z}}\chi_{\rho_{\lambda(j)}}(s_j),~s_j=\mathfrak{p}_j(s).
\end{equation*}
Denote by $\lambda^\rho$ the sequence $\{\lambda(j)\}_{j\in\mathbb{Z}}$  of partitions $\lambda(j)\vdash d_j$.

\subsubsection{Character formula.}

Put $\mathfrak{C}_\mathfrak{m}=\mathfrak{S}_{N}/\Stab(\mathfrak{m})$ and fix a corresponding left transversal $\mathfrak{i}\colon\mathfrak{C}_\mathfrak{m}\to \mathfrak{S}_{N}$.
Let $g=(\mathfrak{t},\sigma)$ be an element of $G_{N}=\mathcal{F}_{N}(\mathbb{T})\rtimes\mathfrak{S}_{N}$. Applying the character formula \eqref{char_semidirect_pr}, we obtain that
\begin{equation}\label{Frob_formula}
\chi_{\pi}(\mathfrak{t}\sigma)=\frac{1}{\#\mathfrak{C}_\mathfrak{m}}\sum_{{\substack{c\in\mathfrak{C}_\mathfrak{m}\\ \mathfrak{i}(c)^{-1}\sigma\,\mathfrak{i}(c)\in \Stab(\mathfrak{m})}}}R_{\mathfrak{m}}(\mathfrak{t}\circ\, {\mathfrak{i}(c)^{-1}} )\prod_{j\in\mathbb{Z}}\chi_{\lambda(j)}\left(\mathfrak{p}_j\left(\mathfrak{i}(c)^{-1}\sigma\,\mathfrak{i}(c)\right)\right),
\end{equation}
where $\chi_{\lambda(j)}$ is the normalized character of the representation $\rho_{\lambda(j)}$. (See also \cite[Theorem 4.5(ii)]{Hirai_Hirai_Hora_1}.)

Denote by $\mathcal{A}(\sigma,\mathfrak{m})$ the set of partitions $\{A_j\}_{j\in\mathbb{Z}}$ of $\mathbb{X}_N$ satisfying the following properties:
\begin{equation*}
\#A_j=d_j,~\sigma(A_j)=A_j ~\text{for all}~j\in\mathbb{Z}.
\end{equation*}
Introduce the subset $\mathfrak{C}_{\mathfrak{m},\sigma}=\left\{c\in\mathfrak{C}_\mathfrak{m}:\mathfrak{i}(c)^{-1}\sigma\,\mathfrak{i}(c)\in\Stab(\mathfrak{m}) \right\}$.
Since the map $\mathfrak{C}_{\mathfrak{m},\sigma}\ni c\mapsto \left\{ \mathfrak{i}(c) (Y_j(\mathfrak{m}))\right\}_{j\in\mathbb{Z}}\in \mathcal{A}(\sigma, \mathfrak{m})$ is a bijection, we obtain from \eqref{Frob_formula} that
\begin{equation*}
\chi_{\pi}(g)=\frac{1}{\#\mathfrak{C}_\mathfrak{m}}\sum_{\{A_j\}_{j\in\mathbb{Z}}\in\mathcal{A}(\sigma, \mathfrak{m})}\prod_{j\in\mathbb{Z}}\prod_{x\in A_j}\mathfrak{t}(x)^{j}\times\prod_{j\in\mathbb{Z}}\chi_{\lambda(j)}(\mathfrak{p}_j(\sigma)),
\end{equation*}

Let $c(\sigma)=(c_1(\sigma),c_2(\sigma),\ldots)$ be the cycle type of $\sigma\in\mathfrak{S}_{N}$. Note that the orbits of $\sigma$ on $\mathbb{X}_{N}$ are in one-to-one correspondence with cycles of $\sigma$. Clearly, any $\sigma$-invariant subset is a disjoint union of some of orbits of $\sigma$.
Consider any partition $\{A_j\}_{j\in\mathbb{Z}}\in \mathcal{A}(\sigma, \mathfrak{m})$ and let $\beta_{jl}$  be the number of $\sigma$-orbits of length $l$ in $A_j$. Then, we have
\begin{equation}\label{beta_matr_cond}
\sum_{l=1}^{\infty}\beta_{jl}l=d_j,~\sum_{j\in\mathbb{Z}}\beta_{jl}=c_l(\sigma).
\end{equation}
Observe that the action of $\sigma$ on $A_{j}$ is a permutation of the cycle type $\beta_j=(\beta_{j1},\beta_{j2},\ldots)$.
Thus any partition $\left\{ A_j\right\}_{j\in\mathbb{Z}}\in \mathcal{A}(\sigma,\mathfrak{m})$ defines the matrix 
\begin{equation*}
\beta\left( \left\{ A_j\right\}_{j\in\mathbb{Z}}\right)=
\left[\begin{matrix}
\vdots&\vdots&\vdots&\vdots&\iddots
\\
\beta_{-1,1}&\beta_{-1,2}&\beta_{-1,3}&\beta_{-1,4}&\ldots
\\
\beta_{0,1}&\beta_{0,2}&\beta_{0,3}&\beta_{0,4}&\ldots
\\
\beta_{1,1}&\beta_{1,2}&\beta_{1,3}&\beta_{1,4}&\ldots
\\
\vdots&\vdots&\vdots&\vdots&\ddots
\end{matrix}\right]
\end{equation*}
with non-negative integer entries that satisfy \eqref{beta_matr_cond}. Let us denote  by $\beta\in\mathcal{B}(\sigma,\mathfrak{m})$ the set of all such matrices.
For any $\beta\in\mathcal{B}(\sigma,\mathfrak{m})$ set
\begin{equation*}
\mathcal{A}(\beta,\sigma,\mathfrak{m})=\{\{A_j\}_{j\in\mathbb{Z}}\in\mathcal{A}(\sigma, \mathfrak{m}):\beta(\{A_j\})=\beta\}.
\end{equation*}
Note that
\begin{equation}\label{A_beta_sigma_m_card}
\#\mathcal{A}(\beta,\sigma,\mathfrak{m})=\prod_{l=1}^{\infty}
\binom{c_l(\sigma)}{\{\beta_{jl}\}_{j\in\mathbb{Z}}}=\prod_{l=1}^{\infty}\Big(c_{l}(\sigma)!\cdot\prod_{j\in\mathbb{Z}}(\beta_{jl}!)^{-1}\Big).
\end{equation}
\begin{Exa}
Consider a special case when $\sigma$ is the identity permutation of $\mathfrak{S}_{N}$. Then, the cycle type of $\sigma$ is $c(\sigma)=(N,0,0,\ldots)$. Therefore, the set $\mathcal{B}(\sigma,\mathfrak{m})$ consists of only one element, namely, the matrix $\beta=\{\beta_{jl}\}_{j\in\mathbb{Z},l\in\mathbb{N}}$ with entries
\begin{equation*}
\beta_{jl}=
\begin{cases}
d_j,~&l=1,
\\
0,~&l>1.
\end{cases}
\end{equation*}
The corresponding set $\mathcal{A}(\beta,\sigma,\mathfrak{m})$ is just the set of all partitions $\{A_j\}_{j\in\mathbb{Z}}$ of the set $\mathbb{X}_{N}$ into disjoint subsets $A_j$, $j\in\mathbb{Z}$, such that $\#A_j=d_j$.
\end{Exa}

We summarize the discussion above in the following proposition.
\begin{Prop}\label{Proposition 4.1 _character_formula}
The irreducible representations of the group $G_{N}$ can be parameterized by pairs $(\mathfrak{m},\lambda^{\rho})$, where $\mathfrak{m}\in\mathcal{F}_{N}(\mathbb{Z})$ and $\lambda^{\rho}$ is a sequence of partitions $\{\lambda(j)\vdash d_j=\#Y_j(\mathfrak{m})\}_{j\in\mathbb{Z}}$. Using the notation introduced above, the corresponding normalized character is given by the formula
\begin{equation}\label{G_N_char_formula}
\chi_{\pi}(g)=\frac{1}{\#\mathfrak{C}_\mathfrak{m}}\sum_{\beta\in\mathcal{B}(\sigma,\mathfrak{m})}\sum_{\{A_j\}\in\mathcal{A}(\beta,\sigma,\mathfrak{m})}\prod_{j\in\mathbb{Z}}\prod_{x\in A_j}\mathfrak{t}(x)^{j}\times\prod_{j\in\mathbb{Z}}\chi_{\lambda(j)}^{\beta_j},~g=(\mathfrak{t},\sigma)\in G_N.
\end{equation}
Here,  $\beta_j=(\beta_{j1},\beta_{j2},\ldots)$ is the cycle type of the permutation $\mathfrak{p}_j(\sigma)$ and $\chi_{\lambda}^{\mu}$ is the value of the normalized character of the irreducible representation  $\rho_{\lambda}$ on a permutation of the cycle type $\mu$.
\end{Prop}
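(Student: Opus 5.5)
The statement is essentially a reorganization of the induced character formula \eqref{Frob_formula}, so I would derive it by rewriting the sum in three stages.

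\textbf{Step 1: parametrization.} This is the Mackey description recalled above, specialized to $A=\mathcal{F}_N(\mathbb{T})$ and $H=\mathfrak{S}_N$. Irreducibles are labeled by an $\mathfrak{S}_N$-orbit of $\mathfrak{m}\in\mathcal{F}_N(\mathbb{Z})$ together with an irreducible $\rho$ of $\Stab(\mathfrak{m})=\prod_j\mathfrak{S}_{Y_j(\mathfrak{m})}$. Since an irreducible of a finite direct product is a tensor product of irreducibles of the factors, $\rho=\bigotimes_j\rho_{\lambda(j)}$ with $\lambda(j)\vdash d_j$. This gives the pair $(\mathfrak{m},\lambda^\rho)$.

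\textbf{Step 2: reindex the sum by invariant partitions.} Starting from \eqref{Frob_formula}, a coset $c$ contributes only if $\tau:=\mathfrak{i}(c)$ satisfies $\tau^{-1}\sigma\tau\in\Stab(\mathfrak{m})$. This holds exactly when $\sigma$ preserves each block $A_j:=\tau(Y_j(\mathfrak{m}))$.

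The map $c\mapsto\{A_j\}$ is well defined on left cosets, because right multiplication by $\Stab(\mathfrak{m})$ fixes each $Y_j$. It is injective, since $\tau(Y_j)=\tau'(Y_j)$ for all $j$ forces $\tau^{-1}\tau'\in\Stab(\mathfrak{m})$. It is surjective onto $\mathcal{A}(\sigma,\mathfrak{m})$, because any partition with $\#A_j=d_j$ is $\tau(Y_j)$ for some $\tau$.

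Under this bijection the factor $R_{\mathfrak{m}}(\mathfrak{t}\circ\tau)$ becomes $\prod_x\mathfrak{t}(\tau x)^{\mathfrak{m}(x)}=\prod_j\prod_{y\in A_j}\mathfrak{t}(y)^j$. I will pin down the convention for the $\mathfrak{S}_N$-action ($\mathfrak{t}\circ\tau$ versus $\mathfrak{t}\circ\tau^{-1}$) so that this matches the stated formula; the other choice only relabels the sum.

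\textbf{Step 3: the symmetric-group factor.} The permutation $\mathfrak{p}_j(\tau^{-1}\sigma\tau)$ of $Y_j$ is conjugate, via $\tau$, to $\sigma|_{A_j}$. Its cycle type is therefore the vector $\beta_j$ of orbit counts of $\sigma$ on $A_j$. Because normalized characters are class functions, the factor equals $\prod_j\chi^{\beta_j}_{\lambda(j)}$.

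This value depends only on the matrix $\beta(\{A_j\})$. Grouping the partitions according to their matrix $\beta\in\mathcal{B}(\sigma,\mathfrak{m})$ then gives \eqref{G_N_char_formula}. The constraints \eqref{beta_matr_cond} hold automatically: summing $l\beta_{jl}$ over $l$ gives $\#A_j=d_j$, and summing $\beta_{jl}$ over $j$ counts all $l$-cycles of $\sigma$.

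\textbf{Main obstacle.} There is no real difficulty. The only delicate point is the bookkeeping in Step 2, namely checking that the coset-to-partition map is a well-defined bijection and that the left/right action conventions are consistent, so that the exponent attached to $A_j$ comes out as exactly $j$.
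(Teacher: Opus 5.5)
Your proposal is correct and follows the paper's own route: Mackey's description of the irreducibles, the induced character formula \eqref{Frob_formula}, the bijection $c\mapsto\{\mathfrak{i}(c)(Y_j(\mathfrak{m}))\}_j$ from $\mathfrak{C}_{\mathfrak{m},\sigma}$ onto $\mathcal{A}(\sigma,\mathfrak{m})$, and then grouping the terms by the orbit-count matrix $\beta$. The convention you flag comes out as you wrote it, since the paper's group law gives $\tau^{-1}\mathfrak{t}\tau=\mathfrak{t}\circ\tau$. Your remark that $\mathfrak{p}_j(\tau^{-1}\sigma\tau)$ is conjugate to $\sigma|_{A_j}$ spells out what the paper abbreviates as $\mathfrak{p}_j(\sigma)$.
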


\section{Limits of characters of $G_{N_k}$}\label{sect:char_limits}

\subsection{Special characters of the group $\mathfrak{S}_{\widehat{\mathbf{n}}}$}
Recall that $\mathfrak{S}_{\widehat{\mathbf{n}}}$ is the inductive limit of symmetric groups $\{\mathfrak{S}_{N_k}\}_{k}$ with the block-diagonal embedding. The group $\mathfrak{S}_{\widehat{\mathbf{n}}}$ has at most two multiplicative characters, namely, the \emph{trivial} character and the \emph{sign} character $\sgn_{\infty}\colon\mathfrak{S}_{\widehat{\mathbf{n}}}\to\mathbb{C}$ given by
\begin{equation*}
\sgn_{\infty}(\sigma):=\lim_{k\to\infty}\sgn_{N_k}(\sigma),
\end{equation*}
where $\sgn_{N_k}(\sigma)$ is the sign of $\sigma$ viewed as an element of $\mathfrak{S}_{N_k}$. Note that $\sgn_{\infty}$ is non-trivial if and only if the sequence $\widehat{\mathbf{n}}=\{n_k\}_{k=1}^{\infty}$ contains only finitely many even numbers.

Denote also $\chi_{\reg}\colon\mathfrak{S}_{\widehat{\mathbf{n}}}\to\mathbb{C}$ the \emph{regular character} of $\mathfrak{S}_{\widehat{\mathbf{n}}}$ defined by the formula
\begin{equation*}
\chi_{\reg}(\sigma)=
\begin{cases}
1,~&\sigma=1_{\mathfrak{S}_{\widehat{\mathbf{n}}}},
\\
0,~&\sigma\neq1_{\mathfrak{S}_{\widehat{\mathbf{n}}}}.
\end{cases}
\end{equation*}
Finally, the \emph{natural} character $\chi_{\nat}$ of $\mathfrak{S}_{\widehat{\mathbf{n}}}$ is given by the formula
\begin{equation*}
\chi_{\nat}(\sigma)=\nu_{\widehat{\mathbf{n}}}(\Fix(\sigma;\mathbb{X}_{\widehat{\mathbf{n}}})),
\end{equation*}
where $\Fix(\sigma;\mathbb{X}_{\widehat{\mathbf{n}}})$ stands for the subset of $\sigma$-fixed points in $\mathbb{X}_{\widehat{\mathbf{n}}}$ (see Section \ref{sect:measure_space_interpret}). For any permutation $\sigma\in\mathfrak{S}_{N}$ we denote by $\Fix_{N}(\sigma)$ the set of its fixed points in $\mathbb{X}_{N}$. In particular, for $\sigma\in\mathfrak{S}_{N_k}\subset\mathfrak{S}_{\widehat{\mathbf{n}}}$ we have
\begin{equation*}
\chi_{\nat}(\sigma)=\frac{\#\Fix_{N_k}(\sigma)}{N_k}.
\end{equation*}
For convenience we introduce the following notation for special characters of $\mathfrak{S}_{\widehat{\mathbf{n}}}$: for $\sigma\in\mathfrak{S}_{\widehat{\mathbf{n}}}$ we let
\begin{equation}\label{phi_mathrm sym}
\phi_{\sym}(\sigma;p)=
\begin{cases}
1,~&p=\triv~\text{(trivial character)},
\\
\sgn_{\infty}(\sigma),~&p=\sgn~\text{(sign character)},
\\
\chi_{\reg}(\sigma),~&p=\reg~\text{(regular character)}.
% \\
% \chi_{\mathrm{nat}}(\sigma),~&p=\nat~\text{(natural character)}.
\end{cases}
\end{equation}

\subsection{The approximation theorem.} As was the case for the spherical functions, to obtain the classification of indecomposable characters on $G_{\widehat{\mathbf{n}}}$ we use the approximation technique.

\begin{Prop}\label{prop:approx_char}
Any normalized indecomposable character of the group $G_{\widehat{\mathbf{n}}}$ is a weak limit of a sequence of normalized irreducible characters of groups $\{G_{N_{k_l}}\}_{l=1}^{\infty}$ for some subsequence $\{N_{k_l}\}_{l=1}^{\infty}$.
\end{Prop}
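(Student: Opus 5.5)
The plan follows the Vershik--Kerov ergodic method in its general form, as already used for spherical functions in Proposition \ref{prop:sph_func_approx}. Let $\chi$ be an indecomposable normalized character of $G_{\widehat{\mathbf{n}}}$, and let $(\pi_\chi,\mathcal{H}_\chi,\xi_\chi)$ be its GNS triple, so that $M=\pi_\chi(G_{\widehat{\mathbf{n}}})''$ is a finite factor with faithful normal trace $\tau(\cdot)=\langle\,\cdot\,\xi_\chi,\xi_\chi\rangle$. For each $k$ the restriction $\chi|_{G_{N_k}}$ is a central positive-definite function on the compact group $G_{N_k}$. By Peter--Weyl it therefore decomposes as
\[
\chi|_{G_{N_k}}=\sum_{\pi\in\widehat{G_{N_k}}}c_k(\pi)\chi_\pi ,\qquad c_k(\pi)\ge0,\quad \sum_\pi c_k(\pi)=1,
\]
where $\chi_\pi$ are the normalized irreducible characters. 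The weights are $c_k(\pi)=\tau(P_k(\pi))$, where $P_k(\pi)=\pi_\chi(e_\pi)$ is the image of the central idempotent $e_\pi\in\mathbf{C}^*[G_{N_k}]$ of the isotypic component $\pi$.

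The key step is to pick, for each $k$, a single $\pi_k$ such that $\chi_{\pi_k}\to\chi$ pointwise along a subsequence. I would use the standard martingale argument. The projections $P_k(\pi)$ form, as $k$ grows, a refining family of partitions of unity inside $M$. The algebras $\mathcal{Z}_k$ they generate are the images of the centers of $\mathbf{C}^*[G_{N_k}]$. Fix $g\in G_{N_r}$ and set $u=\pi_\chi(g)$. For $k\ge r$, consider the function on the atoms of $\mathcal{Z}_k$ given by
\[
\pi\mapsto \frac{\tau(uP_k(\pi))}{\tau(P_k(\pi))}=\chi_\pi(g).
\]
This identity holds because $uP_k(\pi)$ lies in the image of the $\pi$-isotypic block, where $\tau$ restricts to a multiple of the normalized trace; so the value is the normalized character. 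As a function of $\pi$ this is the conditional expectation $E_{\mathcal{Z}_k}$, or more precisely its version on $\mathcal{Z}_k'\cap M$ restricted by centrality, of $u$ onto the commutative algebra of $\pi_\chi(\mathbf{C}^*[G_{N_k}])$-central elements. The algebras $\mathcal{C}_k=\pi_\chi(G_{N_k})'\cap M$ decrease in $k$. Their intersection is $\pi_\chi(G_{\widehat{\mathbf{n}}})'\cap M=\mathbb{C}$, since $M$ is a factor.

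By the reverse martingale convergence theorem for decreasing von Neumann algebras with trace, $E_{\mathcal{C}_k}(u)\to\tau(u)\cdot 1=\chi(g)$ in $L^2(M,\tau)$. Since $E_{\mathcal{C}_k}(u)$ restricted to $\mathcal{Z}_k$ equals $\sum_\pi\chi_\pi(g)P_k(\pi)$ (using centrality of $u$-averages over $G_{N_k}$), this gives
\[
\sum_\pi c_k(\pi)\,|\chi_\pi(g)-\chi(g)|^2\to0 .
\]
The identification of the expectation onto $\mathcal{C}_k$ with $\sum_\pi\chi_\pi(g)P_k(\pi)$ needs care. I would justify it by averaging $u$ over conjugation by the compact group $G_{N_k}$, which yields the expectation onto $\pi_\chi(G_{N_k})'\cap M$. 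That average, applied to $u$ with $g\in G_{N_k}$, lies in $\mathcal{Z}_k$ and equals the displayed sum.

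Now enumerate a countable dense subset $\{g_i\}$ of $G_{\widehat{\mathbf{n}}}$, using second countability of each $G_{N_k}$. Choose $k_l$ so that
\[
\sum_\pi c_{k_l}(\pi)\sum_{i\le l}2^{-i}|\chi_\pi(g_i)-\chi(g_i)|^2<1/l^2 .
\]
By Chebyshev some $\pi_{k_l}$ with $c_{k_l}(\pi_{k_l})>0$ satisfies $\sum_{i\le l}2^{-i}|\chi_{\pi_{k_l}}(g_i)-\chi(g_i)|^2<1/l^2$. This gives pointwise convergence on the dense set. To pass to all of $G_{\widehat{\mathbf{n}}}$ I would argue directly on each $G_{N_r}$: the $L^2$ statement gives, for each fixed $r$, convergence in $L^2(G_{N_r}\times\{\text{weights}\})$. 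Integrating over Haar measure on $G_{N_r}$ and using the diagonal choice with $r\le l$, I get convergence a.e. Then equicontinuity of the $\chi_{\pi}$ fails in general, so instead I would run the same Chebyshev selection with $g$ arbitrary but the bound uniform: $\int_{G_{N_r}}\sum_\pi c_k|\ldots|^2\,dg\to0$ with $|\cdot|\le2$, combined with a subsequence argument.

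I expect this upgrade from almost-everywhere or dense convergence to genuine pointwise convergence everywhere to be the main obstacle. The fallback is to note that Proposition \ref{prop:general_approx_sph_func} in the Appendix supplies exactly this for general inductive limits of compact groups. By Proposition \ref{prop:char_spherical_double}, I would then apply it to the Gelfand pairs $(G_{N_k}\times G_{N_k},\diag G_{N_k})$, whose spherical functions are $\chi_\pi(g_1g_2^{-1})$. That reduces the claim to the already-available spherical approximation theorem.
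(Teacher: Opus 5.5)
Up to its last step your argument is the paper's own proof, namely Proposition~\ref{prop:general_approx_thm}, to which Proposition~\ref{prop:approx_char} is reduced. The shared steps are:
\begin{itemize}
\item the averages $E_n(m)=\int_{G_{N_n}}\pi_\chi(g)m\pi_\chi(g)^{-1}\,dg$, viewed as conditional expectations onto $M\cap\pi_\chi(G_{N_n})'$;
\item factoriality forcing $E_n(m)\to\tau(m)I$ (Lemma~\ref{lem:E_infty});
\item the expansion $E_n(\pi_\chi(h))=\sum_\pi\chi_\pi(h)P_{\pi,n}$ (Lemma~\ref{lem:Pi_m_Pi_decomposition});
\item a diagonal Chebyshev selection.
\end{itemize}
The only difference is that you evaluate at points $g$, whereas the paper pairs with test functions $f\in L^1(G_{N_i})$. (Your ``refining partitions'' remark is inaccurate, since the centers $\mathcal{Z}_k$ are not nested, but nothing depends on it; the decreasing algebras $\mathcal{C}_k$ carry the argument.)

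The step you flag, however, is a genuine gap, and the remedies you sketch do not close it. At stage $l$ the chosen $\pi_{k_l}$ is controlled only at $g_1,\dots,g_l$, and convergence on a countable dense set says nothing elsewhere. For example, on $\mathbb{T}$ the normalized irreducible characters $t\mapsto t^{l!}$ tend to $1$ at every root of unity. Yet $\int_{\mathbb{T}}|t^{l!}-1|^2\,d\mu=2$ for all $l$, and by Lemma~\ref{t_powers_conv} they do not converge pointwise on $\mathbb{T}$. Likewise, a.e.\ or $L^2$ convergence combined with ``a subsequence argument'' cannot by itself produce convergence at every point. And, as you note, the $\chi_\pi$ are not equicontinuous.

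The missing ingredient is Raikov's theorem, used in the paper as Lemma~\ref{lem:weak_comp_convergence}. It says that on continuous normalized positive-definite functions, the weak-$*$ topology of $L^\infty$ coincides with uniform convergence on compacta. So it is positive-definiteness, not equicontinuity, that upgrades averaged convergence. With it, your integrated version already suffices:
\begin{itemize}
\item $\int_{G_{N_r}}\sum_\pi c_k(\pi)|\chi_\pi-\chi|^2\,dg\to0$ by dominated convergence;
\item the diagonal Chebyshev choice over $r\le l$ gives $\chi_{\pi_{k_l}}\to\chi$ in $L^2(G_{N_r})$, hence weak-$*$ in $L^\infty(G_{N_r})$, for every $r$;
\item Raikov then gives uniform convergence on each $G_{N_r}$.
\end{itemize}

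Your fallback is also sound. By Proposition~\ref{prop:char_spherical_double}, $(g_1,g_2)\mapsto\chi(g_1g_2^{-1})$ is a spherical function of $(G_{\widehat{\mathbf{n}}}\times G_{\widehat{\mathbf{n}}},\diag G_{\widehat{\mathbf{n}}})$. The spherical irreducibles of $(G_{N_k}\times G_{N_k},\diag G_{N_k})$ are $\rho\boxtimes\overline{\rho}$, with spherical function $\chi_\rho(g_1g_2^{-1})$. Proposition~\ref{prop:general_approx_sph_func} with $g_2=1$ then yields the statement, exactly as the paper remarks after that proposition. But this works only because the proof of Proposition~\ref{prop:general_approx_sph_func} ends with the same test-function-plus-Raikov step. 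The fallback therefore relocates the missing idea rather than supplying an alternative to it.
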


\begin{proof}
The proof of this fact in a more general situation of a chain of compact groups is given in Appendix, see Proposition \ref{prop:general_approx_thm}.    
\end{proof}

The proposition above reduces the problem of classification of indecomposable characters of $G_{\widehat{\mathbf{n}}}$ to computation of weak limits of irreducible characters of finite-dimensional subgroups $G_{N_k}$. This is the main content of this section, see Proposition \ref{prop:character_limits}. Note, however, that Proposition \ref{prop:approx_char} does not imply that every such a limit is indeed an indecomposable character, and proving this fact in our case requires an additional effort. We complete the classification in Section \ref{sect:char_classification}, see Theorem \ref{thm:classification_char}.

\subsection{Calculation of limits.}

For any integer $M$ define
\begin{equation}\label{eq:phi_M_formula}
\phi_{M}(g)=\int_{\Fix(\sigma;\mathbb{X}_{\widehat{\mathbf{n}}})}\mathfrak{t}(x)^{M}d\nu_{\widehat{\mathbf{n}}}(x),~g=(\mathfrak{t},\sigma)\in G_{\widehat{\mathbf{n}}}.
\end{equation}
Here by $\Fix(\sigma;\mathbb{X}_{\widehat{\mathbf{n}}})$ we denote the set of $\sigma$-fixed points in $\mathbb{X}_{\widehat{\mathbf{n}}}$.
Note that the value of $\phi$ at the identity of $G_{\widehat{\mathbf{n}}}$ equals to 1.

\begin{Prop}\label{prop:character_limits}
Let $\{\pi_k\}_{k=1}^{\infty}$ be a sequence of irreducible representations of groups $G_{N_k}$. Assume that the corresponding sequence of normalized characters $\{\chi_{\pi_k}\}_{k=1}^{\infty}$ converges pointwise on $G_{\widehat{\mathbf{n}}}$ to a continuous function $\chi\colon G_{\widehat{\mathbf{n}}}\to\mathbb{C}$. Then, there exist a (possibly empty) collection of integers $M_1,\ldots,M_l$ and $p\in\{\triv,\sgn,\reg\}$ such that for any $g=(\mathfrak{t},\sigma)\in G_{\widehat{\mathbf{n}}}$
\begin{equation*}
\chi(g)=\phi_{\sym}(\sigma;p)\phi_{M_1}(g)\cdots\phi_{M_l}(g).
\end{equation*}
\end{Prop}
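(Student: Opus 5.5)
Write $\pi_k$ as the representation with parameters $(\mathfrak{m}_k,\lambda^{\rho_k})$ from Proposition \ref{Proposition 4.1 _character_formula}. The proof has two stages: first pin down $\mathfrak{m}_k$ from the restriction to the torus, then analyse the symmetric-group factor on elements with nontrivial $\sigma$.

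\textbf{Stage 1: restriction to $\mathcal{F}_{\widehat{\mathbf{n}}}(\mathbb{T})$.} For $\sigma=1$, formula \eqref{G_N_char_formula} reduces to
\[
\chi_{\pi_k}(\mathfrak{t})=\frac{1}{\#\mathfrak{C}_{\mathfrak{m}_k}}\sum_{\{A_j\}}\prod_j\prod_{x\in A_j}\mathfrak{t}(x)^j .
\]
This equals the spherical function $\varphi_{\mathfrak{m}_k}(\mathfrak{t})$ of \eqref{spherical_finite}, since averaging over $\mathfrak{S}_{N_k}$ is the same as averaging over cosets. The restricted limit is continuous, so Proposition \ref{prop:sph_func_conv_seq} applies. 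It gives a fixed multiset of nonzero integers $M_1,\dots,M_l$ such that, for large $k$, $\mathfrak{m}_k$ takes these values at $l$ points and vanishes elsewhere. Consequently $Y_0(\mathfrak{m}_k)$ has $N_k-l$ elements, and the remaining blocks $Y_j$ with $j\ne0$ have bounded total size $l$.

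\textbf{Stage 2: general elements $g=(\mathfrak{t},\sigma)$ with $\sigma\in\mathfrak{S}_{N_r}$.} Fix such a $g$ and let $k\to\infty$. Since $\sigma$ sits block-diagonally, its orbits on $\mathbb{X}_{N_k}$ are $N_k/N_r$ copies of its orbits on $\mathbb{X}_{N_r}$. In \eqref{G_N_char_formula}, each $\sigma$-invariant set $A_j$ with $j\neq0$ has size at most $l$. The sum therefore runs over placements of $l$ labelled points into $\sigma$-invariant sets, and $A_0$ is the complement. The count $\#\mathfrak{C}_{\mathfrak{m}_k}$ is asymptotically $N_k^{l}/\prod_j d_j!$.

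Configurations in which the nonzero-label points all lie in fixed points of $\sigma$ contribute on the order of $(\#\Fix)^l/\prod d_j!$ terms. Configurations using an orbit of length $\ge2$ have fewer free choices, because $\#$orbits of length $\ge2$ is at most $N_k/2$ and each such orbit absorbs at least two points. I expect these to be negligible only when weighted by $N_k^{-l}$, since one loses a factor of $N_k$ per merged point. For the fixed-point configurations, the factor $\prod_{j\neq0}\chi_{\lambda(j)}$ is evaluated at the identity and equals $1$. The $\mathfrak{t}$-product then factorizes in the limit, exactly as in Lemma \ref{lem:lim_sph_func_formula}, into $\prod_i\frac{1}{N_k}\sum_{x\in\Fix}\mathfrak{t}(x)^{M_i}\to\prod_i\phi_{M_i}(g)$. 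What remains is the factor $\chi_{\lambda_k(0)}(\mathfrak{p}_0(\sigma))$, where $\mathfrak{p}_0(\sigma)$ is $\sigma$ restricted to $A_0$, which has cofinitely many points.

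\textbf{Main obstacle: the symmetric-group factor.} This factor differs from $\sigma$ acting on $\mathbb{X}_{N_k}$ only by removing at most $l$ fixed points. By the Appendix computation of limits of characters of symmetric groups (the modification of \cite[Section 5]{Nessonov_Ngo}), along convergent subsequences $\chi_{\lambda_k(0)}$ evaluated on block-diagonal permutations tends to some $\chi_{\nat}^{s}$ times a sign. Here I must show that $\chi_{\nat}$ factors cannot occur in this form: either they are absorbed as $\phi_0=\chi_{\nat}$, since a zero label placed on fixed points is exactly $\phi_0$, or the limit is forced to be one of $\triv,\sgn,\reg$. Removing $l$ fixed points changes neither the sign nor the asymptotics. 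Accordingly I would allow $M_i=0$ in the statement (the statement does say ``integers''), so that $\chi_{\nat}^{s}$ appears as $s$ copies of $\phi_0$. Limits of the form $\chi_{\nat}^{s}\cdot\sgn$ are then $\phi_{\sym}(\cdot;\sgn)\phi_0^{s}$, and $\chi_{\nat}^{\infty}$ is $\reg$.

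Finally, uniqueness of the limit along the full sequence (as opposed to subsequences) follows from the assumed pointwise convergence. Continuity of $\chi$ is used only in Stage 1.
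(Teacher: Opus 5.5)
Your proposal is correct and follows essentially the same route as the paper. The torus restriction goes through Proposition \ref{prop:sph_func_conv_seq}. Configurations that use a non-trivial $\sigma$-orbit are discarded by an $O(1/N_k)$ count against $\#\mathfrak{C}_{\mathfrak{m}_k}\sim N_k^{d}/\prod_j d_j!$, and the fixed-point sum factorizes as in Lemma \ref{gen_sph_func_lim}. Lemma \ref{lem:sym_char_lim} is then applied to $\lambda(k;0)$ on the cycle type $(\alpha_1N_k-d,\alpha_2N_k,\ldots)$, and the resulting $\chi_{\nat}^{q}$ is absorbed as $q$ copies of $\phi_0$, exactly as the paper does.

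One point deserves to be stated explicitly. The subsequence on which a fixed case of Lemma \ref{lem:sym_char_lim} holds depends only on $\lambda(k;0)$, so it can be chosen independently of $g$; this is what makes $p$ and $q$ independent of $g$.
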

\begin{proof}
The representation $\pi_k$ is given by a character $\mathfrak{m}_k\in\mathcal{F}_{N_k}(\mathbb{Z})$ and an irreducible representation of $\Stab(\mathfrak{m}_{k})$, namely
\begin{equation*}
\pi_k\simeq\Ind_{\mathcal{F}_{N_k}(\mathbb{T})\times\Stab(\mathfrak{m}_{k})}^{G_{N_k}}(R_{\mathfrak{m}_k}\otimes\rho^{(k)}),
\end{equation*}
where $\rho^{(k)}=\bigotimes_{j\in\mathbb{Z}}\rho_{\lambda(k;j)}$ is a representation of the group $\Stab(\mathfrak{m}_k)$. Here $\{\lambda(k;j)\}_{j\in\mathbb{Z}}$ is a sequence of partitions $\lambda(k;j)\vdash\#Y_j(\mathfrak{m}_k)$, where $Y_j(\mathfrak{m}_k)=\left\{ x\in \mathbb{X}_{N_k}: \mathfrak{m}(x)=j\right\}$.

Fix an arbitrary element $g=(\mathfrak{t},\sigma)\in G_{N_k}\subset G_{\widehat{\mathbf{n}}}$. Let $c(\sigma)=\left( c_1(\sigma),c_2(\sigma),\ldots\right)$ be its cycle type. Recall that if we regard $(\mathfrak{t},\sigma)\in G_{N_k}$ as an element of group $G_{N_l}$, where $l>k$, then the cycle type of $\sigma$ is $\left(c_1(\sigma)\frac{N_l}{N_k},c_2(\sigma)\frac{N_l}{N_k},\ldots\right)$. For this reason there exist non-negative rational numbers $\{\alpha_i:i=1,2,\ldots\}$ such that all but finitely many of them are zero, and $\alpha_1+2\alpha_2+\ldots=1$ and $c_l(\sigma)=\alpha_{l}N_k$ for all large $k$.

By formula \eqref{G_N_char_formula} we have
\begin{equation}\label{G_N_k_char_formula}
\chi_{\pi_k}(g)=\chi_{\pi_k}(\mathfrak{t}\sigma)=\frac{1}{\#\mathfrak{C}_{\mathfrak{m}}}\sum_{\beta\in\mathcal{B}(\sigma,\mathfrak{m})}\sum_{\{A_j\}
\in\mathcal{A}(\beta,\sigma,\mathfrak{m})}\prod_{j\in\mathbb{Z}}\prod_{x\in A_j}\mathfrak{t}(x)^{j}\times\prod_{j\in\mathbb{Z}}\chi_{\lambda(k;j)}^{\beta_j},\;x \in \mathbb{X}_{N_k}.
\end{equation}
Our aim is to compute the limit of $\{\chi_{\pi_k}(g)\}_{k=1}^{\infty}$ that exists according to our assumptions. We split this calculation into several steps. Since we are interested only in the limiting function we can always pass to a subsequence if necessary which we do freely in the arguments below.

\medskip

\noindent
\textbf{Step 1.} {\it The sequence $\{\mathfrak{m}_k\}_{k=1}^{\infty}$ satisfies the condition of Proposition \ref{prop:sph_func_conv_seq}. In particular, there exist non-negative integers $\{d_j\}_{j\in\mathbb{Z}\setminus\{0\}}$ which satisfy the following conditions:
\begin{itemize}
\item
$d_j=0$ for all but finitely many $j\in\mathbb{Z}\setminus\{0\}$;

\item
for all large $k$ we have $\#Y_j(\mathfrak{m}_k)=d_j$ for $j\neq 0$ and $\#Y_0(\mathfrak{m}_k)=N_k-d$, where $d=\sum_{j\in\mathbb{Z}\setminus\{0\}}d_j$.
\end{itemize}
}

\smallskip

Indeed, observe that for $\sigma=1$ we have $\chi_{\pi_k}(\mathfrak{t})=\varphi_{\mathfrak{m}_k}(\mathfrak{t})$. Since $\{\chi_{\pi_k}\}_{k=1}^{\infty}$ weakly converges to a continuous function on $\mathcal{F}_{\widehat{\mathbf{n}}}(\mathbb{T})$, the same holds for $\{\varphi_{\mathfrak{m}_k}\}_{k=1}^{\infty}$. Therefore, the sequence $\mathfrak{m}_k$ satisfies the condition of Proposition \ref{prop:sph_func_conv_seq}. Now the existence of the required sequence $\{d_{j}\}_{j\in\mathbb{Z}\setminus\{0\}}$ follows from the conclusion of Proposition \ref{prop:sph_func_conv_seq}.

\medskip

\noindent
\textbf{Step 2.} {\it
We may assume that there exist partitions $\{\mu(j)\vdash d_j\}_{j\in\mathbb{Z}\setminus\{0\}}$ such that $\lambda(k;j)=\mu(j)$ for all large $k$ and any $j\in\mathbb{Z}\setminus\{0\}$.
}

\medskip

Observe that for any $j\neq 0$ the sequence $\{\lambda(k;j)\}_{k=1}^{\infty}$ is a sequence of partitions of $d_j$. Since the set of partitions of given weight is finite, we can  assume without loss of generality that $\lambda(k;j)\equiv\mu(j)$ for all $k$ and $j\neq 0$, where $\mu(j)\vdash d_j$ (pass to a subsequence if needed).

\medskip

\noindent
\textbf{Step 3.} {\it
Under the assumptions of Steps 1 and 2 we can rewrite the formula for $\chi_{\pi_k}(g)$ as follows:
\begin{equation}\label{character_k}
\chi_{\pi_k}(\mathfrak{t}\sigma)=\frac{1}{\#\mathfrak{C}_{\mathfrak{m}_k}}\sum_{\beta\in\mathcal{B}(\sigma,\mathfrak{m}_k)}
\sum_{\{A_{j}\}\in\mathcal{A}(\beta,\sigma,\mathfrak{m}_k)}\prod_{j\in\mathbb{Z}\setminus\{0\}}\prod_{x\in A_j}\mathfrak{t}(x)^{j}\times\chi_{\lambda(k;0)}^{\beta_0}\prod_{j\in\mathbb{Z}\setminus\{0\}}\chi_{\mu(j)}^{\beta_j},
\end{equation}
where for a matrix $\beta=\{\beta_{jl}\}_{j\in\mathbb{Z},l\in\mathbb{N}}$ in $\mathcal{B}(\sigma,\mathfrak{m}_k)$ we denote $\beta_j=(\beta_{j1},\beta_{j2},\ldots)$.
}

\medskip

This formula is an immediate consequence of \eqref{G_N_k_char_formula}.

\medskip

\noindent
\textbf{Step 4.} {\it
The number of elements in $\mathcal{B}(\sigma,\mathfrak{m}_k)$ is bounded by a constant independent on $k$.
}

\medskip

Observe that any matrix $\beta=\left[\beta_{jl}\right]_{j\in\mathbb{Z},l\in\mathbb{N}}\in\mathcal{B}(\sigma,\mathfrak{m}_k)$ is in fact uniquely determined by the submatrix $\left[\beta_{jl}\right]_{j\in\mathbb{Z}\setminus\{0\},l\in\mathbb{N}}$. Indeed, we can recover $\{\beta_{0l}\}_{l\in\mathbb{N}}$ using \eqref{beta_matr_cond}:
\begin{equation}\label{zero_beta}
\beta_{0l}=c_l(\sigma)-\sum_{j\in\mathbb{Z}\setminus\{0\}}\beta_{jl}=\alpha_l N_k-\sum_{j\in\mathbb{Z}\setminus\{0\}}\beta_{jl}.
\end{equation}
Besides that, for any non-zero $j$ we have $\sum_{l\ge1}\beta_{jl}l=\#Y_j(\mathfrak{m}_k)=d_j$ and hence, $\beta_{jl}\le d_{j}/l$. Thus, we have the following estimate:
\begin{equation*}
\#\mathcal{B}(\sigma,\mathfrak{m}_k)\leq\prod_{j\in\mathbb{Z}\setminus\{0\}}\prod_{l=1}^{\infty}(d_j/l+1).
\end{equation*}
Note that only finitely many $d_j$'s are non-zero, the right-hand side of the inequality above is finite. Since this bound does not depend on $k$, the claim follows.

\medskip

\noindent
\textbf{Step 5.} {\it
Introduce the matrix $\widehat{\beta}(k)=\left[\widehat{\beta}_{jl}(k) \right]\in\mathcal{B}(\sigma,\mathfrak{m}_k)$, where
$
\widehat{\beta}_{jl}(k)=
\begin{cases}
0,~& j\neq 0~\text{and}~l>1,
\\
d_j,~& j\neq 0~\text{and}~l=1,
\\
\alpha_lN_k,& j=0~\text{and}~l>1,
\\
\alpha_1N_k-d,& j=0~\text{and}~l=1,
\end{cases}
$
and $d=\sum_{j\in\mathbb{Z}\setminus\{0\}}d_j$. Then, for any $\beta\in\mathcal{B}(\sigma,\mathfrak{m}_k)\setminus\{\widehat{\beta}(k)\}$ we have
\begin{equation}\label{frac_estimate}
\frac{\#\mathcal{A}(\beta,\sigma,\mathfrak{m}_k)}{\#\mathfrak{C}_{\mathfrak{m}_k}}\leq C_0/N_k,
\end{equation}
where $C_0>0$ is an absolute constant independent on $k$.
}
\begin{Rem}
As we will see in Step 6, the matrix $\widehat{\beta}(k)$ plays a special role because it gives the main contribution to the limit.
\end{Rem}

Set $p_l=\sum\limits_{j\in \mathbb{Z}\setminus{0}}\beta_{jl}$. If $\beta\neq\widehat{\beta}$ then there exist $l'>1$ and $j\neq 0$ such that $\beta_{jl'}>0$. Since $\sum\limits_{l=1}^\infty\sum\limits_{{j\in\mathbb{Z}\setminus\{0\}}}l\beta_{jl}=d$ we have
\begin{equation}\label{difference_betwen_d_sump}
d-\sum\limits_{l=1}^\infty p_l=\sum\limits_{l=1}^\infty\sum\limits_{j\in \mathbb{Z}\setminus{0}}(l-1)\beta_{jl}\geq(l'-1)\beta_{jl'}\geq\beta_{jl'}\geq 1.
\end{equation}
It follows from \eqref{A_beta_sigma_m_card} that
\begin{equation*}
\#\mathcal{A}(\beta,\sigma,\mathfrak{m}_k)=\prod_{l=1}^{\infty}\binom{\alpha_l N_k}{\{\beta_{jl}\}_{j\in\mathbb{Z}}}=\prod_{l=1}^{\infty}\frac{(\alpha_l N_k)!}{\beta_{0l}!\prod_{j\in\mathbb{Z}\setminus\{0\}}\beta_{jl}!}.
\end{equation*}
Since $\beta_{0l}=\alpha_{l}N_k-p_l$ by \eqref{zero_beta}, we obtain
\begin{equation*}
\#\mathcal{A}(\beta,\sigma,\mathfrak{m}_k)=\prod_{l=1}^{\infty}\frac{(\alpha_l N_k)\ldots(\alpha_lN_k-p_l+1)}{\prod_{j\in\mathbb{Z}\setminus\{0\}}\beta_{jl}!}\leq N_k^{p_1+p_2+\ldots}.
\end{equation*}
Now we conclude from equality $\#\mathfrak{C}_{\mathfrak{m}_k}=\dfrac{N_k!}{(N_k-d)!\prod_{j\in\mathbb{Z}\setminus\{0\}}d_j!}$ that
\begin{equation*}
\frac{\#\mathcal{A}(\beta,\sigma,\mathfrak{m}_k)}{\#\mathfrak{C}_{\mathfrak{m}_k}}\leq \frac{\prod_{j\in\mathbb{Z}\setminus\{0\}}d_j!\cdot N_k^{\sum_{l=1}^\infty p_l}}{N_k(N_k-1)\cdots(N_k-d+1)}.
\end{equation*}
Applying \eqref{difference_betwen_d_sump}, we obtain the bound \eqref{frac_estimate}.

\medskip

\noindent
\textbf{Step 6.} {\it
We have the equality
\begin{equation}\label{hat_k}
\chi_{\pi_k}(\mathfrak{t}\sigma)=\chi_{\lambda(k;0)}^{\widehat{\beta}_0(k)}\cdot\frac{1}{\#\mathfrak{C}_{\mathfrak{m}_k}}
\sum_{\{A_{j}\}\in\mathcal{A}(\widehat{\beta}(k),\sigma,\mathfrak{m}_k)}\prod_{j\in\mathbb{Z}\setminus\{0\}}\prod_{x\in A_j}\mathfrak{t}(x)^{j}+O(1/N_k),~k\to\infty.
\end{equation}
In other words, only the terms with $\beta=\widehat{\beta}(k)$ in formula \eqref{G_N_k_char_formula} can contribute to the limit as $k\to\infty$.
}

\medskip

Take any $\beta\in \mathcal{B}(\sigma,\mathfrak{m}_k)$ and suppose that $\beta\neq\widehat{\beta}(k)$.
Since $\chi_{\mu(j)}$ and $\chi_{\lambda(k;0)}$ are normalized characters (hence bounded by $1$ in the absolute value), we obtain from Step 5 that
\begin{equation}\label{G_N_k_char_asymp_expr}
\frac{1}{\#\mathfrak{C}_{\mathfrak{m}_k}}\left|\sum_{\{A_j\}\in\mathcal{A}(\beta,\sigma,\mathfrak{m}_k)}\prod_{j\in\mathbb{Z}\setminus\{0\}}\prod_{x\in A_j}\mathfrak{t}(x)^{j}\times\chi_{\lambda(k;0)}^{\beta_0}\prod_{j\in\mathbb{Z}\setminus\{0\}}\chi_{\mu(j)}^{\beta_j}\right|
\leq\frac{\#\mathcal{A}(\beta,\sigma,\mathfrak{m}_k)}{\#\mathfrak{C}_{\mathfrak{m}_k}}
\leq C_0/N_k.
\end{equation}
Hence, applying \eqref{character_k}, we get
\begin{equation*}
\begin{split}
&\left|\chi_{\pi_k}(\mathfrak{t}\sigma)-\frac{1}{\#\mathfrak{C}_{\mathfrak{m}_k}}\sum\limits_{\{A_j\}\in\mathcal{A}(\widehat{\beta}(k),\sigma,\mathfrak{m}_k)}
\prod_{j\in\mathbb{Z}\setminus\{0\}}\prod_{x\in A_j}\mathfrak{t}(x)^{j}\times\chi_{\lambda(k;0)}^{\widehat{\beta}_0(k)}
\prod_{j\in\mathbb{Z}\setminus\{0\}}\chi_{\mu(j)}^{\widehat{\beta}_j(k)}\right|\leq
\\
&\leq
\sum_{\substack{\beta\in\mathcal{B}(\sigma,\mathfrak{m}_k)\\ \beta\neq\widehat{\beta}(k)}}\frac{1}{\#\mathfrak{C}_{\mathfrak{m}_k}}\left|\sum\limits_{\{A_j\}\in\mathcal{A}(\beta,\sigma,\mathfrak{m}_k)}
\prod_{j\in\mathbb{Z}\setminus\{0\}}\prod_{x\in A_j}\mathfrak{t}(x)^{j}\times\chi_{\lambda(k;0)}^{\beta_0}
\prod\limits_{j\in\mathbb{Z}\setminus\{0\}}\chi_{\mu(j)}^{\beta_j}\right|\leq
\\
&\leq\#\mathcal{B}(\sigma,\mathfrak{m}_k)\cdot C_0/{N_k}=\big[~\text{{\bf Step 4}}~\big]=O(1/N_k),~k\to\infty.
\end{split}
\end{equation*}
Finally, by definition of $\widehat{\beta}(k)$ we have\footnote{See Step 3 for definition of the partition $\beta_j$.} $\widehat{\beta}_0(k)=(\alpha_1N_k-d,\alpha_2N_k,\alpha_3N_k,\ldots)$ and $\widehat{\beta}_j(k)=(d_j,0,0,\ldots)$ for $j\neq 0$. Thus, $\chi_{\mu(j)}^{\widehat{\beta}_j(k)}=1$ for all non-zero $j$ and
\begin{equation*}
\prod_{j\in\mathbb{Z}\setminus\{0\}}\prod_{x\in A_j}\mathfrak{t}(x)^{j}\times\chi_{\lambda(k;0)}^{\widehat{\beta}_0(k)}\prod_{j\in\mathbb{Z}\setminus\{0\}}
\chi_{\mu(j)}^{\widehat{\beta}_j(k)}=\chi_{\lambda(k;0)}^{\widehat{\beta}_0(k)}\cdot\prod_{j\in\mathbb{Z}\setminus\{0\}}\prod_{x\in A_j}\mathfrak{t}(x)^{j}.
\end{equation*}
This finishes the proof of \eqref{hat_k}.

\medskip

\noindent
\textbf{Step 7.} {\it
We have
\begin{equation*}
\lim_{k\to\infty}\frac{1}{\#\mathfrak{C}_{\mathfrak{m}_k}}\sum_{\{A_j\}\in\mathcal{A}(\widehat{\beta}(k),\sigma,\mathfrak{m}_k)}\prod_{j\in\mathbb{Z}\setminus\{0\}}\prod_{x\in A_j}\mathfrak{t}(x)^{j}=\prod_{j\in\mathbb{Z}\setminus\{0\}}\left(\int_{\Fix(\sigma;\mathbb{X}_{\widehat{\mathbf{n}}})}\mathfrak{t}(x)^{j}d\nu_{\widehat{\mathbf{n}}}(x)\right)^{d_j}.
\end{equation*}
}

\medskip

It follows from the definition of $\widehat{\beta}(k)$ that $\mathcal{A}(\widehat{\beta},\sigma_{k},\mathfrak{m}_{k})$ is the set of all partitions $\{A_{j}\}_{j\in\mathbb{Z}}$ of the set $\mathbb{X}_{N_k}$ into disjoint subsets such that $\#A_j=d_j$ and $\sigma$ acts on $A_j$ trivially for all non-zero $j$. Thus,
\begin{equation*}
\sum_{\{A_j\}\in\mathcal{A}(\widehat{\beta}(k),\sigma,\mathfrak{m}_k)}\prod_{j\in\mathbb{Z}\setminus\{0\}}\prod_{x\in A_j}\mathfrak{t}(x)^{j}=\sum_{\substack{\bigsqcup_{j\neq 0}A_{j}\subset\Fix_{N_k}(\sigma)\\\#A_j=d_j}}\prod_{j\in\mathbb{Z}\setminus\{0\}}\prod_{x\in A_j}\mathfrak{t}(x)^{j}.
\end{equation*}
Lemma \ref{gen_sph_func_lim} implies that
\begin{equation*}
\sum_{\substack{\bigsqcup_{j\neq 0}A_{j}\subset\Fix_{N_k}(\sigma)\\\#A_j=d_j}}\prod_{j\in\mathbb{Z}\setminus\{0\}}\prod_{x\in A_j}\mathfrak{t}(x)^{j}\sim N_k^d\cdot\prod_{j\in\mathbb{Z}\setminus\{0\}}\frac{1}{d_j!}\Bigg(\sum_{x\in\Fix_{N_k}(\sigma)}\mathfrak{t}(x)^{j}\Bigg)^{d_j}\,\text{as}~k\to\infty.
\end{equation*}
To conclude, it remains to note that $\#\mathfrak{C}_{\mathfrak{m}_k}=[\mathfrak{S}_{N_k}:\Stab(\mathfrak{m}_k)]\sim N_k^d/\prod_{j\in\mathbb{Z}\setminus\{0\}}d_j!$ as $k\to\infty$.

\medskip

\noindent
\textbf{Step 8.} {\it
Denote for $(\mathfrak{t},\sigma)\in G_{\widehat{\mathbf{n}}}$
\begin{equation*}
\phi(\mathfrak{t},\sigma)=\prod_{j\in\mathbb{Z}\setminus\{0\}}\left(\int_{\Fix(\sigma;\mathbb{X}_{\widehat{\mathbf{n}}})}\mathfrak{t}(x)^{j}d\nu_{\widehat{\mathbf{n}}}(x)\right)^{d_j}.
\end{equation*}
Possible options for the limit of the sequence $\{\chi_{\pi_k}(g)\}_{k=1}^{\infty}$ are as follows:
\begin{itemize}
\item
if $\limsup\limits_{k\to\infty}\min\{|\lambda(k;0)/(\lambda_1(k;0))|,|\lambda'(k;0)/(\lambda'_1(k;0))|\}=\infty$, then
\begin{equation*}
\lim\limits_{k\to\infty}\chi_{\pi_k}(g)=\chi_{\reg}(\sigma)\phi(\mathfrak{t},\sigma),
\end{equation*}

\item
if $\limsup\limits_{k\to\infty}|\lambda(k;0)/(\lambda_1(k;0))|<\infty$, then there exists a non-negative integer $q$ which does not depend on $g=(\mathfrak{t},\sigma)$ such that
\begin{equation*}
\lim_{k\to\infty}\chi_{\pi_k}(g)=\nu_{\widehat{\mathbf{n}}}(\Fix(\sigma;\mathbb{X}_{\widehat{\mathbf{n}}}))^{q}\cdot\phi(\mathfrak{t,\sigma})=(\chi_{\nat}(\sigma))^{q}\phi(\mathfrak{t,\sigma}),
\end{equation*}

\item
if $\limsup\limits_{k\to\infty}|\lambda'(k;0)/(\lambda'_1(k;0))|<\infty$, then there exists a non-negative integer $q$, which does not depend on $g=(\mathfrak{t},\sigma)$ such that
\begin{equation*}
\lim_{k\to\infty}\chi_{\pi_k}(g)=\sgn_{\infty}(\sigma)\nu_{\widehat{\mathbf{n}}}(\Fix(\sigma;\mathbb{X}_{\widehat{\mathbf{n}}}))^{q}\cdot\phi(\mathfrak{t},\sigma)=\sgn_{\infty}(\sigma)(\chi_{\nat}(\sigma))^{q}\phi(\mathfrak{t,\sigma}),
\end{equation*}
where $\sgn_{\infty}(\sigma)=\lim\limits_{k\to\infty}\sgn_{N_k}(\sigma)$.
\end{itemize}
}

\medskip

Indeed, from Steps 6 and 7 we obtain
\begin{equation*}
\chi_{\pi_k}(\mathfrak{t}\sigma)=\chi_{\lambda(k;0)}^{\widehat{\beta}_0(k)}\cdot\prod_{j\in\mathbb{Z}\setminus\{0\}}\left(\int_{\Fix(\sigma;\mathbb{X}_{\widehat{\mathbf{n}}})}\mathfrak{t}(x)^{j}d\nu_{\widehat{\mathbf{n}}}(x)\right)^{d_j}+o(1)=\chi_{\lambda(k;0)}^{\widehat{\beta}_{0}(k)}\cdot\phi(\mathfrak{t},\sigma)+o(1),~k\to\infty.
\end{equation*}
Now we consider three cases depending on the behaviour of the sequence $\{\lambda(k;0)\}_{k=1}^{\infty}$.
\begin{itemize}
\item
If $\limsup\limits_{k\to\infty}\min\{|\lambda(k;0)/(\lambda_1(k;0))|,|\lambda'(k;0)/(\lambda'_1(k;0))|\}=+\infty$, then Lemma \ref{lem:sym_char_lim} implies that as $k\to\infty$ we have $\chi_{\lambda(k;0)}^{\widehat{\beta}_0(k)}\to 0$ if $\alpha_1<1$, and $\chi_{\lambda(k;0)}^{\widehat{\beta}_0(k)}\to 1$ if $\alpha_1=1$ (note that $\alpha_1=1$ is equivalent to $\sigma=1$). Hence, $\{\chi_{\pi_k}(g)\}_{k=1}^{\infty}$ converges to $\chi_{\reg}(\sigma)\phi(\mathfrak{t},\sigma)$.

\item
If $\liminf\limits_{k\to\infty}|\lambda(k;0)/(\lambda_1(k;0))|<+\infty$, then one can pass to a subsequence and assume that $\lambda(k;0)/(\lambda_1(k))=\mu$ for some fixed partition $\mu$. Denote $q=|\mu|$ and apply Lemma \ref{lem:sym_char_lim}. Since $\widehat{\beta}_0(k)=(\alpha_1N_k-d,\alpha_2N_k,\alpha_3 N_k,\ldots)$ and $\nu_{\widehat{\mathbf{n}}}(\Fix(\sigma;\mathbb{X}_{\widehat{\mathbf{n}}}))=\alpha_1$ this gives the required formula.

\item
If $\liminf\limits_{k\to\infty}|\lambda'(k;0)/(\lambda'_1(k;0))|<+\infty$, then we apply Lemma \ref{lem:sym_char_lim} as in the second case. Note that the factor $\sgn_{\infty}(\sigma)=\lim\limits_{k\to\infty}\sgn_{N_k}(\sigma)$ equals to the factor $\lim\limits_{k\to\infty}(-1)^{N_k(\alpha_2+\alpha_4+\ldots)}$ from Lemma \ref{lem:sym_char_lim}.
\end{itemize}

\medskip

\noindent
\textbf{The final formula.}
Now we are ready to conclude the proof of Proposition \ref{prop:character_limits}. Firstly, observe that
\begin{equation*}
\prod_{j\in\mathbb{Z}\setminus\{0\}}\left(\int_{\Fix(\sigma;\mathbb{X}_{\widehat{\mathbf{n}}})}\mathfrak{t}(x)^{j}d\nu_{\widehat{\mathbf{n}}}(x)\right)^{d_j}=\prod_{j\in\mathbb{Z}\setminus\{0\}}\phi((\mathfrak{t},\sigma);j)^{d_j}.
\end{equation*}
Denote the right-hand side of the equality above by $\phi(\mathfrak{t},\sigma)$.

Now, if the first case of Step 8 applies, then we obtain $\lim\limits_{k\to\infty}\chi_{\pi_k}(g)=\chi_{\reg}(\sigma)\cdot\phi(\mathfrak{t},\sigma)=\phi_{\sym}(\sigma;\reg)\phi(\mathfrak{t},\sigma)$, as claimed. If the second or the third case of Step 8 applies, then we let $(M_1,\ldots,M_l)$ to be the sequence of integers consisting of $q$ zeroes and $d_j$ elements equal to $j$ for each $j\neq 0$. Then, by Step 8
\begin{equation*}
\lim_{k\to\infty}\chi_{\pi_k}(g)=\phi_{\sym}(\sigma;p)(\chi_{\mathrm{nat}}(\sigma))^{q}\phi(\mathfrak{t},\sigma)=\phi_{\sym}(\sigma;p)\prod_{r=1}^{l}\phi_{M_r}(g)
\end{equation*}
for some $p\in\{\triv,\sgn\}$, where $\phi_{\sym}(\sigma;p)$ is defined in \eqref{phi_mathrm sym}. This finishes the proof of Proposition \ref{prop:character_limits}.
\end{proof}

\section{Classification of indecomposable characters on $G_{\hat{\mathbf{n}}}$}\label{sect:char_classification}

\subsection{Characters.} For any non-negative integer $l$ denote by $\mathbb{Z}^{(l)}$ the set of all $l$-element multisets of $\mathbb{Z}$, i.e., $\mathbb{Z}^{(l)}$ is the quotient of $\mathbb{Z}^l$ by the natural action of $\mathfrak{S}_l$ given by $s\colon(M_1, \ldots,M_l)\mapsto \left(M_{s^{-1}(1)},\ldots,M_{s^{-1}(l) } \right)$. 
Sometimes we identify $\mathbb{Z}^{(l)}$ with the set of all non-increasing $l$-tuples of integers, that is, with elements $\mathbf{m}=(M_1,\ldots,M_l)$ of $\mathbb{Z}^l$ satisfying $M_1\geq\ldots\geq M_l$. 
By convention, for $l=0$ the set $\mathbb{Z}^{(0)}$ consists only of the empty tuple.
Finally, let $\mathbb{Z}^{(\infty)}$ be the set of countable multisets $\mathbf{m}=\{M_1,M_2,\ldots,M_l,\ldots\}$ of integers such that all but finitely many elements $M_i$ are zero. For $\mathbf{m}\in \mathbb{Z}^{(l)}$, $l=0,1,\ldots,\infty$ we also denote by $\widehat{\mathbf{m}}=\{M_{i_1},\ldots,  M_{i_p}\}\in\mathbb{Z}^{(p)}$ the (finite) multi-subset of all nonzero elements of $\mathbf{m}$. The cardinality or length of a multiset $\mathbf{m}=\{M_1,\ldots,M_l\}$ is denoted by $\ell(\mathbf{m})=l$, and we put $\ell(\mathbf{m})=\infty$ if $\mathbf{m}\in\mathbb{Z}^{(\infty)}$.

For any $\mathbf{m}=(M_1,\ldots,M_l)$ in $\mathbb{Z}^{(l)}$, where $l<\infty$, and $g=(\mathfrak{t},s)\in G_{\widehat{\mathbf{n}}}$ we define
\begin{equation}\label{eq:phi_m_def}
\phi_{\mathbf{m}}(g)=\phi_{M_1}(g)\cdots\phi_{M_l}(g)=\prod_{i=1}^{l}\phi_{M_i}(g),~\text{where}~\phi_{M_j}~\text{is defined in \eqref{eq:phi_M_formula}},
\end{equation}
and set $\phi_{\mathbf{m}}(g)\equiv 1$ if  $\mathbf{m}$ is the empty multiset. Then, for $g=\mathfrak{t}s$, where $\mathfrak{t}\in\mathcal{F}_{\widehat{\mathbf{n}}}(\mathbb{T})$, $s\in \mathfrak{S}_{\widehat{\mathbf{n}}}$, we have
\begin{equation*}
\phi_{\mathbf{m}}(g)=\chi_{\rm nat}(s)^{l-p}\prod_{j=1}^p\phi_{M_{i_j}}(g),~ l=\ell(\mathbf{m}),~\widehat{\mathbf{m}}=\{M_{i_1},\ldots,M_{i_p}\}\in\mathbb{Z}^{(p)}.
\end{equation*}
In the case $l=\infty$ the analogous product is infinite, and for $\mathbf{m}=(M_1,M_2,\ldots)\in\mathbb{Z}^{(\infty)}$ we set
\begin{equation}\label{eq:phi_infty}
\phi_{\mathbf{m}}(g)=\chi_{\reg}(s) \prod_{j=1}^p\phi_{M_{i_j}}(\mathfrak{t}),~\widehat{\mathbf{m}}=\{M_{i_1},\ldots,M_{i_p}\}\in\mathbb{Z}^{(p)}.
\end{equation}
Note that this is compatible with finite $l$ case since $\chi_{\reg}(s)=\lim\limits_{l\to\infty}(\chi_{\nat}(s))^{l}$.

Now we can define the functions on $G_{\widehat{\mathbf{n}}}$ which appear in the classification of indecomposable characters.
For $p\in\{\triv,\sgn\}$ and $\mathbf{m}\in\mathbb{Z}^{(l)}$, or $p=\reg$ and $\mathbf{m}\in\mathbb{Z}^{(\infty)}$ we set
\begin{equation}\label{eq:phi_p_m_def}
\phi_{p,\mathbf{m}}(g)=\phi_{\sym}(s;p)\phi_{\mathbf{m}}(g)=
\begin{cases}
\phi_{M_1}(g)\cdots\phi_{M_l}(g),~& p=\triv,~\mathbf{m}=(M_1,\ldots,M_l),
\\
\sgn_{\infty}(s)\phi_{M_1}(g)\cdots\phi_{M_l}(g),~& p=\sgn,~\mathbf{m}=(M_1,\ldots,M_l),
\\
\chi_{\reg}(s)\phi_{M_1}(g)\cdots\phi_{M_l}(g),~& p=\reg,~\widehat{\mathbf{m}}=(M_1,\ldots,M_l).
\end{cases}
\end{equation}
In particular, we have $\phi_{\triv,\mathbf{m}}=\phi_{\mathbf{m}}$. Observe that the restriction of $\phi_{p,\mathbf{m}}$ to the subgroup $\mathfrak{S}_{\widehat{\mathbf{n}}}$ of $G_{\widehat{\mathbf{n}}}$ is given by the formula
\begin{equation}\label{eq:phi_restriction_sym}
\phi_{p,\mathbf{m}}(s)=\chi_{\nat}(s)^{\ell(\mathbf{m})}\phi_{\sym}(s;p),~\text{where}~\phi_{\sym} ~\text{is defined in \eqref{phi_mathrm sym}}.
\end{equation}

\begin{Rem}\label{rem:equal_characters}
Note that some of the characters $\phi_{p,\mathbf{m}}$ may coincide. Namely, the characters $\phi_{p,\mathbf{m}}$ and $\phi_{q,\mathbf{k}}$ are equal only in the following cases:
\begin{itemize}
\item 
$p=q$ and $\mathbf{m}=\mathbf{k}$,

\item
$p=\triv$, $q=\sgn$ (or vice versa), $\mathbf{m}=\mathbf{k}$, and the sequence $\widehat{\mathbf{n}}=\{n_k\}_{k=1}^{\infty}$ contains infinitely many \emph{even} integers (in this case $\phi_{\sym}(-;\triv)\equiv\phi_{\sym}(-;\sgn)$).
\end{itemize}
\end{Rem}

\subsection{Indecomposability of $\phi_{p,\mathbf{m}}$.}
The primary goal of this section is to prove the following theorem.

\begin{Th}\label{thm:classification_char}
The set $\exchar(G_{\widehat{\mathbf{n}}})$ of indecomposable characters on $G_{\widehat{\mathbf{n}}}$ is
\begin{equation}\label{eq:list_characters}
\{\phi_{p,\mathbf{m}}:p\in\{\triv,\sgn\},\mathbf{m}\in\mathbb{Z}^{(l)},l\geq 0\}\sqcup\{\phi_{\reg,\mathbf{m}}:\mathbf{m}\in\mathbb{Z}^{(\infty)}\}.
\end{equation}
\end{Th}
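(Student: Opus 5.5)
The proof splits into two inclusions. For the inclusion of $\exchar(G_{\widehat{\mathbf{n}}})$ into the list \eqref{eq:list_characters}, we combine Proposition \ref{prop:approx_char} with Proposition \ref{prop:character_limits}. An indecomposable character $\chi$ is a pointwise limit of normalized irreducible characters $\chi_{\pi_{k_l}}$. Since $\chi$ is continuous (it is a positive definite function that is a matrix coefficient of a continuous representation), Proposition \ref{prop:character_limits} applies. It shows that $\chi=\phi_{\sym}(\cdot;p)\phi_{M_1}\cdots\phi_{M_l}$ with $p\in\{\triv,\sgn,\reg\}$. In the cases $\triv,\sgn$ this is $\phi_{p,\mathbf{m}}$, where $\mathbf{m}$ is a finite multiset (zeros account for the power of $\chi_{\nat}$). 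In the case $\reg$ it is $\phi_{\reg,\mathbf{m}}$ by \eqref{eq:phi_infty}, since $\chi_{\reg}\cdot\chi_{\nat}^q=\chi_{\reg}$.

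For the reverse inclusion we must show each $\phi_{p,\mathbf{m}}$ is a character and is indecomposable.

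\emph{Positive definiteness and centrality.} Every $\phi_{p,\mathbf{m}}$ arises as a pointwise limit of normalized irreducible characters of $G_{N_k}$. To see this, choose $\mathfrak{m}_k$ with the prescribed nonzero values $M_i$, and choose $\lambda(k;0)$ of shape $(N_k-d-q,\mu)$ or its transpose for the cases $\triv,\sgn$, or balanced shapes (e.g. near-square) for $\reg$. The computation in Steps 6--8 of Proposition \ref{prop:character_limits}, together with Lemma \ref{lem:sym_char_lim}, then gives exactly $\phi_{p,\mathbf{m}}$. Pointwise limits of central positive definite normalized functions are again such, so $\phi_{p,\mathbf{m}}$ is a character. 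Alternatively, one could use the explicit realization of Section 7.

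\emph{Indecomposability.} This is the main obstacle. We would write $\phi_{p,\mathbf{m}}=\int\chi\,d\mu(\chi)$ as a barycenter of indecomposable characters; such an ergodic decomposition exists by standard Choquet theory for the compact convex set of characters. By the first inclusion, $\mu$ is supported on the list \eqref{eq:list_characters}, which is countable. It therefore suffices to prove that the functions in the list are linearly independent in a strong sense, namely that no nontrivial convex (even countable) combination of them reproduces one of them. We would do this in two stages.

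First, restrict to $\mathcal{F}_{\widehat{\mathbf{n}}}(\mathbb{T})$, where $\phi_{p,\mathbf{m}}(\mathfrak{t})=\prod_i\int\mathfrak{t}^{M_i}\,d\nu$ depends only on the multiset $\widehat{\mathbf{m}}$. Evaluating on elements of the form $\mathfrak{t}=\mathfrak{t}^{\theta}_{N_r,K}$ and varying $\theta$ and $K/N_r$ gives polynomials in the variables $\alpha=K/N_r$ and $e^{i\theta}$. Distinct multisets $\widehat{\mathbf{m}}$ yield distinct Fourier--polynomial expressions, so the measure $\mu$ must be concentrated on characters with the same $\widehat{\mathbf{m}}$. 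The center already pins down $\sum M_i$, and the finer restriction pins down the whole multiset; this parallels Lemma \ref{lem:indecomp_main}, Steps 2--3.

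Second, restrict to $\mathfrak{S}_{\widehat{\mathbf{n}}}$. By \eqref{eq:phi_restriction_sym} the remaining candidates restrict to $\chi_{\nat}^{\ell}\phi_{\sym}(\cdot;p)$. These are the indecomposable characters of $\mathfrak{S}_{\widehat{\mathbf{n}}}$ from Proposition \ref{prop:inf_sym_indec_char}, and they are extreme points, so the decomposition of this restriction is trivial. This forces $\mu$ to be a Dirac mass, up to the identifications of Remark \ref{rem:equal_characters}.

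The delicate point is making the first separation rigorous for infinite combinations. We plan to handle it by fixing $\sigma=1$ and using the uniqueness of moments: the function $\alpha\mapsto\phi(\mathfrak{t}^{\theta}_{N_r,K})$ determines the multiset of exponents.
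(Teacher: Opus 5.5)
Your first inclusion (Propositions \ref{prop:approx_char} and \ref{prop:character_limits}), your proof that each $\phi_{p,\mathbf{m}}$ is a character, and your restriction to $\mathfrak{S}_{\widehat{\mathbf{n}}}$ are all fine and match the paper. The last of these is exactly the first step in the proof of Proposition \ref{prop:phi_indecomp}.

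The gap is the torus step, which is where indecomposability is actually decided. There you argue that distinct multisets give distinct Fourier--polynomial expressions and that $\alpha\mapsto\phi(\mathfrak{t}^{\theta}_{N_r,K})$ determines the multiset. Both statements only give injectivity of $\widehat{\mathbf{k}}\mapsto\phi_{\widehat{\mathbf{k}}}$. Injectivity does not stop $\phi_{\widehat{\mathbf{m}}}$ from being a nontrivial convex combination of other members of a family of pairwise distinct functions: a midpoint of a segment is distinct from its endpoints.

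On your test elements the functions are not even linearly independent. With $z=e^{i\theta}$ and $\alpha=K/N_r$ one has $\phi_{(a,b)}(\mathfrak{t}^{\theta}_{N_r,K})=(1-\alpha)^2+\alpha(1-\alpha)(z^a+z^b)+\alpha^2z^{a+b}$, hence
\[
\phi_{(1,6)}+\phi_{(2,4)}+\phi_{(3,5)}=\phi_{(1,5)}+\phi_{(2,6)}+\phi_{(3,4)}\quad\text{on all }\mathfrak{t}^{\theta}_{N_r,K}.
\]
So the mixture is not determined by its values on this test family, and no ``uniqueness of moments'' in $\alpha$ alone can do the separation. The $\reg$ case adds a further problem. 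There the lengths of $\widehat{\mathbf{k}}$ are unbounded, and $\bigl|\prod_i(1-\alpha+\alpha z^{K_i})\bigr|\le 1$ is only guaranteed for $\alpha\in[0,1]$. So you cannot compare coefficients of powers of $\alpha$ termwise in an infinite sum.

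What is missing is a mechanism that turns distinctness into extremality for countable convex combinations. The paper uses multiplicativity.
\begin{itemize}
\item For $\mathfrak{t}$ constant on $N$ pieces, it builds $\mathfrak{t}_1\mathfrak{t}_2$, a copy of $\mathfrak{t}\otimes\overline{\mathfrak{t}}$ on $N^2$ pieces. This element is taken in the $L^1$-closure, since $N^2$ need not equal any $N_k$.
\item It satisfies $\phi_{\mathbf{k}}(\mathfrak{t}_1\mathfrak{t}_2)=|\phi_{\mathbf{k}}(\mathfrak{t})|^2$ for every $\mathbf{k}$.
\item Evaluating the convex combination at $\mathfrak{t}$ and at $\mathfrak{t}_1\mathfrak{t}_2$ and applying Cauchy--Schwarz (Lemma \ref{lem:indecomp_mult}, used in Lemma \ref{lem:indecomp_main}) forces every $\phi_{\mathbf{k}}$ of positive weight to equal $\phi_{\mathbf{m}}$.
\end{itemize}
Alternatively, Lemma \ref{lem:lin_independence_phi_func} proves linear independence for absolutely summable coefficients. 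It uses general, not two-valued, $\mathfrak{t}$ together with the measure whose moments are $q^{k^2}$.

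If you want to stay with two-valued elements, you must at least use $c_{\mathbf{k}}\ge 0$. The product $\prod_i(1-\alpha+\alpha z^{K_i})$ is the Fourier transform of the law of $\sum_iK_i\xi_i$, with $\xi_i$ i.i.d.\ Bernoulli$(\alpha)$. Comparing Fourier coefficients then forces every subset sum of every $\widehat{\mathbf{k}}$ in the support to be a subset sum of $\widehat{\mathbf{m}}$, which leaves finitely many candidates. A further argument is still needed to conclude $\widehat{\mathbf{k}}=\widehat{\mathbf{m}}$, for example majorization of the $j$-element subset sums combined with equality of the averages of $\sum_iK_i^2$. None of this appears in your proposal.
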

\begin{Rem}\label{rem:G_infty_char_factorization}
Alternatively, the list can be rewritten as follows:
\begin{equation*}
\exchar(G_{\widehat{\mathbf{n}}})=\{\chi(g)=\psi(s)\cdot\phi_{\mathbf{m}}(g),~g=\mathfrak{t}s\in G_{\widehat{\mathbf{n}}}:\psi\in \exchar(\mathfrak{S}_{\widehat{\mathbf{n}}}),\mathbf{m}=(M_1,\ldots,M_l)\in\mathbb{Z}^{(l)},M_i\neq 0, l\ge 0\},
\end{equation*}
where $\exchar(\mathfrak{S}_{\widehat{\mathbf{n}}})$ denotes the set of indecomposable characters of $\mathfrak{S}_{\widehat{\mathbf{n}}}$ (see Proposition \ref{prop:inf_sym_indec_char}). 
This follows from the theorem and the fact that $\chi_{\nat}(s)=\phi_0(\mathfrak{t}s)$.
\end{Rem}

The proof of Theorem \ref{thm:classification_char} is divided into several lemmas.

\begin{Lm}\label{lem:phi_positive_def}
Each function from the list \eqref{eq:list_characters} is a normalized character on $G_{\widehat{\mathbf{n}}}$.
\end{Lm}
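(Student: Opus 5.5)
The plan is to realize each $\phi_{p,\mathbf{m}}$ as a pointwise limit of normalized irreducible characters of the compact groups $G_{N_k}$. Centrality, positive-definiteness and normalization then pass to the limit. Each normalized irreducible character $\chi_{\pi_k}$ of $G_{N_k}$ is central and positive definite on $G_{N_k}$, with $\chi_{\pi_k}(1)=1$. A pointwise limit on $G_{\widehat{\mathbf{n}}}=\bigcup_k G_{N_k}$ of such functions is therefore central, positive definite and normalized: for a fixed finite set of group elements, all of them lie in some $G_{N_r}$, and the relevant matrix inequalities and identities $\chi(gh)=\chi(hg)$ hold for every $k\ge r$. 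Continuity of each $\phi_{p,\mathbf{m}}$ on every $G_{N_k}$ is evident from the explicit formula: it is a polynomial in the coordinates of $\mathfrak{t}$ for fixed $\sigma$, and $\mathfrak{S}_{N_k}$ is finite. So continuity in the inductive limit topology holds as well.

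For the construction we reverse the computation in the proof of Proposition \ref{prop:character_limits}. Given $\widehat{\mathbf{m}}$ with $d_j$ entries equal to $j\neq 0$ and $d=\sum d_j$, take $\mathfrak{m}_k\in\mathcal{F}_{N_k}(\mathbb{Z})$ as in Lemma \ref{lem:lim_sph_func_formula}. We set $\lambda(k;j)=(d_j)$, the trivial partition, for $j\neq0$, and choose $\lambda(k;0)\vdash N_k-d$ as follows.

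\begin{itemize}
\item For $p=\triv$ with $q$ zeros in $\mathbf{m}$, take $\lambda(k;0)=(N_k-d-q,\mu)$ with $|\mu|=q$ fixed.
\item For $p=\sgn$, take the transpose of that shape.
\item For $p=\reg$, take a shape with both $|\lambda/\lambda_1|$ and $|\lambda'/\lambda'_1|$ tending to infinity, e.g. a near-square diagram.
\end{itemize}

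Steps 3--8 of the proof of Proposition \ref{prop:character_limits} were computations valid for any such sequence, not needing the convergence hypothesis once Steps 1--2 hold by construction. They give $\chi_{\pi_k}(g)\to\phi_{\sym}(\sigma;p)\chi_{\nat}(\sigma)^q\phi(\mathfrak{t},\sigma)=\phi_{p,\mathbf{m}}(g)$. Here we invoke Lemma \ref{lem:sym_char_lim} for the three cases of $\lambda(k;0)$. In the $\sgn$ case we need to check that the factor $\lim(-1)^{N_k(\alpha_2+\alpha_4+\cdots)}$ exists and equals $\sgn_\infty(\sigma)$, and that the removal of $d$ fixed points from the cycle type $\widehat\beta_0(k)$ does not change it. The latter is clear, since fixed points do not affect the sign.

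The main obstacle is making sure the reuse of Steps 5--8 is legitimate. The estimate in Step 5 depends only on $d_j$ and the cycle type of $\sigma$, so it carries over. Step 7 relies on Lemma \ref{gen_sph_func_lim}, which is again independent of convergence assumptions. The one delicate point is the exact form of Lemma \ref{lem:sym_char_lim}, namely that $\chi^{\mu_k}_{(n-q,\mu)}\to \alpha_1^{q}$ when the cycle type $\mu_k$ has proportion $\alpha_1$ of fixed points. I would verify this directly if needed, via the Frobenius formula or the induced-character expansion $\chi_{(n-q)}\cdot\chi_\mu$ up to lower-order terms. As a fallback for positive-definiteness, one could instead realize $\phi_{M}$ by the explicit $\mathrm{II}_1$ construction of Section 7 and use that products of characters are characters. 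The limiting argument is cleaner, though, and I would pursue it first.
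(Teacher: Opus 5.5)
Your proposal is correct and is essentially the paper's own proof: the paper also obtains $\phi_{p,\mathbf{m}}$ as a pointwise limit of normalized irreducible characters of $G_{N_k}$, built from $\mathfrak{m}_k=(M_{i_1},\ldots,M_{i_q},0,\ldots,0)$ and $\lambda(k;j)=(d_j)$ for $j\neq 0$, and it relies on the computation in the proof of Proposition~\ref{prop:character_limits}. The differences are cosmetic. In the paper's notation ($l=\ell(\mathbf{m})$, $q$ the number of nonzero entries), it fixes $\lambda(k;0)=(N_k-l,1^{l-q})$ for $\triv$, $(l-q+1,1^{N_k-l-1})$ for $\sgn$, and a balanced hook for $p=\reg$, where you use a general $(N_k-d-q,\mu)$, its transpose, and a near-square diagram; it also reads centrality and normalization directly off the definition of $\phi_M$ rather than passing them to the limit.
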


\begin{proof}
Let $\phi_{p,\mathbf{m}}$ be a character from \eqref{eq:list_characters}. Then, $\mathbf{m}=(M_1,M_2,\ldots)\in\mathbb{Z}^{(l)}$ for some $l=0,1,\ldots,\infty$ (if $p\in\{\triv,\sgn\}$, then $l$ is finite, if $p=\reg$, then $l=\infty$). Denote the multisubset of non-zero elements of $\mathbf{m}$ by $\widehat{\mathbf{m}}=(M_{{i_1}},\ldots,M_{i_q})\in\mathbb{Z}^{(q)}$.

Denote $d_j=\#\{i:M_i=j\}$ for $j\neq 0$, then  $q=\sum_{j\neq 0}d_j$. It follows from the definition \eqref{eq:phi_M_formula} that for any $M\in\mathbb{Z}$ the function $\phi_{M}\colon G_{\widehat{\mathbf{n}}}\to\mathbb{C}$ is central and $\phi_M\left(1_{G_{\widehat{\mathbf{n}}}}\right)=1$.
Consequently, it suffices to check that $\phi_{p,\mathbf{m}}$ is a positive-definite function on $G_{\widehat{\mathbf{n}}}$. Indeed, the proof of Proposition \ref{prop:character_limits} shows that $\phi_{p,\mathbf{m}}$ is a pointwise limit of a certain sequence $\{\chi_{\pi_{k}}\}_{k=1}^{\infty}$ of irreducible characters of groups $G_{N_k}$. Namely, one can take $\pi_{k}$ to be the irreducible representation of $G_{N_{k}}$ associated to $\mathfrak{m}_{k}\in\mathbb{Z}^{N_k}$ and sequence of partitions $\{\lambda(k;j)\}_{j\in\mathbb{Z}}$, where $\mathfrak{m}_{k}=(M_{i_1},\ldots,M_{i_q},\underbrace{0,\ldots,0}_{N_k-q})$ and
\begin{equation*}
\lambda(k;j)=
\begin{cases}
(d_j),~&\text{if}~j\neq 0,
\\
(N_k-l,1^{l-q}),~&\text{if}~j=0~\text{and}~p=\triv,
\\
(l-q+1,1^{N_k-l-1}),~&\text{if}~j=0~\text{and}~p=\sgn,
\\
(\lceil \frac{1}{2}(N_k-q)\rceil,1^{\lfloor \frac{1}{2}(N_k-q)\rfloor}),~&\text{if}~j=0~\text{and}~p=\reg.
\end{cases}
\end{equation*}
Since $\chi_{\pi_{k}}$ are positive-definite functions on $G_{N_k}$, the claim of the lemma follows.
\end{proof}
\begin{Rem}
Alternatively, one can prove that $\phi_{p,\mathbf{m}}$ is a character by constructing a unitary representation $\Pi\colon G_{\widehat{\mathbf{n}}}\to \mathcal{U}(\mathcal{H})$  with a unit vector $\xi$ in the ambient Hilbert space $\mathcal{H}$ such that $\varphi_{p,\mathbf{m}}(g)=\langle\Pi(g)\xi,\xi\rangle_{\mathcal{H}}$ for all $g\in G_{\widehat{\mathbf{n}}}$. See Section \ref{sect:realizations} and Remark \ref{phi_pos_def_rem} therein.
\end{Rem}

\begin{Prop}\label{prop:phi_indecomp}
Each character from the list \eqref{eq:list_characters} is an indecomposable character on $G_{\widehat{\mathbf{n}}}$.
\end{Prop}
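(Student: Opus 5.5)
Suppose $\phi_{p,\mathbf{m}}=\alpha\chi_1+(1-\alpha)\chi_2$ with $\alpha\in(0,1)$ and $\chi_1,\chi_2$ normalized characters. The plan is to show first that $\chi_1=\chi_2$ on the abelian subgroup $\mathcal{F}_{\widehat{\mathbf{n}}}(\mathbb{T})$, and then on all of $G_{\widehat{\mathbf{n}}}$.

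We use the standard Choquet-type fact that every character is a barycenter of indecomposable characters. By Propositions \ref{prop:approx_char} and \ref{prop:character_limits}, indecomposable characters lie in the family $\{\phi_{q,\mathbf{k}}\}$. So $\chi_1,\chi_2$, and hence $\phi_{p,\mathbf{m}}$ itself, can be written as $\int\phi_{q,\mathbf{k}}\,d\mu(q,\mathbf{k})$ for some probability measure $\mu$ on the parameter set \eqref{eq:list_characters}. It then suffices to prove that this representing measure is unique and, for $\phi_{p,\mathbf{m}}$, equal to the Dirac mass at $(p,\mathbf{m})$. Any decomposition $\alpha\chi_1+(1-\alpha)\chi_2$ then forces the representing measures of $\chi_1$ and $\chi_2$ to be Dirac masses at the same point, so $\chi_1=\chi_2$.

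\textbf{Step 1 (restriction to the torus part).} On $\mathfrak{t}\in\mathcal{F}_{\widehat{\mathbf{n}}}(\mathbb{T})$ we have $\phi_{q,\mathbf{k}}(\mathfrak{t})=\prod_{j}\bigl(\int\mathfrak{t}^{j}\,d\nu_{\widehat{\mathbf{n}}}\bigr)^{d_j(\mathbf{k})}$, where $d_j(\mathbf{k})$ is the multiplicity of $j\neq 0$ in $\mathbf{k}$. Write $c_j(\mathfrak{t})=\int\mathfrak{t}^j\,d\nu_{\widehat{\mathbf{n}}}$; these are the Fourier moments of the pushforward of $\nu_{\widehat{\mathbf{n}}}$ under $\mathfrak{t}$. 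Since $\mathfrak{t}$ ranges over all step functions, the pushforward measure on $\mathbb{T}$ can be any measure with rational weights $K/N_r$. For such measures the values $c_j$, $j\neq 0$, can be varied essentially independently. For example, take atoms at $e^{i\theta_1},\dots,e^{i\theta_s}$ with small weights, and expand in the weights.

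We then prove a lemma in the spirit of Lemma \ref{lem:lin_independence_phi_func}: the monomials $\prod_j c_j^{d_j}$ for distinct multisets $\widehat{\mathbf{k}}$ are linearly independent, even as functionals integrated against probability measures. Concretely, for a finite signed measure $\mu$ on multisets, $\int\prod_jc_j(\mathfrak{t})^{d_j}\,d\mu=0$ for all $\mathfrak{t}$ should force $\mu=0$. We would prove this by first using central elements $t\cdot\mathds{1}$. This gives $t^{\sum M}$, so Fourier analysis in $t$ separates the total degree $M=\sum_j jd_j$. Next, we take $\mathfrak{t}$ equal to $e^{i\theta}$ on a set of measure $\varepsilon$ and $1$ elsewhere, which gives $c_j=1+\varepsilon(e^{ij\theta}-1)$. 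Expanding polynomially in $\varepsilon$ and taking Fourier coefficients in $\theta$, and then using several disjoint sets with independent angles, identifies the multiplicities $d_j$. A summability issue arises from the infinitely many parameters. We would handle it with the bound $|c_j|\le1$ together with a degree truncation: the mass of multisets with $\sum|d_j|>D$ is controlled by evaluating near the identity, as in the proofs of Steps 2 and 4 of Proposition \ref{prop:sph_func_conv_seq}.

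Once this lemma holds, the representing measure for $\phi_{p,\mathbf{m}}$ must concentrate on parameters with $\widehat{\mathbf{k}}=\widehat{\mathbf{m}}$.

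\textbf{Step 2 (the symmetric-group part).} It remains to separate $(q,\ell(\mathbf{k}))$ among parameters with fixed $\widehat{\mathbf{m}}$. Take $g=\mathfrak{t}s$ with $\mathfrak{t}$ supported in $\Fix(s)$, or simply $\mathfrak{t}=1$. Then $\phi_{q,\mathbf{k}}(\mathfrak{t}s)=\psi_{q,\ell(\mathbf{k})}(s)\,\phi_{\widehat{\mathbf{k}}}(\mathfrak{t}s)$, where $\psi_{q,\ell}\in\exchar(\mathfrak{S}_{\widehat{\mathbf{n}}})$. For $s$ whose fixed-point set has positive measure, $\phi_{\widehat{\mathbf{m}}}(\mathfrak{t}s)$ can be made nonzero, and dividing by it (varying $\mathfrak{t}$ on $\Fix(s)$) yields
\[
\int\psi\,d\mu'=\psi_{p,\ell(\mathbf{m})}\quad\text{on the set of } s \text{ with } \chi_{\nat}(s)>0.
\]
We would then use the indecomposability from Proposition \ref{prop:inf_sym_indec_char}, via uniqueness of its barycentric decomposition, which holds because $\mathfrak{S}_{\widehat{\mathbf{n}}}$ is an inductive limit of finite groups and so its character simplex is a Choquet simplex. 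Some care is needed at $s$ with $\chi_{\nat}(s)=0$ to distinguish $\reg$. There, the divided identity contains no information, but the functions $\chi_{\nat}^r$ for different $r$ are already separated on $s$ with $\chi_{\nat}(s)\in(0,1)$, since they are distinct powers of a variable taking a dense set of rational values. Only $r=\infty$ vanishes there, and it is pinned down by mass conservation.

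\textbf{Main obstacle.} The main obstacle is Step 1: proving the uniqueness of the representing measure over infinitely many multiset parameters. It requires showing that the monomials in the moments $c_j$ are "measure-linearly independent" with enough uniformity to justify exchanging limits with the integral. I would try first the $\varepsilon$-expansion with several small disjoint atoms and dominated convergence.
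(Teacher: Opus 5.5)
\textbf{Gap: Step 1 is not proved, and the suggested method fails as stated.}

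Most of your reduction is sound. This includes the barycentric decomposition over $\{\phi_{q,\mathbf{k}}\}$; the paper uses the same fact implicitly when it writes $\phi_{p,\mathbf{m}}$ as a countable convex combination of indecomposables. It also includes your observation that on $\mathcal{F}_{\widehat{\mathbf{n}}}(\mathbb{T})$ each $\phi_{q,\mathbf{k}}$ depends only on $\widehat{\mathbf{k}}$. Your Step 2 works as well: set $\mathfrak{t}=1$, divide by $\chi_{\nat}(s)^{\ell(\widehat{\mathbf{m}})}$ where it is positive, separate the powers $\chi_{\nat}^{r}$ and the $\sgn_{\infty}$-twist on dense sets of values, and recover the weight of $\reg$ by mass conservation.

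Step 1, however, carries essentially all of the content of the proposition, and you leave it open. For a finite measure $\mu$ on multisets, the map $\varepsilon\mapsto\sum_{\mathbf{k}}\mu(\mathbf{k})\prod_{j}\bigl(1+\varepsilon(e^{ij\theta}-1)\bigr)^{d_j(\mathbf{k})}$ is a uniformly convergent sum of polynomials of unbounded degree. It need not be differentiable at $\varepsilon=0$. Extracting the coefficient of $\varepsilon$, or of $\varepsilon_1\cdots\varepsilon_s$ with several atoms, produces weights such as $d_j(\mathbf{k})$ or $\ell(\mathbf{k})^s$ that are unbounded on the support. Dominated convergence would therefore need $\sum_{\mathbf{k}}|\mu(\mathbf{k})|\,\ell(\mathbf{k})^{s}<\infty$, and nothing gives you that.

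Concretely, let $\mathbf{k}_a$ consist of $a$ entries $1$ and $a$ entries $-1$. The character $\sum_{a\ge 1}\frac{6}{\pi^2a^2}\phi_{\triv,\mathbf{k}_a}$ takes the value $\sum_{a\ge1}\frac{6}{\pi^2a^2}(1-x)^a$ at your test element, where $x=2\varepsilon(1-\varepsilon)(1-\cos\theta)$. This is not differentiable at $\varepsilon=0$.

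Steps 2 and 4 of Proposition \ref{prop:sph_func_conv_seq} do not supply the truncation you want either. They constrain sequences of spherical functions with a continuous limit, not the tail of a representing measure.

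\textbf{The missing idea.} Since only convex combinations are at stake, you do not need linear independence against signed measures. In Lemma \ref{lem:indecomp_main} the functions are extended by continuity to the $L^1$-closure $\mathcal{F}^{L^1}(\mathbb{T})$. There every $\mathfrak{t}\in\widetilde{\mathcal{F}}_{N}(\mathbb{T})$ has a ``double'' $\mathfrak{t}_1\mathfrak{t}_2\in\widetilde{\mathcal{F}}_{N^2}(\mathbb{T})$ (morally $\mathfrak{t}\otimes\overline{\mathfrak{t}}$) satisfying $\phi_{\mathbf{k}}(\mathfrak{t}_1\mathfrak{t}_2)=|\phi_{\mathbf{k}}(\mathfrak{t})|^2$ for all $\mathbf{k}$ at once. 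Lemma \ref{lem:indecomp_mult}, the equality case of Cauchy--Schwarz, then forces any convex combination of distinct such functions that reproduces one of them to be trivial.

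Applied to the pushforward of your representing measure onto $\widehat{\mathbf{k}}$, this settles your Step 1 in a few lines. The paper simply does the steps in the other order: it first fixes $q$ and $\ell(\mathbf{k})$ by restricting to $\mathfrak{S}_{\widehat{\mathbf{n}}}$, then applies this lemma.

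If you want the stronger signed statement, use the device of Lemma \ref{lem:lin_independence_phi_func}. Tensor with elements $\mathfrak{r}$ that approximate a measure on $\mathbb{T}$ with moments $q^{j^2}$. Every term then acquires the factor $q^{\|\mathbf{k}\|^2}$, and the identity becomes a power series in $q$ whose coefficients are finite sums. Everything then reduces to finite linear independence.

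Positivity is also what you would need to salvage your $\varepsilon$-expansion, for example via Fatou's lemma to obtain the moment bounds. For signed measures, as you formulate it, the expansion does not go through as stated.
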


The proof of this fact relies on the following lemma.

\begin{Lm}\label{lem:indecomp_main}
Assume that there exists a collection
\begin{equation*}
\{\alpha_{\mathbf{k}}:\mathbf{k}=(K_1,\ldots,K_j)\in\mathbb{Z}^{(j)},~j\in\mathbb{Z}_{\ge 0}\}\sqcup\{\alpha_{\mathbf{k}}:\mathbf{k}=(K_1,K_2,\ldots)\in\mathbb{Z}^{(\infty)}\}
\end{equation*}
of non-negative real numbers satisfying
$\sum\limits_{j\in\mathbb{Z}_{\ge 0}}\sum\limits_{\mathbf{k}\in\mathbb{Z}^{(j)}}\alpha_{\mathbf{k}}+\sum\limits_{\mathbf{k}\in\mathbb{Z}^{(\infty)}}\alpha_{\mathbf{k}}=1$, and an element $\mathbf{m}\in\bigcup\limits_{j\in\mathbb{Z}_{\ge 0}\cup\{\infty\}}\mathbb{Z}^{(j)}$ such that for all $g=\mathfrak{t}s\in  G_{\widehat{\mathbf{n}}}$, where $\mathfrak{t}\in\mathcal{F}_{\widehat{\mathbf{n}}}(\mathbb{T}), s\in\mathfrak{S}_{\widehat{\mathbf{n}}}$, the equality
\begin{equation}\label{eq:phi_conv_comb}
\phi_{\mathbf{m}}(g)=\sum_{j\in\mathbb{Z}_{\ge 0}}\sum_{\mathbf{k}\in\mathbb{Z}^{(j)}}\alpha_{\mathbf{k}}\cdot\phi_{\mathbf{k}}(g)+\sum_{\mathbf{k}  \in\mathbb{Z}^{(\infty)}} \alpha_{\mathbf{k}}\cdot\phi_{\mathbf{k}}(g)~\text{holds}.
\end{equation}
Then, $\alpha_{\mathbf{m}}=1$ and $\alpha_{\mathbf{k}}=0$ for all other $\mathbf{k}\neq \mathbf{m}$.
\end{Lm}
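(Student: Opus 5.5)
The plan is to separate the two sources of data in \eqref{eq:phi_conv_comb}: the restriction to the abelian subgroup $\mathcal{F}_{\widehat{\mathbf{n}}}(\mathbb{T})$, which sees only the multiset $\widehat{\mathbf{k}}$ of non-zero entries, and the restriction to $\mathfrak{S}_{\widehat{\mathbf{n}}}$, which sees only the length $\ell(\mathbf{k})$.

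\textbf{Step 1 (torus).} For $s=1$ we have $\phi_{\mathbf{k}}(\mathfrak{t})=\prod_{i}\int_{\mathbb{X}_{\widehat{\mathbf{n}}}}\mathfrak{t}^{K_i}\,d\nu_{\widehat{\mathbf{n}}}$. The zero entries contribute the factor $1$, so this depends only on $\widehat{\mathbf{k}}$. I will show that the functions $\mathfrak{t}\mapsto\phi_{\widehat{\mathbf{k}}}(\mathfrak{t})$ for distinct finite multisets of non-zero integers are linearly independent in a strong sense that survives countable convex combinations; this is the content of Lemma \ref{lem:lin_independence_phi_func}, which I take as given.

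To see why it holds, take $\mathfrak{t}\in\mathcal{F}_{N_r}(\mathbb{T})$ with values $t_1,\dots,t_{N_r}$, each taken on a cell of measure $1/N_r$. Then
\[
\phi_{\widehat{\mathbf{k}}}(\mathfrak{t})=\prod_{i}\Bigl(\frac{1}{N_r}\sum_{x}t_x^{K_i}\Bigr),
\]
which is a trigonometric polynomial on $\mathbb{T}^{N_r}$. Its Fourier coefficient at a character $R_{\mathfrak{n}}$ with $\mathfrak{n}=(K_1,\dots,K_p,0,\dots,0)$ (entries placed on distinct points) is non-zero only if the multiset $\widehat{\mathbf{k}}$ can be split into $p$ blocks with sums $K_1,\dots,K_p$. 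When $N_r$ exceeds every relevant length, the coefficient equals $N_r^{-p}$ times a count.

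Now pair both sides of \eqref{eq:phi_conv_comb} with $\overline{R_{\mathfrak{n}}}$ over Haar measure. This is legitimate by dominated convergence, since $\sum\alpha_{\mathbf{k}}=1$ and $|\phi_{\mathbf{k}}|\le1$. An induction on the ``fineness'' of $\widehat{\mathbf{k}}$ should then force all weight onto multisets with $\widehat{\mathbf{k}}=\widehat{\mathbf{m}}$. I would order by $|\widehat{\mathbf{k}}|$, the number of non-zero entries, and compare scaling in $N_r$: a finer multiset gives a coefficient of order $N_r^{-p}$ with larger $p$. Taking $r\to\infty$ along suitable test characters, I expect to conclude that $\sum_{\widehat{\mathbf{k}}=\widehat{\mathbf{m}}}\alpha_{\mathbf{k}}=1$.

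\textbf{Step 2 (symmetric group).} Having fixed $\widehat{\mathbf{k}}=\widehat{\mathbf{m}}$, restrict \eqref{eq:phi_conv_comb} to $g=s\in\mathfrak{S}_{\widehat{\mathbf{n}}}$ and use \eqref{eq:phi_restriction_sym}. Since $\phi_{\mathbf{k}}=\phi_{\triv,\mathbf{k}}$, this gives
\[
\chi_{\nat}(s)^{\ell(\mathbf{m})}=\sum_{j}\beta_j\,\chi_{\nat}(s)^{j}+\beta_\infty\,\chi_{\reg}(s),
\]
where $\beta_j=\sum_{\ell(\mathbf{k})=j}\alpha_{\mathbf{k}}$. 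The value $\chi_{\nat}(s)=a$ ranges over a dense set of rationals in $[0,1]$, since $\chi_{\nat}$ of an element of $\mathfrak{S}_{N_r}$ can be any $K/N_r<1$. For $a<1$ the identity reads $a^{\ell(\mathbf{m})}=\sum_j\beta_j a^j$. The right side is a power series with non-negative coefficients summing to at most $1$, so it is analytic on $(-1,1)$. Uniqueness of power series then forces $\beta_{\ell(\mathbf{m})}=1$ if $\ell(\mathbf{m})<\infty$.

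If instead $\ell(\mathbf{m})=\infty$, the left side is $\chi_{\reg}(s)$, which is $0$ for $a<1$. This forces every finite $\beta_j=0$, hence $\beta_\infty=1$. Within each fixed pair $(\widehat{\mathbf{m}},\ell)$ there is exactly one multiset $\mathbf{k}$, so $\alpha_{\mathbf{m}}=1$ and all other $\alpha_{\mathbf{k}}=0$.

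\textbf{Main obstacle.} The delicate part is Step 1, namely extracting $\widehat{\mathbf{k}}$ from an infinite convex combination, where coarser multisets can contribute to the Fourier coefficients of finer ones. My first approach would be to test against the torus element $\mathfrak{t}^\theta_{N_r,K}$ and use the generating-function form from Lemma \ref{spher_func_t_element}. Letting $K/N_r\to$ small, the expansion in powers of $\alpha=K/N_r$ isolates the moments $\sum_i e^{iK_i\theta}$, which I would use to peel off multisets inductively.
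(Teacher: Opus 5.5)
Your argument is correct, but it runs in the opposite order from the paper's.

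The paper first restricts \eqref{eq:phi_conv_comb} to $\mathfrak{S}_{\widehat{\mathbf{n}}}$ and uses the classification of $\exchar(\mathfrak{S}_{\widehat{\mathbf{n}}})$ to put all the weight on multisets of length $\ell(\mathbf{m})$. To separate multisets of that length, it then extends the $\phi_{\mathbf{k}}$ by $L^1$-continuity and applies Lemma \ref{lem:indecomp_mult} with $v_u=\mathfrak{t}_1\mathfrak{t}_2\in\widetilde{\mathcal{F}}_{N^2}(\mathbb{T})$. Since $\phi_{\mathbf{k}}(v_u)=|\phi_{\mathbf{k}}(u)|^2$, Cauchy--Schwarz leaves a single $\mathbf{k}$.

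You instead fix $\widehat{\mathbf{k}}$ first. You group the $\alpha_{\mathbf{k}}$ by $\widehat{\mathbf{k}}$ and apply Lemma \ref{lem:lin_independence_phi_func}. This is legitimate, because that lemma's proof does not use Lemma \ref{lem:indecomp_main}. You then fix the length by an elementary power-series identity, using that $\chi_{\nat}$ takes a dense set of values in $[0,1)$, on which $\chi_{\reg}$ vanishes. Strictly, these values are $K/N_r$ with $K\neq N_r-1$, but they are still dense.

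Each route buys something different.
\begin{itemize}
\item Your length step is more self-contained: it needs no input from the classification of characters of $\mathfrak{S}_{\widehat{\mathbf{n}}}$.
\item The paper's separation step needs only that the $\phi_{\mathbf{k}}$ of a fixed length are distinct and that the weights are non-negative.
\item You rely on the stronger signed $\ell^1$-independence. Its infinite-sum part is the genuinely hard input; the paper obtains it from the product structure $\mathfrak{t}\otimes\mathfrak{r}$ together with a measure on $\mathbb{T}$ with moments $q^{k^2}$.
\end{itemize}

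Once you cite Lemma \ref{lem:lin_independence_phi_func}, your Fourier-scaling sketch and the ``main obstacle'' discussion are superfluous. Be aware that, as written, that sketch would not establish the lemma. Letting $r\to\infty$ inside an infinite sum requires a bound, uniform in $\widehat{\mathbf{k}}$, on the Fourier coefficients of $\phi_{\widehat{\mathbf{k}}}$ multiplied by $N_r^{q}$, where $q$ is the number of non-zero entries of the test character. The estimate $|\phi_{\widehat{\mathbf{k}}}|\le 1$ alone does not supply this. The $\mathfrak{t}^{\theta}_{N_r,K}$ expansion in small $K/N_r$ runs into the same lack of uniformity.
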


\begin{proof}
\textbf{Step 1: fixing the length of tuples.}
Recall that for $\mathbf{m}\in\mathbb{Z}^{(j)}$ we have $\phi_{\mathbf{m}}(s)=\chi_{\nat}(s)^{j}$, and for $\mathbf{m}\in\mathbb{Z}^{(
\infty)}$ we have $\phi_{\mathbf{m}}(g)=\chi_{\reg}(s)\phi_{\mathbf{k}}(\mathfrak{t})$, $g=\mathfrak{t}s$. Then, restricting the equality \eqref{eq:phi_conv_comb} to the subgroup $\mathfrak{S}_{\widehat{\mathbf{n}}}\subset G_{\widehat{\mathbf{n}}}$ yields
\begin{equation*}
\phi_{\mathbf{m}}(s)=\sum_{j\in\mathbb{Z}_{\ge 0}}\gamma_j\cdot\chi_{\nat}(s)^{j}+\gamma_{\infty}\cdot\chi_{\reg}(s),~\text{where}~\gamma_j=\sum_{\mathbf{k}\in\mathbb{Z}^{(j)}}\alpha_{\mathbf{k}},~j\in\mathbb{Z}_{\ge 0}\cup\{\infty\}.
\end{equation*}
Note that $\gamma_j\ge 0$ and $\sum\limits_{j\in\mathbb{Z}_{\ge 0}\cup\{\infty\}}\gamma_j=1$. Now we have two cases:
\begin{itemize}
\item 
$\mathbf{m}\in\mathbb{Z}^{(l)}$ for some $l\in\mathbb{Z}_{\ge 0}$. Then, $\phi_{\mathbf{m}}(s)=\chi_{\nat}(s)^{l}$, and from the classification theorem for indecomposable characters of $\mathfrak{S}_{\widehat{\mathbf{n}}}$ (see Proposition \ref{prop:inf_sym_indec_char}), we obtain that $\gamma_l=1$ and $\gamma_j=0$ for $l=j$. Consequently, $\alpha_{\mathbf{k}}=0$ whenever $\mathbf{k}\notin\mathbb{Z}^{(l)}$. Next we restrict this identity to the subgroup $\mathcal{F}_{\widehat{\mathbf{n}}}(\mathbb{T})\subset G_{\widehat{\mathbf{n}}}$.

\item 
$\mathbf{m}\in\mathbb{Z}^{(\infty)}$. Then, $\phi_{\mathbf{m}}(s)=\chi_{\reg}(s)$ and we similarly obtain that $\gamma_{\infty}=1$ and $\gamma_l=0$ for $l\in\mathbb{Z}_{\ge 0}$, and hence, $\alpha_{\mathbf{k}}=0$ if $\mathbf{k}\notin\mathbb{Z}^{(\infty)}$.
\end{itemize}
Thus, if $\mathbf{m}\in\mathbb{Z}^{(l)}$, where $l\in\mathbb{Z}_{\ge 0}\cup\{\infty\}$, then
\begin{equation}\label{eq:phi_conv_comb_red}
\phi_{\mathbf{m}}(g)=\sum_{\mathbf{k}\in\mathbb{Z}^{(l)}}\alpha_{\mathbf{k}}\cdot\phi_{\mathbf{k}}(g),~g=\mathfrak{t}s\in G_{\widehat{\mathbf{n}}}.
\end{equation}
\textbf{Step 2: extending the characters by continuity.}
Denote by $\mathcal{F}^{L^1}(\mathbb{T})$ the norm closure of $\mathcal{F}_{\widehat{\mathbf{n}}}(\mathbb{T})$ inside $L^1(\mathbb{X}_{\widehat{\mathbf{n}}}, \nu_{\widehat{\mathbf{n}}})$. It is clear that 
\begin{equation*}
\mathcal{F}^{L^1}(\mathbb{T})=\{f\in L^1(\mathbb{X}_{\widehat{\mathbf{n}}}, \nu_{\widehat{\mathbf{n}}}):|f(x)|=1~\text{for almost all}~x\in\mathbb{X}_{\widehat{\mathbf{n}}}\}.
\end{equation*}
Recall that
\begin{equation*}
\phi_{\mathbf{k}}(g)=\prod_{1\le j\le l}\left(\int_{\Fix(s;\mathbb{X}_{\widehat{\mathbf{n}}})}\mathfrak{t}(x)^{K_j}\,d\nu_{\widehat{\mathbf{n}}}(x)\right),~\mathbf{k}=(K_1,K_2,\ldots)\in\mathbb{Z}^{(l)},~l\in\mathbb{Z}_{\ge 0}\cup\{\infty\}.
\end{equation*}
(If $l=\infty$ the product exists but is non-zero only when $s=1_{\mathfrak{S}_{\widehat{\mathbf{n}}}}$.)
We claim that all these characters in fact extend by continuity to $\mathcal{F}^{L^1}(\mathbb{T})$. Indeed, assume that the sequence $\{f_m\}^\infty_{m=1}\subset \mathcal{F}_{\widehat{\mathbf{n}}}(\mathbb{T})$ is a Cauchy sequence in $L^1\left(\mathbb{X}_{\widehat{\mathbf{n}}},\nu_{\widehat{\mathbf{n}}}\right)$, i.e., we have
\begin{equation*}
\lim_{m,n\to \infty}\int_{\mathbb{X}_{\widehat{\mathbf{n}}}}|f_m(x)-f_n(x)|d\nu_{\widehat{\mathbf{n}}}(x)=0.
\end{equation*}
Since for any integer $L$ and $z,w\in\mathbb{T}$ the inequality $|z^L-w^L|\le|L|\cdot|z-w|$ holds, we obtain
\begin{equation*}
\lim_{m,n\to \infty}\left| \int_{\mathbb{X}_{\widehat{\mathbf{n}}}}f_m(x)^L  d\nu_{\widehat{\mathbf{n}}}(x)-\int\limits_{\mathbb{X}_{\widehat{\mathbf{n}}}}f_n(x)^L  d\nu_{\widehat{\mathbf{n}}}(x)\right|=0 ~\text{for any integer}~L.
\end{equation*}
Therefore, every character from \eqref{eq:phi_p_m_def} can be extended by continuity to a character on the group $G^{L^1}$, generated by $\mathfrak{S}_{\widehat{\mathbf{n}}}$ and $\mathcal{F}^{L^1}(\mathbb{T})$. We slightly abuse the notation and denote the extension by the sane symbol $\phi_{\mathbf{k}}$. Then, by continuity, the equality \eqref{eq:phi_conv_comb_red} holds for any $g\in\mathcal{F}^{L^1}(\mathbb{T})$ as well (the series on the right converges absolutely).

\textbf{Step 3: using the multiplicativity.}
For any positive integer $N$ there is a partition (up to null sets) of the Lebesgue space $(\mathbb{X}_{\widehat{\mathbf{n}}},\nu_{\widehat{\mathbf{n}}})$ into $N$ subsets $\{\mathbb{S}_{m,N}\}_{m=1}^{N}$ each of measure $1/N$:
\begin{equation*}
\mathbb{S}_{m,N}=\left\{(x_1,x_2,\ldots)\in\mathbb{X}_{\widehat{\mathbf{n}}}=\prod_{j=1}^{\infty}\mathbb{X}_{n_j}:\sum_{j=1}^{\infty}\frac{x_j-1}{N_j}\in\left[\frac{m-1}{N},\frac{m}{N}\right)\right\},~1\le m\le N.
\end{equation*}
It is not difficult to verify that for $N=N_i$ we have $\mathbb{S}_{m,N_i}=\{m\}\times\prod\limits_{j= i+1}^{\infty}\mathbb{X}_{n_j}$, where $m$ is regarded as an element of $\mathbb{X}_{N_i}\simeq\mathbb{X}_{n_1}\times\ldots\times\mathbb{X}_{n_i}$.
Next, for each positive integer $N$ consider the subgroup of $\mathcal{F}^{L^1}(\mathbb{T})$ consisting of functions that are constant on subsets $\mathbb{S}_{1,N}$, \ldots, $\mathbb{S}_{N,N}$; we denote this subgroup by $\widetilde{\mathcal{F}}_{N}(\mathbb{T})$. Clearly $\widetilde{\mathcal{F}}_{N}(\mathbb{T})$ is isomorphic to $\mathcal{F}_N(\mathbb{T})$ (in fact, these two are same for $N=N_i$). Besides that, for any positive integers $N$ and $M$ the group $\widetilde{\mathcal{F}}_{N}(\mathbb{T})$ embeds into $\widetilde{\mathcal{F}}_{MN}(\mathbb{T})$ in the block-diagonal way.

To complete the proof of the lemma, we apply Lemma \ref{lem:indecomp_mult} (see below) for the set $S=\bigcup_{N\ge 1}\widetilde{\mathcal{F}}_{N}(\mathbb{T})\subset\mathcal{F}^{L^1}(\mathbb{T})$ and functions $\{\phi_{\mathbf{k}}:\mathbf{k}\in\mathbb{Z}^{(l)}\}$ (these are distinct on $S$).

It remains to check the assumptions of Lemma \ref{lem:indecomp_mult}. Pick any positive integer $N$ and element $\mathfrak{t}\in\widetilde{\mathcal{F}}_N(\mathbb{T})$. Then, there exist $t_1,\ldots,t_N\in\mathbb{T}$ such that
\begin{equation*}
\mathfrak{t}(x)=
\begin{cases}
t_i,~\text{if}~x\in\mathbb{S}_{i,N},~i=1,\ldots,N.
\end{cases}
\end{equation*}
We consider the following two functions in $\widetilde{\mathcal{F}}_{N^2}(\mathbb{T})$:
\begin{equation*}
\begin{split}
&\mathfrak{t}_1(x)=
\begin{cases}
t_i,&\text{if}~x\in\bigsqcup\limits_{j=1}^{N}\mathbb{S}_{j+N(i-1),N^2},~i=1,\ldots,N,
\end{cases}
\\
&\mathfrak{t}_2(x)=
\begin{cases}
t_j^{-1},&\text{if}~x\in\bigsqcup\limits_{i=1}^{N}\mathbb{S}_{j+N(i-1),N^2},~j=1,\ldots,N.
\end{cases}
\end{split}
\end{equation*} 
Note that $\mathfrak{t}\in\widetilde{\mathcal{F}}_{N}(\mathbb{T})$ and $\mathfrak{t}_1\in\widetilde{\mathcal{F}}_{N^2}(\mathbb{T})$ are the same as elements in $\mathcal{F}^{L^1}(\mathbb{T})$ because $\mathbb{S}_{i,N}=\bigsqcup_{j=1}^{N}\mathbb{S}_{j+N(i-1),N^2}$.
We claim now that for any (extended) character $\phi_{\mathbf{m}}$, where $\mathbf{m}\in\bigcup_{j\in\mathbb{Z}_{\ge 0}\cup\{\infty\}}\mathbb{Z}^{(j)}$, the following \emph{multiplicativity property} holds:
\begin{equation*}
\phi_{\mathbf{m}}(\mathfrak{t}_1\cdot\mathfrak{t}_2)=\phi_{\mathbf{m}}(\mathfrak{t}_1)\cdot\phi_{\mathbf{m}}(\mathfrak{t}_2).
\end{equation*}
In fact, we also have $\phi_{\mathbf{m}}(\mathfrak{t}_1\cdot\mathfrak{t}_2)=|\phi_{\mathbf{m}}(\mathfrak{t}_1)|^2=|\phi_{\mathbf{m}}(\mathfrak{t}_2)|^2$. Indeed, it suffices to verify that for $\mathbf{m}$ consisting of a single entry $M\in\mathbb{Z}$. In the case we have
\begin{equation*}
\phi_{M}(\mathfrak{t}_1\mathfrak{t}_2)=\int_{\mathbb{X}_{\widehat{\mathbf{n}}}}\mathfrak{t}_1(x)^{M}\mathfrak{t}_2(x)^{M}d\nu_{\widehat{\mathbf{n}}}(x)=\sum_{i,j=1}^{N}\frac{1}{N^2}\cdot t_{i}^Mt_{j}^{-M}=\left(\sum_{i=1}^{N}\frac{1}{N}\cdot t_i^M\right)\left(\sum_{j=1}^{N}\frac{1}{N}\cdot t_j^{-M}\right)=\phi_{M}(\mathfrak{t}_1)\phi_{M}(\mathfrak{t}_2).
\end{equation*}
Since $t_i\in\mathbb{T}$, we also have $\phi_{M}(\mathfrak{t}_1)=\overline{\phi_{M}(\mathfrak{t}_2)}$.
\begin{Rem}
Let us explain the construction of sets $\mathbb{S}_{m,N^2}$ and functions $\mathfrak{t}_1$, $\mathfrak{t}_2$ in a more intuitive way. The Lebesgue space $(\mathbb{X}_{\widehat{\mathbf{n}}},\nu_{\mathbf{\widehat{\mathbf{n}}}})$ can be decomposed as a product space $\mathbb{X}_{N}\times\mathbb{X}_{N}\times \mathbb{X}$, where $\mathbb{X}_{N}=\{1,\ldots,N\}$ is the discrete $N$-point space with uniform measure, and $\mathbb{X}$ is a Lebesgue space. Under this identification the set $\mathbb{S}_{j+N(i-1),N^2}$ is simply $\{j\}\times\{i\}\times\mathbb{X}$. Then, the functions $\mathfrak{t}_1$ and $\mathfrak{t}_2$ can be interpreted as functions $\mathfrak{t}\otimes\mathds{1}_{\mathbb{X}_{N}}\otimes\mathds{1}_{\mathbb{X}}$ and $\mathds{1}_{\mathbb{X}_{N}}\otimes\overline{\mathfrak{t}}\otimes\mathds{1}_{\mathbb{X}}$, respectively, where $\mathfrak{t}\in\mathcal{F}_N(\mathbb{T})$ is given by $\mathfrak{t}(x)=t_x$, $x\in\mathbb{X}_{N}$. In this interpretation the factorization property of the integral above is immediate.
\end{Rem}
Finally, from the above we see that the assumptions of Lemma \ref{lem:indecomp_mult} for functions $\{\phi_{\mathbf{k}}:\mathbf{k}\in\mathbb{Z}^{(l)}\}$ on the set $S=\bigcup_{N\ge 1}\widetilde{\mathcal{F}}_{N}(\mathbb{T})$ are satisfied: for $u=\mathfrak{t}\in\widetilde{\mathcal{F}}_N(\mathbb{T})$ we can put $v_u=\mathfrak{t}_1\mathfrak{t}_2\in\widetilde{\mathcal{F}}_{N^2}(\mathbb{T})$. Therefore, we must have $\alpha_{\mathbf{m}}=1$ and $\alpha_{\mathbf{k}}=0$ for $\mathbf{k}\neq\mathbf{m}$.
\end{proof}

Now let us prove the fact which played the crucial role in the proof of Lemma \ref{lem:indecomp_main}. The statement\footnote{We thank Artem Dudko for bringing this lemma and its proof to our attention.} is quite general and, roughly speaking, it says that ``multiplicativity forces indecomposability''. 

\begin{Lm}\label{lem:indecomp_mult}
Let $\{\psi_i\}_{i=1}^{\infty}$ be a countable set of distinct functions on a set $S$ with values in the closed unit disc $\{z\in\mathbb{C}:|z|\le 1\}$. Assume that for any element $u\in S$ there exists an element $v_u\in S$ such that $\psi_i(v_u)=|\psi_i(u)|^2$ for all $i$. Then, none of the functions $\psi_j$ can be a non-trivial convex combination of the functions $\{\psi_i\}_{i=1}^{\infty}$. In other words, if there exist non-negative real numbers $\lambda_i\in[0,1]$ with $\sum\limits_{i\ge 1}\lambda_i=1$ such that
\begin{equation*}
\psi_j=\sum_{i\ge 1}\lambda_i\cdot \psi_i~\text{as functions on}~S,
\end{equation*}
then $\lambda_j=1$ and $\lambda_i=0$ for $i\neq j$.
\end{Lm}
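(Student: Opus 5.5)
The plan is to exploit the identity $\psi_j=\sum_i\lambda_i\psi_i$ twice, once at $u$ and once at $v_u$, and to compare the two via a convexity (Cauchy--Schwarz) inequality. Fix $j$ and suppose $\psi_j=\sum_{i}\lambda_i\psi_i$ on $S$ with $\lambda_i\ge 0$ and $\sum_i\lambda_i=1$. The series converges absolutely because $|\psi_i|\le 1$. For any $u\in S$, evaluating at $v_u$ and using the hypothesis $\psi_i(v_u)=|\psi_i(u)|^2$ gives
\begin{equation*}
|\psi_j(u)|^2=\psi_j(v_u)=\sum_i\lambda_i|\psi_i(u)|^2 .
\end{equation*}
Evaluating at $u$ itself gives $\psi_j(u)=\sum_i\lambda_i\psi_i(u)$. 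Therefore
\begin{equation*}
\sum_i\lambda_i\,|\psi_i(u)-\psi_j(u)|^2=\sum_i\lambda_i|\psi_i(u)|^2-2\operatorname{Re}\Big(\overline{\psi_j(u)}\sum_i\lambda_i\psi_i(u)\Big)+|\psi_j(u)|^2=|\psi_j(u)|^2-2|\psi_j(u)|^2+|\psi_j(u)|^2=0 .
\end{equation*}
This is the variance of the ``random variable'' $i\mapsto\psi_i(u)$ under the probability weights $\lambda$, and it vanishes.

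It follows that $\psi_i(u)=\psi_j(u)$ for every $i$ with $\lambda_i>0$. Since $u\in S$ was arbitrary, every $\psi_i$ with $\lambda_i>0$ coincides with $\psi_j$ as a function on $S$. The functions are pairwise distinct, so $\lambda_i=0$ for $i\ne j$, and $\sum_i\lambda_i=1$ then forces $\lambda_j=1$.

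No step is a real obstacle. The only points needing care are the justification of rearranging the absolutely convergent series in the expansion of the variance, which holds because all terms are bounded by $1$ and $\sum_i\lambda_i<\infty$, and the remark that distinctness of the $\psi_i$ is what turns ``equal on the support of $\lambda$'' into ``$\lambda$ is concentrated at $j$''.
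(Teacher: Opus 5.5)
Your proof is correct and is essentially the paper's argument: the paper also evaluates the identity at $u$ and at $v_u$ and uses the equality case of Cauchy--Schwarz for the vectors $(\sqrt{\lambda_i}\psi_i(u))_i$ and $(\sqrt{\lambda_i})_i$, and your vanishing weighted variance $\sum_i\lambda_i|\psi_i(u)-\psi_j(u)|^2=0$ is that equality case written out explicitly. Your version has the small advantage of identifying the common value on the support of $\lambda$ directly as $\psi_j(u)$, which makes the conclusion $\lambda_j=1$ immediate.
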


\begin{proof}
Apply this equality to $v_u$ to obtain
\begin{equation*}
|\psi_j(u)|^2=\psi_j(v_u)=\sum_{i\ge 0}\lambda_i\psi_i(v_u)=\sum_{i\ge 0}\lambda_i|\psi_i(u)|^2.
\end{equation*}
Together with equalities $\psi_j(u)=\sum_{i\ge 0}\lambda_i\psi_i(u)$, $\sum_{i\ge 0}\lambda_i=1$, and the Cauchy-Schwarz inequality this shows that the vectors $(\sqrt{\lambda_i}\psi_i(u))_i$ and $(\sqrt{\lambda_i})_i$ are proportional for any $u$. Since the functions $\psi_i$ ar all distinct, there exists a unique $i_0$ with $\lambda_{i_0}\neq 0$. Consequently, $\lambda_{i_0}=1$ and $i_0=j$.
\end{proof}

\begin{Rem}
In practice, we apply Lemma \ref{lem:indecomp_mult} in the following setup: the set $S$ has a semigroup structure given by multiplication $\otimes$ and an involution $u\mapsto u^*$ such that $\psi_i(u^*)=\overline{\psi_i(u)}$ for all $i$ (often $S$ is the set of conjugacy classes of some big group $G$ and the $(-)^*$ corresponds to the taking the inverse). Then, often one can show that the functions of interest (in particular, $\psi_i$'s) satisfy the \emph{multiplicativity property}: $\psi(u_1\otimes u_2)=\psi(u_1)\psi(u_2)$ for $u_1,u_2\in S$ (for instance, this is usually the case for indecomposable characters of \emph{big} groups). In this case, one sets $v_u=u\otimes u^*$. Then, $\psi(u^*)=\overline{\psi(u)}$ and $\psi(u\otimes u^*)=\psi(u)\psi(u^*)=|\psi(u)|^2$, hence the conditions of the lemma hold.
\end{Rem}

For the proof of Proposition \ref{prop:phi_indecomp} instead of Lemma \ref{lem:indecomp_main} we may also use a different statement, namely Lemma \ref{lem:lin_independence_phi_func} below. It was also used for the proof of indecomposability of spherical functions in Theorem \ref{thm:spherical_func}.

\begin{Lm}\label{lem:lin_independence_phi_func}
Assume that there exist a countable set $\{c_{\mathbf{k}}:\mathbf{k}\in\mathbb{Z}^{(\infty)}\}$ of complex numbers such that
\begin{itemize}
\item
$\sum\limits_{\mathbf{k}\in\mathbb{Z}^{(\infty)}}|c_{\mathbf{k}}|<+\infty$, and

\item
the equality $\sum\limits_{\mathbf{k}\in\mathbb{Z}^{(\infty)}}c_{\mathbf{k}}\cdot\phi_{\mathbf{k}}(\mathfrak{t})=0$ holds for all $\mathfrak{t}\in\mathcal{F}_{\widehat{\mathbf{n}}}(\mathbb{T})$.
\end{itemize}
Then, $c_{\mathbf{k}}=0$ for all $\mathbf{k}$.
\end{Lm}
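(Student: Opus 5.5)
The plan is to reduce the statement to the vanishing of Fourier coefficients of continuous functions on a finite-dimensional torus, and then to peel off the multisets one at a time by an asymptotic argument in a small measure parameter. Throughout, for $\mathbf{k}\in\mathbb{Z}^{(\infty)}$ only the finite multiset $\widehat{\mathbf{k}}=(K_1,\ldots,K_l)$ of non-zero entries matters, since $\phi_{\mathbf{k}}(\mathfrak{t})=\prod_{j=1}^{l}\int_{\mathbb{X}_{\widehat{\mathbf{n}}}}\mathfrak{t}(x)^{K_j}\,d\nu_{\widehat{\mathbf{n}}}(x)$. Distinct $\mathbf{k}$ give distinct $\widehat{\mathbf{k}}$. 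I write $l(\mathbf{k})$ for the length of $\widehat{\mathbf{k}}$.

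\textbf{Reduction to a torus.} Fix $n\ge 1$ and weights $a_1,\ldots,a_n\ge 0$ with $\sum_i a_i<1$ that are multiples of $1/N_r$ for some $r$. For $z=(z_1,\ldots,z_n)\in\mathbb{T}^n$, take $\mathfrak{t}_z\in\mathcal{F}_{N_r}(\mathbb{T})$ equal to $z_i$ on a union of $a_iN_r$ of the atoms and $1$ on the remaining atoms, which have total measure $a_0=1-\sum_i a_i$. Then
\begin{equation*}
\phi_{\mathbf{k}}(\mathfrak{t}_z)=\prod_{j=1}^{l(\mathbf{k})}\Big(a_0+\sum_{i=1}^n a_i z_i^{K_j}\Big)=:F_{\mathbf{k}}(a;z).
\end{equation*}
Each $F_{\mathbf{k}}(a;\cdot)$ is a trigonometric polynomial bounded by $1$ on $\mathbb{T}^n$. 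By absolute convergence, $\sum_{\mathbf{k}}c_{\mathbf{k}}F_{\mathbf{k}}(a;\cdot)\equiv 0$ implies that, for every exponent vector $e\in\mathbb{Z}^n$,
\begin{equation*}
\sum_{\mathbf{k}}c_{\mathbf{k}}\widehat{F_{\mathbf{k}}}(a;e)=0,\qquad |\widehat{F_{\mathbf{k}}}(a;e)|\le 1 .
\end{equation*}
Each $\widehat{F_{\mathbf{k}}}(a;e)$ is a polynomial in $a$, and by continuity in $a$ (dominated convergence) the identity extends from the dense set of admissible $a$ to all $a$ with $a_i\ge 0$ and $\sum_i a_i\le 1$.

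\textbf{Induction on the length.} Suppose some $c_{\mathbf{k}}\neq 0$, and choose $\mathbf{k}^*$ with $c_{\mathbf{k}^*}\neq 0$ of minimal length $l$ among those. Since $\sum_{\mathbf{k}}|c_{\mathbf{k}}|<\infty$, only finitely many $\mathbf{k}$ have $|c_{\mathbf{k}}|\ge\varepsilon$, so I may also take $\mathbf{k}^*$ of maximal $|c_{\mathbf{k}}|$ among those of length $l$. Set $n=l$, take $a_i=a$ for all $i$, and take $e=(K^*_1,\ldots,K^*_l)$.

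Expanding the product, $\widehat{F_{\mathbf{k}}}(a;e)$ is a sum over maps from the factors $\{1,\ldots,l(\mathbf{k})\}$ to $\{0,1,\ldots,l\}$ in which every $i\ge 1$ is hit. Hence $\widehat{F_{\mathbf{k}}}(a;e)=O(a^{l})$, and the coefficient of $a^{l}$ counts the injective assignments of the $l$ values $K^*_i$ into $\widehat{\mathbf{k}}$. This coefficient vanishes for $l(\mathbf{k})=l$ unless $\widehat{\mathbf{k}}=\widehat{\mathbf{k}^*}$, and then it equals the multiplicity factor $\prod_K d_K!$. Dividing the identity by $a^{l}$ and letting $a\to 0$ should therefore isolate $c_{\mathbf{k}^*}\prod_K d_K!$, plus the contributions of longer $\mathbf{k}$ with the same Fourier coefficient. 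To kill those, I would vary the coefficient choice: use the $a^{l}$-coefficient of $\widehat{F_{\mathbf{k}}}$, which for $l(\mathbf{k})=l'>l$ is a number $q_{\mathbf{k}}$. Then use independent weights $a_0$ and $a_i$ (dropping the constraint $\sum_i a_i+a_0=1$ after the polynomial identity is established) to separate the $a_0^{l'-l}$ powers, and so separate the lengths.

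\textbf{Main obstacle.} The hard step is justifying the limit $a\to 0$ termwise. The bound $|\widehat{F_{\mathbf{k}}}(a;e)|\le\Pr(\text{all } l \text{ indices hit})\le (l(\mathbf{k})\,a)^{l}$ is not uniform in $\mathbf{k}$. My first attempt is a truncation. Split the sum into $l(\mathbf{k})\le L$ and $l(\mathbf{k})>L$. For the finite part, the $a^{l}$-asymptotics are uniform, and the finitely many polynomials $\prod_j p_{K_j}$ in the power sums $p_K=a_0+\sum_i a_i z_i^K$ are linearly independent as functions of $(a,z)$ once $n$ is large, because the power sums $p_K$ with $K\neq 0$ are algebraically independent. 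The tail I would control by choosing $a\approx 1/L$, where the tail is bounded by $\sum_{l(\mathbf{k})>L}|c_{\mathbf{k}}|\to 0$. I expect the balancing of $a$ against $L$ to be the delicate point. If it fails, the fallback is the multiplicativity trick of Lemma \ref{lem:indecomp_mult}. Pass to the extension to $\mathcal{F}^{L^1}(\mathbb{T})$ from Step 2 of Lemma \ref{lem:indecomp_main}, and use tensor-product elements $\mathfrak{t}\otimes\mathfrak{t}'$ on $\mathbb{X}_N\times\mathbb{X}_N\times\mathbb{X}$. On these elements each $\phi_{\mathbf{k}}$ factorizes, which converts the vanishing relation into a relation with absolutely summable coefficients in two independent groups of variables. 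A Hilbert-space argument, realizing the $\phi_{\mathbf{k}}$ as asymptotically orthonormal after normalizing by $N^{l(\mathbf{k})/2}$ for random $\mathfrak{t}$, should then force all $c_{\mathbf{k}}=0$.
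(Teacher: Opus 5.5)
There is a genuine gap. Your reduction to Fourier coefficients of $F_{\mathbf{k}}(a;z)$ is sound. So is the observation that for $l(\mathbf{k})=l$ the $a^{l}$-term isolates $\widehat{\mathbf{k}^*}$. The difficulty of the lemma, however, is the infinitely many $\mathbf{k}$ of other lengths, and neither of your mechanisms handles them.

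\textbf{The limit $a\to 0$ is not justified.} After dividing by $a^{l}$ you only have $|\widehat{F_{\mathbf{k}}}(a;e)|/a^{l}\le\min\{a^{-l},l(\mathbf{k})^{l}\}$. So at $a\approx 1/L$ the tail is bounded merely by $L^{l}\sum_{l(\mathbf{k})>L}|c_{\mathbf{k}}|$, which need not tend to $0$ when all you know is $\sum|c_{\mathbf{k}}|<\infty$. The ``finite part'' is not in the asymptotic regime there either, since $l(\mathbf{k})a\approx 1$ and $a_0^{l(\mathbf{k})-l}\approx e^{-l}$ for $l(\mathbf{k})\approx L$. Even granting the limit, you would get a single relation $c_{\mathbf{k}^*}\prod_K d_K!+\sum_{l(\mathbf{k})>l}q_{\mathbf{k}}c_{\mathbf{k}}=0$ with $q_{\mathbf{k}}$ as large as $l(\mathbf{k})^{l}$. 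That is one possibly divergent equation in infinitely many unknowns.

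\textbf{Treating $a_0$ as independent of $a$ is not legitimate.} The vanishing is known only at realizable weights, i.e.\ on the simplex $a_0+\sum_i a_i=1$. Each $\widehat{F_{\mathbf{k}}}$ is homogeneous of degree $l(\mathbf{k})$ in $(a_0,a)$. So an infinite sum of such terms vanishing on that hyperplane says nothing about its value at $s(a_0,a)$ with $s<1$. Separating the degrees is not a formal step once a ``polynomial identity'' is in hand; it is precisely the nontrivial content of the lemma.

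\textbf{The fallback is not carried out.} The near-orthogonality of the $\phi_{\mathbf{k}}$ for random $\mathfrak{t}\in\mathcal{F}_N(\mathbb{T})$ is again not uniform in $l(\mathbf{k})$, which is the same obstruction.

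\textbf{The missing idea.} The paper uses multiplicativity. For $\mathfrak{t}\in\mathcal{F}_{N_k}(\mathbb{T})$ and $\mathfrak{r}\in\mathcal{F}_{N_m/N_k}(\mathbb{T})$ one has $\phi_{\mathbf{k}}(\mathfrak{t}\otimes\mathfrak{r})=\phi_{\mathbf{k}}(\mathfrak{t})\prod_{j}\bigl(\tfrac{N_k}{N_m}\sum_{y}\mathfrak{r}(y)^{K_j}\bigr)$. Let the value distribution of $\mathfrak{r}$ approximate an arbitrary probability measure $\mu$ on $\mathbb{T}$; dominated convergence applies because of $\sum|c_{\mathbf{k}}|<\infty$. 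The hypothesis becomes $\sum_{\mathbf{k}}c_{\mathbf{k}}\prod_j\widehat{\mu}(K_j)\,\phi_{\mathbf{k}}(\mathfrak{t})=0$, where $\widehat{\mu}(K)=\int_{\mathbb{T}}z^{K}\,d\mu(z)$.

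This is the correct version of your rescaling. For $\mu=(1-s)\,\mathrm{Haar}+s\,\delta_1$ one has $\widehat{\mu}(K)=s$ for every $K\neq 0$, so each term acquires the factor $s^{l(\mathbf{k})}$. The resulting power series in $s$ vanishes on $[0,1]$, which gives $\sum_{l(\mathbf{k})=L}c_{\mathbf{k}}\phi_{\mathbf{k}}\equiv 0$ for every $L$.

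Inside a fixed length your own extraction then works exactly, with no limit in $a$. Take $n=L$ and $e=(K^*_1,\ldots,K^*_L)$. For $l(\mathbf{k})=L$ every factor must go to a distinct non-zero index, so $\widehat{F_{\mathbf{k}}}(a;e)=a^{L}\prod_K d_K!\,\delta_{\widehat{\mathbf{k}},\widehat{\mathbf{k}^*}}$. Termwise integration is justified by absolute convergence, hence $c_{\mathbf{k}^*}=0$.

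The paper makes a different choice of $\mu$: the wrapped Gaussian with $\widehat{\mu}(K)=q^{K^2}$. This splits the relation into the finite groups $\|\mathbf{k}\|^2=R$, and it then applies the finite linear independence proved by monomial orthogonality.
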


\begin{proof}
\textbf{Linear independence for finite fixed length.} We start by proving that for any non-negative integer $l$ any finite subset of $\{\phi_{\mathbf{k}}(\mathfrak{t}):\mathbf{k}\in\mathbb{Z}^{(l)}\}$ is linearly independent.

We proceed by induction on $l$. The case $l=0$ is trivial since $\mathbb{Z}^{(0)}$ consists only of the empty tuple $\varnothing$ and the function $\phi_{\varnothing}(\mathfrak{t})\equiv 1$ is non-zero.

Now consider the case $l>0$. Assume that there exist constants $\{a_{\mathbf{k}}:\mathbf{k}\in\mathbb{Z}^{(l)}\}$ not all zero and such that $a_{\mathbf{k}}\neq 0$ only for finitely many $\mathbf{k}\in\mathbb{Z}^{(l)}$ and
\begin{equation*}
\sum_{\mathbf{k}\in\mathbb{Z}^{(l)}}a_{\mathbf{k}}\cdot\phi_{\mathbf{k}}(\mathfrak{t})=0~\text{for all}~\mathfrak{t}\in\mathcal{F}_{\widehat{\mathbf{n}}}(\mathbb{T}).
\end{equation*}
Let $S=\{\mathbf{k}\in\mathbb{Z}^{(l)}:a_{\mathbf{k}}\neq 0\}$, the set $S$ is a finite non-empty subset of $\mathbb{Z}^{(l)}$.
Restricting the equality above to a subgroup $\mathcal{F}_{N}(\mathbb{T})$ for some large $N=N_k>l$, we obtain the following identity: for any $(t_1,\ldots,t_N)\in\mathbb{T}^N\simeq\mathcal{F}_{N}(\mathbb{T})$ we have
\begin{equation}\label{eq:phi_lin_rel_restr}
\sum_{\mathbf{k}=(K_1,\ldots,K_l)\in S}a_{\mathbf{k}}\cdot N^{-l}\prod_{j=1}^{l}\left(t_1^{K_j}+\ldots+t_{N}^{K_j}\right)=0.
\end{equation}
Since the set $S$ is non-empty, there exists an element $\mathbf{m}\in S$ which has the \emph{minimal number of zero entries}. Let $p\in\{1,\ldots,l\}$ be the number of non-zero entries of $\mathbf{k}$. Then, each element $\mathbf{m}\in S$ is a union (in the sense of multisets) of an element in $\mathbb{Z}^{(p)}$ and $l-p$ zeroes. Without loss of generality, we may assume that each $\mathbf{k}=(K_1,\ldots,K_l)\in S$ satisfies $K_{p+1}=\ldots=K_{l}=0$. Therefore, we can rewrite the equality \eqref{eq:phi_lin_rel_restr} as follows:
\begin{equation}
\sum_{\mathbf{k}=(K_1,\ldots,K_l)\in S}a_{\mathbf{k}}\cdot N^{-p}\prod_{j=1}^{p}\left(t_1^{K_j}+\ldots+t_{N}^{K_j}\right)=0,~\text{or}
\end{equation}
\begin{equation*}
\sum_{\mathbf{k}=(K_1,\ldots,K_l)\in S}\sum_{i_1,\ldots,i_p=1}^{N}a_{\mathbf{k}}\cdot t_{i_1}^{K_1}t_{i_2}^{K_2}\ldots t_{i_p}^{K_p}=0
\end{equation*}
Let $\mathbf{m}=(M_1,\ldots,M_p,0,\ldots,0)$ and note that each $M_j$ is a non-zero integer. Now multiply both sides of the last equality by $\overline{t_{1}^{M_1}\ldots t_{p}^{M_p}}$ and integrate over $(t_1,\ldots,t_N)\in\mathbb{T}^{N}$ with respect to the standard Haar measure. Note that distinct monomials in $t_1,\ldots,t_N$ are orthogonal in $L^2(\mathbb{T}^N)$, and a monomial $t_{i_1}^{K_1}\ldots t_{i_{p}}^{K_p}$ can be equal to $t_{1}^{M_1}\ldots t_{p}^{M_p}$ only\footnote{Here we substantially use the fact that $M_1$, \ldots, $M_p$ are all non-zero.} if $(t_{i_1}^{K_1},\ldots,t_{i_p}^{K_p})$ is a permutation of $(t_1^{M_1},\ldots,t_p^{M_p})$, that is, when there exists a permutation $\tau\in\mathfrak{S}_p$ such that
\begin{equation*}
K_1=M_{\tau(1)},\ldots,K_p=M_{\tau(p)}~\text{and}~i_1=\sigma(1),\ldots,i_p=\sigma(p).
\end{equation*}
Note that the number of such permutations $\tau$ is equal to the order of the stabilizer of $(M_1,\ldots,M_p)$ under the $\mathfrak{S}_p$-action.
It follows that only the term corresponding to $\mathbf{k}=\mathbf{m}$ can contribute to the left-hand side after integration. Therefore, we obtain the equality
\begin{equation*}
\sum_{\mathbf{k}=(K_1,\ldots,K_l)\in S}\delta_{\mathbf{k},\mathbf{m}}\cdot a_{\mathbf{k}}\cdot\#\Stab(M_1,\ldots,M_p)=0,
\end{equation*}
which is equivalent to $a_{\mathbf{m}}=0$. However, this contradicts the fact that $\mathbf{m}\in S$. Thus, the set of functions $\{\phi_{\mathbf{k}}(\mathfrak{t}):\mathbf{k}\in\mathbb{Z}^{(l)}\}$ is linearly independent for any $l\ge 0$.

\textbf{General case.} For general case of the lemma we rely on the multiplicativity property of functions $\phi_{\mathbf{k}}$.
Assume that for $\{c_{\mathbf{k}}:\mathbf{k}\in\mathbb{Z}^{(\infty)}\}$ with $\sum_{\mathbf{k}}|c_{\mathbf{k}}|<+\infty$ we have
\begin{equation*}
\sum_{\mathbf{k}\in\mathbb{Z}^{(\infty)}}c_{\mathbf{k}}\cdot\phi_{\mathbf{k}}\equiv 0~\text{on}~\mathcal{F}_{\widehat{\mathbf{n}}}(\mathbb{T}).
\end{equation*}
Note that the series on the left converges absolutely. Fix any element $\mathfrak{t}\in\mathcal{F}_{N_k}(\mathbb{T})$. For any positive integer $m>k$ and element $\mathfrak{r}\in\mathcal{F}_{N_m/N_k}(\mathbb{T})$ consider the element denoted by $\mathfrak{t}\otimes\mathfrak{r}\in\mathcal{F}_{N_m}(\mathbb{T})$ which is defined as follows:
\begin{equation*}
(\mathfrak{t}\otimes\mathfrak{r})(x,y)=\mathfrak{t}(x)\cdot\mathfrak{r}(y),~(x,y)\in\mathbb{X}_{N_k}\times\mathbb{X}_{N_m/N_k}\simeq\mathbb{X}_{N_k}.
\end{equation*}
It is not difficult to check that for all $\mathbf{k}$ we have
\begin{equation*}
\phi_{\mathbf{k}}(\mathfrak{t}\otimes\mathfrak{r})=\phi_{\mathbf{k}}(\mathfrak{t}\otimes\mathds{1}_{N_m/N_k})\cdot \phi_{\mathbf{k}}(\mathds{1}_{N_k}\otimes\mathfrak{r})=\phi_{\mathbf{k}}(\mathfrak{t})\cdot \phi_{\mathbf{k}}(\mathds{1}_{N_k}\otimes\mathfrak{r}),
\end{equation*}
where $\mathds{1}_{N}$ stands for constant one function in $\mathcal{F}_{N}(\mathbb{T})$.
Plugging this into the initial relation yields
\begin{equation*}
\sum_{\mathbf{k}\in\mathbb{Z}^{(\infty)}}c_{\mathbf{k}}\phi_{\mathbf{k}}(\mathds{1}_{N_k}\otimes\mathfrak{r})\cdot\phi_{\mathbf{k}}(\mathfrak{t})\equiv 0~\text{for}~\mathfrak{t}\in\mathcal{F}_{N_k}(\mathbb{T}),~\mathfrak{r}\in\mathcal{F}_{N_m/N_k}(\mathbb{T})
\end{equation*}
Now by varying $m>k$ and $\mathfrak{r}\in\mathcal{F}_{N_m/N_k}(\mathbb{T})$ we will obtain simpler linear relations. Note that for $\mathbf{k}\in\mathbb{Z}^{(\infty)}$ we have
\begin{equation*}
\phi_{\mathbf{k}}(\mathds{1}_{N_k}\otimes\mathfrak{r})=\prod_{j=1}^{l}\left(\sum_{x\in\mathbb{X}_{N_m/N_k}}\frac{1}{|\mathbb{X}_{N_m/N_k}|}\cdot\mathfrak{r}(x)^{K_j}\right),
\end{equation*}
where $K_1,\ldots,K_l$ are the non-zero elements of the multiset $\mathbf{k}$. We may treat $\mathfrak{r}$ as a discrete probability distribution on $\mathbb{T}$, namely, as $\sum_{x\in\mathbb{X}_{N_k}}\frac{1}{N_k}\delta_{\mathfrak{r}(x)}$~-- the average of Dirac delta distributions supported at points $\mathfrak{r}(x)\in\mathbb{T}$, where $x\in\mathbb{X}_{N_k}$. Since $m$ can be taken arbitrarily large and $\sum_{\mathbf{k}}|c_{\mathbf{k}}|<+\infty$, we can apply an approximation argument to obtain that the equality
\begin{equation}\label{eq:lin_comb_measure_par}
\sum_{l\ge 0}\sum_{\substack{\mathbf{k}=(K_1,\ldots,K_l,0,0,\ldots)\\K_1,\ldots,K_l\neq 0}}c_{\mathbf{k}}\left(\prod_{j=1}^{l}\int_{\mathbb{T}}z^{K_j}d\mu(z)\right)\cdot\phi_{\mathbf{k}}(\mathfrak{t})\equiv 0
\end{equation}
holds for any $\mathfrak{t}\in\mathcal{F}_{N_k}(\mathbb{T})$ and any Borel probability measure $\mu$ on $\mathbb{T}$. Now we reduce the general case of the lemma to the first one by choosing a suitable measure $\mu$ that allows us to ``separate'' our infinite sum into finite ones. The specific choice of $\mu$ is given in the following lemma:

\begin{Lm}
For any real $q\in(0,1)$ there exists a Borel probability measure $\mu$ on $\mathbb{T}$ with the following moments:
\begin{equation*}
\int_{\mathbb{T}}z^kd\mu(z)=q^{k^2}~\text{for all}~k\in\mathbb{Z}.
\end{equation*}
\end{Lm}
\begin{proof}
Set $a=-\log q\in(0,+\infty)$ and note that
\begin{equation*}
q^{k^2}=e^{-ak^2}=\frac{1}{\sqrt{2\pi}}\int_{\mathbb{R}}e^{-x^2/2+ik\sqrt{2a}x} dx=\int_{\mathbb{R}}(e^{ix})^{k}\cdot\frac{1}{\sqrt{4\pi a}}e^{-x^2/4a}dx.
\end{equation*}
Therefore, taking the measure $\mu$ on $\mathbb{T}$ with the density given by the formula
\begin{equation*}
d\mu(e^{i\theta})=\frac{1}{\sqrt{4\pi a}}\sum_{n\in\mathbb{Z}}e^{-(\theta+2\pi n)^2/4a}d\theta,~\theta\in\mathbb{R}/2\pi\mathbb{Z}\simeq\mathbb{T}
\end{equation*}
produces the required moments.
\end{proof}

Applying \eqref{eq:lin_comb_measure_par} for the measure $\mu$ from the lemma above, we obtain
\begin{equation*}
\sum_{\mathbf{k}=(K_1,K_2,\ldots)\in\mathbb{Z}^{(\infty)}}c_{\mathbf{k}}q^{\|\mathbf{k}\|^2}\cdot\phi_{\mathbf{k}}(\mathfrak{t})\equiv 0,
\end{equation*}
where $\|\mathbf{k}\|^2=\sum_{j\ge 1}K_j^2$.
Since the sum $\sum_{\mathbf{k}}|c_{\mathbf{k}}|$ is finite, for a fixed $\mathfrak{t}\in\mathcal{F}_{N_k}(\mathbb{T})$ the left-hand side is a power series $F(q)$ which converges absolutely in the disc $\{q:|q|\le 1\}$. However, as explained above, $F(q)$ vanishes whenever $q\in(0,1)$. Therefore, all coefficients of this power series must vanish as well.
Hence, for any non-negative integer $R\ge 0$ we get
\begin{equation*}
\sum_{\mathbf{k}\in\mathbb{Z}^{(\infty)}:~\|\mathbf{k}\|^2=R}c_{\mathbf{k}}\phi_{\mathbf{k}}=0.
\end{equation*}
However, now all these sums have finite number of summands, hence by the first part of the proof all $c_{\mathbf{k}}$ must be zero, as claimed.
\end{proof}

\begin{Rem}
It is a well-known fact that the products of powers sums form an additive basis of the ring of symmetric functions \cite[(2.12)]{Macdonald}.
This can be shown by proving that the power sums $\{p_{\lambda}\}$ can be obtained from the monomial basis $\{m_{\lambda}\}$ via a non-degenerate linear triangular transformation (we use the notation of Macdonald \cite{Macdonald}). Essentially, the first part of the proof of Lemma \ref{lem:lin_independence_phi_func} shows an analogous statement for the Laurent symmetric polynomials in finitely many variables. To state this more precisely, we consider the ring $\mathbb{C}[x_1^{\pm 1},\ldots,x_{N}^{\pm 1}]^{\mathfrak{S}_{N}}$ and define two families of symmetric Laurent polynomials parameterized by pairs of partitions $(\lambda,\mu)$ with $\ell(\lambda)+\ell(\mu)\le N$. Namely, if $k=\ell(\lambda)$, $l=\ell(\mu)$ and $k+l\le N$, then for $\nu=(\lambda_1,\ldots,\lambda_k,0,\ldots,0,-\mu_l,\ldots,-\mu_1)\in\mathbb{Z}^{N}$ we put
\begin{equation*}
\begin{split}
\tilde{p}_{\nu}
&=\prod_{i=1}^{N}(x_1^{\nu_i}+x_2^{\nu_i}+\ldots+x_{N}^{\nu_i}),
\\
\tilde{m}_{\nu}
&=\sum_{[\sigma]\in\mathfrak{S}_{N}/\Stab(\nu)}x_{\sigma(1)}^{\nu_1}x_{\sigma(2)}^{\nu_2}\ldots x_{\sigma(N)}^{\nu_N}.
\end{split}
\end{equation*}
In the course of the proof, we basically proved that $\tilde{p}_{\nu}=c_{\nu,\nu}\tilde{m}_{\nu}+\sum\limits_{\ell_0(\nu')<\ell_0(\nu)}c_{\nu,\nu'}\tilde{m}_{\nu'}$, where $c_{\nu,\nu}\neq 0$ and $\ell_0(\nu')$ denotes the number of non-zero entries in $\nu'$. Since the monomial symmetric Laurent polynomials $\{\tilde{m}_{\nu}\}$ form a basis in $\mathbb{C}[x_1^{\pm 1},\ldots,x_{N}^{\pm 1}]^{\mathfrak{S}_{N}}$ the elements $\tilde{p}_{\nu}$ form a basis as well.
\end{Rem}

\begin{Rem}
In some sense Lemma \ref{lem:lin_independence_phi_func} gives a stronger statement than Lemma \ref{lem:indecomp_main}: it shows that not only the characters of $\phi_{p,\mathbf{m}}$ are in convex position, but they are even linearly independent.
\end{Rem}

Now we are in position to prove the indecomposability of characters $\phi_{p,\mathbf{m}}$.

\begin{proof}[Proof of Proposition \ref{prop:phi_indecomp}]
By Lemma \ref{lem:phi_positive_def} the function $\phi_{p,\mathbf{m}}$, where $\mathbf{m}=\left(M_1,\ldots,M_l\right)\in\mathbb{Z}^{(l)}$, is a normalized character on $G_{\widehat{\mathbf{n}}}$. Suppose that $\phi_{p,\mathbf{m}}$ is not indecomposable, i.e., it is equal to a non-trivial convex combination of some indecomposable characters of $G_{\widehat{\mathbf{n}}}$. Proposition \ref{prop:approx_char} implies that any indecomposable character on $G_{\widehat{\mathbf{n}}}$ must be of the form $\phi_{q,\mathbf{k}}$, where $q\in\{\triv,\sgn,\reg\}$ and $\mathbf{k}=\left( K_1,\ldots,K_j\right)\in\mathbb{Z}^{(j)}$. Therefore, we have an equality of the form
\begin{equation*}
\phi_{p,\mathbf{m}}=\sum_{j\in\mathbb{Z}_{\ge 0}}\sum_{\mathbf{k}\in\mathbb{Z}^{(j)}}\sum_{q\in\{\triv,\sgn\}}\alpha_{q,\mathbf{k}}\cdot\phi_{q,\mathbf{k}}+\sum_{\mathbf{k}\in\mathbb{Z}^{(\infty)}}\alpha_{\reg}\cdot\phi_{\reg,\mathbf{k}},
\end{equation*}
where $\alpha_{q,\mathbf{k}}\in[0,1)$ are such that $\sum_{q,\mathbf{k}}\alpha_{q,\mathbf{k}}=1$ and the summation is over $q\in\{\triv,\sgn\}$ and multisets $\mathbf{k}\in\mathbb{Z}^{(j)}$, $j\ge 0$ and also $q=\reg$, $\mathbf{k}\in\mathbb{Z}^{(\infty)}$ (if the sign character $\sgn_{\infty}$ is trivial, we set $\alpha_{\sgn,\mathbf{k}}=0$). Applying \eqref{eq:phi_m_def} and \eqref{eq:phi_p_m_def}, we rewrite the equality above as follows:
\begin{equation*}
\phi_{p,\mathbf{m}}(g)=\sum_{j\ge 0}\sum_{\mathbf{k}\in\mathbb{Z}^{(j)}}(\alpha_{\triv,\mathbf{k}}+\alpha_{\sgn,\mathbf{k}}\sgn_{\infty}(s))\cdot\phi_{\mathbf{k}}(g)+\sum_{\mathbf{k}\in\mathbb{Z}^{(\infty)}}\alpha_{\reg,\mathbf{k}}\chi_{\reg}(s)\cdot\phi_{\mathbf{k}}(g),~g=\mathfrak{t}s\in G_{\widehat{\mathbf{n}}}.
\end{equation*}
Now we restrict this identity to the subgroup $\mathfrak{S}_{\widehat{\mathbf{n}}}$ of the group $G_{\widehat{\mathbf{n}}}$. Since the restriction of $\phi_{p,\mathbf{k}}$ to $\mathfrak{S}_{\widehat{\mathbf{n}}}$ is given by \eqref{eq:phi_restriction_sym}, and in particular $\phi_{\mathbf{k}}(g)=\chi_{\nat}(s)^{\ell(\mathbf{k})}$ for any $g=s\in\mathfrak{S}_{\widehat{\mathbf{n}}}$, we obtain the following equality of functions on $\mathfrak{S}_{\widehat{\mathbf{n}}}$:
\begin{equation*}
\phi_{\sym}(s;p)\chi_{\nat}(s)^{\ell(\mathbf{m})}=\sum_{j\ge 0}\sum_{\mathbf{k}\in\mathbb{Z}^{(j)}}(\alpha_{\triv,\mathbf{k}}+\alpha_{\sgn,\mathbf{k}}\sgn_{\infty}(s))\chi_{\nat}(s)^{j}+\sum_{\mathbf{k}\in\mathbb{Z}^{(\infty)}}\alpha_{\reg,\mathbf{k}}\chi_{\reg}(s),~s\in \mathfrak{S}_{\widehat{\mathbf{n}}}.
\end{equation*}
The classification of the indecomposable characters of $\mathfrak{S}_{\widehat{\mathbf{n}}}$, see \cite[Theorem 1.2 and Lemma 6.1]{Nessonov_Ngo}, implies that there are two possibilities:
\begin{itemize}
\item
$p=\reg$.

\smallskip

\noindent
Then, $\phi_{\sym}(s;p)\chi_{\nat}(s)^{\ell(\mathbf{m})}=\chi_{\reg}(s)$ and $\alpha_{\triv,\mathbf{k}}=\alpha_{\sgn,\mathbf{k}}=0$ for all $\mathbf{k}\in\bigcup_{l\in\mathbb{Z}_{\ge 0}}\mathbb{Z}^{(l)}$. Hence,
\begin{equation*}
\phi_{p,\mathbf{m}}=\phi_{\reg,\mathbf{m}}=\sum_{\mathbf{k}\in\mathbb{Z}^{(\infty)}}\alpha_{\mathbf{\reg,\mathbf{k}}}\cdot\phi_{\reg,\mathbf{k}}.
\end{equation*}
Now Lemma \ref{lem:indecomp_main} (or Lemma \ref{lem:lin_independence_phi_func}) and the formula $\phi_{\reg,\mathbf{k}}(g)=\chi_{\reg}(s)\phi_{\mathbf{k}}(\mathfrak{t})$ for $g=\mathfrak{t}s\in G_{\widehat{\mathbf{n}}}$ yields $\alpha_{\reg,\mathbf{m}}=1$, which is a contradiction.

\medskip

\item
$p\in\{\triv,\sgn\}$.

\smallskip

\noindent
In this case $\alpha_{q,\mathbf{k}}=0$ unless $q=p$ and $\ell(\mathbf{k})=\ell(\mathbf{m})$. Therefore, we obtain
\begin{equation*}
\phi_{\mathbf{m}}=\sum_{\mathbf{k}\in\mathbb{Z}^{(l)}}\alpha_{p,\mathbf{k}}\cdot\phi_{\mathbf{k}},
\end{equation*}
where $l=\ell(\mathbf{m})$. Applying Lemma \ref{lem:indecomp_main}, we get $\alpha_{p,\mathbf{m}}=1$, which is a contradiction.
\end{itemize}

Thus, $\phi_{p,\mathbf{m}}$ is an indecomposable character of $G_{\widehat{\mathbf{n}}}$ for all choices of $p$ and $\mathbf{m}$.
\end{proof}

\subsection{Proof of the classification of characters for $G_{\widehat{\mathbf{n}}}$.}
Now we are ready to prove the main result.

\begin{proof}[Proof of Theorem \ref{thm:classification_char}]

Let $\chi$ be a normalized indecomposable character of the group $G_{\widehat{\mathbf{n}}}$. By Proposition \ref{prop:approx_char}, $\chi$ is a weak limit of a sequence of irreducible characters $\{\chi_{\pi_{k_l}}\}_{l=1}^{\infty}$, where $\pi_{k_l}$ is an irreducible representation of the group $G_{N_{k_l}}$ and $\{k_l\}_{l=1}^{\infty}$ is an increasing sequence of positive integers. Without loss of generality, we may assume that $k_l=l$ for all $l$.
Then, it follows from Proposition \ref{prop:character_limits} that $\chi=\phi_{p,\mathbf{m}}$ for some $p\in\{\triv,\sgn,\reg\}$ and tuple $\mathbf{m}\in\bigcup_{j\in\mathbb{Z}_{\ge 0}}\mathbb{Z}^{(j)}$ if $p=\triv,\sgn$ or $\mathbf{m}\in\mathbb{Z}^{(\infty)}$ for $p=\reg$. Finally, the characters $\phi_{p,\mathbf{m}}$ are indeed indecomposable by Proposition \ref{prop:phi_indecomp}.
\end{proof}

\section{Realizations of representations}\label{sect:realizations}

In this section we give constructions of unitary representations that realize the characters which we obtained in Theorem \ref{thm:classification_char} as vector states. We use the same approach as in our previous work \cite{Nessonov_Ngo} for $\mathfrak{S}_{\widehat{\mathbf{n}}}$ and refer the reader to \cite[Section 2]{Nessonov_Ngo} for details.

\subsection{An odometer construction.}
Recall that for the sequence $\widehat{\mathbf{n}}=\{n_j\}_{j=1}^{\infty}$ we constructed a measure space $(\mathbb{X}_{\widehat{\mathbf{n}}},\nu_{\widehat{\mathbf{n}}})$ which is defined as the product of finite measure spaces $(\mathbb{X}_{n_k},\nu_{n_k})$ endowed with the uniform measure. We view $\mathbb{X}_{\widehat{\mathbf{n}}}$ as the space of sequences $x=(x_1,\ldots,x_k,\ldots)$, where $x_k\in\mathbb{X}_{n_k}$. The group $\mathfrak{S}_{\widehat{\mathbf{n}}}$ naturally embeds into the group $\Aut_{0}(\mathbb{X}_{\widehat{\mathbf{n}}},\nu_{\widehat{\mathbf{n}}})$ of measure preserving transformations of $(\mathbb{X}_{\widehat{\mathbf{n}}},\nu_{\widehat{\mathbf{n}}})$. Namely, an element $\sigma\in\mathfrak{S}_{N_k}\subset\mathfrak{S}_{\widehat{\mathbf{n}}}$ acts on $(x,y)\in\mathbb{X}_{N_k}\times\prod_{j>k}\mathbb{X}_{n_j}$ as $\sigma((x,y))=(\sigma(x),y)$.

Define an automorphism $O\in \Aut_0\left( \mathbb{X}_{\widehat{\mathbf{n}}},\nu_{\widehat{\mathbf{n}}}\right)$ as follows: set $Ox=( y_1,y_2,\ldots)\in\mathbb{X}_{\widehat{\mathbf{n}}}$, where
\begin{equation*}
y_j=
\begin{cases}
0,& \text{if}~x_i=n_i-1~\text{for all}~i,
\\
x_j+1~(\bmod\,{n_p}),&\text{if}~p\leq\min\{i:x_i<n_i-1\},
\\
x_j,&\text{if}~p>\min\{i:x_i<n_i-1\}.
\end{cases}
\end{equation*}
Then, $O$ is a measure preserving ergodic transformation of $(\mathbb{X}_{\widehat{\mathbf{n}}},\nu_{\widehat{\mathbf{n}}})$ (see \cite[Lemma 2.2]{Nessonov_Ngo}). For any $\sigma\in\mathfrak{S}_{\widehat{\mathbf{n}}}$ there is a measurable function $d(\sigma,-)\colon\mathbb{X}_{\widehat{\mathbf{n}}}\to\mathbb{Z}$ such that
\begin{equation*}
\sigma(x)=O^{d(\sigma,x)}(x)~\text{for all}~x\in\mathbb{X}_{\widehat{\mathbf{n}}}.
\end{equation*}
Note that $d$ satisfies the following cocycle condition: for any $\sigma,\tau\in\mathfrak{S}_{\widehat{\mathbf{n}}}$ one has
\begin{equation}\label{eq:d_cocycle}
d(\sigma\tau,x)=d(\sigma,\tau(x))+d(\tau,x).
\end{equation}

Below we give explicit constructions of unitary representations of the group $G_{\widehat{\mathbf{n}}}$ which realize the characters from the classification in Theorem \ref{thm:classification_char} as vector states of a certain unit vector.

\subsection{Construction for characters without regular part.}

Assume that $p\in\{\triv,\sgn\}$. Consider the Hilbert space $\mathcal{H}_{1}=L^2(\mathbb{X}_{\widehat{\mathbf{n}}},\nu_{\widehat{\mathbf{n}}})\otimes\ell^2(\mathbb{Z})$ viewed as the space of square-integrable functions on $\mathbb{X}_{\widehat{\mathbf{n}}}\times\mathbb{Z}$. For any integer $M$ and any element $g=(\mathfrak{t},\sigma)\in G_{\widehat{\mathbf{n}}}$ define a unitary operator $\mathcal{F}^{(1)}_{M}(g)$ by the following formula\footnote{For $M=0$ the restriction of $\mathcal{F}^{(1)}_{0}$ to $\mathfrak{S}_{\widehat{\mathbf{n}}}$ coincides with the representation $\mathcal{F}$ from \cite[Eq. (2.7)]{Nessonov_Ngo}.}:
\begin{equation*}
\big(\mathcal{F}^{(1)}_{M}(g)\eta\big)(x,m)=\mathfrak{t}(x)^{M}\eta\big(\sigma^{-1}(x),m-d(\sigma^{-1},x)\big).
\end{equation*}
Then, it is not difficult to verify that $\mathcal{F}^{(1)}_{M}\colon G_{\widehat{\mathbf{n}}}\to \mathcal{U}(\mathcal{H}_1)$ is indeed a unitary representation of the group $G_{\widehat{\mathbf{n}}}$. Moreover, a direct calculation shows that for any $g\in G_{\widehat{\mathbf{n}}}$ we have
\begin{equation*}
\langle\mathcal{F}^{(1)}_{M}(g)\xi_{1},\xi_{1}\rangle_{\mathcal{H}}=\phi(g;M),
\end{equation*}
where the unit vector $\xi_{1}\in \mathcal{H}_1$ is defined by the formula
\begin{equation*}
\xi_{1}(x,m)=\delta_{0,m}=
\begin{cases}
0,&~m\ne 0,
\\
1,&~m=0.
\end{cases}
\end{equation*}
As $p\in\{\triv,\sgn\}$, the function $\phi_{\sym}(-;p)\colon\mathfrak{S}_{\widehat{\mathbf{n}}}\to\mathbb{C}$ is a multiplicative character of $\mathfrak{S}_{\widehat{\mathbf{n}}}$ and it lifts to a one-dimensional representation of the group $G_{\widehat{\mathbf{n}}}$. Then, for any tuple $\mathbf{m}=(M_1,\ldots,M_l)\in\mathbb{Z}^{(l)}$ we can define a unitary representation $\mathcal{F}_{p,\mathbf{m}}$ of the group $G_{\widehat{\mathbf{n}}}$ acting in the Hilbert space $\mathcal{H}_{1}^{\otimes l}$ by the following formula:
\begin{equation*}
\mathcal{F}_{p,\mathbf{m}}(g)=\phi_{\sym}(\sigma;p)\otimes\bigotimes_{j=1}^{l}\mathcal{F}^{(1)}_{M_{j}}(g),~g=(\mathfrak{t},\sigma)\in G_{\widehat{\mathbf{n}}}.
\end{equation*}
The vector state corresponding to $\xi_1^{\otimes l}\in\mathcal{H}_1^{\otimes l}$ recovers the character $\phi_{p,\mathbf{m}}$:
\begin{equation}\label{eq:phi_nonreg_matrix_el}
\langle\mathcal{F}_{p,\mathbf{m}}(g)\xi_{1}^{\otimes l},\xi_{1}^{\otimes l}\rangle_{\mathcal{H}_1^{\otimes l}}=\phi_{\sym}(\sigma;p)\phi_{M_1}(g)\ldots\phi_{M_l}(g)=\phi_{p,\mathbf{m}}(g),~g=(\mathfrak{t},\sigma)\in G_{\widehat{\mathbf{n}}}.
\end{equation}

\begin{Rem}
One can prove directly that $\mathcal{F}^{(1)}_{M}$ is a $\mathrm{II}_1$-representation of $G_{\widehat{\mathbf{n}}}$ adapting the proof of \cite[Proposition 2.6]{Nessonov_Ngo}.
\end{Rem}

\subsection{Construction for characters with regular part.} Assume that $p=\reg$.
Consider the Hilbert space $\mathcal{H}_2=L^2(\mathbb{X}_{\widehat{\mathbf{n}}},\nu_{\widehat{\mathbf{n}}})\otimes\ell^2(\mathfrak{S}_{\widehat{\mathbf{n}}})$ which we can regard as the space of square-integrable functions on $\mathbb{X}_{\widehat{\mathbf{n}}}\times\mathfrak{S}_{\widehat{\mathbf{n}}}$. For any tuple  $\mathbf{m}=(M_1,\ldots,M_l)\in\mathbb{Z}^{(l)}$ define the following unitary operator $\mathcal{F}^{(2)}_{M}$ on $\mathcal{H}_2$:
\begin{equation*}
(\mathcal{F}^{(2)}_{M}(g)\eta)(x,\tau)=\mathfrak{t}(x)^{M}\cdot\eta(\sigma^{-1}(x),\sigma^{-1}\tau).
\end{equation*}
Then, we have
\begin{equation*}
\langle\mathcal{F}^{(2)}_{M}\xi_2,\xi_2\rangle_{\mathcal{H}_2}=\phi(g;M)\cdot\chi_{\reg}(\sigma),~g=(\mathfrak{t},\sigma)\in G_{\widehat{\mathbf{n}}},
\end{equation*}
where the unit vector $\xi_2\in\mathcal{H}_2$ is given by
\begin{equation*}
\xi_2(x,\sigma)=
\begin{cases}
1,~&\sigma=1_{\mathfrak{S}_{\widehat{\mathbf{n}}}},\\
0,~&\sigma\neq1_{\mathfrak{S}_{\widehat{\mathbf{n}}}}.
\end{cases}
\end{equation*}
Similar to the first case, we define for a tuple $\mathbf{m}=(M_1,\ldots,M_l)\in\mathbb{Z}^{(l)}$ a unitary representation $\mathcal{F}_{\reg,\mathbf{m}}$ as
\begin{equation*}
\mathcal{F}_{\reg,\mathbf{m}}=\bigotimes_{j=1}^{l}\mathcal{F}^{(2)}_{M_j}
\end{equation*}
Finally, it is clear that the vector state corresponding $\xi_2^{\otimes l}\in\mathcal{H}_2^{\otimes l}$ yields the character $\phi_{\reg,\mathbf{m}}$:
\begin{equation}\label{eq:phi_reg_matrix_el}
\langle\mathcal{F}_{\reg,\mathbf{m}}(g)\xi_2^{\otimes l},\xi_2^{\otimes l}\rangle=\prod_{j=1}^{l}(\phi(g;M_j)\chi_{\reg}(\sigma))=\chi_{\reg}(\sigma)\cdot\prod_{j=1}^l\phi(g;M_j)=\phi_{\reg,\mathbf{m}}(g).
\end{equation}

\begin{Rem}\label{phi_pos_def_rem}
In particular, the identities \eqref{eq:phi_nonreg_matrix_el} and \eqref{eq:phi_reg_matrix_el} imply that $\phi_{p,\mathbf{m}}$ is a positive-definite function on $G_{\widehat{\mathbf{n}}}$. The centrality of $\phi_{p,\mathbf{m}}$ follows directly from the definition of functions $\phi_{\sym}(\sigma;p)$ and $\phi_{\mathbf{m}}(g)$, see \eqref{eq:phi_m_def} and \eqref{eq:phi_p_m_def}.
\end{Rem}

\section{An application: characters of the infinite block-diagonal unitary group}\label{sect:ind_char_unitary}

As was mentioned in the introduction, among the motivations behind the study of characters of the group $G_{\widehat{\mathbf{n}}}$ was obtaining an alternative proof of the description of indecomposable characters on the inductive limit $\operatorname{U}_{\widehat{\mathbf{n}}}=\varinjlim \operatorname{U}_{N_i}$ of the finite-dimensional unitary groups $\{\operatorname{U}_{N_k}\}$ with block-diagonal embeddings. In this section we explain how this classification can be deduced from our Theorems \ref{thm:main_th_intro} and \ref{thm:sph_th_intro}. 

The problem of classifying indecomposable characters of $\operatorname{U}_{\widehat{\mathbf{n}}}$ was addressed previously by Boyer \cite[Proposition 1]{Boyer_1993} for $\widehat{\mathbf{n}}=(2,2,\ldots)$ and Enomoto--Izumi \cite[Theorems 1.2 and 1.5]{Enomoto_Izumi}  (note that \cite
{Enomoto_Izumi} considers a more general situation of unitary groups of arbitrary infinite-dimensional unital simple AF algebras). However, their methods rely on complicated branching rules for unitary groups, whereas we obtain this result from classification of characters of $G_{\widehat{\mathbf{n}}}$ (which in turn was proved by direct analysis of limits of characters).

The connection between the characters of $\operatorname{U}_{\widehat{\mathbf{n}}}$ and $G_{\widehat{\mathbf{n}}}(\mathbb{T})$ relies on the following observation: the group $G_N(\mathbb{T})\simeq\mathbb{T}^{N}\rtimes\mathfrak{S}_{N}$ is the normalizer of the diagonal torus $\mathbb{T}^{N}\subset\operatorname{U}_{N}$. This implies that any character on $\operatorname{U}_{\widehat{\mathbf{n}}}$ is completely determined by its restriction to $G_{\widehat{\mathbf{n}}}(\mathbb{T})$. 

\subsection{Embedding into the hyperfinite $\mathrm{II}_1$ factor.}

We regard the group $\operatorname{U}_{\widehat{\mathbf{n}}}$ as the unitary subgroup of the inductive limit of matrix algebras $\varinjlim\operatorname{Mat}_{N_i}$ (see Section \ref{subsect:block_diag_unitary}). The norm closure of the latter is the corresponding \emph{uniformly hyperfinite (UHF) algebra} $A_{\widehat{\mathbf{n}}}=\overline{\varinjlim \operatorname{Mat}_{N_i}}$.
Denote by $\tr$ the unique faithful tracial state on $A_{\widehat{\mathbf{n}}}$ (for $x\in\mathrm{Mat}_{N_i}$ we have $\tr(x)=\frac{1}{N_i}\Tr(x)$).
Using this trace we define a norm on $A_{\widehat{\mathbf{n}}}$ by the formula
\begin{equation*}
\|x\|_{2}:=\tr(x^*x),~x\in A_{\widehat{\mathbf{n}}}.
\end{equation*}
By using the GNS construction for $A_{\widehat{\mathbf{n}}}$ and $\tr$, we may assume that the algebra $A_{\widehat{\mathbf{n}}}$ is embedded into a hyperfinite $\mathrm{II}_1$-factor $R$. Then, $\varinjlim \operatorname{Mat}_{N_i}$ is a dense subalgebra of $R$ with respect to $\|\cdot\|_2$.  
It is not difficult to verify the following claim:
\begin{Lm}
For every operator $u\in\operatorname{U}(R)$ and any $\epsilon>0$, there exists a unitary $v\in\operatorname{U}_{\widehat{\mathbf{n}}}$ such that
\begin{equation}\label{approx_of_unitary_factor}
\|u-v\|_2= \sqrt{\tr\left((u-v)^*(u-v) \right)}<\epsilon.
\end{equation}
In other words, $\operatorname{U}_{\widehat{\mathbf{n}}}$ is dense in $\operatorname{U}(R)$ with respect to $\|\cdot\|_2$.
\end{Lm}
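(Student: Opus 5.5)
Our plan is a two-stage approximation: first approximate $u$ in $\|\cdot\|_2$ by an element of the $*$-algebra $\varinjlim\operatorname{Mat}_{N_i}$, then replace that element by a genuine unitary of some $\operatorname{U}_{N_k}$ via polar decomposition. Since $\varinjlim\operatorname{Mat}_{N_i}$ is $\|\cdot\|_2$-dense in $R$, and we may also use Kaplansky's density theorem, we can choose $x\in\operatorname{Mat}_{N_k}$ for some $k$ with $\|x\|\le 1$ and $\|u-x\|_2<\delta$. Kaplansky's theorem gives such an $x$ in the unit ball of the norm-closed algebra $A_{\widehat{\mathbf{n}}}$, converging strongly, hence in $\|\cdot\|_2$, to $u$. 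A small norm perturbation then moves it into some $\operatorname{Mat}_{N_k}$, at a cost of rescaling by $(1+\delta)^{-1}$ to stay in the unit ball.

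Inside the finite-dimensional algebra $\operatorname{Mat}_{N_k}$, we write the polar decomposition $x=v|x|$ with $v\in\operatorname{U}_{N_k}$; a unitary $v$ exists because we are in finite dimensions. We then estimate
\begin{equation*}
\|x-v\|_2=\|v(|x|-1)\|_2=\|1-|x|\|_2.
\end{equation*}
Since $0\le|x|\le 1$, we have $(1-|x|)^2\le 1-|x|^2$, so
\begin{equation*}
\|1-|x|\|_2^2\le\tr(1-x^*x)=\|u\|_2^2-\|x\|_2^2.
\end{equation*}
Here we used $\tr(1)=\tr(u^*u)$, and we write $\|y\|_2^2=\tr(y^*y)$, the intended square in the paper's definition. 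The right-hand side satisfies
\begin{equation*}
\|u\|_2^2-\|x\|_2^2=(\|u\|_2-\|x\|_2)(\|u\|_2+\|x\|_2)\le 2\|u-x\|_2<2\delta.
\end{equation*}
Hence
\begin{equation*}
\|u-v\|_2\le\|u-x\|_2+\|x-v\|_2<\delta+\sqrt{2\delta},
\end{equation*}
and choosing $\delta$ small enough gives the bound $\|u-v\|_2<\epsilon$ required in \eqref{approx_of_unitary_factor}.

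The main point needing care is the first step: we need an approximant that is both in the matrix algebra and contractive, since the estimate above requires $\|x\|\le 1$. If we prefer to avoid Kaplansky, there is an alternative: take any $\|\cdot\|_2$-approximant $y\in\operatorname{Mat}_{N_k}$ and apply to it the function $f(z)=z/\max(1,|z|)$. This is done via the polar decomposition $y=w|y|$, setting $x=w\min(|y|,1)$. Since $f$ is a $1$-Lipschitz retraction onto the unit ball, we expect $\|f(y)-u\|_2\lesssim\|y-u\|_2$, but this estimate for non-normal $y$ is less clean, so the Kaplansky route is the one we would pursue.
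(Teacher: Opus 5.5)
Your argument is correct and complete. The paper gives no proof of this lemma (it only says the claim is not difficult to verify), so yours supplies the missing argument rather than competing with one.

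The steps check out:
\begin{itemize}
\item Kaplansky's theorem, applied to $A_{\widehat{\mathbf{n}}}$ acting on the GNS space of $\tr$, gives a contraction $x$ with $\|u-x\|_2$ small. Strong convergence evaluated at the trace vector is exactly $\|\cdot\|_2$-convergence.
\item The norm perturbation followed by rescaling moves $x$ into some $\operatorname{Mat}_{N_k}$ without leaving the unit ball.
\item For $0\le|x|\le 1$, the operator inequality $(1-|x|)^2\le 1-|x|^2$ holds, since it is equivalent to $2|x|(1-|x|)\ge 0$. Hence
\[
\|x-v\|_2^2\le 1-\|x\|_2^2\le 2\bigl(1-\|x\|_2\bigr)\le 2\|u-x\|_2 .
\]
\end{itemize}

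You can drop Kaplansky and the perturbation step by taking $x=E_k(u)$. Here $E_k\colon R\to\operatorname{Mat}_{N_k}$ is the trace-preserving conditional expectation, the same kind of tool the paper uses in its appendix. It is contractive in operator norm, so $\|x\|\le 1$ automatically. It also extends to the orthogonal projection of $L^2(R,\tr)$ onto $\operatorname{Mat}_{N_k}$. These projections increase to the identity because $\bigcup_k\operatorname{Mat}_{N_k}$ is $\|\cdot\|_2$-dense, so $\|u-E_k(u)\|_2\to 0$. Your polar-decomposition estimate then applies unchanged. Either way, the fallback you sketch with $z\mapsto z/\max(1,|z|)$ is unnecessary.
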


\subsection{Extension of characters to the unitary group of hyperfinite $\mathrm{II}_1$ factor.}
Let $\chi$ be a character (not necessarily indecomposable) of the group $\operatorname{U}_{\widehat{\mathbf{n}}}$.  Since its restriction to the subgroup $G_{\widehat{\mathbf{n}}}\subset \operatorname{U}_{\widehat{\mathbf{n}}}$ is a normalized character, by Theorem \ref{thm:main_th_intro} and Remark \ref{rem:G_infty_char_factorization}, there exist non-negative numbers $\alpha_{\psi,\mathbf{m}}$, where $\psi\in\exchar(\mathfrak{S}_{\widehat{\mathbf{n}}})$ and $\mathbf{m}=(M_1,\ldots,M_l)\in\mathbb{Z}^{(l)}$ with $M_i\neq 0$ for $l\ge 0$ such that
\begin{equation}\label{eq:chi_formula_exchar_S}
\chi(g)=\sum_{\psi\in\exchar(\mathfrak{S}_{\widehat{\mathbf{n}}})}\sum_{l\ge 0}\,\sum_{\substack{\mathbf{m}=(M_1,\ldots,M_l)\in\mathbb{Z}^{(l)}\\M_i\neq 0}}\alpha_{\psi,\mathbf{m}}\cdot\psi(s)\phi_{\mathbf{m}}(g),~g=\mathfrak{t}s\in G_{\widehat{\mathbf{n}}}.
\end{equation}

By restricting to the toroidal part $\mathcal{F}_{\widehat{\mathbf{n}}}(\mathbb{T})\subset\operatorname{U}_{\widehat{\mathbf{n}}}$, and observing that any element of $\operatorname{U}_{\widehat{\mathbf{n}}}$ is conjugate to an element of $\mathcal{F}_{\widehat{\mathbf{n}}}(\mathbb{T})$, we obtain that for all $u\in\operatorname{U}_{\widehat{\mathbf{n}}}$ we have
\begin{equation}\label{eq:restr_tor_unitary}
\chi(u)=\sum_{l\ge 0}\sum_{\substack{\mathbf{m}=(M_1,\ldots,M_l)\in\mathbb{Z}^{(l)}\\M_i\neq 0}}\gamma_\mathbf{m}\,\prod_{i=1}^{l}\tr\left(u^{M_i}\right),~\text{where}~\gamma_{\mathbf{m}}=\sum_{\psi\in\exchar(\mathfrak{S}_{\widehat{\mathbf{n}}})}\alpha_{\psi,\mathbf{m}} ~~\text{and}~~\sum_{\mathbf{m}} \gamma_\mathbf{m}=1.
\end{equation}    
Applying Lemma \ref{approx_of_unitary_factor} and using the fact that the map $u\mapsto\tr(u^M)$ is continuous with respect to $\|\cdot\|_2$, we can extend the character $\chi$ by continuity to a character of $\operatorname{U}(R)$ using the formula \eqref{eq:restr_tor_unitary}. Since all hyperfinite $\mathrm{II}_1$-factors are isomorphic, one can construct inside $R$ a subalgebra isomorphic to $\varinjlim\operatorname{Mat}_{2^i}$ (note that the restriction of $\tr$ to $\operatorname{Mat}_{2^i}$ is the usual normalized trace). We denote by $\operatorname{U}_{2^{\infty}}$ the unitary subgroup of this subalgebra. Then, it is not difficult to check that the restriction of $\chi$ to $\operatorname{U}_{2^{\infty}}$ is a normalized character.
\begin{Lm}
(a) Any character $\chi$ on $\operatorname{U}_{\widehat{\mathbf{n}}}$ extends continuously to a character on $\operatorname{U}(R)$.
\\
(b) If $\chi$ is an indecomposable character of $\operatorname{U}_{\widehat{\mathbf{n}}}$, then by extending it to $\operatorname{U}(R)$ and restricting to $\operatorname{U}_{2^{\infty}}$, we again get an indecomposable character.
\end{Lm}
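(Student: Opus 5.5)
For (a), I will start from formula \eqref{eq:restr_tor_unitary}, which expresses $\chi$ on $\operatorname{U}_{\widehat{\mathbf{n}}}$ as an absolutely convergent convex combination of the functions $u\mapsto\prod_i\tr(u^{M_i})$. Each such function is defined on all of $\operatorname{U}(R)$, and it is $\|\cdot\|_2$-continuous there. To see continuity, I will use $\|u^M-v^M\|_2\le|M|\,\|u-v\|_2$, which follows by telescoping and unitarity, together with $|\tr(x)|\le\|x\|_2$. The series has coefficients $\gamma_{\mathbf{m}}\ge0$ summing to $1$ and terms bounded by $1$, so it converges uniformly and defines a $\|\cdot\|_2$-continuous function $\widetilde\chi$ on $\operatorname{U}(R)$ that extends $\chi$.

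I will then transfer the properties of a character from $\operatorname{U}_{\widehat{\mathbf{n}}}$ by density (the preceding Lemma). Centrality is the identity $\widetilde\chi(uv)=\widetilde\chi(vu)$, and positive definiteness is the statement $\sum c_i\bar c_j\widetilde\chi(u_j^{-1}u_i)\ge0$. Both are closed conditions under $\|\cdot\|_2$-approximation, because multiplication and inversion are $\|\cdot\|_2$-continuous on the unitary group (note $\|xy\|_2\le\|x\|\|y\|_2$ and $\|xy\|_2\le\|x\|_2\|y\|$). Normalization is immediate. The restriction of $\widetilde\chi$ to $\operatorname{U}_{2^\infty}$ is therefore a normalized character. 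The only point to watch is that \eqref{eq:restr_tor_unitary} was derived for an arbitrary, not necessarily indecomposable, character via Choquet decomposition on $G_{\widehat{\mathbf{n}}}$. This is what the paper asserts, so I take it as given.

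For (b), the key observation is that the extension map $\chi\mapsto\widetilde\chi$ is affine and injective, and that restriction to $\operatorname{U}_{2^\infty}$ is injective on $\|\cdot\|_2$-continuous characters, since $\operatorname{U}_{2^\infty}$ is also $\|\cdot\|_2$-dense in $\operatorname{U}(R)$ by the analogue of the preceding Lemma for $\varinjlim\operatorname{Mat}_{2^i}$. Now suppose $\widetilde\chi|_{\operatorname{U}_{2^\infty}}=\alpha\chi_1+(1-\alpha)\chi_2$ with $\chi_1,\chi_2$ characters of $\operatorname{U}_{2^\infty}$. I want to pull $\chi_1,\chi_2$ back to $\operatorname{U}_{\widehat{\mathbf{n}}}$, and the obstacle is that they need not a priori be $\|\cdot\|_2$-continuous. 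I will handle this in three steps.

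First, apply part (a) with the roles of the two groups swapped. Then $\chi_1$ and $\chi_2$ admit representations of the form \eqref{eq:restr_tor_unitary} on $\operatorname{U}_{2^\infty}$, since $\operatorname{U}_{2^\infty}$ is $\operatorname{U}_{\widehat{\mathbf{n}}}$ for $\widehat{\mathbf{n}}=(2,2,\dots)$. In particular they are automatically $\|\cdot\|_2$-continuous and extend to characters $\widetilde\chi_1,\widetilde\chi_2$ of $\operatorname{U}(R)$. Second, by density, $\widetilde\chi=\alpha\widetilde\chi_1+(1-\alpha)\widetilde\chi_2$ holds on all of $\operatorname{U}(R)$. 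Third, restricting this identity to $\operatorname{U}_{\widehat{\mathbf{n}}}$ decomposes $\chi$. Indecomposability of $\chi$ forces $\widetilde\chi_1|_{\operatorname{U}_{\widehat{\mathbf{n}}}}=\widetilde\chi_2|_{\operatorname{U}_{\widehat{\mathbf{n}}}}$, hence $\widetilde\chi_1=\widetilde\chi_2$ by density, and so $\chi_1=\chi_2$.

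The main obstacle is this automatic-continuity step: ensuring that every character of $\operatorname{U}_{2^\infty}$ is $\|\cdot\|_2$-continuous. The plan resolves it by invoking the main theorem for the sequence $(2,2,\dots)$ together with the fact that every element of $\operatorname{U}_{2^\infty}$ is conjugate into the torus.
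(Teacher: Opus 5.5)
Your proposal is correct and is essentially the paper's argument. Part (a) extends formula \eqref{eq:restr_tor_unitary} by $\|\cdot\|_2$-continuity and transfers centrality and positive-definiteness by density. Part (b) applies the same extension to $\chi_1,\chi_2$ on $\operatorname{U}_{2^{\infty}}$ (the case $\widehat{\mathbf{n}}=(2,2,\ldots)$), restricts back to $\operatorname{U}_{\widehat{\mathbf{n}}}$, and concludes by indecomposability plus density. You make explicit what the paper leaves as ``not difficult to check'': the estimate $\|u^M-v^M\|_2\le|M|\,\|u-v\|_2$, the $\|\cdot\|_2$-continuity of multiplication and inversion, and the implicit requirement that the copy of $\varinjlim\operatorname{Mat}_{2^i}$ be chosen $\|\cdot\|_2$-dense in $R$.
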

\begin{proof}
Part (a) follows from the discussion above. For part (b) assume that $\chi=\alpha\chi_1+(1-\alpha)\chi_2$ for two distinct characters $\chi_1,\chi_2$ on $\operatorname{U}_{2^{\infty}}$ and $\alpha\in(0,1)$. Then, by the same argument as above, $\chi_1$ and $\chi_2$ can be extended by continuity from $\operatorname{U}_{2^{\infty}}$ to $\operatorname{U}(R)$ and then restricted back to $\operatorname{U}_{\widehat{\mathbf{n}}}$. It follows that $\chi_1$ and $\chi_2$ must coincide on $\operatorname{U}_{\widehat{\mathbf{n}}}$, but this forces $\chi_1\equiv\chi_2$ by density and continuity.
\end{proof}

\subsection{Multiplicativity for $\operatorname{U}_{2^{\infty}}$.}
The group $\operatorname{U}_{2^{\infty}}$ is a particular example of the group $\operatorname{U}_{\widehat{\mathbf{n}}}$ corresponding to the homogeneous case when $n_i=2$ for all $i$. We show that in this case indecomposable characters satisfy a certain multiplicativity property. For $u\in\operatorname{U}_N$ and $v\in\operatorname{U}_M$ we denote by $u\otimes v\in\operatorname{U}_{NM}$ the unitary matrix corresponding to the tensor product of unitary operators $u\colon\mathbb{C}^N\to\mathbb{C}^N$ and $v\colon\mathbb{C}^M\to\mathbb{C}^M$. 

\begin{Lm}\label{multiplicativity_lemma}
Assume that $\chi$ is an indecomposable character of $\operatorname{U}_{2^{\infty}}$. Let $u\in \operatorname{U}_{2^k}$ and $v\in I_{2^k}\otimes\operatorname{U}_{2^l}\subset\operatorname{U}_{2^{k+l}}$. Then $\chi(uv)=\chi(u)\chi(v)$.
\end{Lm}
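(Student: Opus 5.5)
The plan is to prove multiplicativity directly from the factor property of the GNS representation, using an ``asymptotic shift'' of $v$ to the far tail of the tensor product. Let $(\pi_\chi,\mathcal{H}_\chi,\xi_\chi)$ be the GNS representation of $\chi$. Since $\chi$ is indecomposable, $\pi_\chi(\operatorname{U}_{2^{\infty}})''$ is a factor (Appendix~\ref{sect:characters_spherical_func}). Write $v=I_{2^k}\otimes v'$ with $v'\in\operatorname{U}_{2^l}$, and for $n\ge 0$ put
\begin{equation*}
v_n=I_{2^{k+n}}\otimes v'\in\operatorname{U}_{2^{k+n+l}}.
\end{equation*}
Inside $\operatorname{U}_{2^{k+n+l}}$ we regard $u$ as $u\otimes I_{2^{n+l}}$ and $v$ as $I_{2^k}\otimes v'\otimes I_{2^n}$.

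\textbf{Step 1: conjugacy.} Let $P_n\in\operatorname{U}_{2^{k+n+l}}$ be the permutation matrix that permutes the tensor factors $k+1,\ldots,k+n+l$ of $(\mathbb{C}^2)^{\otimes(k+n+l)}$ so that the $l$ factors carrying $v'$ move to the last $l$ positions. Then $P_n(I_{2^k}\otimes v'\otimes I_{2^n})P_n^{-1}=v_n$. Since $P_n$ does not touch the first $k$ factors, it commutes with $u\otimes I$. Centrality of $\chi$ therefore gives
\begin{equation*}
\chi(uv)=\chi(P_nuvP_n^{-1})=\chi(uv_n)\quad\text{and}\quad \chi(v)=\chi(v_n)\quad\text{for all } n.
\end{equation*}

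\textbf{Step 2: weak limits are central.} The operators $\pi_\chi(v_n)$ lie in the unit ball, so some subnet converges in the weak operator topology to an operator $T$. Because $\pi_\chi(\operatorname{U}_{2^\infty})''$ is weakly closed, $T$ belongs to it. For fixed $m$, every $w\in\operatorname{U}_{2^m}$ commutes with $v_n$ once $k+n\ge m$. Hence $T$ commutes with $\pi_\chi(\operatorname{U}_{2^m})$ for every $m$, and therefore with all of $\pi_\chi(\operatorname{U}_{2^\infty})''$. So $T$ lies in the center, and the factor property gives $T=c\cdot I$. Pairing with $\xi_\chi$ identifies the scalar:
\begin{equation*}
c=\lim\langle\pi_\chi(v_n)\xi_\chi,\xi_\chi\rangle=\lim\chi(v_n)=\chi(v).
\end{equation*}
Since every weak limit point of $\{\pi_\chi(v_n)\}$ equals $\chi(v)I$, the whole sequence converges weakly to $\chi(v)I$.

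\textbf{Step 3: conclusion.} Combining Steps 1 and 2,
\begin{equation*}
\chi(uv)=\lim_{n}\chi(uv_n)=\lim_{n}\langle\pi_\chi(v_n)\xi_\chi,\pi_\chi(u)^{*}\xi_\chi\rangle=\chi(v)\,\langle\xi_\chi,\pi_\chi(u)^*\xi_\chi\rangle=\chi(u)\chi(v).
\end{equation*}

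\textbf{Main obstacle and fallback.} The delicate points are the bookkeeping in Step 1 and the centrality argument in Step 2. In Step 1 one must check that the conjugating permutation really lies in $\operatorname{U}_{2^\infty}$ under the block-diagonal identification and commutes with $u$. If the operator-algebraic route causes difficulties, I would argue through formula \eqref{eq:restr_tor_unitary} instead. Each function $\Phi_{\mathbf{m}}(w)=\prod_i\tr(w^{M_i})$ is multiplicative on such pairs, because $u$ and $v$ commute and $\tr((u\otimes I)^M(I\otimes v')^M)=\tr(u^M)\tr(v'^M)$. One would then show that indecomposability forces a single $\gamma_{\mathbf{m}}$ to be nonzero, via Lemma~\ref{lem:indecomp_mult} with $v_u=u\otimes u^*$.
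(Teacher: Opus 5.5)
Your argument is correct and is essentially the paper's proof: the paper pushes $u$ rather than $v$ into the tail (via $u_n=I_{2^n}\otimes u$, which is conjugate to $u$, with $u_nv$ conjugate to $uv$), uses the factor property to identify every weak accumulation point of $\pi(u_n)$ with $\chi(u)\cdot I$, and then concludes with the same pairing as your Step~3. Your fallback, however, is not an independent route: applying Lemma~\ref{lem:indecomp_mult} to $\chi$ with $v_u=u\otimes u^*$ requires $\chi(u\otimes u^*)=|\chi(u)|^2$, which is itself an instance of the lemma being proved (this is exactly how the paper uses the lemma in Lemma~\ref{lem:chi_product_tr}).
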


\begin{proof}
The argument is standard. Consider the sequence of elements $u_n=I_{2^n}\otimes u\in\operatorname{U}_{2^{n+k}}$. Clearly, $u_n$ is conjugate to the image of $u\in\operatorname{U}_{2^k}$ in $\operatorname{U}_{2^{n+k}}$, and similarly for $uv$ and $u_nv$ for $n>k+l$. If $(\pi,\mathcal{H},\xi)$ is the GNS representation associated to $\chi$, then the sequence of operators $\pi(u_n)$ must have a weak-$*$ accumulation point $A\in\mathcal{B}(\mathcal{H})$. Since $\pi(u_n)\in\pi(\operatorname{U}_{2^n})'\cap\pi(\operatorname{U}_{2^{n+k}})''$ for all $n$ and $\pi(\operatorname{U}_{2^{\infty}})''$ is a factor, the operator $A$ must be scalar. However, $\langle\pi(u_n)\xi,\xi\rangle=\chi(u_n)=\chi(u)$. Consequently, $A=\chi(u)\cdot I_{\mathcal{H}}$. It remains to notice that
\begin{equation*}
\chi(uv)=\chi(u_nv)=\langle\pi(u_nv)\xi,\xi\rangle\to\langle A\pi(v)\xi,\xi\rangle=\chi(u)\langle\pi(v)\xi,\xi\rangle=\chi(u)\chi(v),~n\to\infty,
\end{equation*}
hence $\chi(uv)=\chi(u)\chi(v)$, as claimed.
\end{proof}

From now on we assume that $\chi$ is an indecomposable character of $\operatorname{U}_{\widehat{\mathbf{n}}}$.

\begin{Lm}\label{lem:chi_product_tr}
There exists a unique $\mathbf{m}=\mathbf{m}_0$ in the sum \eqref{eq:restr_tor_unitary} such that $\gamma_{\mathbf{m}_0}>0$. In particular, $\gamma_{\mathbf{m}_0}=1$ and
\begin{equation}\label{eq:prod_traces}
\chi(u)=\prod_{i=1}^{l}\tr\left(u^{M_i}\right)~\text{for all}~ u\in\operatorname{U}(R),
\end{equation}
where $\mathbf{m}_{0}=(M_1,\ldots,M_l)\in\mathbb{Z}^{(l)}$ with $M_1,\ldots,M_l\neq 0$.
\end{Lm}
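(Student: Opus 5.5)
We work with the extension of $\chi$ to $\operatorname{U}(R)$ and its restriction to $\operatorname{U}_{2^{\infty}}$, which by part (b) of the preceding lemma is still indecomposable. On $\operatorname{U}(R)$ it is given by formula \eqref{eq:restr_tor_unitary}. The plan is to show that the convex combination $\sum_{\mathbf{m}}\gamma_{\mathbf{m}}\prod_i\tr(u^{M_i})$ is multiplicative along tensor decompositions. Then Lemma \ref{lem:indecomp_mult} forces a single term to survive.

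\textbf{Step 1: a squaring element.} Take $u\in\operatorname{U}_{2^k}\subset\operatorname{U}_{2^\infty}$. Set $v=I_{2^k}\otimes\bar u$, viewed in $\operatorname{U}_{2^{2k}}$, where $\bar u$ is the entrywise conjugate of $u$. Since $\tr$ is multiplicative on tensor products, for every integer $M$ we have $\tr(v^M)=\overline{\tr(u^M)}$. Hence $\chi(v)=\overline{\chi(u)}$ for every function of the form \eqref{eq:restr_tor_unitary}, since the coefficients $\gamma_{\mathbf{m}}$ are real. By Lemma \ref{multiplicativity_lemma} we get
\begin{equation*}
\chi(uv)=\chi(u)\chi(v)=|\chi(u)|^2 .
\end{equation*}

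\textbf{Step 2: apply Lemma \ref{lem:indecomp_mult}.} For each $\mathbf{m}$ put $\psi_{\mathbf{m}}(u)=\prod_i\tr(u^{M_i})$. Then
\begin{equation*}
\psi_{\mathbf{m}}(u\otimes\bar u)=\prod_i\tr(u^{M_i})\,\overline{\tr(u^{M_i})}=|\psi_{\mathbf{m}}(u)|^2 .
\end{equation*}
Note that $uv$ is exactly $u\otimes\bar u$ after the identification used in Lemma \ref{multiplicativity_lemma}. So on $S=\bigcup_k\operatorname{U}_{2^k}$ the functions $\psi_{\mathbf{m}}$ satisfy the hypothesis of Lemma \ref{lem:indecomp_mult} with $v_u=u\otimes\bar u$. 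The function $\chi$ satisfies the same squaring identity, by Step 1.

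The Cauchy--Schwarz argument in the proof of Lemma \ref{lem:indecomp_mult} does not actually need $\chi$ to appear in the list; it only uses $\chi(v_u)=|\chi(u)|^2$ together with $\chi=\sum\gamma_{\mathbf{m}}\psi_{\mathbf{m}}$ and $\sum\gamma_{\mathbf{m}}=1$. These give
\begin{equation*}
\Bigl|\sum\gamma_{\mathbf{m}}\psi_{\mathbf{m}}(u)\Bigr|^2=\sum\gamma_{\mathbf{m}}|\psi_{\mathbf{m}}(u)|^2 .
\end{equation*}
Equality in Cauchy--Schwarz then forces all $\psi_{\mathbf{m}}(u)$ with $\gamma_{\mathbf{m}}>0$ to be equal, for every $u\in S$.

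\textbf{Step 3: distinctness.} It remains to check that distinct multisets $\mathbf{m}$ (nonzero entries) give distinct functions $\psi_{\mathbf{m}}$ on $S$. We restrict to diagonal unitaries in $\operatorname{U}_{2^k}$, i.e.\ to $\mathcal{F}_{2^k}(\mathbb{T})$. There $\psi_{\mathbf{m}}=\phi_{\mathbf{m}}$, and distinctness follows from the linear independence in Lemma \ref{lem:lin_independence_phi_func}, transported to the homogeneous sequence $(2,2,\ldots)$; the argument there does not depend on $\widehat{\mathbf{n}}$.

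Hence only one $\gamma_{\mathbf{m}_0}$ is nonzero, and it equals $1$. Formula \eqref{eq:prod_traces} then holds on $S$, on $\operatorname{U}_{\widehat{\mathbf{n}}}$ (both are given by the same expression \eqref{eq:restr_tor_unitary} and the coefficients are now determined), and by $\|\cdot\|_2$-continuity on all of $\operatorname{U}(R)$.

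The main obstacle is justifying Step 1 rigorously. We need Lemma \ref{multiplicativity_lemma} to apply to the extended and restricted character on $\operatorname{U}_{2^\infty}$, and we need the tensor-product embedding there to be compatible with the trace. Both reduce to the indecomposability established in part (b) and to the identity $\tr(a\otimes b)=\tr(a)\tr(b)$.
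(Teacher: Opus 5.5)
Your proposal is correct and follows the paper's proof. The paper likewise feeds $u\otimes u^*$ into Lemma~\ref{multiplicativity_lemma} to get $\chi(u\otimes u^*)=|\chi(u)|^2$ and $\tr((u\otimes u^*)^M)=|\tr(u^M)|^2$, and then invokes Lemma~\ref{lem:indecomp_mult}; your $u\otimes\bar u$ has the same traces of powers, so nothing changes. Your two additions are correct and fill in details the paper leaves implicit: you run the Cauchy--Schwarz argument directly because $\chi$ is not a priori one of the $\psi_{\mathbf m}$, and you check distinctness of the $\psi_{\mathbf m}$ on diagonal unitaries via Lemma~\ref{lem:lin_independence_phi_func}.
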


\begin{proof}
Restrict the equality \eqref{eq:restr_tor_unitary} to $\operatorname{U}_{2^{\infty}}\subset\operatorname{U}(R)$ (this basically reduces the lemma to the case when $n_i=2$ for all $i$). Then, for any $u\in\operatorname{U}_{2^k}\subset\operatorname{U}_{2^{\infty}}$ we have an element $u\otimes u^*\in\operatorname{U}_{2^{k+1}}\subset\operatorname{U}_{2^{\infty}}$ which satisfies equalities
\begin{equation*}
\tr((u\otimes u^*)^{M})=|\tr(u^M)|^2~\text{for all}~M\in\mathbb{Z}~\text{and}~\chi(u\otimes u^*)=|\chi(u)|^2
\end{equation*}
by definition of $\tr$ and previous lemma.
Therefore, we may apply Lemma \ref{lem:indecomp_mult} and the claim follows.
%We apply the previous lemma to elements $u,v\in G_{2^{\infty}}(\mathbb{T})$.
\end{proof}

\subsection{Proof of the classification.} Now we are ready to complete the proof.

\begin{Th}\label{thm:indec_char_inf_unitary}
The elements of the multiset $\mathbf{m}_{0}=(M_1, \ldots,M_l)$ from Lemma \ref{lem:chi_product_tr} belong to the set $\{-1,1\}$. Consequently, for any indecomposable character $\chi$ on $\operatorname{U}_{\widehat{\mathbf{n}}}$ there exist non-negative integers $p$ and $q$ such that $\chi(u)=\tr(u)^p\tr(u^*)^q$ for any $u\in\operatorname{U}_{\widehat{\mathbf{n}}}$. Moreover, for any non-negative integers $p$, $q$ the function $\tr^p\overline{\tr}^q$ is indeed an indecomposable character of $\operatorname{U}_{\widehat{\mathbf{n}}}$.
\end{Th}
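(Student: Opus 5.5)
We must show that $|M_i|=1$ for every $i$, and then check that each $\tr^p\overline{\tr}^q$ is an indecomposable character.

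\textbf{Step 1: a positive-definiteness test on a one-parameter subgroup.} By Lemma \ref{lem:chi_product_tr}, $\chi(u)=\prod_{i=1}^{l}\tr(u^{M_i})$ on $\operatorname{U}(R)$ with all $M_i\neq 0$. The plan is to exploit positive definiteness on a commutative subgroup of $\operatorname{U}(R)$ on which the trace behaves like integration against a measure on $\mathbb{T}$. Take a diffuse (Cartan) abelian subalgebra $L^\infty([0,1])\subset R$ with $\tr=\int\cdot\,dx$. For a function $f\colon[0,1]\to\mathbb{R}$ put $u_s=e^{isf}$ for $s\in\mathbb{R}$. Then $s\mapsto\chi(u_s)=\prod_i\int e^{isM_if(x)}dx$ is a positive-definite continuous function on $\mathbb{R}$. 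This already holds for any character, so it alone will not give a contradiction, and we need a sharper test.

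\textbf{Step 2: use unitaries that do not commute, via the $\operatorname{U}_2$ block structure.} The sharper approach is to restrict $\chi$ to $\operatorname{U}_2\otimes I\subset \operatorname{U}_{2^{k}}$, or more generally to $\operatorname{U}_N\otimes I$. There $\chi$ becomes the function $u\mapsto\prod_i \frac1N\Tr(u^{M_i})$ on the compact group $\operatorname{U}_N$. This function is a positive-definite class function, so it is a nonnegative combination of normalized irreducible characters of $\operatorname{U}_N$. Expand each power sum $p_M(u)=\Tr(u^M)$ in Schur functions (more precisely, rational Schur characters for negative $M$). For $|M|\ge 2$, the expansion $p_M=\sum_{\text{hooks}}(-1)^{\mathrm{ht}}s_{\text{hook}}$ has negative coefficients. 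The key computation is to show that for $N$ large, the product $\prod_i p_{M_i}$ has a negative coefficient on some irreducible character whenever some $|M_i|\ge 2$. A natural witness is the irreducible $\operatorname{U}_N$-character indexed by the pair of partitions obtained by taking a hook of height one in $p_{M_i}$ for the offending index and one-row partitions for the other indices. Its coefficient should be $-1$ times a positive multiplicity, with no cancellation coming from the other terms, because it is extremal in dominance order. After dividing by the dimensions, positive definiteness requires nonnegative Fourier coefficients, which gives the contradiction. Hence $M_i\in\{\pm1\}$. Writing $p=\#\{i:M_i=1\}$ and $q=\#\{i:M_i=-1\}$, and using $\tr(u^{-1})=\overline{\tr(u)}$, we obtain $\chi=\tr^p\overline{\tr}^q$.

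\textbf{Step 3: existence and indecomposability.} The function $\tr^p\overline{\tr}^q$ is the vector state of $u\mapsto u^{\otimes p}\otimes\bar u^{\otimes q}$ acting on $L^2(R,\tr)^{\otimes(p+q)}$ at $\hat1^{\otimes(p+q)}$, so it is a central positive-definite function. For indecomposability, suppose $\tr^p\overline{\tr}^q$ were a convex combination of other characters. By the discussion preceding \eqref{eq:restr_tor_unitary}, every character is a convex combination of the functions $\prod_i\tr(u^{M_i})$. Restricting to $\mathcal{F}_{\widehat{\mathbf{n}}}(\mathbb{T})$ turns these into the functions $\phi_{\mathbf{m}}$. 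Lemma \ref{lem:lin_independence_phi_func} then forces the decomposition to be trivial.

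\textbf{Main obstacle.} The main obstacle is Step 2: identifying an irreducible constituent with a provably negative coefficient without any cancellation. The first thing to try is the case $l=1$ with $M=2$ on $\operatorname{U}_2$, where $\Tr(u^2)=\chi_{(2)}-\chi_{(1,1)}$. The general case should follow by an extremal-highest-weight argument.
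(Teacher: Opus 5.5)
\textbf{There is a gap in Step~2, which carries the whole first assertion of the theorem.} You never prove that $\prod_i\Tr(u^{M_i})$ has a negative coefficient on some irreducible character of $\operatorname{U}_N$. The mechanism you propose for it is also false: you claim the witness is built from a height-one hook and one-row partitions, and that it escapes cancellation because it is extremal in dominance order.

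Cancellation does occur. For example, $p_2^2=(s_{(2)}-s_{(1,1)})^2=s_{(4)}-s_{(3,1)}+2s_{(2,2)}-s_{(2,1,1)}+s_{(1^4)}$. Here the coefficient of $s_{(3,1)}$ is $1-2$, and the $+1$ comes from $s_{(2)}^2$.

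Worse, your witness can have coefficient zero. For $\chi(u)=\tr(u^3)\tr(u)$ one has $p_3p_1=s_{(4)}-s_{(2,2)}+s_{(1^4)}$. Your recipe produces $s_{(2,1)}s_{(1)}=s_{(3,1)}+s_{(2,2)}+s_{(2,1,1)}$. Both dominance-extremal constituents, $s_{(3,1)}$ and $s_{(2,1,1)}$, have coefficient $0$ in $p_3p_1$, and the only negative coefficient sits on the middle one. So an ``extremal highest weight'' argument cannot work, and the key step of your proof is missing.

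Your route can be completed, but the key fact is different. Let $\mu$ be the partition formed by the positive $M_i$ and $\nu$ the one formed by the $|M_i|$ with $M_i<0$. Then on $\operatorname{U}_N$, $N=N_k$, with eigenvalues $x$, one has $N^l\chi(u)=p_\mu(x)p_\nu(\bar x)$.
\begin{itemize}
\item By Frobenius, $p_\mu=\sum_{\alpha\vdash|\mu|}\chi^\alpha(\mu)s_\alpha$, where $\chi^\alpha(\mu)$ are the irreducible $\mathfrak{S}_{|\mu|}$-character values.
\item If $\mu\neq(1^{|\mu|})$, column orthogonality gives $\sum_\alpha\chi^\alpha(\mu)\dim\alpha=0$. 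Since $\chi^{(|\mu|)}(\mu)=1$, there is some $\alpha$ with $\chi^\alpha(\mu)<0$.
\item For $N\ge|\mu|+|\nu|$, Koike's formula gives $s_{\alpha'}(x)s_{\beta'}(\bar x)=s_{[\alpha',\beta']}+{}$(a nonnegative combination of irreducibles $s_{[\alpha'',\beta'']}$ with $|\alpha''|<|\alpha'|$).
\item Hence the coefficient of the irreducible $s_{[\alpha,(|\nu|)]}$ in $p_\mu(x)p_\nu(\bar x)$ is exactly $\chi^\alpha(\mu)\chi^{(|\nu|)}(\nu)=\chi^\alpha(\mu)<0$. This contradicts positive definiteness of $\chi|_{\operatorname{U}_N}$. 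If the offending $M_i$ is negative, argue symmetrically.
\end{itemize}
This is the ``Schur positivity'' alternative that the paper mentions, and calls more complicated, in its final remark.

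The paper's own proof avoids $\operatorname{U}_N$ representation theory entirely. On permutation matrices, the $G_{\widehat{\mathbf{n}}}$-classification gives $\chi(\sigma)=\psi_0(\sigma)\chi_{\nat}(\sigma)^l\le\chi_{\nat}(\sigma)^l$ with $|\psi_0|\le1$. Lemma~\ref{lem:chi_product_tr} gives $\chi(\sigma)=\prod_i\chi_{\nat}(\sigma^{M_i})$. Evaluating at a product of $l$ disjoint $p$-cycles, with $p$ a prime dividing some $M_j$, yields $(1-lp/N)^l\ge(1-lp/N)^{l-1}$, which is impossible.

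Your Step~3 is fine. Indecomposability follows from Lemma~\ref{lem:lin_independence_phi_func} together with the fact that every element of $\operatorname{U}_{\widehat{\mathbf{n}}}$ is conjugate into the diagonal torus.
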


\begin{proof}
Our aim is to show that $M_i=\pm 1$ for each $i$. Since $\gamma_{\mathbf{m}}=0$ for $\mathbf{m}\neq\mathbf{m}_0$ we have by \eqref{eq:chi_formula_exchar_S}
\begin{equation*}
\chi(g)=\psi_0(s)\cdot\phi_{\mathbf{m}_0}(g),~\text{where}~\psi_0(s)=\sum_{\psi\in\exchar(\mathfrak{S}_{\widehat{\mathbf{n}}})}\alpha_{\psi,\mathbf{m}_0}\cdot\psi(s).
\end{equation*}
Observe that $\psi(s)\in[-1,1]$ for all $\psi$ and $s$. Thus, $\chi(s)=\psi_0(s)\phi_{\mathbf{m}_0}(s)=\psi_0(s)\chi_{\nat}(s)^{l}\le\chi_{\nat}(s)^l$.

Restrict the formula \eqref{eq:prod_traces} above to a permutation $\sigma\in\mathfrak{S}_{N_i}$ for a sufficiently large $i$. Then, we obtain
\begin{equation*}
\chi(\sigma)=\prod_{i=1}^{l}\tr(\sigma^{M_i})=\prod_{i=1}^{l}\chi_{\nat}(\sigma^{M_i}).
\end{equation*}
On the other hand, from the discussion above we have $\phi_{\mathbf{m}_0}(\sigma)\le\chi_{\nat}(\sigma)^{l}$. Thus, we obtain the inequality
\begin{equation}\label{eq:chi_nat_sigma_ineq}
\chi_{\nat}(\sigma^{M_1})\ldots\chi_{\nat}(\sigma^{M_l})\le\chi_{\nat}(\sigma)^l~\text{for all}~\sigma\in\mathfrak{S}_{N_i}.
\end{equation}
Now suppose that $M_j\neq\pm 1$ for some $j$. Then, there exists a prime number $p$ dividing $M_j$. Assuming that $N_i>lp$, we can construct a permutation $\sigma_0\in\mathfrak{S}_{N_i}$ whose cycle decomposition consists of $l$ independent cycles of length $p$. Then, it is not difficult to check that for $M\in\mathbb{Z}$ we have
\begin{equation*}
\chi_{\nat}(\sigma_0^{M})=\frac{1}{N_i}\#\Fix(\sigma_0^{M};\mathbb{X}_{N_i})=
\begin{cases}
1-\frac{lp}{N_i},&~p\nmid M,
\\
1,&~p\mid M.
\end{cases}
\end{equation*}
It follows that for $\sigma=\sigma_0$ the inequality \eqref{eq:chi_nat_sigma_ineq} yields
\begin{equation*}
(1-lp/N_i)^{l}=\chi_{\nat}(\sigma_0)^{l}\ge\chi_{\nat}(\sigma_0^{M_1})\ldots\chi_{\nat}(\sigma_0^{M_l})\ge(1-lp/N_i)^{l-1},
\end{equation*}
which is clearly a contradiction since $1-lp/N_i\in(0,1)$. Thus, $M_i\in\{\pm1\}$ for each $i$ and hence, $\chi(u)=\tr(u)^p\tr(u^*)^q$ for some $p,q\in\mathbb{Z}_{\ge0}$.

Finally, to prove that the function $\tr^p\overline{\tr}^q$ is a character we note that $\tr$ is a normalized central positive-definite function on $\operatorname{U}_{\widehat{\mathbf{n}}}$. The indecomposability of $\tr^p\overline{\tr}^q$ can be proved using the multiplicativity property using the argument from the proof of Lemma \ref{lem:chi_product_tr}.
\end{proof}

\begin{Rem}
An alternative, but more complicated proof of the fact that $M_i\in\{\pm 1\}$ uses the restriction to finite-dimensional subgroups $\operatorname{U}_{N_k}$, the Schur positivity and certain asymptotic results on the multiplicities of irreducibles in tensor products of representations of $\operatorname{U}_{N_k}$.
\end{Rem}

\section{Discussion of the general case of $G_{\hat{\mathbf{n}}}(A)$}\label{sect:gen_ab_group}

Fix an arbitrary compact abelian group $A$. Denote by $\widehat{A}$ the \emph{Pontryagin dual group} for $A$. It is known that $\widehat{A}$ is a discrete abelian group.
Recall that the group $G_{\widehat{\mathbf{n}}}(A)$ is constructed as the inductive limit of semidirect products $\mathcal{F}_{N_k}(A)\rtimes\mathfrak{S}_{N_k}\simeq A^{N_k}\rtimes\mathfrak{S}_{N_k}$ under the block-diagonal embeddings. Note that $\mathcal{F}_{N_k}(A)$ is a compact abelian group and we identify its Pontryagin dual group with $\mathcal{F}_{N_k}(\widehat{A})$. We use the multiplicative notation for these groups.

In previous sections we proved the classification theorem for the indecomposable characters of the group $G_{\widehat{\mathbf{n}}}(\mathbb{T})$. It turns out that many proof ideas apply for more general abelian groups $A$. In particular, we believe that a modification of our methods can be used to deduce the classification of indecomposable characters of the group $G_{\widehat{\mathbf{n}}}(A)$ when $A=A_0\times\mathbb{T}^d$, where $A_0$ is a finite abelian group and $d\in\mathbb{Z}_{\ge 0}$. We plan to explain the details in future work.

\textbf{Expectations.} Based on preliminary calculations and the answer for $A=\mathbb{T}$, it seems plausible to us that there will be a family indecomposable characters of $G_{\widehat{\mathbf{n}}}(A)$ of the form
\begin{equation*}
\chi(\mathfrak{a},\sigma)=\phi_{A}(\mathfrak{a})\phi_{\sym}(\sigma;p)\prod_{i=1}^{l}\left(\int_{\Fix(\sigma;\mathbb{X}_{\widehat{\mathbf{n}}})}\alpha_i(\mathfrak{a}(x))d\nu_{\widehat{\mathbf{n}}}(x)\right),~(\mathfrak{a},\sigma)\in G_{\widehat{\mathbf{n}}}(A),
\end{equation*}
where $\phi_{A}$ and $\phi_{\sym}(\sigma;p)$ are one-dimensional characters of groups $\mathcal{F}_{\widehat{\mathbf{n}}}(A)$ and $\mathfrak{S}_{\widehat{\mathbf{n}}}$, respectively. However, we believe that for general $A$ the structure of the classification will be different from the case $A=\mathbb{T}$. In particular, there will be more indecomposable characters especially when $A$ is discrete. Moreover, the interplay between the arithmetic properties of $\widehat{\mathbf{n}}=\{n_k\}_{k=1}^{\infty}$ and the structure of the torsion part of $A$ might affect the classification. We describe below some examples of these phenomena when $A$ is a finite abelian group.

\subsection{Some examples.} We consider the case of cyclic group: let $A=\{e^{2\pi i k/r}:0\le k\le r-1\}\simeq\mathbb{Z}/r\mathbb{Z}$ for some positive integer $r>1$. In all examples below the characters will be supported on the normal abelian subgroup $\mathcal{F}_{\widehat{\mathbf{n}}}(A)$ of $G_{\widehat{\mathbf{n}}}(A)$.

\begin{Exa}[Regular character]
The group $G_{\widehat{\mathbf{n}}}(A)$ has non-trivial center isomorphic to $A$. One can use this to construct characters as follows: for $\alpha\in \widehat{A}$ define
\begin{equation*}
\chi_{\alpha}(\mathfrak{a})=
\begin{cases}
\alpha(a),~&\text{if}~\mathfrak{a}(x)\equiv a~\text{for some}~a\in A,
\\
0,~&\text{otherwise}.
\end{cases}
\end{equation*}
It is a straightforward to verify that $\chi_{\alpha}$ is indeed a character. Note the continuity holds automatically since the group $G_{\widehat{\mathbf{n}}}(A)$ is discrete. In particular, for $r=2$ note that the convex combination $\frac{1}{2}(\chi_{+1}+\chi_{-1})$ is equal to the regular character of $G_{\widehat{\mathbf{n}}}(A)$ (similarly for $r>2$). Therefore, the regular character is not indecomposable in this case (the group is also not ICC since it has a non-trivial center) and we also get $|\widehat{A}|=r$ characters supported on the center of $G_{\widehat{\mathbf{n}}}(A)$.
\end{Exa}

\begin{Exa}[Multiplicative characters]
One can also have some non-trivial multiplicative characters coming from $\mathcal{F}_{\widehat{\mathbf{n}}}(A)$. Namely, for an element $\alpha\in\widehat{A}$ we may define
\begin{equation*}
\chi(\mathfrak{a})=\lim_{k\to\infty}\alpha\Bigg(\prod_{x\in\mathbb{X}_{N_l}}\mathfrak{a}(x)\Bigg)^{N_k/N_l},~\mathfrak{a}\in\mathcal{F}_{N_l}(A).
\end{equation*}
Of course, in order for this to make sense, we have to impose certain arithmetic conditions on $\alpha$ and $\widehat{\mathbf{n}}$. In the case when $A$ is a cyclic group of order $r=2$, we may assume that the sequence of $\{n_i\}$ consists only of odd integers. For other cyclic groups the situation can be more complicated: for instance, if $r=6$, we may take the sequence $\widehat{\mathbf{n}}=(3,5,7,3,5,7,\ldots)$, but not $\widehat{\mathbf{n}}=(5,7,5,7,5,7,\ldots)$.
\end{Exa}
 
\begin{Exa}[Intermediate situations]
We also can get a ``mixture'' of the two examples above. Assume that $r=4$. Then, $A$ has a non-trivial subgroup $B$ isomorphic to $\mathbb{Z}/2\mathbb{Z}$, let $\beta\in\widehat{A}$ be such that $B=\ker\beta$. Define the character $\chi$ as follows: for $\mathfrak{a}\in\mathcal{F}_{N_k}(A)$ put
\begin{equation*}
\chi(\mathfrak{a})=
\begin{cases}
\beta(a)^{N_k},~&\text{if}~\mathfrak{a}(x)/a\in B~\text{for all}~x\in \mathbb{X}_{N_k}~\text{for some}~a\in A,
\\
0,~&\text{otherwise}.
\end{cases}
\end{equation*}
The character $\chi$ is supported on the preimage of the center of $\mathcal{F}_{\widehat{\mathbf{n}}}(A/B)$ under the natural map $\mathcal{F}_{\widehat{\mathbf{n}}}(A)\to \mathcal{F}_{\widehat{\mathbf{n}}}(A/B)$ (similar to the first example). On this support the formula for $\chi$ is basically the pullback of the character of $\mathcal{F}_{\widehat{\mathbf{n}}}(A/B)$ from the second example.
One may generalize this construction to any subgroup $B$ of a finite abelian group $A$.
\end{Exa}

Finally, let us note that when $A$ is not discrete, the continuity imposes strong restrictions on characters. In particular, there no analogues of the examples above in the case of $A=\mathbb{T}$.

\appendix

\section{Supplementary material}\label{sect:appendix}

\subsection{Characters and representations of the symmetric group.}

In this subsection we give a brief overview of the Okounkov--Vershik approach to the representation theory of the symmetric group.

\subsubsection{Basic definitions.}
Fix a positive integer $N$. The symmetric group $\mathfrak{S}_{N}$ is the group of permutations of the set $\mathbb{X}_{N}=\{1,2,\ldots,N\}$. For any $\sigma\in\mathfrak{S}_{N}$ we denote by $c(\sigma)=(c_1(\sigma),c_2(\sigma),\ldots)$ its cycle type, i.e., $c_k(\sigma)$ is the number of $k$-cycles in the cycle decomposition of $\sigma$. Note that $\sum_{k}kc_k(\sigma)=N$. It is clear that cycle types are in one-to-one correspondence with conjugacy classes of $\mathfrak{S}_{N}$.

It is well-known that $\mathfrak{S}_{N}$ has the following presentation in terms of the \emph{Coxeter generators} $s_i=(i~i+1)$, $i\in\{1,\ldots,N-1\}$:
\begin{equation*}
\mathfrak{S}_{N}=\langle s_1,\ldots,s_{N-1}\mid s_i^2=1,i\in\{1,\ldots,N-1\};s_{i}s_{i+1}s_{i}=s_{i+1}s_{i}s_{i+1},i\in\{1,\ldots,N-2\}\rangle.
\end{equation*}

A \emph{partition} of $N$ is a non-increasing sequence $\lambda=(\lambda_1,\ldots,\lambda_l)$ of positive integers such that $N=\lambda_1+\ldots+\lambda_l$. For such a partition we denote $|\lambda|=N$ and $\ell(\lambda)=l$. If $\lambda$ is a partition of $N$, we write $\lambda\vdash N$.

It is well known that the irreducible representations of $\mathfrak{S}_{N}$ are parameterized by partitions $\lambda$ of $N$. We denote the corresponding representation by $\rho_{\lambda}$ and its \emph{normalized character}\footnote{Note that this is not a conventional notation, but nevertheless we use it here for technical reasons.} by $\chi_{\lambda}$. Note that the conjugacy classes of $\mathfrak{S}_{N}$ are also parameterized by partitions of $N$ (we regard the cycle type $c$ of a permutation as a partition $1^{c_1}2^{c_2}\ldots$). We denote by $\chi_{\lambda}^{\mu}$ the value of $\chi_{\lambda}$ on conjugacy class that corresponds to $\mu\vdash N$ (recall that characters are central functions on $\mathfrak{S}_{N}$).

The \emph{Young diagram} associated to a partition $\lambda=(\lambda_1,\ldots,\lambda_l)\vdash N$ is a collection of $N$ boxes arranged in left-justified rows such that the $i$-th row consists of $\lambda_i$ boxes (see Figure \ref{fig:young_diagram}). Denote the set of Young diagrams with $N$ boxed by $\mathbb{Y}_{N}$ and let $\mathbb{Y}=\bigcup_{N\ge 0}\mathbb{Y}_{N}$. We often identify a partition with its Young diagram.

\begin{Exa}
For instance, the trivial partition $\lambda=(n)$ corresponds to the trivial one-dimensional representation of $\mathfrak{S}_n$, while the conjugate partition $\lambda'=(1^n)$ corresponds to the sign representation of $\mathfrak{S}_n$.
\end{Exa}

Flipping a Young diagram $\lambda$ over its main diagonal gives another Young diagram which we denote by $\lambda'$. The corresponding partition $\lambda'$ is called the \emph{conjugate} partition (see an example on Figure \ref{fig:young_diagram}).

\begin{figure}[h!]
\centering
\ytableausetup{boxsize=1.2em}
\begin{ytableau}
 ~ & ~ & ~ & ~ & ~ \\
 ~ & ~ & ~ & ~ & ~ \\
 ~ & ~ & ~ \\
 ~ \\
 ~
\end{ytableau}
\hspace{20mm}
\begin{ytableau}
 ~ & ~ & ~ & ~ & ~ \\
 ~ & ~ & ~ \\
 ~ & ~ & ~ \\
 ~ & ~ \\
 ~ & ~
\end{ytableau}
\caption{Young diagrams of $\lambda=(5,5,3,1,1)$ and $\lambda'=(5,3,3,2,2)$.}
\label{fig:young_diagram}
\end{figure}
A \emph{(standard) Young tableau} of shape $\lambda\vdash N$ is a filling of boxes of the corresponding Young diagram with numbers $1,2,\ldots,N$ such that the entries increase across each row and down each column. The set of all Young tableaux of shape $\lambda$ is denoted by $\Tab(\lambda)$. It turns out that the dimension of the irreducible representation $\rho_{\lambda}$ is equal to the cardinality of $\Tab(\lambda)$ (see Subsection \ref{sect:sym_irrep}). In view of this fact, we use the notation $\dim\lambda=\#\Tab(\lambda)$.

For a given Young diagram $\lambda$ and a box $\square\in\lambda$ located in the $p$-th row and the $q$-th column of $\lambda$, the number $\mathrm{cont}(\square)=q-p$ is called the \emph{content} of the box $\square$. For a Young tableau $T\in\Tab(\lambda)$ and $i\in\{1,2,\ldots,|\lambda|=N\}$ we use the notation $a_i=a_i(T)$ to denote the content of the box of $T$ which contains $i$.

\begin{Exa}
Consider the Young tableau of shape $\lambda=(4,2,1)$ depicted in Figure \ref{fig:young_tableau} on the left. Then, the contents of its entries are as follows: $(a_1,a_2,a_3,a_4,a_5,a_6,a_7)=(0,1,-1,-2,2,0,3)$.
\end{Exa}
\begin{figure}[h!]
\centering
\ytableausetup{boxsize=1.8em}
\begin{ytableau}
 1 & 2 & 5 & 7 \\
 3 & 6  \\
 4
\end{ytableau}
\hspace{20mm}
\begin{ytableau}
 0 & 1 & 2 & 3 \\
 -1 & 0  \\
 -2
\end{ytableau}
\caption{A standard tableau of shape $\lambda=(4,2,1)$ and contents of its cells.}
\label{fig:young_tableau}
\end{figure}

\begin{Def}\label{def:minimal_el_conj_cl}
Let $c=(c_1,c_2,\ldots)$ be a cycle type in $\mathfrak{S}_{N}$. Let $m_1\le m_2\le\ldots\le m_r$, where $r=c_2+c_3+\ldots$, be the sequence
\begin{equation*}
\underbrace{2,\ldots,2}_{c_2},\underbrace{3,\ldots,3}_{c_3},\ldots,\underbrace{i,\ldots,i}_{c_i},\ldots.
\end{equation*}
Then, we define the \emph{minimal element} in the conjugacy class of $\mathfrak{S}_{N}$ corresponding to $c$ to be the permutation $\sigma_{c}$ given by the following formula:
\begin{equation*}
\sigma_c=\prod_{i=1,\ldots,r}(m_1+\ldots+m_{i-1}+1;m_1+\ldots+m_{i-1}+2;\ldots;m_1+\ldots+m_{i-1}+m_{i}).
\end{equation*}
In particular, $\sigma_c$ is a product of cycles of lengths $m_1,m_2,\ldots,m_r$, and $\supp(\sigma_c)=\{1,\ldots,N-c_1\}$.
\end{Def}

The main property of the minimal element $\sigma_{c}$ is that it can be represented as a product
\begin{equation*}
\sigma_c=s_{i_1}\ldots s_{i_r},
\end{equation*}
where $i_1<i_2<\ldots<i_r$ are elements of $\{1,\ldots,N-c_1-1\}$. This is a consequence of the definition and the identity $(i~i+1~\ldots~j)=s_{i}s_{i+1}\ldots s_{j}$.

\begin{Exa}
The minimal element of the conjugacy class corresponding to the cycle type $c=(3,1,2)$ in $\mathfrak{S}_{11}$ is $\sigma_c=(1~2)(3~4~5)(6~7~8)(9)(10)(11)$.
\end{Exa}

\subsubsection{Realizations of the irreducible representations of $\mathfrak{S}_{N}$}\label{sect:sym_irrep}
In this subsection we recall the explicit constructions of the irreducible representations of the symmetric group $\mathfrak{S}_{N}$ (we refer the reader to \cite{CST} and \cite{Okounkov-Vershik} for more details).

The unitary irreducible representations $(\rho_\lambda,V_{\lambda})$ of $\mathfrak{S}_{N}$ are labeled by partitions $\lambda\vdash N$. The dimension of $\rho_{\lambda}$ equals $\dim\lambda=\#\Tab(\lambda)$ and there exists an orthonormal basis $\left\{ v_T\right\}_{T\in\Tab(\lambda)}$ in $V_{\lambda}$ parameterized by the standard Young tableaux $T\in\Tab(\lambda)$ such that the Coxeter generators $s_i=(i\;i+1)$ acts on elements of this basis  as follows:
\begin{itemize}
\item
if $i$ and $i+1$ are in the same row of $T$, then $\rho_\lambda(s_i)v_T=v_T$;
	
\item
if $i$ and $i+1$ are in the same column of $T$, then $\rho_\lambda(s_i)v_T=-v_T$;
	
\item
if $i$ and $i+1$ are in different rows and columns in $T$, then swapping positions of elements $i$ and $i+1$ in $T$ also gives some tableau $T'$.
Then, the operator $\rho_\lambda(s_i)$ in the two-dimensional space $\operatorname{span}\{v_{T},v_{T'}\}$ acts on the basis $\{v_{T},v_{T'}\}$ via the following matrix:
\begin{equation}\label{matrix_generator}
\begin{pmatrix}
1/d & \sqrt{1-1/d^2}
\\
\sqrt{1-1/d^2} & -1/d
\end{pmatrix},
\end{equation}
where $d=a_{i+1}-a_{i}$ and $a_i$ is the content of the box of $T$ that contain $i$.
\end{itemize}
This construction implies the following statement.
\begin{Lm}\label{sign_lemma}
Put $\widehat{\rho}_\lambda(\sigma)=\sgn(\sigma)\rho_\lambda(\sigma)$ for $\sigma\in\mathfrak{S}_N$. Then, the representations $\widehat{\rho}_\lambda$ and $\rho_{\lambda'}$ of the group $\mathfrak{S}_{N}$ are unitarily equivalent.
\end{Lm}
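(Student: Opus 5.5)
The plan is to compare the two representations on the Coxeter generators. Since $\mathfrak{S}_N$ is generated by $s_1,\ldots,s_{N-1}$ and both $\widehat{\rho}_\lambda$ and $\rho_{\lambda'}$ are homomorphisms, it suffices to exhibit a unitary operator $U\colon V_\lambda\to V_{\lambda'}$ with $U\widehat{\rho}_\lambda(s_i)=\rho_{\lambda'}(s_i)U$ for all $i$. The natural candidate comes from transposition of tableaux. For $T\in\Tab(\lambda)$ let $T'\in\Tab(\lambda')$ be the transposed tableau; this is a bijection $\Tab(\lambda)\to\Tab(\lambda')$. Transposition swaps rows and columns, so the content of each box changes sign: $a_i(T')=-a_i(T)$. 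I would define $Uv_T=\epsilon_T\,v_{T'}$, where $\epsilon_T\in\{\pm1\}$ are signs still to be chosen. Such a $U$ is automatically unitary, because it maps one orthonormal basis to another up to signs.

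Next I would check the three cases of the Young orthogonal form. If $i,i+1$ lie in the same row of $T$, they lie in the same column of $T'$. Then $\widehat{\rho}_\lambda(s_i)v_T=-v_T$ and $\rho_{\lambda'}(s_i)v_{T'}=-v_{T'}$, so these agree for any choice of signs. The same-column case is symmetric. In the remaining case, $s_i$ acts on $\operatorname{span}\{v_T,v_{s_iT}\}$ in $V_\lambda$ by the matrix \eqref{matrix_generator} with $d=a_{i+1}-a_i$. Multiplying by $\sgn(s_i)=-1$ gives the matrix with diagonal $(-1/d,\,1/d)$ and off-diagonal entries $-\sqrt{1-1/d^2}$. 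On the transposed side the axial distance becomes $-d$, so $\rho_{\lambda'}(s_i)$ has diagonal $(-1/d,\,1/d)$ and off-diagonal entries $+\sqrt{1-1/d^2}$. The two matrices therefore differ only by conjugation with $\diag(1,-1)$. This means the signs must satisfy $\epsilon_{s_iT}=-\epsilon_T$ whenever $s_iT$ is again standard.

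The main obstacle is choosing the signs $\epsilon_T$ consistently, since the constraints above must be compatible across all $T$. I would take $\epsilon_T=\sgn(w_T)$, where $w_T\in\mathfrak{S}_N$ is the permutation sending a fixed reference tableau $T_0$ to $T$ entrywise, for instance $T_0$ the row-reading tableau. Since $w_{s_iT}=s_iw_T$, we get $\epsilon_{s_iT}=-\epsilon_T$, which is exactly the required relation, and consistency is automatic because $\epsilon_T$ is a well-defined function of $T$. With these signs $U$ intertwines all generators and hence all of $\mathfrak{S}_N$.

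If one only needs equivalence rather than an explicit unitary, a shortcut is available. Taking traces, $\Tr\widehat{\rho}_\lambda=\sgn\cdot\Tr\rho_\lambda$ is an irreducible character, since twisting by a one-dimensional character preserves irreducibility. One then identifies it as $\chi_{\lambda'}$ by evaluating on the minimal elements of Definition \ref{def:minimal_el_conj_cl}. The explicit intertwiner described above is cleaner and self-contained, so I would follow that route.
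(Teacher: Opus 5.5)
Your proof is correct and is exactly the argument the paper leaves implicit when it says the lemma follows from the Young orthogonal form: transposing tableaux negates contents, so the generator matrices agree up to conjugation by $\diag(1,-1)$, and the signs $\epsilon_T=\sgn(w_T)$, with $w_{s_iT}=s_iw_T$, absorb this consistently. The character-theoretic shortcut in your last paragraph would still need an actual computation identifying $\sgn\cdot\chi_\lambda$ with $\chi_{\lambda'}$, but since you do not rely on it, nothing is missing.
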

Define the normalized character of the irreducible representation $\rho_\lambda$ by the formula:
\begin{equation}\label{character_of_finite_Gr}
\chi_{\lambda}(\sigma)=\frac{\Tr\left( \rho_\lambda(\sigma) \right)}{\Tr\left( \rho_\lambda(1_{\mathfrak{S}_N})\right)}=\frac{1}{\dim\lambda}\Tr(\rho_{\lambda}(\sigma)).
\end{equation}
where $\Tr$ is the standard trace of a finite-dimensional operator, $\sigma\in\mathfrak{S}_N$ and $1_{\mathfrak{S}_N}$ is the identity element of $\mathfrak{S}_N$. Note that Lemma \ref{sign_lemma} implies the equality
\begin{equation}\label{sign_char}
\chi_{\lambda'}(\sigma)=\sgn(\sigma)\chi_{\lambda}(\sigma)
\end{equation}
for all $\sigma\in\mathfrak{S}_{N}$ and $\lambda\vdash N$. Here $\sgn(\sigma)=\sgn_{N}(\sigma)$ denotes the sign of permutation $\sigma$.

We also use the following formula for diagonal matrix elements of $\rho_{\lambda}$.
\begin{Lm}[{\cite[Lemma 3.4]{Nessonov_Ngo}}]\label{lem:matrix_elem_formula}
Suppose that a permutation $\sigma\in\mathfrak{S}_{N}$ is expressed as a product of distinct Coxeter generators:
\begin{equation*}
\sigma=s_{i_1}\ldots s_{i_r}~\text{where}~i_1<i_2<\ldots<i_r.
\end{equation*}
Then for any tableau $T$ the following equality holds:
\begin{equation}\label{matrix_elem_equ}
|(\rho_\lambda(\sigma) v_T,v_T)|=\prod_{j}\frac{1}{|a_{i_j+1}-a_{i_j}|}.
\end{equation}
Here, the product is taken over all indices $j$ for which the numbers $i_j$ and $i_j+1$ are in different rows and different columns of tableau $T$ and $a_i$ denotes the content of the box in $T$ that contains $i$.
\end{Lm}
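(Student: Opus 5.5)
The plan is to compute the diagonal matrix element directly from the explicit Young (orthogonal) form recalled above. Write $\sigma=s_{i_1}\cdots s_{i_r}$ with $i_1<\dots<i_r$. Since the indices are strictly increasing, I will process the factors one at a time and track which tableaux can appear. This will show that the coefficient of $v_T$ in $\rho_\lambda(\sigma)v_T$ is a product of single diagonal entries, one for each generator.

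First I will expand $\rho_\lambda(s_{i_r})v_T$, then apply $\rho_\lambda(s_{i_{r-1}})$, and so on. By the rules above, $\rho_\lambda(s_i)v_S$ is a combination of $v_S$ and $v_{s_iS}$, where $s_iS$ means swapping $i$ and $i+1$ in $S$. The second vector appears only when $i$ and $i+1$ lie in different rows and columns of $S$.

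So $\rho_\lambda(\sigma)v_T=\sum_{J}c_J\,v_{T_J}$, summing over subsets $J\subset\{1,\dots,r\}$ of positions where a swap was taken. Here $T_J$ is obtained from $T$ by applying the transpositions $s_{i_j}$, $j\in J$, in the order dictated by the product. The key observation is that $T_J=T$ forces $J=\emptyset$. The transpositions $s_{i_j}$ with distinct increasing indices compose to a permutation $\prod_{j\in J}s_{i_j}$, again written as a reduced product of distinct Coxeter generators. Such a product is never the identity unless it is empty, since each generator occurring exactly once moves the element $i_j$ nontrivially; for instance, its image in the abelianization-type count of inversions is $|J|>0$. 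Since $T_J$ is the tableau $T$ relabelled by this permutation, $T_J=T$ would force that permutation to be the identity.

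Hence only the term $J=\emptyset$ contributes to $(\rho_\lambda(\sigma)v_T,v_T)$. In that term every factor acts by its diagonal coefficient on the current tableau, and the current tableau is still $T$ at each stage. I will check this carefully: $\rho_\lambda(s_{i_j})$ acts after $s_{i_{j+1}},\dots,s_{i_r}$, and because we took no swaps, the tableau is exactly $T$. The diagonal coefficient is $\pm1$ when $i_j,i_j+1$ share a row or column. Otherwise it is $1/(a_{i_j+1}-a_{i_j})$ with the contents of $T$. Taking absolute values gives \eqref{matrix_elem_equ}.

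The main obstacle is making rigorous the claim that no nonempty $J$ returns to $T$. The subtlety is that, for a given $J$, the swaps are applied to intermediate tableaux and not all of them may be allowed. The fix is to note that whenever a swap is allowed, the resulting tableau is simply the relabelling of the previous one by $s_{i_j}$. Therefore $T_J$, whenever it occurs, equals $\pi_J\cdot T$ with $\pi_J$ a product of distinct generators in increasing order, and such a $\pi_J$ has length $|J|$ and hence is nontrivial.
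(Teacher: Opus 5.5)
Your proof is correct. The paper gives no proof of its own here; it imports the lemma from \cite[Lemma 3.4]{Nessonov_Ngo}. Your route is the natural one: expand $\rho_\lambda(s_{i_1})\cdots\rho_\lambda(s_{i_r})v_T$ in Young's orthogonal form, observe that every branch with a nonempty swap set $J$ ends at $\pi_J\cdot T\neq T$, and conclude that only the all-diagonal branch survives. That branch gives the signed value $\prod_{j=1}^{r}(a_{i_j+1}-a_{i_j})^{-1}$, which is slightly more than the lemma asks for.

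The only blemish is the parenthetical about an ``abelianization-type count''. It is not a valid reason, since the abelianization of $\mathfrak{S}_N$ only records parity and so says nothing when $|J|$ is even. Your appeal to ``length $|J|$'' is true but left unproved. The clean justification is this: take the smallest $j\in J$. Every other factor $s_{i_{j'}}$ of $\pi_J$ has $i_{j'}>i_j$ and therefore fixes $i_j$. Since $s_{i_j}$ is the leftmost factor, $\pi_J(i_j)=i_j+1$, so $\pi_J\neq 1$.
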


\subsubsection{Proof of Lemma \ref{lem:sym_char_lim}}

In the proof of Lemma \ref{lem:sym_char_lim} we use the following upper bound for the characters of the symmetric group which is due to Roichman (see \cite{Roichman} and also \cite{Feray_Sniady}).

\begin{Prop}[{\cite[Theorem 1]{Roichman}}]\label{char_bound}
There exist absolute constants $a\in(0,1)$ and $b>0$ such that for any Young diagram $\lambda$ with $N=|\lambda|> 4$ boxes and for any $\sigma\in\mathfrak{S}_N$ the following inequality holds:
\begin{equation}
\left|\chi_{\lambda}(\sigma)\right|\leq
\left(\max\left\{\frac{\lambda_1}{N},\frac{\lambda_1'}{N},a\right\}\right)^{b\cdot\#\supp_{N}(\sigma)}.
\end{equation}
Here $\lambda_1$ (respectively, $\lambda_1'$) is the number of boxes in the first row (respectively, column) of the diagram $\lambda$.
\end{Prop}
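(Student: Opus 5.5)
The plan is to prove the bound through the Okounkov--Vershik realization of $\rho_\lambda$ (Subsection~\ref{sect:sym_irrep}) and the diagonal matrix element formula of Lemma~\ref{lem:matrix_elem_formula}, rather than through Roichman's original Murnaghan--Nakayama analysis. Since $\chi_\lambda$ is central, we may replace $\sigma$ by any conjugate. By relabelling $\mathbb{X}_N$ in reverse order (equivalently, using the minimal element of Definition~\ref{def:minimal_el_conj_cl} written with the top generators), we take $\sigma=s_{i_1}\cdots s_{i_r}$ with $i_1<\dots<i_r$ distinct, all lying in $\{c_1+1,\ldots,N-1\}$, and $r=\#\supp_N(\sigma)-\#\{\text{non-trivial cycles}\}\ge \tfrac12\#\supp_N(\sigma)$.

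Expanding the trace in the Gelfand--Tsetlin basis and applying Lemma~\ref{lem:matrix_elem_formula} gives
\begin{equation*}
|\chi_\lambda(\sigma)|\le \frac{1}{\dim\lambda}\sum_{T\in\Tab(\lambda)}\prod_{j\in J(T)}\frac{1}{|a_{i_j+1}(T)-a_{i_j}(T)|}
=\mathbb{E}_T\Big[\prod_{j\in J(T)}|a_{i_j+1}-a_{i_j}|^{-1}\Big],
\end{equation*}
where $T$ is a uniformly random standard tableau of shape $\lambda$. Here $J(T)$ is the set of $j$ for which $i_j$ and $i_j+1$ lie in different rows and columns; for $j\notin J(T)$ the factor is $1$. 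Whenever $j\in J(T)$ the contents differ by at least $2$ in absolute value, so each such factor is at most $\tfrac12$. We then pass to a subfamily of pairwise disjoint generators, for instance every other index, which keeps $r'\ge r/2$ generators. This reduces the problem to showing that the number of ``bad'' indices, those where $i_j,i_j+1$ are row- or column-adjacent, is unlikely to be a large fraction of $r'$.

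To control the bad events, generate $T$ from the top: the positions of $N,N-1,\ldots$ are obtained by successively removing corners with the co-transition probabilities $\dim(\lambda\setminus\square)/\dim\lambda$. The first step is to show that, conditionally on the current shape $\mu$ with $n$ boxes, the probability that the next two removed boxes lie in the same row is at most $C\,\mu_1/n$, and in the same column at most $C\,\mu_1'/n$. This should follow from the hook formula, since the probability of removing a specific pair of boxes in the same row is a product of ratios of hook lengths. The second step is a martingale/Chernoff-type product bound over the $r'$ disjoint generators: the expected product is at most $\prod\big(\tfrac12+C\max(\mu_1/n,\mu'_1/n)\big)$. This gives the desired form $\max\{\lambda_1/N,\lambda_1'/N,a\}^{b\#\supp}$ after choosing $a<1$ and $b>0$ small, and after treating separately the regime where $\lambda_1/N$ or $\lambda_1'/N$ is close to $1$. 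In that regime almost all boxes lie in the first row or column, and one compares directly with the trivial or sign character via \eqref{sign_char}.

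The main obstacle is that the ratios $\mu_1/n$ and $\mu_1'/n$ can grow as boxes are removed, so the conditional bound deteriorates along the process. I would first handle the case $\#\supp_N(\sigma)\le N/2$. There the shape never shrinks below $N/2$ boxes, so $\mu_1/n\le 2\lambda_1/N$, and the loss is absorbed into the constants $a$ and $b$. The remaining case $\#\supp_N(\sigma)>N/2$ I would reduce to the first by only using the generators among the top $N/2$ indices, which still constitute a positive proportion of $\supp_N(\sigma)$.
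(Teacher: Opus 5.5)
The paper does not prove this statement. It imports Roichman's theorem as a black box, used only in Lemma~\ref{lem:sym_char_lim}(1), so your proposal has to stand on its own, and its central step fails.

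Your claim is that, conditionally on the current shape $\mu\vdash n$, the next two removed boxes lie in the same row with probability at most $C\mu_1/n$. This is false. Take $\mu=(m^m)$, $n=m^2$. The first removed box is forced to be $(m,m)$. The remaining shape $(m^{m-1},m-1)$ is self-conjugate, with two corners $(m-1,m)$ and $(m,m-1)$, each removed next with probability $1/2$. So the same-row probability is $1/2$ and the same-row-or-column probability is $1$, while $C\mu_1/n=C\mu_1'/n=C/m\to 0$.

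The same example defeats the whole strategy, not just this lemma. For $\lambda=(m^m)$ and $\sigma$ a transposition, your normalization (generators at the top) gives $\sigma=s_{N-1}$. Then every $T$ has $J(T)=\varnothing$, and your bound reads $|\chi_\lambda(\sigma)|\le 1$. The proposition, however, requires $|\chi_\lambda(\sigma)|\le\max\{1/m,a\}^{2b}<1$ for large $m$. The true value is $0$: tableaux with $N-1$ to the left of $N$ contribute $+1$, those with $N-1$ above $N$ contribute $-1$, and taking absolute values of the diagonal coefficients discards exactly this cancellation.

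Whether $i$ and $i+1$ are row- or column-adjacent is governed by the local corner structure of the random intermediate shape $\mathrm{sh}(T|_{\le i+1})$. Adjacency is unlikely when that shape has many well-separated corners and likely when it is blocky near its boundary. No quantity such as $\mu_1/n$ or $\mu_1'/n$ of the current shape detects this. To make the seminormal-form route work, you would have to place the generators away from the extremes of $\{1,\dots,N\}$ and prove an averaged, not conditional, estimate using information on the typical shape of a uniformly random tableau. That is where the substance of Roichman's theorem lies, and it is missing from your argument.

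There are two further gaps even if your lemma is granted:
\begin{itemize}
\item The product $\prod(\tfrac12+2C\max\{\lambda_1,\lambda_1'\}/N)$ is $\ge 1$ once $\max\{\lambda_1,\lambda_1'\}/N\ge 1/(4C)$. So the range between $1/(4C)$ and $1-\varepsilon$ is not covered.
\item The regime near $1$, where the exponent must be sharp in $\lambda_1/N$, is only asserted by ``compare with the trivial or sign character'', not proved.
\end{itemize}
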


\begin{Lm}[{cf. \cite[Section 5]{Nessonov_Ngo}}]\label{lem:sym_char_lim}
Fix a non-negative integer $d$ and non-negative rational numbers $\alpha_1,\ldots,\alpha_r$ such that $\alpha_1+2\alpha_2+\ldots+r\alpha_r=1$. For each $k$ define
\begin{equation*}
\gamma(k)=(\alpha_1N_k-d,\alpha_2N_k,\ldots,\alpha_r N_k).
\end{equation*}
Assume that for all sufficiently large $k$ all entries of $\gamma(k)$ are non-negative integers, i.e., $\gamma(k)$ is a cycle type in $\mathfrak{S}_{N_k-d}$. Let $\{\lambda(k)\vdash N_k-d\}_{k=1}^{\infty}$ be a sequence of partitions. Consider the sequence of values of normalized characters $\{\chi_{\lambda(k)}^{\gamma(k)}\}_{k=1}^{\infty}$. Then,
\begin{itemize}
\item[(1)]
if $\limsup\limits_{k\to\infty}\min\{|\lambda(k)/(\lambda_1(k))|,|\lambda'(k)/(\lambda'_1(k))|\}=+\infty$, then $\liminf\limits_{k\to\infty}|\chi_{\lambda(k)}^{\gamma(k)}|=0$ if $\alpha_1<1$ (i.e., if at least one of $\alpha_2,\ldots,\alpha_r$ is positive);

\item[(2)]
if there exists a partition $\mu$ such that $\lambda(k)/(\lambda_1(k))=\mu$ for all large $k$, then $\lim\limits_{k\to\infty}\chi_{\lambda(k)}^{\gamma(k)}=\alpha_1^{q}$;

\item[(3)]
if there exists a partition $\mu$ such that $\lambda'(k)/(\lambda'_1(k))=\mu$ for all large $k$, then $\lim\limits_{k\to\infty}\chi_{\lambda(k)}^{\gamma(k)}=u\cdot \alpha_1^{q}$,
\end{itemize}
where we denote $q=|\mu|$ and $u=\lim\limits_{k\to\infty}(-1)^{N_k(\alpha_2+\alpha_4+\ldots)}\in\{\pm 1\}$.
\end{Lm}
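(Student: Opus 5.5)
We prove Lemma \ref{lem:sym_char_lim} by the Okounkov--Vershik model of Section \ref{sect:sym_irrep}, following \cite[Section 5]{Nessonov_Ngo}. Write $M_k=N_k-d$, put $\gamma(k)$ for the cycle type, and evaluate $\chi_{\lambda(k)}$ on the minimal element $\sigma_{\gamma(k)}$ of Definition \ref{def:minimal_el_conj_cl}. This gives
\begin{equation*}
\chi_{\lambda(k)}^{\gamma(k)}=\frac{1}{\dim\lambda(k)}\sum_{T\in\Tab(\lambda(k))}(\rho_{\lambda(k)}(\sigma_{\gamma(k)})v_T,v_T),
\end{equation*}
and $\sigma_{\gamma(k)}=s_{i_1}\cdots s_{i_r}$ with $i_1<\ldots<i_r$ and support $\{1,\ldots,M_k-\alpha_1N_k+d\}$. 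By \eqref{sign_char}, case (3) follows from case (2) applied to $\lambda'(k)$. The sign factor is $\sgn(\sigma_{\gamma(k)})=(-1)^{N_k(\alpha_2+\alpha_4+\ldots)}$, since an $l$-cycle has sign $(-1)^{l-1}$. This sign is eventually constant because the $N_k$ are eventually divisible by any fixed denominator. So only cases (1) and (2) need proof.

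\emph{Case (1).} We use Roichman's bound, Proposition \ref{char_bound}. Along a subsequence both $|\lambda(k)|-\lambda_1(k)$ and $|\lambda(k)|-\lambda'_1(k)$ tend to infinity. This alone does not force $\lambda_1(k)/M_k$ away from $1$, so the argument splits further.

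If along a further subsequence $\max\{\lambda_1(k),\lambda_1'(k)\}/M_k\le c<1$, then, since $\#\supp(\sigma_{\gamma(k)})\ge(1-\alpha_1)N_k\to\infty$, Proposition \ref{char_bound} gives $|\chi|\le\max\{c,a\}^{b(1-\alpha_1)N_k}\to 0$.

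Otherwise, say $\lambda_1(k)/M_k\to1$ while $q_k=M_k-\lambda_1(k)\to\infty$ with $q_k=o(M_k)$. Here we use the tableau expansion directly. For all but a vanishing fraction of tableaux, the small boxes are placed among the first $\alpha_1$-proportion of the entries, or at least away from the support. For a typical $T$, each transposition $s_{i_j}$ with $i_j,i_j+1$ both in row one acts by $+1$. The total contribution then behaves like $\alpha_1^{q_k}$ up to $o(1)$, and this tends to $0$ because $\alpha_1<1$.

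\emph{Case (2).} Here $\lambda(k)=(M_k-q,\mu)$ with $q=|\mu|$ fixed. Let $\Tab_0$ be the set of tableaux $T$ in which every entry $i\le M_k-\alpha_1N_k+d$, that is, every entry in the support, lies in the first row. For $T\in\Tab_0$, every $s_{i_j}$ acts by $+1$, so $(\rho(\sigma)v_T,v_T)=1$. For $T\notin\Tab_0$, some entry of the support lies in $\mu$. Lemma \ref{lem:matrix_elem_formula} bounds the diagonal coefficient by $\prod 1/|a_{i_j+1}-a_{i_j}|$, and the content gap between a box of $\mu$ and a first-row box at position about $i$ is of order $i$. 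So unless that entry sits at a position $i_j$ which is $O(1)$ away from the start of the first row, the coefficient is $O(1/N_k)$. Counting, the tableaux with an $O(1)$-located exceptional entry form a negligible fraction as well. Hence
\begin{equation*}
\chi=\frac{\#\Tab_0}{\dim\lambda(k)}+o(1).
\end{equation*}
Finally, $\#\Tab_0/\dim\lambda(k)$ is the probability that the $q$ entries placed in $\mu$ all exceed the support size. Since $\dim\lambda(k)\sim\dim\mu\binom{M_k}{q}$ and $\#\Tab_0\sim\dim\mu\binom{\alpha_1N_k}{q}$, this ratio tends to $\alpha_1^{q}$.

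The main obstacle is case (1) when one row or column has length $M_k-o(M_k)$ but the remainder is unbounded. There Roichman's bound is useless, and a uniform version of the case (2) estimate with $q_k\to\infty$ is needed. We would first try to bound the error by a term like $q_k^2/N_k$ and, if that fails, pass to a further subsequence.
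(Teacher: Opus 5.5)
\textbf{Gap in case (1).} In the regime where, say, $\lambda_1(k)/M_k\to 1$ while $M_k-\lambda_1(k)\to\infty$, you declare Proposition \ref{char_bound} useless. You replace it with a heuristic tableau count (``behaves like $\alpha_1^{q_k}$ up to $o(1)$''), which you do not prove and whose uniformity in $q_k$ you yourself flag as missing. So case (1) is not established as written.

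The heuristic is unnecessary, because you have misjudged Roichman's bound. Its exponent is $b\cdot\#\supp(\sigma_{\gamma(k)})=b(1-\alpha_1)N_k$, which grows linearly in $N_k$. Put $x_k=\min\{|\lambda(k)/(\lambda_1(k))|,|\lambda'(k)/(\lambda'_1(k))|\}$; then $\max\{\lambda_1(k),\lambda_1'(k)\}/M_k=1-x_k/M_k$, and
\begin{equation*}
|\chi_{\lambda(k)}^{\gamma(k)}|\le a^{b(1-\alpha_1)N_k}+\Bigl(1-\frac{x_k}{M_k}\Bigr)^{b(1-\alpha_1)N_k}\le a^{b(1-\alpha_1)N_k}+\exp\bigl(-b(1-\alpha_1)x_k\bigr),
\end{equation*}
using $N_k\ge M_k$. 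This tends to $0$ along any subsequence with $x_k\to\infty$, whether or not $x_k=o(M_k)$. That is the paper's one-line proof of (1): no subcase split is needed, and the ``main obstacle'' you identify does not exist.

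\textbf{Repairs in case (2) and for the sign $u$.} Your case (2) follows the paper's route, and your count $\#\Tab_0/\dim\lambda(k)\to\alpha_1^q$ is correct, but two steps need fixing.

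\begin{itemize}
\item The bound from Lemma \ref{lem:matrix_elem_formula} for a support entry $i$ lying in $\mu$ with a first-row neighbour is of order $1/i$, not $O(1/N_k)$. What you need is a cutoff $Q$. If $1,\ldots,Q$ all lie in the first row, the content gap is at least about $Q-2q$, so the diagonal coefficient is $O(1/Q)$. Tableaux with some entry $\le Q$ in $\mu$ form only an $O(Q/N_k)$ fraction. Then let $Q\to\infty$ after $k\to\infty$.
\item You must also discard tableaux in which $\mu$ contains two consecutive integers. For these the relevant Coxeter generator may act by $\pm1$ or have a small content gap, so no smallness is available. They form an $O(1/N_k)$ fraction.
\end{itemize}
With these two changes your argument becomes the paper's decomposition $\Tab_1\sqcup\Tab_2\sqcup\Tab_3$, with error $O(1/Q)+o(1)$.

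Finally, your reason for the existence of $u$ is wrong: $N_k$ need not become divisible by an arbitrary fixed integer (for instance, take all $n_k=3$). The correct argument sets $s_k=N_k(\alpha_2+\alpha_4+\cdots)\in\mathbb{Z}$. Since $s_{k+1}=n_{k+1}s_k$, once $s_k$ is even it stays even, so its parity is eventually constant.
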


\begin{proof}
Denote $M_k=N_k-d$ and let $g_k$ be the \emph{minimal element} of the conjugacy class of $\mathfrak{S}_{M_k}$ that corresponds to the cycle type $\gamma(k)\vdash M_k$, see Definition \eqref{def:minimal_el_conj_cl}. It follows that for all sufficiently large $k$ we have
\begin{equation*}\label{support_of_min_el}
\supp_{M_k}g_k=\left\{1,2,\ldots,(1-\alpha_1)N_k\right\}.
\end{equation*}

\textbf{(1)}
Fix a sequence $\left\{k_i\right\}_{i=1}^{\infty}$ which satisfies the condition
\begin{equation}
\lim\limits_{i\to\infty}\min\{|\lambda(k_i)/( \lambda_1(k_i))|,|\lambda'(k_i)/(\lambda_1'(k_i))|\}=+\infty.
\end{equation}
It suffices to consider the case when $k_i=i$ for all $i$, i.e., when the sequence $\{k_i\}_{i=1}^{\infty}$ coincides with the sequence $\{k\}_{k=1}^{\infty}$ (the proof of the general case is analogous). Our goal is to show that $\lim\limits_{k\to\infty}\chi_{\lambda(k)}^{\gamma(k)}=0$.

Using Proposition \ref{char_bound} and the equality $\#\supp_{M_k}(g_k)=(2\alpha_2+3\alpha_3+\ldots+r\alpha_r)N_k=(1-\alpha_1)N_k$, we obtain
\begin{equation*}
\left|\chi_{\lambda(k)}^{\gamma(k)}\right|\leq
\left(\max\left\{a,\lambda_1(k)/M_k,\lambda_1'(k)/M_k\right\}\right)^{b(1-\alpha_1)N_k},~\text{where}~a\in (0,1), b>0.
\end{equation*}
Taking the limit when $k\to\infty$ yields
\begin{equation*}
\begin{split}
\limsup_{k\to\infty}\left|\chi_{\lambda(k)}^{\gamma(k)}\right|
&\leq\limsup\limits_{k\to\infty}
\big(\max\{\lambda_1(k)/M_{k},\lambda_1'(k)/M_k\}\big)^{b(1-\alpha_1)N_k}=
\\
&=
\limsup\limits_{k\to\infty}\left(1-\frac{1
}{M_{k}}\min\{|\lambda(k)/\lambda_1(k)|,|\lambda'(k)/\lambda_1'(k)|\}\right)^{b(1-\alpha_1)N_k}\leq
\\
&\leq\limsup\limits_{k\to\infty}\exp\big(-b(1-\alpha_1)\cdot\min\{|\lambda(k)/\lambda_1(k)|,|\lambda'(k)/\lambda_1'(k)|\}\big)=0
\end{split}
\end{equation*}
as $M_k=N_k-d$, $b>0$ and $\alpha_1<1$ by assumption. Hence, $\chi_{\lambda(k)}^{\gamma(k)}\to 0$ as $k\to\infty$.

\textbf{(2)}
Similar to (1) we may assume that $\lambda(k)/(\lambda_1(k))=\mu$ for all $k$. If $\mu$ is the empty partition, then $|\mu|=0$ and $\chi_{\lambda(k)}^{\gamma(k)}=1$ for all $k$, hence the statement is trivial. From now on we suppose that $\mu=(\mu_1,\mu_2,\ldots)$ is a partition of $q=|\mu|\geq 1$. Denote by $\mu'=\left( \mu'_1,\mu'_2,\ldots,\mu'_m \right)$ the conjugate partition. Note that $\lambda(k)=(N_k-q,\mu_1,\mu_2,\ldots)$ for all $k$. In other words, for each $k$ the Young diagram $\lambda(k)$ consists of the first row of length $N_k-q$ and the Young diagram $\mu$.
	
It follows from the hook length formula \cite[Theorem 3.10.2]{Sagan} that
\begin{equation*}\label{hook}
\begin{aligned}
\#\Tab(\lambda(k))
&=\dim\lambda(k)=\frac{\dim\mu}{|\mu|!}\cdot\frac{M_k!}{(M_k-|\mu|-m)!
\prod_{i=1}^{m}(M_k-|\mu|-i+1+\mu'_i)}=
\\
&=\frac{\dim\mu}{|\mu|!}\cdot
\frac{\prod_{i=1}^{|\mu|+m}(M_k-|\mu|-m+i)}{\prod_{i=1}^{m}(M_k-|\mu|-i+1+\mu'_i)}=\Big(\frac{\dim\mu}{q!}+o(1)\Big)N_{k}^{q}~\text{as}~k\to\infty.
\end{aligned}
\end{equation*}
To calculate the limit of the sequence $\{\chi_{\lambda(k)}^{\gamma(k)}\}_{k=1}^{\infty}$, we start with the equality
\begin{equation}\label{eq:char_matrix_elem}
\chi_{\lambda(k)}^{\gamma(k)}=\chi_{\lambda(k)}(g_k)=\frac{\Tr\rho_{\lambda(k)}(g_k)}{\Tr\rho_{\lambda(k)}(1)}=\frac{1}{\#\Tab(\lambda(k))}\sum\limits_{T\in\Tab(\lambda(k))}
\left(\rho_{\lambda(k)}
(g_k)v_T,v_T\right).
\end{equation}
It turns out that in the sum above only a certain subset of $\Tab(\lambda(k))$ gives a non-negligible contribution to the limit as $k\to\infty$.

Fix an arbitrary sufficiently large positive integer parameter $Q>q$.
Define the following sets:
\begin{itemize}
\item
Let $\Tab_{1}(\lambda(k))$ be the set of all tableaux $T\in\Tab(\lambda(k))$ such that all elements of $\{1,2,\ldots,Q\}$ and $\supp_{M_k} g_k=\{1,2,\ldots,(1-\alpha_1)N_k\}$ appear in the first row of $T$;

\item
Let $\Tab_{2}(\lambda(k))$ be the set of all tableaux $T\in\Tab(\lambda(k))\setminus\Tab_1(\lambda(k))$ such that numbers $\{1,2,\ldots,Q\}$ belong to the first row of $T$ and such that the diagram $\mu$ does not contain two consecutive integers;

\item
Let $\Tab_{3}(\lambda(k))$ be the complement of $\Tab_1(\lambda(k))\sqcup\Tab_2(\lambda(k))$ in $\Tab(\lambda(k))$.
\end{itemize}
Clearly, $\Tab(\lambda(k))=\Tab_1(\lambda(k))\sqcup\Tab_2(\lambda(k))\sqcup\Tab_3(\lambda(k))$. Our aim is to show that as $k\to\infty$, only the terms corresponding to $T\in\Tab_1(\lambda(k))$ contribute to the limit of the right-hand side of \eqref{eq:char_matrix_elem}.

First, let us give estimates for the cardinalities of sets $\Tab_i(\lambda(k))$, $i=1,2,3$. Observe that the union $\Tab_1(\lambda(k))\sqcup\Tab_2(\lambda(k))$ contains all tableaux $T\in\Tab(\lambda(k))$ for which the diagram $\mu$ does not contain two consecutive integers and does not contain an element from $\{1,2,\ldots,Q\}$. Such a tableau is completely determined by the filling of the diagram $\mu$. Then, it is not difficult to see that
\begin{equation*}
\begin{split}
\#\Tab_1(\lambda(k))+\#\Tab_2(\lambda(k))
&\ge\dim\mu\cdot\frac{(M_k-Q)(M_k-Q-3)\cdots(M_k-Q-3(|\mu|-1))}{|\mu|!}=
\\
&=\Big(\frac{\dim\mu}{q!}+o(1)\Big)N_{k}^{q},~k\to\infty.
\end{split}
\end{equation*}
Therefore, we have
\begin{equation*}
\#\Tab_3(\lambda(k))=\#\Tab\lambda(k)-\#\Tab_1(\lambda(k))-\#\Tab_2(\lambda(k))=o(N_k^q)=\#\Tab(\lambda(k))\cdot o(1)~\text{as}~k\to\infty.
\end{equation*}
Similarly, the number of tableaux in $\Tab_1(\lambda(k))$ is equal to the number of standard Young tableaux of shape $\mu$ filled by elements of $\{(1-\alpha_1)N_k+1,\ldots,M_k=N_k-d\}$. Hence,
\begin{equation*}
\#\Tab_1(\lambda(k))=\dim\mu\cdot\binom{\alpha_1N_k-d}{|\mu|}=\Big(\alpha_1^q\cdot\frac{\dim\mu}{q!}+o(1)\Big)N_k^q=(\alpha_1^q+o(1))\cdot\#\Tab(\lambda(k)),~k\to\infty.
\end{equation*}

We proceed to the analysis of the matrix coefficients $(\rho_{\lambda(k)}(g_k)v_T,v_T)$ for $T\in\Tab_1(\lambda(k))\sqcup\Tab_2(\lambda(k))$.
It follows from the definition of $g_k\in\mathfrak{S}_{M_k}$ that there exists a presentation
\begin{equation*}
g_k=s_{i_1}s_{i_2}\ldots s_{i_r}
\end{equation*}
in terms of Coxeter generators, where $i_1<i_2<\ldots<i_r$ are elements of $\supp_{M_k}(g_k)$. Consider two cases:
\begin{itemize}
\item[(a)] $T\in\Tab_1(\lambda(k))$.

\smallskip

\noindent
In this case all elements $i_1,i_2,\ldots,i_r$ belong to the first row of $T$. Hence, by Lemma \ref{lem:matrix_elem_formula} we have $(\rho_{\lambda(k)}(g_k)v_T,v_T)=1$.

\medskip

\item[(b)] $T\in\Tab_2(\lambda(k))$.

\smallskip

\noindent
In this case the definition of $\Tab_2(\lambda(k))$ implies that there exists an element $i\in\supp_{M_k}(g_k)$ such that $i-1$ and $i+1$ belong to the first row of $T$, but $i$ is not. It follows that for some $j\in\{1,2,\ldots,r\}$ we have $i_j=i$ or $i_j=i-1$. Clearly, $i\ge Q+1$ and therefore\footnote{Recall that $a_i$ is the content of the box of the tableau $T$ that contains $i$.}, $a_{i}\ge Q$. On the other hand, both $i-1$ and $i+1$ belong to the diagram $\mu$ in $T$ which gives $a_{i-1},a_{i+1}\in\{-|\mu|,-|\mu|+1\ldots,|\mu|-2\}$. Thus, $|a_{i_j+1}-a_{i_j}|\ge Q-|\mu|+2=Q-q+2$. Then, Lemma \ref{lem:matrix_elem_formula} implies that $(\rho_{\lambda(k)}(g_k)v_T,v_T)\le\frac{1}{Q-q+2}$.
\end{itemize}
Now we are ready to perform the final calculation. Recall that
\begin{equation*}
\chi_{\lambda(k)}^{\gamma(k)}=\frac{1}{\#\Tab(\lambda(k))}\sum_{i=1,2,3}\sum_{T\in\Tab_i(\lambda(k))}(\rho_{\lambda(k)}v_T,v_T).
\end{equation*}
The discussion above implies that
\begin{equation*}
\begin{split}
&\sum_{T\in\Tab_1(\lambda(k))}(\rho_{\lambda(k)}v_T,v_T)=\#\Tab_1(\lambda(k))=(\alpha_1^q+o(1))\cdot\#\Tab(\lambda(k)),~k\to\infty,
\\
&\left|\sum_{T\in\Tab_2(\lambda(k))}(\rho_{\lambda(k)}v_T,v_T)\right|\leq\frac{\#\Tab_2(\lambda(k))}{Q-q+2}\le\frac{1}{Q-q+2}\cdot\#\Tab(\lambda(k)),
\\
&\left|\sum_{T\in\Tab_3(\lambda(k))}(\rho_{\lambda(k)}v_T,v_T)\right|\leq\#\Tab_3(\lambda(k))=o(1)\cdot\#\Tab(\lambda(k)).
\end{split}
\end{equation*}
Therefore, we obtain
\begin{equation}
\left|\chi_{\lambda(k)}^{\gamma(k)}-\alpha_1^{q}\right|\le\frac{1}{Q-q+2}+o(1)~\text{as}~k\to\infty.
\end{equation}
Finally, since $Q$ can be chosen arbitrarily large, we get $\lim\limits_{k\to\infty}\chi_{\lambda(k)}^{\gamma(k)}=\alpha_1^q$.

\textbf{(3)}
By the formula \eqref{sign_char}, we have $\chi_{\lambda(k)}^{\gamma(k)}=\chi_{\lambda(k)}(g_k)=\chi_{\lambda'(k)}(g_k)\cdot\sgn_{M_k}(g_k)$. Since the cycle type of $g_k$ is $\gamma(k)=(\alpha_1N_k-d,\alpha_2N_k,\ldots,\alpha_r N_k)$, we have $\sgn_{M_k}(g_k)={(-1)}^{N_k(\alpha_2+\alpha_4+\ldots)}$. The statement of the third case of the lemma now follows directly from the previous case.
\end{proof}

\subsection{Approximation of characters of the inductive limit by the characters of pre-limit groups.}\label{characters_approximation}

Let $\{G(n)\}_{k=1}^{\infty}$ be a sequence of Hausdorff, second countable compact groups such that
\begin{equation*}
G(1)\subset G(2)\subset\ldots\subset G(n)\subset\ldots
\end{equation*}
Let $G(\infty)$ be the topological group which is the inductive limit of the sequence $\{G(n)\}_{n=1}^{\infty}$. In particular, as a set, $G(\infty)=\bigcup_{k=1}^{\infty}G(n)$. Denote by $\widehat{G}(n)$ the countable set of equivalence classes of all (finite-dimensional) irreducible representations of the group $G(n)$.

\begin{Prop}\label{prop:general_approx_thm}
Let $\chi$ be an indecomposable character of the group $G(\infty)$. Then, there exists an increasing sequence of positive integers $\{n_l\}_{l=1}^{\infty}$ and a sequence of irreducible representations $\{\pi_{n_l}\in\widehat{G}(n_l)\}_{l=1}^{\infty}$ such that the corresponding sequence $\{\chi_{\pi_{n_l},n_l}\}_{l=1}^{\infty}$ of indecomposable characters of groups $G(n_l)$ converges to $\chi$ uniformly on all compact subsets of $G(\infty)$.
\end{Prop}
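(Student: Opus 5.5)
The natural route is the Vershik--Kerov ergodic method: restrict $\chi$ to each compact group $G(n)$, decompose the restriction into irreducible characters, and use indecomposability of $\chi$ to show that this decomposition concentrates more and more on single irreducibles as $n\to\infty$.

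\textbf{Step 1: restriction and its coefficients.} Let $\chi_n=\chi|_{G(n)}$. This is a continuous central positive-definite function on the compact group $G(n)$ with $\chi_n(1)=1$. By Peter--Weyl it has an expansion
\begin{equation*}
\chi_n=\sum_{\pi\in\widehat{G}(n)}c_n(\pi)\,\chi_{\pi,n},\qquad c_n(\pi)\ge 0,\quad \sum_{\pi}c_n(\pi)=1,
\end{equation*}
where the $\chi_{\pi,n}$ are normalized irreducible characters. The coefficients are read off from the GNS representation: $c_n(\pi)=\langle P_{\pi}\xi,\xi\rangle$, where $P_\pi$ is the central isotypic projection in $\pi_\chi(G(n))''$. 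So we get a probability measure $P_n$ on $\widehat{G}(n)$. Restricting from $G(n+1)$ to $G(n)$ gives coherent projective maps $P_{n+1}\mapsto P_n$ via the branching multiplicities.

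\textbf{Step 2: ergodic (martingale) argument.} This is the main obstacle. Consider the projective limit, i.e.\ the path space of the branching graph, with the central (Gibbs) measure $P$ determined by $\chi$. Indecomposability of $\chi$ means that $\pi_\chi(G(\infty))''$ is a factor, equivalently that $P$ is ergodic (tail-trivial). For $g\in G(m)$ and $n\ge m$, the function
\begin{equation*}
F_n(\omega)=\chi_{\omega_n,n}(g)
\end{equation*}
is a reverse martingale with respect to the decreasing tail filtration. Its conditional expectation $E[\chi_{\omega_{n+1},n+1}(g)\mid \omega_n]$ should be checked against the branching rule. By the reverse martingale convergence theorem, $F_n$ converges $P$-a.s. to $E[F\mid\text{tail}]$, which equals $\int F_m\,dP=\chi(g)$ by ergodicity. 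Applying this to a countable dense set of $g$'s and intersecting the full-measure sets yields a single path $\omega$ with $\chi_{\omega_n,n}(g)\to\chi(g)$ for all $g$ in that dense set. If one prefers to avoid path spaces, an alternative is to show directly that $\int|\chi_{\pi,n}(g)-\chi(g)|^2\,dP_n(\pi)\to 0$. This uses the fact that the weak limit of the operators $\int \chi_{\pi,n}(g)\,dP_n$ (which lie in the center of $\pi_\chi(G(n))''$) is a scalar by factoriality, combined with the norm identity.

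\textbf{Step 3: upgrade to uniform convergence on compacta.} Since $G(\infty)$ carries the inductive limit topology, every compact subset lies in some $G(m)$. So it suffices to get uniform convergence on each $G(m)$. Here I would use equicontinuity. A positive-definite normalized function $f$ satisfies
\begin{equation*}
|f(g)-f(h)|^2\le 2\bigl(1-\operatorname{Re} f(g^{-1}h)\bigr).
\end{equation*}
Hence pointwise convergence $\chi_{\pi_n,n}\to\chi$ near the identity, together with continuity of $\chi$, gives uniform equicontinuity on $G(m)$. The near-identity convergence must be uniform, which I would secure by running Step 2 with $L^2$ or Haar-averaged convergence over $G(m)$ rather than a countable set. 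Arzel\`a--Ascoli then gives uniform convergence on $G(m)$. A diagonal extraction over $m$ produces the increasing sequence $\{n_l\}$ and representations $\pi_{n_l}\in\widehat{G}(n_l)$ with the stated uniform convergence on all compact subsets.
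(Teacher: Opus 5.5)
Your second variant of Step 2 is essentially the paper's argument. The paper uses the conditional expectations $E_n(m)=\int_{G(n)}\Pi(g)m\Pi(g)^{-1}dg$ onto $M\cap\Pi(G(n))'$. These are decreasing projections on $L^2(M,\tr)$, so they converge strongly, and by factoriality the limit is $m\mapsto\tr(m)I$. Applying this to $m=\int_{G(i)}f(h)\Pi(h)\,dh$ gives $\sum_{\pi}\|P_{\pi,n}\xi\|^2\bigl|\int_{G(i)}f\,(\chi_{\pi,n}-\chi)\bigr|^2\to0$. The paper then picks $\pi_{n_l}$ using a countable dense family $\{f_{i,j}\}\subset L^1(G(i))$.

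Note that it is this strong convergence that gives the $L^2(P_n)$ statement. A scalar weak limit of the central operators $\sum_\pi\chi_{\pi,n}(g)P_{\pi,n}$, plus the norm identity, only bounds $\liminf\|E_n(\Pi(g))\xi\|$ from below.

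Your martingale variant is a genuinely different and valid route. It gives convergence along the whole sequence for $P$-a.e.\ path, instead of along a selected subsequence. The price is the path-space machinery: the identification characters $\leftrightarrow$ central measures, and extremality $\Leftrightarrow$ tail triviality for a fixed cotransition system.

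Fix the direction there, though. The reverse-martingale identity conditions on the later coordinate:
$E[\chi_{\omega_n,n}(g)\mid\omega_{n+1}=\Lambda]=\sum_\lambda\tfrac{\mathrm{mult}(\lambda,\Lambda)\dim\lambda}{\dim\Lambda}\chi_{\lambda,n}(g)=\chi_{\Lambda,n+1}(g)$, which follows from the cotransition probabilities. The forward quantity $E[\chi_{\omega_{n+1},n+1}(g)\mid\omega_n]$ that you propose to check depends on $P$ and is in general not $\chi_{\omega_n,n}(g)$.

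The real gap is Step 3. Your inequality turns a \emph{uniform} bound on $1-\operatorname{Re}\chi_{\pi_n,n}$ over a neighbourhood $V$ of $e$ into equicontinuity. Neither pointwise convergence nor $L^2$/Haar-averaged convergence gives such a uniform bound directly. So the ``Hence'' does not follow, and ``securing'' it by averaging needs one more idea: smoothing.

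Let $\phi\ge0$ be a probability density supported in $V$. Take $h=gu$ in your inequality, integrate against $\phi(u)\,du$, and use Cauchy--Schwarz. For every normalized positive-definite $f$ this gives $\sup_g\bigl|f(g)-\int f(gu)\phi(u)\,du\bigr|\le\bigl(2(1-\operatorname{Re}\int f\phi)\bigr)^{1/2}$. The right-hand side involves only the averaged quantity $\int f\phi$.

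Weak-$*$ convergence on $G(m)$ then finishes the job. Both of your variants give it once you test against a countable dense subset of $L^1(G(m))$ rather than against points, exactly as the paper does. It yields $\int\chi_{\pi_n,n}\phi\to\int\chi\phi$, which is close to $1$ for small $V$ because $\chi$ is continuous at $e$. It also yields uniform convergence on $G(m)$ of the smoothed functions $g\mapsto\int\chi_{\pi_n,n}(w)\phi(g^{-1}w)\,dw$, because the translates $\phi(g^{-1}\,\cdot\,)$, $g\in G(m)$, form a norm-compact subset of $L^1(G(m))$.

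A three-term estimate then gives uniform convergence on $G(m)$ without Arzel\`a--Ascoli. This is Raikov's theorem, which the paper simply invokes as Lemma \ref{lem:weak_comp_convergence} (Dixmier 13.5.2).
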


\begin{Rem}
G. Olshanski in \cite[Theorem 22.10]{Olshanski} proves a version of this statement for a more general situation of \emph{locally compact} groups and \emph{arbitrary} (not necessarily central) positive-definite functions. The proof therein may be adapted for the case of characters (central positive-definite functions) as well. We use a different approach which relies on the properties of the Gelfand--Naimark--Segal construction.
\end{Rem}

\begin{proof}[Proof of Proposition \ref{prop:general_approx_thm}.]
Let $\chi\colon G(\infty)\to\mathbb{C}$ be an indecomposable normalized character. Then, by the GNS-construction, there exists a unitary $\mathrm{II}_1$-factor representation $\Pi$ of the group $G(\infty)$ in a Hilbert space $\mathcal{H}$ and a unit vector $\xi\in\mathcal{H}$ such that
\begin{itemize}
\item
$\xi$ is a cyclic and separating vector with respect to $\Pi(G(\infty))''$;

\item
for any $g\in G(\infty)$ we have $\chi(g)=\langle\Pi(g)\xi,\xi\rangle_{\mathcal{H}}$.
\end{itemize}

Let $\mathcal{B}(\mathcal{H})$ and $\mathcal{U}(\mathcal{H})$ be the sets of bounded and unitary operators on Hilbert space $\mathcal{H}$, respectively.
Denote by $M\subset \mathcal{B}(\mathcal{H})$ the $W^*$-subalgebra (the von Neumann subalgebra) of $\mathcal{B}(\mathcal{H})$ generated by the operators $\{\Pi(g):g\in G(\infty)\}$. Then, the von Neumann bicommutant theorem implies that $M=\Pi(G(\infty))''$. Since $\Pi$ is type $\mathrm{II}_1$-representation, the $W^*$-algebra $M$ is a $\mathrm{II}_1$-factor, i.e., $M\cap M'=\mathbb{C}\cdot I_{\mathcal{H}}$. The properties of the GNS-construction and $\xi$ also imply that the vector state $\tr\colon M\to\mathbb{C}$ given by the formula $\tr(a)=\langle a\xi,\xi\rangle_{\mathcal{H}}$ is a faithful normal tracial state on $M$ and, moreover, $\tr(\Pi(g))=\chi(g)$ for any $g\in G(\infty)$.

Firstly, observe that the $W^*$-algebra $M$ can be regarded as a pre-Hilbert space via the following inner product: for any $a,b\in M$ define\footnote{This is also known as the \emph{standard representation} of the tracial $W^*$-algebra $(M,\tr)$. See \cite[Ch. I.9, Eq. (5)]{Takesaki} for more details about this construction.}
\begin{equation*}
\langle a,b\rangle_{\tr}=\tr(b^*a)=\langle b^*a\xi,\xi\rangle_{\mathcal{H}}=\langle a\xi,b\xi\rangle_{\mathcal{H}}.
\end{equation*}
Denote by $L^2(M,\tr)$ the completion of $M$ with respect to the inner product $\langle\cdot,\cdot\rangle_{\tr}$. (Note that this inner product is non-degenerate since $\xi$ is a separating vector for $M$.)

For each positive integer $n$ consider the mapping $E_n\colon M\to M$ defined by the formula
\begin{equation}\label{eq:E_n_def}
E_n(m)=\int_{G(n)}\Pi(g)m\Pi(g)^{-1}dg,~m\in M.
\end{equation}
Here, we endow the group $G(n)$ with the probability Haar measure (as the group $G(n)$ is compact, its Haar measure is biinvariant).
It follows from equalities
\begin{equation*}
E_n(E_n(m))=E_n(m),~\langle E_n(m_1),m_2\rangle_{\tr}=\langle m_1,E_n(m_2)\rangle_{\tr}
\end{equation*}
which hold for all $m,m_1,m_2\in M$, that $E_n$ can be extended to an orthogonal projection in $L^2(M,\tr)$. Observe that $E_n$ also projects $M$ onto the subalgebra
\begin{equation*}
M\cap\Pi(G(n))'=M^{G(n)}=\{m\in M:\Pi(g)m\Pi(g)^{-1}=m~\text{for all}~g\in G(n)\}
\end{equation*}
of $G(n)$-invariants of $M$. In other words, $E_n$ is the \emph{conditional expectation} (see \cite[Ch.~V.2, Proposition 2.36]{Takesaki}) of $M$  onto $M^{G(n)}$.

Since $G(n)\subset G(n+1)$, we also have $E_n\ge E_{n+1}$. Thus, there exists the limit $E_{\infty}=\lim\limits_{k\to\infty}E_n$ in the strong operator topology of $\mathcal{B}(L^2(M,\tr))$. It follows that for any $m\in M$ one has
\begin{equation}\label{eq:E_n_limit}
\lim_{n\to\infty}\|E_n(m)-E_{\infty}(m)\|_{\tr}=0.
\end{equation}

\begin{Lm}\label{lem:E_infty}
For any $m\in M$ we have $E_{\infty}(m)=\tr(m)\cdot I_{\mathcal{H}}$.
\end{Lm}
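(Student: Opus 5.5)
The plan is to show that $E_\infty(m)$ lies in the center of $M$, so that $M$ being a factor forces it to be a scalar. The trace then identifies the scalar.

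\emph{Step 1: $E_\infty(m)$ is in $M$.} $E_\infty$ is the strong limit of the decreasing projections $E_n$ on $L^2(M,\tr)$, hence it is the orthogonal projection onto $\bigcap_n \overline{M^{G(n)}}^{\|\cdot\|_{\tr}}$. Each $E_n(m)$ lies in $M$ with $\|E_n(m)\|\le\|m\|$, so the sequence $\{E_n(m)\}$ is uniformly bounded in operator norm. On bounded sets, convergence in $\|\cdot\|_{\tr}$ corresponds to strong convergence via $a\mapsto a\xi$. Combining this with the fact that the unit ball of $M$ is complete in the strong topology, the limit $E_\infty(m)$ is represented by an element of $M$ with norm at most $\|m\|$. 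I expect this to be the main technical point: one must identify $L^2$-limits of bounded sequences with genuine elements of $M$. The standard argument is that $\xi$ is separating and that the commutant $M'$ contains $JMJ$, so the limit defines a bounded operator commuting with $M'$, hence lying in $M''=M$.

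\emph{Step 2: invariance and centrality.} For fixed $k$ and $n\ge k$, the element $E_n(m)$ commutes with $\Pi(G(k))$. Conjugation by a unitary $\Pi(g)$ is isometric for $\|\cdot\|_{\tr}$, so passing to the limit gives that $E_\infty(m)$ commutes with $\Pi(g)$ for every $g\in\bigcup_k G(k)=G(\infty)$. Therefore
\[
E_\infty(m)\in M\cap\Pi(G(\infty))'=M\cap M'=\mathbb{C}\cdot I_{\mathcal{H}}.
\]

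\emph{Step 3: identifying the scalar.} Write $E_\infty(m)=c\,I_{\mathcal{H}}$. Since the trace is invariant under conjugation, $\tr(E_n(m))=\int_{G(n)}\tr(\Pi(g)m\Pi(g)^{-1})\,dg=\tr(m)$. Because $\tr(a)=\langle a,I\rangle_{\tr}$ is continuous in $L^2(M,\tr)$, passing to the limit gives $c=\tr(E_\infty(m))=\tr(m)$.
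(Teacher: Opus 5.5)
Your proof is correct and follows the same route as the paper. You show that $E_{\infty}(m)\in M$, deduce from the $G(n)$-invariance of $E_n(m)$ that it lies in $M\cap M'=\mathbb{C}\cdot I_{\mathcal{H}}$, and identify the scalar via $\tr(E_n(m))=\tr(m)$.

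The only difference is in Step 1. The paper obtains strong convergence of $E_n(m)$ from two facts: $\xi$ is cyclic for $\Pi(G(\infty))$, and $E_n(m)\Pi(g)=\Pi(g)E_n(m)$ for $g\in G(l)$, $n\ge l$. You instead use the standard fact that $\xi$ is cyclic for $M'$ (equivalently, separating for $M$), so bounded $\|\cdot\|_{\tr}$-Cauchy sequences in $M$ converge strongly to an element of $M$.
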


\begin{proof}[Proof of Lemma \ref{lem:E_infty}]
To start with, fix any $m\in M$ and let us show that $E_{\infty}(m)$ belongs to $M$. As $M$ is a $W^*$-algebra in $\mathcal{B}(\mathcal{H})$, it suffices to check that $E_{\infty}(m)$ is the limit of $\{E_n(m)\}_{k=1}^{\infty}\subset\mathcal{B}(\mathcal{H})$ in the strong operator topology of $\mathcal{H}$.

Indeed, from the construction above it follows that the sequence $\{E_k(m)\xi\}_{n=1}^{\infty}$ is a Cauchy sequence in $\mathcal{H}$ and therefore, there exists a limit $\lim\limits_{n\to\infty}E_n(m)\xi=\eta\in\mathcal{H}$. We claim that for any operator $A$ in $M$ the sequence $\{E_n(m)A\xi\}_{n=1}^{\infty}$ converges to $A\eta$. Since $\bigcup_{n=1}^{\infty}\Pi(G(n))''$ is dense in $M=\Pi(G(\infty))''$ in the strong operator topology, and operators $\{E_n(m)\}_{k=1}^{\infty}\subset\mathcal{B}(\mathcal{H})$ are uniformly bounded in norm (namely, by $\|E_n(m)\|_{\mathcal{H}}\leq\|m\|_{\mathcal{H}}$ for all $n$), it suffices to verify the claim for $A\in\Pi(G(l))$ for each $l$.
To prove the latter, observe that for all $n$ and $l$ such that $n\ge l$ and for any $m\in M$ we have $E_n(m)\in\Pi(G(l))'\cap M$. Thus, for any fixed $l$ and $A\in\Pi(G(l))$ we have
\begin{equation*}
\lim_{n\to\infty}E_n(m)A\xi=\lim_{n\to\infty}AE_n(m)\xi=A\eta.
\end{equation*}
This shows that the sequence $\{E_n(m)\}_{n=1}^{\infty}$ converges in the strong operator topology of $\mathcal{B}(\mathcal{H})$. Denote by $\widetilde{E}(m)$ the corresponding limit. Then, we have
\begin{equation*}
\lim_{n\to\infty}\|E_n(m)-\widetilde{E}(m)\|_{\tr}=\lim_{n\to\infty}\|E_n(m)\xi-\widetilde{E}(m)\xi\|_{\mathcal{H}}=0,
\end{equation*}
and thus, $E_{\infty}(m)=\widetilde{E}(m)$ is an element of $M$.

As we noted before, $E_n(m)\in\Pi(G(l))'\cap M$ whenever $n\ge l$. Hence, the limit $E_{\infty}(m)=\lim\limits_{n\to\infty}E_k(m)$ is an element of $M\cap\Pi(G(\infty))'=M\cap M'$. However, the $W^*$-algebra $M$ is a factor, that is, $M\cap M'=\mathbb{C}\cdot I_{\mathcal{H}}$. Therefore, $E_{\infty}(m)$ is a scalar operator in $\mathcal{B}(\mathcal{H})$. The cyclicity property of the state $\tr$ implies that
\begin{equation*}
\tr(E_n(m))=\tr\left(\int_{G(n)}\Pi(g)m\Pi(g)^{-1}dg\right)=\int_{G(n)}\tr(\Pi(g)m\Pi(g)^{-1})dg=\int_{G(n)}\tr(m)dg=\tr(m),
\end{equation*}
and hence $\tr(E_{\infty}(m))=\lim\limits_{n\to\infty}\tr(E_n(m))=\tr(m)$. Thus, we obtain $E_{\infty}(m)=\tr(m)\cdot I_{\mathcal{H}}$, as claimed.
\end{proof}

\begin{Lm}\label{lem:lim_scalar_operator}
For any $m\in M$ we have
\begin{equation*}
\lim_{n\to\infty}\left\|\int_{G(n)}(\Pi(g)m\Pi(g)^{-1}\xi-\tr(m)\xi) dg\right\|^2_{\mathcal{H}}=0.
\end{equation*}
\end{Lm}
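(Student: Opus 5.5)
The plan is to identify the vector inside the norm with $E_n(m)\xi-\tr(m)\xi$ and then let Lemma \ref{lem:E_infty} together with \eqref{eq:E_n_limit} do the work.

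First I rewrite the integrand. Since $\int_{G(n)}dg=1$ (probability Haar measure), we have
\begin{equation*}
\int_{G(n)}\bigl(\Pi(g)m\Pi(g)^{-1}\xi-\tr(m)\xi\bigr)\,dg=\Bigl(\int_{G(n)}\Pi(g)m\Pi(g)^{-1}dg\Bigr)\xi-\tr(m)\xi=E_n(m)\xi-\tr(m)\xi.
\end{equation*}
This identification needs one justification: the operator-valued integral defining $E_n(m)$ in \eqref{eq:E_n_def} is understood in the weak (or strong) sense, so it commutes with applying it to the fixed vector $\xi$. The map $g\mapsto\Pi(g)m\Pi(g)^{-1}\xi$ is norm-continuous on the compact group $G(n)$, because $\Pi$ is strongly continuous and unitary. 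Hence the Bochner integral of this vector-valued function exists and coincides with $E_n(m)\xi$: pairing both with an arbitrary vector $\eta$ gives the same scalar integral.

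Next I translate the Hilbert norm into the trace norm. Because $\tr(m)\cdot I_{\mathcal{H}}\in M$, the definition of $\langle\cdot,\cdot\rangle_{\tr}$ gives
\begin{equation*}
\|E_n(m)\xi-\tr(m)\xi\|_{\mathcal{H}}^2=\|E_n(m)-\tr(m)I_{\mathcal{H}}\|_{\tr}^2 .
\end{equation*}
By Lemma \ref{lem:E_infty}, $\tr(m)I_{\mathcal{H}}=E_\infty(m)$. By \eqref{eq:E_n_limit}, $\|E_n(m)-E_\infty(m)\|_{\tr}\to 0$ as $n\to\infty$. Combining these gives the claim.

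The only point that needs any care is the first step, namely interchanging the integral with evaluation at $\xi$; the rest is immediate from the earlier lemmas. An equivalent route avoids vector-valued integration altogether. Expand the squared norm as a double integral, $\int\!\!\int\langle \Pi(g)m\Pi(g)^{-1}\xi,\Pi(h)m\Pi(h)^{-1}\xi\rangle\,dg\,dh-|\tr(m)|^2$. The double integral equals $\tr(E_n(m)^*E_n(m))$, and the resulting quantity is exactly $\|E_n(m)\|_{\tr}^2-|\tr(m)|^2$. This tends to $\|E_\infty(m)\|_{\tr}^2-|\tr(m)|^2=0$.
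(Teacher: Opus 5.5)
Your proof is correct and follows the paper's route. The paper's one-line proof likewise identifies the integral with $E_n(m)\xi-\tr(m)\xi$ via \eqref{eq:E_n_def}, reads its norm as $\|E_n(m)-\tr(m)I_{\mathcal{H}}\|_{\tr}$, and concludes from Lemma~\ref{lem:E_infty} together with \eqref{eq:E_n_limit}; your justification for pulling $\xi$ through the integral, and your alternative expansion using $\tr(E_n(m))=\tr(m)$, fill in details the paper leaves implicit.
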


\begin{proof}[Proof of Lemma \ref{lem:lim_scalar_operator}]
The statement of the lemma now follows from the previous lemma together with formulas \eqref{eq:E_n_limit} and \eqref{eq:E_n_def}.
\end{proof}

\begin{Lm}\label{lem:Pi_m_Pi_decomposition}
For $f\in L^1(G(i))$ consider the element $m=\int_{G(i)}f(h)\Pi(h)dh$ of the $W^*$-algebra $M$. Then, for any $n\ge i$ the following identity holds:
\begin{equation*}
\int_{G(n)}\Pi(g)m\Pi(g)^{-1}dg=\sum_{\pi\in\widehat{G}(n)}\left(\int_{G(i)}f(h)\chi_{\pi,n}(h)dh\right)\cdot P_{\pi},
\end{equation*}
where $\pi\in\widehat{G}(n)$, the operator $P_{\pi,n}$ is given by the formula
\begin{equation}\label{eq:projector_isotypic_comp}
P_{\pi,n}=(\dim\pi)^2\int_{G(n)}\chi_{\pi}(g^{-1})\Pi(g)dg,
\end{equation}
and $\chi_{\pi,n}(g)=\frac{1}{\dim\pi}\Tr(\pi(g))$ is the normalized character of the finite-dimensional irreducible representation $\pi$ of $G(n)$. In other words, $P_{\pi}$ is the orthogonal projection onto the $\pi$-isotypic component of the representation $\Pi\colon G(\infty)\to\mathcal{U}(\mathcal{H})$ viewed as a $G(l)$-representation.
\end{Lm}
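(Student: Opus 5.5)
The plan is to decompose the restriction of $\Pi$ to the compact group $G(n)$ into isotypic components and then compute the conjugation average on each component separately. Since $G(n)$ is compact, $\mathcal{H}=\bigoplus_{\pi\in\widehat{G}(n)}P_{\pi,n}\mathcal{H}$. The operators $P_{\pi,n}$ defined by \eqref{eq:projector_isotypic_comp} are the standard central idempotents. Concretely, $\int_{G(n)}\chi_\pi(g^{-1})\Pi(g)\,dg$ is the image under $\Pi$ of the central function $\overline{\chi_\pi}$. By the Schur orthogonality relations, these images are mutually orthogonal self-adjoint projections, up to the normalization constant $\dim\pi$ (with the paper's normalized $\chi_\pi$ this constant needs checking). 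They sum strongly to $I_{\mathcal{H}}$ and commute with $\Pi(G(n))$. I would first record these facts, adjusting the scalar in front so that $P_{\pi,n}$ is idempotent.

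Next I would reduce to $m=\Pi(h)$ with $h\in G(i)\subset G(n)$. Since $m=\int_{G(i)}f(h)\Pi(h)\,dh$, Fubini (everything is norm-bounded, $f\in L^1$, and the integrals converge in the weak operator sense) gives
\begin{equation*}
\int_{G(n)}\Pi(g)m\Pi(g)^{-1}dg=\int_{G(i)}f(h)\Bigl(\int_{G(n)}\Pi(ghg^{-1})\,dg\Bigr)dh.
\end{equation*}
It therefore suffices to prove that
\begin{equation*}
T_h:=\int_{G(n)}\Pi(ghg^{-1})\,dg=\sum_{\pi}\chi_{\pi,n}(h)P_{\pi,n}
\end{equation*}
in the strong operator topology, with uniform bound $\|T_h\|\le 1$.

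To prove this identity, note that $T_h$ commutes with $\Pi(G(n))$, and so it preserves each isotypic component. Write $P_{\pi,n}\mathcal{H}\cong V_\pi\otimes\mathcal{K}_\pi$, on which $\Pi(g)$ acts as $\pi(g)\otimes I$. On this space $T_h$ acts as $\bigl(\int_{G(n)}\pi(ghg^{-1})\,dg\bigr)\otimes I$. The operator $\int_{G(n)}\pi(ghg^{-1})\,dg$ commutes with the irreducible $\pi$, so by Schur's lemma it is a scalar. Taking traces identifies the scalar as $\Tr\pi(h)/\dim\pi=\chi_{\pi,n}(h)$. Summing over $\pi$ gives the formula for $T_h$, with strong convergence guaranteed by the orthogonality of the $P_{\pi,n}$ and $|\chi_{\pi,n}|\le1$.

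Finally, integrating against $f(h)\,dh$ and interchanging the integral with the strongly convergent sum completes the proof. The interchange is justified by dominated convergence applied to each vector $P_{\pi,n}\eta$: $\|\sum_\pi c_\pi(h)P_{\pi,n}\eta\|\le\|\eta\|$ and $f\in L^1$. The main obstacle is this interchange of the $dh$-integral with the infinite sum, together with getting the normalization constant in $P_{\pi,n}$ right. For the interchange, I would apply the identity to each $P_{\pi,n}\eta$ separately and then sum using orthogonality, instead of handling the whole series at once.
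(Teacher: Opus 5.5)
Your proposal is correct and follows the paper's route. Both arguments establish the class-average identity $\int_{G(n)}\Pi(ghg^{-1})\,dg=\sum_{\pi}\chi_{\pi,n}(h)P_{\pi,n}$ for $h\in G(i)\subset G(n)$ and then integrate it against $f(h)\,dh$; the paper cites Schur orthogonality where you use Schur's lemma on each isotypic component, which amounts to the same thing, and your componentwise justification of the interchange fills in a step the paper leaves implicit. The normalization you wanted to check is already right as written, since $P_{\pi,n}=\dim\pi\int_{G(n)}\overline{\Tr\pi(g)}\,\Pi(g)\,dg=(\dim\pi)^2\int_{G(n)}\chi_{\pi,n}(g^{-1})\Pi(g)\,dg$ is the isotypic projection.
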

\begin{Rem}
Note that the orthogonal projections $\{P_{\pi,n}:\pi\in\widehat{G}(n)\}$ are pairwise orthogonal and their sum $\sum_{\pi\in\widehat{G}(l)}P_{\pi,n}$ is the identity operator in $\mathcal{H}$. This is one of the consequences of the Peter--Weyl theorem for $G(n)$, see \cite[Theorem 4.20]{Knapp}.
\end{Rem}
\begin{proof}[Proof of Lemma \ref{lem:Pi_m_Pi_decomposition}]
This is a consequence of some standard facts from the harmonic analysis on a compact group $G(n)$. Indeed, it follows from the Schur orthogonality relations, see e.g., \cite[Corollary 4.10]{Knapp}, that the equality
\begin{equation*}
\int_{G(n)}\Pi(ghg^{-1})dg=\sum_{\pi\in\widehat{G}(l)}\chi_{\pi,n}(h)\cdot P_{\pi,n},
\end{equation*}
holds for any $h\in G(n)$. Multiplying both sides by $f(h)$ and integrating over $h\in G(i)\subset G(n)$ yields the formula from the lemma.
\end{proof}

Now let conclude the proof of the proposition. Since for each $i$ the group $G(i)$ is separable, we can choose a countable dense subset $\{f_{i,1},f_{i,2},\ldots\}$ in $L^1(G(i))$. Then, by Lemma \ref{lem:lim_scalar_operator} and Lemma \ref{lem:Pi_m_Pi_decomposition} applied for $f=f_{i,j}$, for all $j$ we have
\begin{equation}\label{eq:decomposition_tends_to_zero}
\begin{split}
&\lim_{n\to\infty}
\sum_{\pi\in\widehat{G}(n)}\left|\int_{G(i)}f_{i,j}(h)(\chi_{\pi,n}(h)-\chi(h))dh\right|^2\cdot \|P_{\pi}\xi\|_{\mathcal{H}}^2=0,~\text{and}
\\
&\tr\left(\int_{G(i)}f_{i,j}(h)\Pi(h)dh\right)=\int_{G(i)}f_{i,j}(h)\tr(\Pi(h))dh=\int_{G(i)}f_{i,j}(h)\chi(h)dh.
\end{split}
\end{equation}
Denote for any $i$, $j$, for each $n\ge i$ and $\pi\in\widehat{G}(n)$ by $I_{i,j}^{(n)}(\pi)$ the following quantity:
\begin{equation*}
I_{i,j}^{(n)}(\pi)=\int_{G(i)}f_{i,j}(h)(\chi_{\pi,n}(h)-\chi(h))dh.
\end{equation*}
It follows from \eqref{eq:decomposition_tends_to_zero} that
\begin{equation*}
\lim_{n\to\infty}
\sum_{\pi\in\widehat{G}(n)}\Bigg(\sum_{i,j=1}^{l}|I_{i,j}^{(n)}(\pi)|^2\Bigg)\cdot\|P_{\pi,n}\xi\|_{\mathcal{H}}^2=0 ~\text{for any fixed positive integer}~l.
\end{equation*}
Fix a sequence $\{\varepsilon_{l}\}_{l=1}^{\infty}$ of positive reals such that $\lim\limits_{l\to\infty}\varepsilon_{l}=0$.
Since for each $n$ we have $\sum_{\pi\in\widehat{G}(n)}\|P_{\pi,n}\xi\|_{\mathcal{H}}^2=\|\xi\|^2_{\mathcal{H}}=1$, for every $l$ there exists a positive integer $n_l$ and $\pi_{n_l}\in\widehat{G}(n_l)$ such that
\begin{equation}\label{eq:ineq_for_convergence}
\sum_{i,j=1}^{l}|I_{i,j}^{(n_l)}(\pi_{n_l})|^2<\varepsilon_{l}.
\end{equation}
Without loss of generality we may assume that the sequence $\{n_l\}_{l=1}^{\infty}$ is strictly increasing. Then, for any fixed $i$ and $j$, we have
\begin{equation*}
\lim_{l\to\infty}\int_{G(i)}f_{i,j}(h)(\chi_{\pi_{n_l},n_l}(h)-\chi(h))dh=\lim_{l\to\infty}I_{i,j}^{(n_l)}(\pi_{n_l})=0.
\end{equation*}
The density of $\{f_{i,1},f_{i,2},\ldots\}$ in $L^1(G(i))$ and the fact that the functions $\{\chi_{\pi_{n_l},n_l}-\chi\}_{k=1}^{\infty}$ are uniformly bounded in $L^{\infty}$-norm, imply that $\chi_{\pi_{n_l},n_l}|_{G(i)}$ converges to $\chi|_{G(i)}$ in the weak-$*$ topology on $L^{\infty}(G(i))$. Finally, Lemma \ref{lem:weak_comp_convergence} (see below) implies that $\chi_{\pi_{n_l},n_l}$ converges to $\chi$ uniformly on each $G(i)$. This concludes the proof of the proposition.
\end{proof}

\begin{Lm}\label{lem:weak_comp_convergence}
Let $G$ be a Hausdorff locally compact group. Let $\operatorname{Pos}_1(G)$ be the set of all continuous positive-definite functions on $G$ whose value at the identity element is $1$. Then, the weak-$*$ topology on $\operatorname{Pos}_1(G)\subset L^{\infty}(G)$ induced by the natural pairing between $L^{\infty}(G)$ and $L^1(G)$ is equivalent to the topology of uniform convergence on compact subsets of $G$.
\end{Lm}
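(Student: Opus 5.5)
We must prove Lemma \ref{lem:weak_comp_convergence}: on $\operatorname{Pos}_1(G)$, the weak-$*$ topology from $L^1(G)$ coincides with the topology of uniform convergence on compact sets. The plan is to prove that each topology is finer than the other.

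\emph{Uniform convergence implies weak-$*$.} Suppose a net $\varphi_\alpha\to\varphi$ uniformly on compacta, and fix $f\in L^1(G)$ and $\varepsilon>0$. Choose a compact $K$ with $\int_{G\setminus K}|f|<\varepsilon$. Since positive-definite functions satisfy $|\varphi_\alpha|\le\varphi_\alpha(e)=1$, we get
\[
\Bigl|\int f(\varphi_\alpha-\varphi)\Bigr|\le\|f\|_1\sup_K|\varphi_\alpha-\varphi|+2\varepsilon .
\]
The right-hand side tends to $2\varepsilon$, and $\varepsilon$ is arbitrary. This direction is routine.

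\emph{Weak-$*$ implies uniform on compacta.} This is the substantive direction, a Raikov-type theorem, and I would argue via GNS. Let $\varphi_\alpha\to\varphi$ weak-$*$ inside $\operatorname{Pos}_1(G)$, and let $(\pi,\mathcal H,\xi)$ be the GNS triple of $\varphi$.
\begin{enumerate}
\item For $\varepsilon>0$, continuity of $\varphi$ at $e$ gives a compact symmetric neighbourhood $U$ of $e$ with $|1-\varphi(u)|<\varepsilon$ on $U$. Set $f=\mathbf 1_U/|U|$.
\item Use the standard inequality for $\psi\in\operatorname{Pos}_1$,
\[
|\psi(g)-\psi(h)|^2\le 2\bigl(1-\operatorname{Re}\psi(g^{-1}h)\bigr).
\]
The aim is to transfer smallness of $1-\operatorname{Re}\varphi$ on $U$ to $\varphi_\alpha$ in an averaged form. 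The quantity $\int f(1-\operatorname{Re}\varphi_\alpha)$ converges to $\int f(1-\operatorname{Re}\varphi)<\varepsilon$ by weak-$*$ convergence. Averaged smallness is only a starting point, though; what is needed is a uniform bound on $U$.
\item The step I expect to be the main obstacle is upgrading this average to uniformity. I would try the convolution trick: write $\psi*\check f-\psi$ and bound it via the GNS vector of $\psi$, using
\[
\Bigl\|\int f(u)\pi_\psi(u)\xi_\psi\,du-\xi_\psi\Bigr\|^2\le 2\int f\,(1-\operatorname{Re}\psi).
\]
Cauchy--Schwarz then gives
\[
\sup_g|(\psi*f)(g)-\psi(g)|\le\sqrt{2\int f(1-\operatorname{Re}\psi)}.
\]
So for large $\alpha$ each $\varphi_\alpha$ is uniformly $O(\sqrt\varepsilon)$-close to $\varphi_\alpha*f$, and likewise $\varphi$ is close to $\varphi*f$.
\item Next, $(\varphi_\alpha*f)(g)=\int\varphi_\alpha(h)f(h^{-1}g)\,dh$ is a pairing of $\varphi_\alpha$ with the $L^1$ function $f_g(h)=f(h^{-1}g)$. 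The map $g\mapsto f_g$ is continuous into $L^1$ and the $\varphi_\alpha$ are uniformly bounded, so weak-$*$ convergence is uniform on the norm-compact family $\{f_g:g\in K\}$. Hence $\varphi_\alpha*f\to\varphi*f$ uniformly on $K$.
\item Combining the three estimates yields $\limsup_\alpha\sup_K|\varphi_\alpha-\varphi|\le C\sqrt\varepsilon$, and since $\varepsilon$ is arbitrary the convergence is uniform on $K$.
\end{enumerate}

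Since both topologies are defined by nets, these two implications give equivalence of the topologies on $\operatorname{Pos}_1(G)$.
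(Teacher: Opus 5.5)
Your proof is correct. It is essentially the standard proof of Raikov's theorem, which is exactly the result the paper relies on: the paper gives no argument of its own and just cites \cite[Theorem 13.5.2]{Dixmier}. The ingredients are smoothing by $f=\mathbf{1}_U/|U|$ for a small symmetric neighbourhood $U$, the bound $\sup_g|\psi*f-\psi|\le\bigl(2\int f(1-\operatorname{Re}\psi)\bigr)^{1/2}$ for every $\psi\in\operatorname{Pos}_1(G)$, and uniformity of bounded weak-$*$ convergence on the norm-compact set $\{f_g:g\in K\}\subset L^1(G)$. So your write-up simply makes the citation self-contained. It also correctly uses that the limit $\varphi$ lies in $\operatorname{Pos}_1(G)$, which is essential: for example, $x\mapsto e^{inx}$ tends weak-$*$ to $0$ on $\mathbb{R}$ without converging uniformly on compact sets.
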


\begin{proof}
This is a well-known fact about positive-definite functions on a locally compact group. For the proof see e.g., \cite[Theorem 13.5.2]{Dixmier}.
\end{proof}

\begin{Rem}
Note that the idea of using the weak convergence of positive-definite functions also appears in the proof of Olshanski of the approximation theorem for positive-definite functions \cite[Theorem 22.10]{Olshanski}.
\end{Rem}

\subsection{Approximation of spherical functions of the inductive limits of Gelfand pairs.}\label{sect:appendix_spherical_func_approx} 
The goal of this section is to recall how the concept of a Gelfand pair generalizes for inductive limits of compact groups and to give a proof of the approximation theorem for spherical functions.

\subsubsection{Classical Gelfand pairs and spherical functions.}
Recall the definition of a Gelfand pair for compact topological groups:

\begin{Def}
Let $G$ be a compact topological group, and let $K$ be its compact subgroup. Then, $(G,K)$ is called a \emph{Gelfand pair}, if any of the following equivalent conditions holds:
\begin{itemize}
\item 
the algebra $C(K\backslash G/K)=\{f\in C(G):f(k_1gk_2)=f(g)~\text{for all}~k_1,k_2\in K\}$ of continuous $K$-biinvariant functions on $G$ is commutative with respect to the convolution;

\item 
for any irreducible (continuous) unitary representation $\pi$ of the group $G$ acting in a Hilbert space $\mathcal{H}$ the subspace $\mathcal{H}^{K}=\{v\in\mathcal{H}:\pi(k)v=v~\text{for all}~k\in K\}$ of $K$-invariant vectors has dimension at most 1.
\end{itemize}
\end{Def}

\begin{Def}\label{def:gelfand_pair}
Let $(G,K)$ be a Gelfand pair. A positive-definite function $\varphi\colon G\to\mathbb{C}$ is called a \emph{spherical (or $K$-spherical) function} if the following conditions hold:
\begin{itemize}
\item 
for all $g\in G$ and $k_1,k_2\in K$ we have $\varphi(k_1gk_2)=\varphi(g)$;

\item 
the GNS representation $\pi_{\varphi}$ of $G$ associated to $\varphi$ is a factor-representation\footnote{It is easily seen that in this case $\pi_{\varphi}$ is irreducible.}.
\end{itemize}  
\end{Def}

\begin{Rem}
For any unit $K$-spherical vector, i.e., a vector $\xi\in\mathcal{H}^{K}$, the formula $\varphi(g)=\langle\pi(g)\xi,\xi\rangle_{\mathcal{H}}$ defines a normalized $K$-spherical function on $G$. If in addition $\mathcal{H}$ is an irreducible representation of $G$, then $\varphi$ does not depend on a choice of $\xi$. Moreover, the GNS representation $\pi_{\varphi}$ is unitarily equivalent to $\pi$ in this case.
\end{Rem}

\begin{Rem}
The algebra $C(K\backslash G/K)$ naturally embeds into the von Neumann algebra $VN(G)\subset \mathcal{B}(L^2(G))$, generated by the left regular representation of $G$ in $L^2(G)$, as follows:
\begin{equation*}
C(K\backslash G/K)\ni f\mapsto \int\limits_{G}f(g)L_g\,{\rm d}\,g,~\text{where}~(L_g\eta)(x)=\eta(g^{-1}x),~ \eta\in L^2(G),
\end{equation*}
and ${\rm d}\,g$ is the normalized Haar measure on $G$.
The operator $P_{K}=\int_{K}L_k\,{\rm d}\,k\in VN(G)$ is an orthogonal projection. The pair $(G,K)$ is a Gelfand pair if and only if the subalgebra $P_{K}\cdot VN(G)\cdot P_{K}$ is abelian.
\end{Rem}

\subsubsection{Gelfand pairs for inductive limits.}
Let $\{G(n)\}_{n=1}^{\infty}$ be as in section \ref{characters_approximation}. Assume in addition that there is a sequence of compact subgroups $K(n)\subset G(n)$ such that $K(n)\subset K({n+1})$ and denote $K(\infty)=\bigcup_{n\ge 1} K(n)$. 
In general, the groups $K(\infty)=\bigcup_{n\ge 1}K(n)$ and $G(\infty)=\bigcup_{n\ge 1}G(n)$ fail to be locally compact. Nevertheless, the concept of Gelfand pair can be defined in terms of representations of $G(\infty)$.
\begin{Def}[{\cite[Definition 8.13, Proposition 8.15]{Borodin_Olshanski}}]\label{def:gelfand_pair_infty}
A pair $(G(\infty), K(\infty))$ is called a Gelfand pair if for any irreducible  representation $\Pi$ of the group $G(\infty)$ in a Hilbert space $\mathcal{H}$, we have $\dim \mathcal{H}^{K(\infty)}\leq 1$, where
\begin{equation*}
\mathcal{H}^{K(\infty)}=\{\eta\in H: \Pi(k)\eta=\eta~\text{for all}~k\in K(\infty)\}.
\end{equation*}
In case when the subspace $\mathcal{H}^{K(\infty)}$ is non-zero, we define the $K(\infty)$-spherical function $\varphi_\Pi$ of $\Pi$ by $\varphi_\Pi(g)=\langle \Pi(g)\xi,\xi\rangle_{\mathcal{H}}$ for $g\in G(\infty)$, where $\xi$ is any unit vector in $\mathcal{H}^{K(\infty)}$.
\end{Def}

\begin{Rem}
The condition in the definition above is equivalent to the property that for any $g_1,g_2\in G(\infty)$ the operators $P_{K(\infty)}\Pi(g_1)P_{K(\infty)}$ and $P_{K(\infty)}\Pi(g_2)P_{K(\infty)}$ commute (here $P_{K(\infty)}$ denotes the orthogonal projection onto $\mathcal{H}^{K(\infty)}$). As for the usual Gelfand pairs, the function $\varphi_{\Pi}$ is $K(\infty)$-biinvariant and positive-definite, does not depend on a choice of $\xi$, and the GNS representation of $G(\infty)$ associated to $\varphi_{\Pi}$ is unitarily equivalent to $\Pi$. 
\end{Rem}

% \begin{Rem}
% Let us take a unit vector  $\xi$ from the subspace $H^{K(\infty)}$. Define positive-definite function $\varphi_\Pi$ by $\varphi_\Pi(g)=\left( \Pi(g)\xi,\xi\right)$, $g\in G(\infty)$. Then GNS-representation of $G(\infty)$, corresponding to $\varphi_\Pi$, is unitary equivalent to $\Pi$. We usually call the vector $\xi$ a $K(\infty)$-spherical.
% \end{Rem}
% {\it For this reason, it is natural to call a positive-definite function $\varphi$ on $G(\infty)$ a $K(\infty)$-spherical provided that the equality $\varphi(k_1gk_2)=\varphi(g)$ holds for all $g\in G(\infty)$, $k_1, k_2\in K(\infty)$ and the associated GNS-representation is irreducible}.

\begin{Prop}
Assume that $(G(n),K(n))$ is a Gelfand pair for each positive integer $n$ in the usual sense (see Definition \ref{def:gelfand_pair}). Then, $(G(\infty),K(\infty))$ is a Gelfand pair in the sense of Definition \ref{def:gelfand_pair_infty}. 
\end{Prop}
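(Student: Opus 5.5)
The plan is to use the equivalent formulation recorded in the Remark after Definition \ref{def:gelfand_pair_infty}. It suffices to show that, for any irreducible unitary representation $\Pi$ of $G(\infty)$ in $\mathcal{H}$, the compressions $P_{K(\infty)}\Pi(g_1)P_{K(\infty)}$ and $P_{K(\infty)}\Pi(g_2)P_{K(\infty)}$ commute for all $g_1,g_2\in G(\infty)$. Irreducibility then gives $\dim\mathcal{H}^{K(\infty)}\le 1$. The commutant of $\Pi(G(\infty))$ is $\mathbb{C}$, so the corner $P_{K(\infty)}\Pi(G(\infty))''P_{K(\infty)}$ is $\mathcal{B}(\mathcal{H}^{K(\infty)})$. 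The corner algebra is generated by compressions $P\Pi(g)P$, because $P\Pi(a)P\Pi(b)P$ can be approximated using the density argument of the same type. Being commutative, this forces the dimension to be at most $1$.

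The main step is the construction $P_n=\int_{K(n)}\Pi(k)\,dk$, the orthogonal projection onto $\mathcal{H}^{K(n)}$. These form a decreasing sequence of projections, since $K(n)\subset K(n+1)$. Their strong limit is the projection onto $\bigcap_n\mathcal{H}^{K(n)}=\mathcal{H}^{K(\infty)}$, so $P_n\to P_{K(\infty)}$ strongly.

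Next, fix $g_1,g_2\in G(N)$ and take $n\ge N$. Restrict $\Pi$ to the compact group $G(n)$ and decompose it into isotypic components $\mathcal{H}=\bigoplus_{\pi\in\widehat{G}(n)}\mathcal{H}_\pi$, with $\mathcal{H}_\pi\cong V_\pi\otimes \mathcal{M}_\pi$. On each component, $P_n$ acts as $Q_\pi\otimes 1$, where $Q_\pi$ is the projection onto $V_\pi^{K(n)}$. Because $(G(n),K(n))$ is a Gelfand pair, $\dim V_\pi^{K(n)}\le 1$. Hence $P_n\Pi(g)P_n$ acts on $\mathcal{H}_\pi$ as $\langle\pi(g)e_\pi,e_\pi\rangle\,(Q_\pi\otimes 1)$, a scalar multiple of $P_n|_{\mathcal{H}_\pi}$. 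Consequently $P_n\Pi(g_1)P_n$ and $P_n\Pi(g_2)P_n$ commute for all $n\ge N$. Equivalently, one can use that $E\,C^*[G(n)]\,E$ is abelian, which by Peter--Weyl passes to any representation.

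Finally, I pass to the limit $n\to\infty$. Products of strongly convergent, uniformly bounded sequences converge strongly, so $P_n\Pi(g_1)P_n\Pi(g_2)P_n\to P\Pi(g_1)P\Pi(g_2)P$ strongly, and the same holds with $g_1,g_2$ swapped. The commutation therefore survives in the limit. The main care is needed in the first paragraph, in justifying that commutativity of the compressions implies $\dim\le 1$. If that step becomes awkward, I would argue directly instead. For unit vectors $\xi,\eta\in\mathcal{H}^{K(\infty)}$, commutativity shows that $\xi\mapsto\langle\Pi(\cdot)\xi,\xi\rangle$ is a spherical-type function. A cyclicity/Schur argument then shows that the span of $P\Pi(G)\xi$ is invariant under the commutative algebra of compressions, and irreducibility forces $\eta\in\mathbb{C}\xi$.
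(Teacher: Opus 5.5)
Your proof is correct. The paper itself states this proposition without proof, deferring to Borodin--Olshanski (Prop.~8.15) and to the Remark identifying the definition with pairwise commutation of the compressions $P_{K(\infty)}\Pi(g)P_{K(\infty)}$; your route (the projections $P_n$ onto $\mathcal{H}^{K(n)}$ decrease strongly to $P_{K(\infty)}$, the compressions commute at each finite level by the Gelfand property of $(G(n),K(n))$, and commutation survives the strong limit of uniformly bounded products) is the standard argument behind that reference.

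The only loose spot is the justification in your first paragraph, which should simply read as follows. The linear span of $\Pi(G(\infty))$ is weakly dense in $\Pi(G(\infty))''=\mathcal{B}(\mathcal{H})$, and $x\mapsto PxP$ is weakly continuous, so the span of the compressions $P\Pi(g)P$ is weakly dense in $\mathcal{B}(P\mathcal{H})$. That algebra is therefore commutative, which forces $\dim P\mathcal{H}\le 1$. No approximation of products $P\Pi(a)P\Pi(b)P$ is needed, and your fallback argument is unnecessary.
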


From now on we always assume that $(G(n),K(n))$ is a Gelfand pair.
The next proposition shows that spherical functions of $(G(\infty),K(\infty))$ can be approximated by a sequence of spherical functions of $(G(n),K(n))$. Denote by $\widehat{G}_K(n)\subset\widehat{G}(n)$ the set of equivalence classes of irreducible $K$-spherical representations of $G(n)$. Then, for any $\pi\in\widehat{G}_K(n)$ we denote by $\varphi_{\pi,n}$ the corresponding $K(n)$-spherical function.

\begin{Prop}\label{prop:general_approx_sph_func}
Let $\varphi$ be a spherical function of the Gelfand pair $(G(\infty),K(\infty))$. Then, there exists an increasing sequence of positive integers $\{n_l\}_{l=1}^{\infty}$ and a sequence of irreducible representations $\{\pi_{n_l}\in\widehat{G}(n_l)\}_{l=1}^{\infty}$ possessing a $K(n_l)$-spherical vector such that the corresponding sequence $\{\varphi_{\pi_{n_l},n_l}\}_{l=1}^{\infty}$ of spherical functions of $(G(n_l),K(n_l))$ converges to $\varphi$ uniformly on all compact subsets of $G(\infty)$.
\end{Prop}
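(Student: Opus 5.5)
Let $\Pi$ be the irreducible representation of $G(\infty)$ in $\mathcal{H}$ whose spherical function is $\varphi$, and let $\xi\in\mathcal{H}^{K(\infty)}$ be a unit vector with $\varphi(g)=\langle\Pi(g)\xi,\xi\rangle_{\mathcal{H}}$. The plan is to mimic the proof of Proposition \ref{prop:general_approx_thm}, replacing the conditional expectations $E_n$ by the spherical projections $P_n=\int_{K(n)}\Pi(k)\,dk$. These are orthogonal projections onto $\mathcal{H}^{K(n)}$, they decrease in $n$, and they converge strongly to the projection $P_\infty$ onto $\mathcal{H}^{K(\infty)}=\mathbb{C}\xi$.

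\textbf{Step 1: decompose $\xi$ into spherical components.} For each $n$, split $\mathcal{H}$ into $G(n)$-isotypic components using the projections $P_{\pi,n}$ from \eqref{eq:projector_isotypic_comp}. Since $\xi$ is $K(n)$-invariant, each $P_{\pi,n}\xi$ is $K(n)$-invariant. It is therefore nonzero only when $\pi\in\widehat{G}_K(n)$. Since $(G(n),K(n))$ is a Gelfand pair, $P_{\pi,n}\xi$ lies in the $K(n)$-fixed subspace of a direct sum of copies of $\pi$. That subspace is a sum of lines, one per copy, so $P_{\pi,n}\xi$ generates a single copy of $\pi$. Consequently
\[
\langle\Pi(g)P_{\pi,n}\xi,P_{\pi,n}\xi\rangle=\|P_{\pi,n}\xi\|^2\varphi_{\pi,n}(g),\qquad g\in G(n),
\]
and summing over $\pi$ gives $\varphi|_{G(n)}=\sum_{\pi}c_{\pi,n}\varphi_{\pi,n}$, where $c_{\pi,n}=\|P_{\pi,n}\xi\|^2$ and $\sum_\pi c_{\pi,n}=1$.

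\textbf{Step 2: concentration (the main obstacle).} I must show that this convex combination concentrates on functions close to $\varphi$. Fix $f\in L^1(G(i))$ and put $m=\int_{G(i)} f(h)\Pi(h)\,dh$. For $n\ge i$, consider the vector $P_n m\xi$. Since $P_n\to P_\infty$ strongly,
\[
P_nm\xi\to\langle m\xi,\xi\rangle\xi=\Big(\int f\varphi\Big)\xi.
\]
On the other hand, $P_n$ commutes with the $P_{\pi,n}$. Inside a single copy of $\pi$, the operator $P_n\pi(h)P_n$ acts on the spherical line as multiplication by $\varphi_{\pi,n}(h)$. Hence
\[
P_nm\xi=\sum_\pi\Big(\int f\varphi_{\pi,n}\Big)P_{\pi,n}\xi .
\]
Comparing the two expressions, the components $P_{\pi,n}\xi$ being orthogonal, yields
\[
\lim_{n\to\infty}\sum_\pi\Big|\int f(\varphi_{\pi,n}-\varphi)\Big|^2c_{\pi,n}=0.
\]
This is the exact analogue of \eqref{eq:decomposition_tends_to_zero}. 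The delicate point is justifying the identity $P_n\Pi(h)P_nP_{\pi,n}\xi=\varphi_{\pi,n}(h)P_{\pi,n}\xi$. It holds because $P_{\pi,n}\xi$ spans the $K(n)$-fixed line of its copy of $\pi$.

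\textbf{Step 3: diagonal extraction.} Choose countable dense sets $\{f_{i,j}\}_{j}\subset L^1(G(i))$. Because $\sum_\pi c_{\pi,n}=1$, for each $l$ there exist $n_l$ and $\pi_{n_l}\in\widehat{G}_K(n_l)$ with
\[
\sum_{i,j\le l}\Big|\int f_{i,j}(\varphi_{\pi_{n_l},n_l}-\varphi)\Big|^2<\varepsilon_l ,
\]
and $n_l$ can be taken strictly increasing. This gives weak-$*$ convergence on each $G(i)$, since the functions are uniformly bounded. Lemma \ref{lem:weak_comp_convergence}, applied to these normalized positive-definite functions, upgrades this to uniform convergence on each $G(i)$, and hence on compact subsets of $G(\infty)$.
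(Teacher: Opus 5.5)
Your proposal is correct and follows essentially the same route as the paper. You decompose $\xi$ along the $G(n)$-isotypic components, use the projections $\int_{K(n)}\Pi(k)\,dk$ decreasing to the projection onto $\mathbb{C}\xi$ together with the Gelfand-pair identity on each spherical line to get the weighted estimate $\sum_\pi \|P_{\pi,n}\xi\|^2\bigl|\int f(\varphi_{\pi,n}-\varphi)\bigr|^2\to 0$, and then extract a diagonal subsequence and apply Lemma \ref{lem:weak_comp_convergence}. Your Step 1 also justifies, via the one-dimensionality of the $K(n)$-fixed vectors in each $\pi$, why $P_{\pi,n}\xi$ generates a single copy of $\pi$, which the paper only asserts.
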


\begin{proof}
Let $(\Pi,\mathcal{H},\xi)$ be the GNS representation of $G(\infty)$ associated to the spherical function $\varphi$. Then, $\varphi(g)=\langle\Pi(g)\xi,\xi\rangle_{\mathcal{H}}$ for all $g\in G(\infty)$. For a subset $S$ of $\mathcal{H}$ we denote by $[S]$ the closure of the linear span of $S$ in $\mathcal{H}$.

Recall from the previous section that $\widehat{G}(n)$ is the set of isomorphism classes of all irreducible representations of a compact group $G(n)$, and for $\pi\in\widehat{G}(n)$ we denote by $\chi_{\pi,n}$ the normalized character of $\pi$ and by $P_{\pi,n}$ the orthogonal projection onto the $\pi$-isotypic component of $\mathcal{H}$ regarded as a representation of $G(k)$ (cf. Lemma \ref{lem:Pi_m_Pi_decomposition} and \eqref{eq:projector_isotypic_comp}). In particular, the projections $\{P_{\pi,n}:\pi\in\widehat{G}(n)\}$ are mutually orthogonal and $\sum\limits_{\pi\in\widehat{G}(n)}P_{\pi,n}=I_{\mathcal{H}}$. In particular, we have the following orthogonal decomposition:
\begin{equation}\label{eq:xi_decomp_G}
\xi=\bigoplus_{\pi\in\widehat{G}(n)}P_{\pi,n}\xi.
\end{equation}
The projections $P_{\pi,n}$ commute with the action of $G(n)$. Since $K(n)\subset G(n)$, it follows that each summand $P_{\pi,n}\xi$ is a $K(n)$-invariant vector. For $\pi\in\widehat{G}(n)$ such that $P_{\pi,n}\xi\neq 0$, we put $\xi_{\pi,n}=\frac{P_{\pi,n}\xi}{\|P_{\pi,n}\xi\|}$ (otherwise put $\xi_{\pi,n}=0$). Note that if $P_{\pi,n}\xi\neq 0$, then the subspace $[\Pi(G(n))\xi_{\pi,n}]$ is an irreducible representation of $G(n)$ isomorphic to $\pi$, and $\xi_{\pi,n}$ is a $K(n)$-spherical vector. In particular, for any $\pi\in\widehat{G}_K(n)$ the matrix element $\langle\Pi(g)\xi_{\pi,n},\xi_{\pi,n}\rangle$ is either equal to the $K(n)$-spherical function $\varphi_{\pi,n}(g)$ or zero (if $P_{\pi,n}\xi=0$). It follows that non-zero summands in \eqref{eq:xi_decomp_G} are given only by $\pi\in\widehat{G}_K(n)$ and we have the following orthogonal sum:
\begin{equation}\label{xi:decomp_G_K}
\xi=\bigoplus_{\pi\in\widehat{G}_K(n)}\|P_{\pi,n}\xi\|\cdot\xi_{\pi,n}.
\end{equation}

Let $Q_k$ be the orthogonal projection onto the subspace $\mathcal{H}^{K(n)}$. Clearly, we have
\begin{equation*}
Q_n=\int_{K(n)}\Pi(k)dk.
\end{equation*}
Besides that, $\{Q_n\}_{n=1}^{\infty}$ is a decreasing sequence of orthogonal projections which strongly converges to the orthogonal projection $Q_{\infty}$ onto the one-dimensional subspace $\mathcal{H}^{K(\infty)}$ spanned by $\xi$. Since $Q_{\infty}\eta=\langle\eta,\xi\rangle\cdot\xi$, we have
\begin{equation*}\label{eq:limit_K_inv_projections}
\lim_{n\to\infty}\left\|Q_n\eta-\langle\eta,\xi\rangle\xi\right\|=0 ~\text{for all}~\eta\in\mathcal{H}.
\end{equation*}
Since $(G(n),K(n))$ is a Gelfand pair, for any $\pi\in\widehat{G}_K(n)$ we have the following equality:
\begin{equation}\label{eq:gelfand_pair_identity}
Q_n\Pi(g)\xi_{\pi,n}=\langle\Pi(g)\xi_{\pi,n},\xi_{\pi,n}\rangle\cdot\xi_{\pi,n}=\varphi_{\pi,n}(g)\xi_{\pi,n},~g\in G(n).
\end{equation}
Indeed, if $\xi_{\pi,n}=0$, this is trivial, otherwise it is a consequence of the fact that $\xi_{\pi,n}$ is a $K(n)$-spherical vector of $G(n)$-representation $[\Pi(G(n))\xi_{\pi,n}]$, which is isomorphic to $\pi$.

Now take any function $f\in L^1(G(i))$ and apply the \eqref{eq:limit_K_inv_projections} for $\eta=\int_{G(i)}f(g)\Pi(g)\xi dg$. For $n>i$ we have
\begin{equation*}
\begin{split}
Q_n\eta
&=\int_{G(i)}f(g)Q_n\Pi(g)\xi dg=\sum_{\pi\in\widehat{G}_K(n)}\|P_{\pi,n}\xi\|\cdot\int_{G(i)}f(g)Q_n\Pi(g)\xi_{\pi,n}dg=
\\
&=\sum_{\pi\in\widehat{G}_K(n)}\|P_{\pi,n}\xi\|\cdot\int_{G(i)}f(g)\varphi_{\pi,n}(g)\xi_{\pi,n}dg.
\end{split}
\end{equation*}
On the other hand,
\begin{equation*}
\langle\eta,\xi\rangle\cdot\xi=\int_{G(i)}f(g)\langle\Pi(g)\xi,\xi\rangle\xi dg=\int_{G(i)}f(g)\varphi(g)\xi dg=\sum_{\pi\in\widehat{G}_K(n)}\|P_{\pi,n}\xi\|\cdot\int_{G(i)}f(g)\varphi(g)\xi_{\pi,n} dg.
\end{equation*}
Subtracting these identities and computing the square of the norm yields
\begin{equation*}
\|Q_n\eta-\langle\eta,\xi\rangle\xi\|^2=\sum_{\pi\in\widehat{G}_K(n)}\|P_{\pi,n}\xi\|^2\cdot\left|\int_{G(i)}f(g)(\varphi_{\pi,n}(g)-\varphi(g)) dg\right|^2.
\end{equation*}
Hence, by \eqref{eq:limit_K_inv_projections}, for any $f\in L^1(G(i))$, we have
\begin{equation}\label{eq:limit_sum_weak_conv}
\lim_{n\to\infty}\sum_{\pi\in\widehat{G}_K(n)}\|P_{\pi,n}\xi\|^2\cdot\left|\int_{G(i)}f(g)(\varphi_{\pi,n}(g)-\varphi(g)) dg\right|^2=0.
\end{equation}
Now we can conclude the argument as in the proof of Proposition \ref{prop:general_approx_thm}, see \eqref{eq:decomposition_tends_to_zero}.
\end{proof}

\begin{Rem}
In fact, Proposition \ref{prop:general_approx_thm} can seen as a special case of Proposition \ref{prop:general_approx_sph_func} for the inductive limit of Gelfand pairs $(G(n)\times G(n),\diag(G(n)))$. See Appendix \ref{sect:characters_spherical_func} for more details about the connection between characters and spherical functions of doubles.
\end{Rem}

\subsubsection{Connection to the functional equation for spherical functions.} 
Recall the following observation originally due to Olshanski:
\begin{Lm}[{\cite[Theorem 23.6]{Olshanski}}]
Any spherical function $\varphi$ of the Gelfand pair $(G(\infty),K(\infty))$ satisfies the following equality for any $g_1,g_2\in G(\infty)$:
\begin{equation*}
\lim_{n\to\infty}\int_{K(n)}\varphi(g_1kg_2)dk=\varphi(g_1)\varphi(g_2).
\end{equation*}
\end{Lm}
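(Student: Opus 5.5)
The plan is to combine the defining property of spherical vectors, $\Pi(k)\xi=\xi$ for $k\in K(\infty)$, with the irreducibility of $\Pi$ (the GNS representation of $\varphi$). The goal is to show that the averaged operators $Q_n\Pi(g_2)$, applied to $\xi$, converge to the projection onto $\mathbb{C}\xi$. Write $(\Pi,\mathcal{H},\xi)$ for the GNS triple of $\varphi$ and $Q_n=\int_{K(n)}\Pi(k)\,dk$ for the orthogonal projection onto $\mathcal{H}^{K(n)}$. For $g_1,g_2\in G(\infty)$ we have
\begin{equation*}
\int_{K(n)}\varphi(g_1kg_2)\,dk=\Big\langle \Pi(g_1)\Big(\int_{K(n)}\Pi(k)\,dk\Big)\Pi(g_2)\xi,\xi\Big\rangle=\langle Q_n\Pi(g_2)\xi,\Pi(g_1)^{-1}\xi\rangle ,
\end{equation*}
where the integral is Bochner in the strong topology. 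This is legitimate because $g\mapsto\Pi(g)$ is strongly continuous on the compact group $G(n)\supset K(n)$ for $n$ large enough that $g_1,g_2\in G(n)$.

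Next I will use, exactly as in the proof of Proposition \ref{prop:general_approx_sph_func}, that $\{Q_n\}$ is a decreasing sequence of orthogonal projections. Hence $Q_n$ converges strongly to the projection $Q_\infty$ onto $\bigcap_n\mathcal{H}^{K(n)}=\mathcal{H}^{K(\infty)}$. Since $\Pi$ is irreducible and $(G(\infty),K(\infty))$ is a Gelfand pair (Definition \ref{def:gelfand_pair_infty}), this subspace is $\mathbb{C}\xi$, so $Q_\infty\eta=\langle\eta,\xi\rangle\xi$. Irreducibility of the GNS representation holds because $\varphi$ is spherical: its GNS representation is a factor representation with a $K(\infty)$-fixed vector, and the Gelfand property forces $\pi(G)'$ to be abelian, so the factor is $\mathbb{C}$.

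Applying strong convergence to the fixed vector $\eta=\Pi(g_2)\xi$ gives
\begin{equation*}
\langle Q_n\Pi(g_2)\xi,\Pi(g_1)^{-1}\xi\rangle\longrightarrow\langle\Pi(g_2)\xi,\xi\rangle\langle\xi,\Pi(g_1)^{-1}\xi\rangle=\varphi(g_2)\,\langle\Pi(g_1)\xi,\xi\rangle=\varphi(g_1)\varphi(g_2).
\end{equation*}
This is the claim.

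The only delicate point is the identification $\mathcal{H}^{K(\infty)}=\mathbb{C}\xi$, that is, that $\dim\mathcal{H}^{K(\infty)}\le 1$ for the GNS representation of a spherical function. If one only assumes that $\pi_\varphi$ is a factor representation, I would first deduce irreducibility. The commutativity of $Q_\infty\Pi(g)Q_\infty$ (the Remark after Definition \ref{def:gelfand_pair_infty}) together with cyclicity of $\xi$ gives that $\pi_\varphi(G)'$ embeds into the commutative algebra $Q_\infty\mathcal{B}(\mathcal{H})Q_\infty\cap\{Q_\infty\Pi(G)Q_\infty\}'$ via $T\mapsto Q_\infty TQ_\infty$. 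Hence the commutant is abelian, and a factor with abelian commutant is $\mathcal{B}(\mathcal{H})$. After that, the Gelfand-pair definition gives $\dim\mathcal{H}^{K(\infty)}=1$.
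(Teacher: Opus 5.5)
Your proof is correct and is essentially the paper's argument: write the average as $\langle\Pi(g_1)Q_n\Pi(g_2)\xi,\xi\rangle$, pass to the strong limit $Q_n\to Q_\infty$, and use $Q_\infty\eta=\langle\eta,\xi\rangle\xi$. Your extra paragraph deriving $\mathcal{H}^{K(\infty)}=\mathbb{C}\xi$ is not needed in the paper's setting, since Definition~\ref{def:gelfand_pair_infty} already takes the representation to be irreducible. That paragraph is nonetheless sound, provided you add the step it leaves implicit: the abelian algebra generated by the operators $Q_\infty\Pi(g)Q_\infty$ has $\xi$ as a cyclic vector on $Q_\infty\mathcal{H}$, hence is maximal abelian, which is why its commutant in $\mathcal{B}(Q_\infty\mathcal{H})$ is commutative.
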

\begin{proof}
We use the same notation as in the proof of Proposition \ref{prop:general_approx_sph_func}. In particular, $Q_{n}$ denotes the orthogonal projection onto the $K(n)$-invariant subspace of $\mathcal{H}$. 
Since $\{Q_{n}\}_{n=1}^{\infty}$ strongly converges to $Q_{\infty}$~-- the orthogonal projection to $\mathcal{H}^{K(\infty)}=\mathbb{C}\cdot\xi$, we have
\begin{equation*}
\int_{K(n)}\varphi(g_1kg_2)dk=\langle\Pi(g_1)Q_n\Pi(g_2)\xi,\xi\rangle\to\langle\Pi(g_1)Q_{\infty}\Pi(g_2)\xi,\xi\rangle,~n\to\infty.
\end{equation*}
It remains to notice that $Q_{\infty}\eta=\langle\eta,\xi\rangle\xi$. Hence, $\langle \Pi(g_1)Q_{\infty}\Pi(g_2)\xi,\xi\rangle=\varphi(g_2)\langle\Pi(g_1)\xi,\xi\rangle=\varphi(g_1)\varphi(g_2)$, which completes the proof of the lemma.
\end{proof}

\begin{Rem}
The analogous statement for Gelfand pair $(G(n),K(n))$ is as follows: if $\pi\in\widehat{G}_K(n)$, then the corresponding spherical function $\varphi_{\pi,n}$ satisfies the functional equation:
\begin{equation*}
\int_{K(n)}\varphi_{\pi,n}(g_1kg_2)dk=\varphi_{\pi,n}(g_1)\varphi_{\pi,n}(g_2).
\end{equation*}
\end{Rem}

Now let us explain how the final expression in the proof of Proposition \ref{prop:general_approx_sph_func} can be obtained via functional equations for spherical functions of Gelfand pairs $(G(n),K(n))$ and $(G(\infty),K(\infty))$.  By slightly modifying the argument above, one can show that for any functions $f_1,f_2\in L^1(G(l))$ the following equality holds:
\begin{equation}\label{eq:weighted_func_eq}
\lim_{n\to\infty}\int_{K(n)}\int_{G(l)\times G(l)}f_1(g_1)f_2(g_2)\varphi(g_1kg_2)dkdg_1dg_2=\left(\int_{G(l)}f_1(g_1)\varphi(g_1)dg_1\right)\cdot\left(\int_{G(l)}f_2(g_2)\varphi(g_2)dg_2\right).
\end{equation}
Now we decompose the spherical function $\varphi$ using the fact that $\sum_{\pi\in\widehat{G}(n)}P_{\pi,n}=I_{\mathcal{H}}$ and that $P_{\pi,n}\xi=0$ for $\pi\notin\widehat{G}_K(n)$:
\begin{equation*}
\varphi(g)=\langle\Pi(g)\xi,\xi\rangle=\sum_{\pi\in\widehat{G}(n)}\langle\Pi(g)P_{\pi,n}\xi,P_{\pi,n}\xi\rangle=\sum_{\pi\in\widehat{G}_K(n)}\|P_{\pi,n}\xi\|^2\cdot\varphi_{\pi,n}(g),~g\in G(n).
\end{equation*}
Here we use the fact that if $P_{\pi,n}\xi\neq 0$, then $\langle\Pi(g)P_{\pi,n}\xi,P_{\pi,n}\xi\rangle=\|P_{\pi,n}\xi\|^2\cdot\varphi_{\pi,n}(g)$ (see also \eqref{eq:gelfand_pair_identity}).
Then, we can write
\begin{equation*}
\begin{split}
\int_{K(n)}\int_{G(l)\times G(l)}&f_1(g_1)f_2(g_2)\varphi(g_1kg_2)dkdg_1dg_2=
\\
&=\sum_{\pi\in\widehat{G}_K(n)}\|P_{\pi}\xi\|^2\int_{G(l)\times G(l)}f_1(g_1)f_2(g_2)dg_1dg_2\int_{K(n)}\varphi_{\pi,n}(g_1kg_2)dk=
\\
&=\sum_{\pi\in\widehat{G}_K(n)}\|P_{\pi}\xi\|^2\int_{G(l)\times G(l)}f_1(g_1)f_2(g_2)\varphi_{\pi,n}(g_1)\varphi_{\pi,n}(g_2)dg_1dg_2=
\\
&=\sum_{\pi\in\widehat{G}_K(n)}\|P_{\pi}\xi\|^2\left(\int_{G(l)}f_1(g_1)\varphi_{\pi,n}(g_1)dg_1\right)\left(\int_{G(l)}f_2(g_2)\varphi_{\pi,n}(g_2)dg_2\right).
\end{split}
\end{equation*}
Now choose $f_1=f$ and $f_2=f^*$, where $f\in L^1(G(l))$ and\footnote{Recall that $f\mapsto f^*$ is the standard involution on the Banach algebra $L^1(G(l))$. The function $\Delta$ stands for the \emph{modular function} of $G(l)$.} $f^*(h)=\Delta(h^{-1})\overline{f(h^{-1})}$. From the discussion above we obtain 
\begin{equation*}
\lim_{n\to\infty}\left[\sum_{\pi\in\widehat{G}_K(n)}\|P_{\pi}\xi\|^2\cdot\left|\int_{G(l)}f(g)\varphi_{\pi,n}(g)dg\right|^2-\left|\int_{G(l)}f(g)\varphi(g)dg\right|^2\right]=0.
\end{equation*}
Finally, observe that 
\begin{equation*}
\sum_{\pi\in\widehat{G}_K(n)}\|P_{\pi}\xi\|^2\cdot\left|\int_{G(l)}f(g)\varphi_{\pi,n}(g)dg\right|^2-\left|\int_{G(l)}f(g)\varphi(g)dg\right|^2=\sum_{\pi\in\widehat{G}_K(n)}\|P_{\pi}\xi\|^2\cdot\left|\int_{G(l)}f(g)(\varphi_{\pi,n}(g)-\varphi(g))dg\right|^2.
\end{equation*}
The last identity holds because $\varphi(g)=\sum_{\pi\in\widehat{G}_K(n)}\|P_{\pi}\xi\|^2\cdot\varphi_{\pi,n}(g)$ and $\sum_{\pi\in\widehat{G}_K(n)}\|P_{\pi}\xi\|^2=\|\xi\|^2=1$.
Together with \eqref{eq:weighted_func_eq}, this yields an alternative approach to the main formula \eqref{eq:limit_sum_weak_conv} in the proof of Proposition \ref{prop:general_approx_sph_func}.

\subsection{Other technical lemmas.}

\subsubsection{Generalization of Lemma \ref{lem:lim_sph_func_formula}.} In the proof of the main theorem we used the following fact.

\begin{Lm}\label{gen_sph_func_lim}
Let $(\mathfrak{t},\sigma)\in G_{N_k}$ be an element of $G_{\widehat{\mathbf{n}}}$. Fix non-negative integers $\{d_j\}_{j\in\mathbb{Z}\setminus\{0\}}$ such that only finitely many among them are non-zero. Denote $d=\sum_{j\in\mathbb{Z}\setminus\{0\}}d_j$ and
\begin{equation*}
\psi_k(\mathfrak{t},\sigma)=\sum_{\substack{\bigsqcup_{j}A_{j}\subset\Fix_{N_k}(\sigma) \\ \#A_j=d_j}}\prod_{j\in\mathbb{Z}\setminus\{0\}}\prod_{x\in A_j}\mathfrak{t}(x)^{j},
\end{equation*}
where the first summation runs over all choices of disjoint subsets $\{A_{j}\}_{j\in\mathbb{Z}\setminus\{0\}}$ such that $\bigsqcup_{j\in\mathbb{Z}\setminus\{0\}}A_j\subset \Fix_{N_k}(\sigma_k)$ and $\#A_j=d_j$. Then,
\begin{equation*}
\lim_{k\to\infty}N_k^{-d}\psi_k(\mathfrak{t},\sigma)=\prod_{j\in\mathbb{Z}\setminus\{0\}}\frac{1}{d_j!}\Bigg(\int_{\Fix(\sigma;\mathbb{X}_{\widehat{\mathbf{n}}})}\mathfrak{t}(x)^{j}d\nu_{\widehat{\mathbf{n}}}(x)\Bigg)^{d_j}.
\end{equation*}
\end{Lm}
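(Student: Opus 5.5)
The plan is to reduce the sum over disjoint subsets to an unrestricted sum over ordered tuples, mimicking the proof of Lemma \ref{lem:lim_sph_func_formula}. Let $F_k=\Fix_{N_k}(\sigma)$, where $\sigma\in\mathfrak{S}_{N_r}$ is regarded as an element of $\mathfrak{S}_{N_k}$ for $k\ge r$. Enumerating each $A_j$ as an ordered tuple introduces a factor $\prod_j d_j!$. This rewrites
\begin{equation*}
\psi_k(\mathfrak{t},\sigma)=\prod_{j\neq 0}\frac{1}{d_j!}\sum_{\substack{(x_1,\ldots,x_d)\in F_k^d\\ \text{distinct}}}\prod_{s=1}^{d}\mathfrak{t}(x_s)^{J_s}.
\end{equation*}
Here $(J_1,\ldots,J_d)$ is the fixed sequence in which each non-zero $j$ appears exactly $d_j$ times.

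Next, compare this with the full sum over all tuples in $F_k^d$. That full sum factorizes as $\prod_{s}\big(\sum_{x\in F_k}\mathfrak{t}(x)^{J_s}\big)=\prod_{j\neq 0}\big(\sum_{x\in F_k}\mathfrak{t}(x)^j\big)^{d_j}$. The tuples having at least one coincidence $x_s=x_{s'}$ number at most $\binom{d}{2}N_k^{d-1}$, and each term has modulus $1$. So the difference between the two sums is $O(N_k^{d-1})$, which is negligible after dividing by $N_k^d$.

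Finally, compute the normalized power sums. By the block-diagonal structure, $F_k=\Fix_{N_r}(\sigma)\times\mathbb{X}_{N_k/N_r}$, and $\mathfrak{t}$ is constant in the second coordinate. Hence for every $k\ge r$,
\begin{equation*}
\frac{1}{N_k}\sum_{x\in F_k}\mathfrak{t}(x)^j=\frac{1}{N_r}\sum_{x\in\Fix_{N_r}(\sigma)}\mathfrak{t}(x)^j=\int_{\Fix(\sigma;\mathbb{X}_{\widehat{\mathbf{n}}})}\mathfrak{t}(x)^j\,d\nu_{\widehat{\mathbf{n}}}(x).
\end{equation*}
This value does not depend on $k$. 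Combining the three steps gives the limit, with the factors $1/d_j!$ coming from the first step.

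I do not expect a real obstacle. The only points needing care are the combinatorial factor $\prod_j d_j!$, which comes from ordering subsets that carry the same label $j$ while keeping the labels distinguished, and the bound on the degenerate tuples.
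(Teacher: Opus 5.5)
Your proof is correct and follows the paper's argument. It uses the same reduction to ordered tuples of distinct fixed points with the factor $\prod_j d_j!$, and the same block-diagonal observation that $\Fix_{N_k}(\sigma)$ consists of $N_k/N_r$ copies of $\Fix_{N_r}(\sigma)$ with $\mathfrak{t}$ constant along the copies. The only difference is cosmetic: the paper, arguing as in the proof of Lemma \ref{lem:lim_sph_func_formula}, handles the distinctness constraint by an exact count with falling-factorial weights before taking the limit. You instead bound the coincident tuples by $\binom{d}{2}N_k^{d-1}$, which is equally valid and gives an explicit $O(1/N_k)$ error.
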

\begin{proof}
Let $(M_1,\ldots,M_d)$ be a sequence of integers which contains $d_j$ entries equal to $j$ (this sequence is unique up to a permutation of $M_i$'s)\footnote{Compare with {\bf Step 5} on page \pageref{M_1, M_2_ldots_constants}.}. It follows from the definition of $\psi_k(\mathfrak{t},\sigma)$ that
\begin{equation*}
\psi_k(\mathfrak{t},\sigma)=\frac{1}{\prod_{j\in\mathbb{Z}\setminus\{0\}}d_j!}\sum_{\substack{\textrm{distinct}\\x_1,\ldots,x_d\in\Fix_{N_k}(\sigma)}}\mathfrak{t}(x_1)^{M_1}\ldots\mathfrak{t}_k(x_d)^{M_d},
\end{equation*}
where the summation is over all $d$-tuples $(x_1,\ldots,x_d)$ of distinct elements of $\Fix_{N_k}(\sigma)$. Note that this formula is almost identical to the formula \eqref{phi_m_t_formula} for $\varphi_{\mathfrak{m}_k}(\mathfrak{t})$ from the proof of Lemma \ref{lem:lim_sph_func_formula}.

Assuming that $(\mathfrak{t},\sigma)\in G_{N_{r}}\subset G_{\widehat{\mathbf{n}}}$ for some $r<k$, let us introduce the multiset $\{\mathfrak{t}_r(x):x\in\Fix_{N_r}(\sigma)\}$ and denote its elements by $t_1,\ldots,t_l$. Then, for all $k>r$ the multiset $\{\mathfrak{t}_k(x):x\in\Fix_{N_k}(\sigma)\}$ consists of $l\cdot N_k/N_r$ elements, namely, it consists of $N_k/N_r$ copies each of the elements $t_1,\ldots,t_l$. Arguing as in the proof of Lemma \ref{lem:lim_sph_func_formula}, we obtain that
\begin{equation*}
\begin{aligned}
\lim_{k\to\infty}
N_k^{-d}\psi_k(\mathfrak{t},\sigma)
&=\frac{1}{\prod_{j\in\mathbb{Z}\setminus\{0\}}d_j!}\cdot\frac{1}{N_r^d}\sum_{1\leq i_1,\ldots,i_d\leq l}t_{i_1}^{M_1}\ldots t_{i_d}^{M_d}=\frac{1}{\prod_{j\in\mathbb{Z}\setminus\{0\}}d_j!}\cdot\prod_{p=1}^{d}\left(\frac{1}{N_r}\sum_{i=1}^{l}t_i^{M_p}\right)=
\\
&=\prod_{j\in\mathbb{Z}\setminus\{0\}}\frac{1}{d_j!}\Bigg(\frac{1}{N_r}\sum_{x\in\Fix_{N_r}(\sigma_r)}\mathfrak{t}_r(x)^{j}\Bigg)^{d_j}=\prod_{j\in\mathbb{Z}\setminus\{0\}}\frac{1}{d_j!}\Bigg(\int_{\Fix(\sigma;\mathbb{X}_{\widehat{\mathbf{n}}})}\mathfrak{t}(x)^{j}d\nu_{\widehat{\mathbf{n}}}(x)\Bigg)^{d_j},
\end{aligned}
\end{equation*}
as claimed.
\end{proof}
\begin{Rem}
As can be seen from the proof, the case $\sigma_k=1_{\mathfrak{S}_{N_k}}\in\mathfrak{S}_{N_k}$ of the lemma above is essentially equivalent to Lemma \ref{lem:lim_sph_func_formula}.
\end{Rem}

\subsection{Characters of $G$ and spherical functions of $(G\times G,\diag(G))$.}\label{sect:characters_spherical_func}

The goal of this subsection is to give proofs of Proposition \ref{prop:char_spherical_double} and Proposition \ref{prop:ind_char_and_spher_func} stated in the introduction which relate the spherical representations of the pair $(G_{\widehat{\mathbf{n}}}\times G_{\widehat{\mathbf{n}}},\diag(G_{\widehat{\mathbf{n}}}))$ and indecomposable characters of $G_{\widehat{\mathbf{n}}}$.

Let us recall the setting of Proposition \ref{prop:char_spherical_double}: we have a topological group $G$ and a normalized indecomposable character $\chi\colon G\to\mathbb{C}$. Then, the GNS (Gelfand--Naimark--Segal) construction gives a unitary representation $(\pi_{\chi},\mathcal{H}_{\chi},\xi_{\chi})$ of $G$, where the unitary operators $\{\pi_\chi(g)\}_{g\in G}$ act in Hilbert space $\mathcal{H}_\chi$, the unit vector $\xi_\chi$ is cyclic, i.e., the linear span of the vectors $\{\pi_\chi(g)\xi_\chi\}_{g\in G}$ is dense in $\mathcal{H}_\chi$, and $\chi(g)=\langle\pi_\chi(g)\xi_\chi,\xi_\chi \rangle$ for all $g\in G$. 

We denote by $\mathcal{B}(\mathcal{H}_{\chi})$ the algebra of all bounded operators in $\mathcal{H}_\chi$. Let $\pi_\chi(G)''$ be the $W^*$-algebra generated by the operators $\{\pi_\chi(g)\}_{g\in G}$. For an indecomposable character $\chi$ the von Neumann algebra  $\pi_\chi(G)''$ is a factor, hence the mapping $\pi_\chi(G)''\ni b\stackrel{\tr}{\mapsto} \langle b\,\xi_\chi,\xi_\chi\rangle\in\mathbb{C}$ is the unique normalized\footnote{It means that ${\tr}(1_G)=1$.} weakly continuous trace on $\pi_\chi(G)''$.
The diagonal subgroup of the double group $G\times G$ is defined as $\diag(G)=\{(g,g): g\in G\}$.

\begin{proof}[Proof of Proposition \ref{prop:char_spherical_double}]
Define the operator $S$ by $\mathcal{H}_\chi\ni b\xi_\chi\stackrel{S}{\mapsto} b^*\xi_\chi$, where $b\in\pi_\chi(G)''$. Since for any $b\in\pi_\chi(G)''$ we have
\begin{equation*}
\|b^*\xi_\chi\|^2=\tr(bb^*)=\tr(b^*b)=\| b\xi_\chi \|^2,
\end{equation*}
the operator $S$ is well defined and extends by continuity to an antilinear isometry of $\mathcal{H}_\chi$, i.e., $S$ satisfies the condition $\langle S\eta, S\zeta\rangle=\langle\zeta,\eta\rangle$ for all $\eta,\zeta\in\mathcal{H}_\chi$. It also follows from the definition that $S^2=I_{\mathcal{H}_{\chi}}$. 

\textbf{The commutant of $\pi_{\chi}(G)$.} Our first goal is to show that $S\pi_{\chi}(G)''S=\pi_{\chi}(G)'$. To simplify the notation, denote the $\textrm{II}_1$-factor $\pi_\chi(G)''$ by $M$. Then, note that for any $a,b,x\in M$ we have 
\begin{equation*}
SbSax\xi_\chi=Sbx^*a^*\xi_\chi=axb^*\xi_\chi~~\text{and}~~aSbSx\xi_\chi=aSbx^*\xi_\chi=axb^*\xi_\chi,
\end{equation*} 
hence the operators $SbS$ and $a$ commute. Therefore, we have the inclusion
\begin{equation}\label{SMS_subset M'}
S\pi_\chi(G)''S=SMS\subset M'=\pi_\chi(G)'=\left\{ v\in \mathcal{B}(\mathcal{H}_{\chi}): va=av ~\text{for all}~ a\in\pi_\chi(G)''\right\}
\end{equation}
and $\xi_\chi$ is a cyclic vector for $M'=\pi_\chi(G)'$.

Now let us prove the opposite inclusion $M'\subset SMS$. To this end, we first prove that the mapping $M'\ni a'\mapsto\langle a'\xi_\chi,\xi_\chi\rangle$ defines a tracial state (or simply trace) on $M'$, that is, the equality $\langle a'b'\xi_\chi,\xi_\chi\rangle=\langle b'a'\xi_\chi,\xi_\chi\rangle$ holds for all $a',b'\in M'$.
Indeed, if $a'=SaS$ and  $b'=SbS$, where $a,b\in M$, then from computations above
\begin{equation*}
\langle a'b'\xi_\chi,\xi_\chi\rangle=\langle b^*a^*\xi_\chi,\xi_\chi\rangle=\tr(b^*a^*)=\tr(a^*b^*)=\langle a^*b^*\xi_\chi,\xi_\chi\rangle=\langle b'a'\xi_\chi,\xi_\chi\rangle,
\end{equation*}
Assume, for instance, that $a'$ is not in $W^*$-algebra $SMS\subset M'=\left\{v\in \mathcal{B}(\mathcal{H}_{\chi}): va=av ~\text{for all}~ a\in M\right\}$. Since $\xi_\chi$ is a cyclic vector for $SMS$, there exist sequences $\{Sa_nS\},\{ Sb_nS\}\subset SMS$ such that
\begin{equation}\label{Cauchy_sequences}
\lim\limits_{n\to\infty}\left\|a'\xi_\chi-Sa_nS\xi_\chi \right\|=0~~\text{and}~~ \lim_{n\to\infty}\left\|b'\xi_\chi-Sb_nS\xi_\chi \right\|=0.
\end{equation}
It follows from this that $\{Sa_n^*S\}$, $\{Sb_n^*S\}$ are also Cauchy sequences (recall that $\|x\xi_{\chi}\|=\|x^*\xi_{\chi}\|$ for all $x\in M$). Thus, for any $x\in M$ we have
\begin{equation*}
\begin{split}
\lim_{n\to\infty}\langle Sa_n^*S\xi_\chi,x\xi_\chi \rangle
&=\lim_{n\to\infty}\langle a_n\xi_\chi,x\xi_\chi\rangle=\lim_{n\to\infty}\langle x^*\xi_\chi,a_n^*\xi_\chi\rangle=
\\
&=\lim\limits_{n\to\infty}\langle x^*\xi_\chi, Sa_nS\xi_\chi\rangle=\langle x^*\xi_\chi, a'\xi_\chi \rangle=\langle(a')^*\xi_\chi,x\xi_\chi\rangle.
\end{split}
\end{equation*}
Therefore,
\begin{equation}\label{Cauchy_sequences_star}
\lim_{n\to\infty}\left\|(a')^*\xi_\chi-Sa_n^*S\xi_\chi \right\|=0 ~~\text{and}~~
\lim\limits_{n\to\infty}\left\|(b')^*\xi_\chi-Sb_n^*S\xi_\chi \right\|=0,
\end{equation}
and hence $(a')^*\xi_\chi=\lim\limits_{n\to\infty}Sa_n^*S\xi_\chi$ and $(b')^*\xi_\chi=\lim\limits_{n\to\infty}Sb_n^*S\xi_\chi$. Finally, on account of \eqref{Cauchy_sequences} and \eqref{Cauchy_sequences_star} we have
\begin{equation*}
\begin{split}
\langle a'b'\xi_\chi, \xi_\chi\rangle
&=\lim_{n\to\infty}\langle Sb_nS\xi_\chi,Sa^*_nS\xi_\chi\rangle=\lim_{n\to\infty}\langle b_n^*\xi_\chi,a_n\xi_\chi\rangle=\lim_{n\to\infty}\langle a_n^*\xi_\chi,b_n\xi_\chi\rangle=
\\
&=\lim_{n\to\infty}\langle Sa_nS\xi_\chi,Sb_n^*S\xi_\chi\rangle=\langle a'\xi_\chi,(b')^*\xi_\chi \rangle=\langle b'a'\xi_\chi,\xi_\chi\rangle.
\end{split}
\end{equation*}
Thus, we have proved that the mapping $M'\ni a'\mapsto \langle a'\xi_\chi,\xi_\chi\rangle$ is indeed a trace on $M'$.

As above, the map $M'\xi_\chi\ni a'\xi_\chi\stackrel{S'}{\mapsto} (a')^*\xi_\chi$ extends by continuity to an antilinear isometry $S'\colon\mathcal{H}_\chi\rightarrow\mathcal{H}_\chi$.
Similarly as before, it is easily checked that $S'M'S'\subset (M')'=M$. Since the equality $(SaS)^*x\xi_\chi=xa\xi_\chi$ holds for all $a,x\in M$, we obtain $S'\left( SaS\right)\xi_\chi=a\xi_\chi$. Therefore, $S'=S^{-1}=S$. Hence, applying \eqref{SMS_subset M'}, we obtain that $S\pi_\chi(G)''S=SMS=M'=\pi_\chi(G)'$. 

\textbf{Construction of the spherical representation.}
To conclude the proof of the proposition it remains to show that if $\pi_\chi$ is a factor representation of $G$, then the operators
\begin{equation*}
\pi_\chi^{(2)}((g_1,g_2))=\pi_\chi(g_1)S\pi_\chi(g_2)S,\quad g_1,g_2\in G,
\end{equation*}
form an irreducible spherical representation of the group $G\times G$, and to compute the spherical function of $\pi_{\chi}^{(2)}$.

We start by defining for each $b\in \pi_\chi(G)''$ the mapping $\mathcal{H}_\chi\ni a\xi_\chi\stackrel{R(b)}{\mapsto}ab^*\,\xi_\chi\in\mathcal{H}_\chi$, where  $a\in \pi_\chi(G)''$. It follows from the equalities $\|ab^*\,\xi_\chi\|_{\mathcal{H}_\chi}^2=\langle ba^*ab^*\,\xi_\chi,\xi_\chi \rangle=\langle b^*b\,a^*\,\xi_\chi,a^*\,\xi_\chi\rangle$ that $R(b)$ extends to a bounded operator in $\mathcal{H}_\chi$, $\|R(b)\|\leq\|b\|$ and $R(b)$ is a unitary operator for a unitary $b$. By definition, $R(b)\in\pi_\chi(G)'$ (in fact $R(b)=SbS$) and the mapping $\pi_\chi(G)''\ni b\mapsto R(b)\in \pi_\chi(G)'$ is an antilinear $\star$-representation of $W^*$-algebra $\pi_\chi(G)''$. By the above, the mapping $G\ni g\stackrel{\pi'_\chi}{\mapsto} R\left( \pi_\chi(g)\right)=\pi'_\chi(g)\in \pi_\chi(G)'$ is a unitary representation of $G$. Therefore, the operators $\pi_\chi^{(2)}((g_1,g_2))=\pi_\chi(g_1)\cdot \pi'_\chi(g_2)=\pi_\chi(g_1)S\pi_\chi(g_2)S$, where $(g_1,g_2)\in G\times G$, define a representation of the group $G\times G$. The irreducibility follows from the fact that $\pi_{\chi}(G)'\cap\pi_{\chi}'(G)'=M'\cap M=\mathbb{C}\cdot I_{\mathcal{H}_{\chi}}$.

The definition of $\pi_\chi^{(2)}$ implies that $\pi_\chi^{(2)}((g,g))\xi_\chi=\xi_\chi$ for all $g\in G$. In other words, the vector $\xi_{\chi}$ is a $\diag(G)$-spherical vector of $\pi_{\chi}^{(2)}$. Moreover,
\begin{equation*}
\langle\pi_{\chi}^{(2)}(g_1,g_2)\xi_{\chi},\xi_{\chi}\rangle=\langle\pi_{\chi}^{(2)}(g_1g_2^{-1},1)\xi_{\chi},\xi_{\chi}\rangle=\chi(g_1g_2^{-1}).
\end{equation*}
Hence, the value of the spherical function of representation $\pi_{\chi}^{(2)}$ at $(g_1,g_2)\in G\times G$ equals $\chi(g_1g_2^{-1})$. This completes the proof of the proposition.
\end{proof}

In the proof of Proposition \ref{prop:ind_char_and_spher_func} we use the constructions from Proposition \ref{prop:char_spherical_double} specialized to the case $G=G_{\widehat{\mathbf{n}}}$.

\begin{proof}[Proof of Proposition \ref{prop:ind_char_and_spher_func}] Recall that the homomorphism $\Gamma\colon G_{\widehat{\mathbf{n}}}\to G_{\widehat{\mathbf{n}}}\times G_{\widehat{\mathbf{n}}}$ is given by $\Gamma(\mathfrak{t}s)=(\mathfrak{t}s,s)$, where $\mathfrak{t}\in\mathbb{T}^{\widehat{\mathbf{n}}}$ and $s\in\mathfrak{S}_{\widehat{\mathbf{n}}}$. Note that the image of $\Gamma$ normalizes the subgroup $\mathbb{T}^{\widehat{\mathbf{n}}}\times\{1_{G_{\widehat{\mathbf{n}}}(\mathbb{T})}\}$ of $G_{\widehat{\mathbf{n}}}(\mathbb{T})\times G_{\widehat{\mathbf{n}}}(\mathbb{T})$. Then, the representation $\pi=\pi^{(2)}_{\chi}\circ\Gamma$ can be restricted to the subspace $[\pi_{\chi}(\mathbb{T}^{\widehat{\mathbf{n}}})\xi_{\chi}]$. We claim that this representation is in fact irreducible and spherical.

Denote $M=\pi_\chi(G_{\widehat{\mathbf{n}}}(\mathbb{T}))''$ and $N=\pi_\chi(\mathbb{T}^{\widehat{\mathbf{n}}})''=\pi( \mathbb{T}^{\widehat{\mathbf{n}}})''$. Recall that then there exists the \emph{conditional expectation} $E_{\mathbb{T}}$ of the factor $M$ onto the abelian subalgebra $N$ which can be defined as the unique linear map $E_{\mathbb{T}}\colon M\to N$ (see \cite[Ch.~V.2, Proposition 2.36]{Takesaki}) which satisfies the following properties:
\begin{itemize}
\item 
$\tr(E(b))=\tr(b)$ for all  $b\in M$,

\item 
$E_{\mathbb{T}}(a)=a$ for all $a\in N$,

\item $E_{\mathbb{T}}(ab)=aE_{\mathbb{T}}(b)$ for all $a\in N$ and $b\in M$,

\item $E_{\mathbb{T}}$ can be extended continuously to the orthogonal projection in the Hilbert space $L^2(M,\tr)$ onto the subspace $L^2(N,\tr)\subset L^2(M,\tr)$.
\end{itemize}

Let $P$ be the orthogonal projection onto $[\pi_{\chi}(\mathbb{T}^{\widehat{\mathbf{n}}})\xi_{\chi}]$ (the closed linear span of $\{\pi_{\chi}(\mathfrak{t})\xi_{\chi}:\mathfrak{t}\in\mathbb{T}^{\widehat{\mathbf{n}}}\}$). We claim that the $W^*$-subalgebra of $\mathcal{B}(\mathcal{H}_{\chi})$ generated by the operators $P\pi(g)P$, $g\in G_{\widehat{\mathbf{n}}}$, contains all operators of the form $P\pi_{\chi}^{(2)}(g,h)P$. 
Indeed, for any $\mathfrak{t}\in\mathbb{T}^{\widehat{\mathbf{n}}}$ we have $\pi_{\chi}^{(2)}(\mathfrak{t},\mathfrak{t})P=P\pi_{\chi}^{(2)}(\mathfrak{t},\mathfrak{t})=P$ since $\pi_{\chi}^{(2)}(\mathfrak{t},\mathfrak{t})$ acts trivially on $[\pi_{\chi}(\mathbb{T}^{\widehat{\mathbf{n}}})\xi]$. Hence, for any $h=(\mathfrak{t},s)\in G_{\widehat{\mathbf{n}}}$ we have
\begin{equation*}
P\pi_{\chi}^{(2)}(g,h)P=P\pi_{\chi}^{(2)}(h,h)P\cdot P\pi_{\chi}(h^{-1}g)P=P\pi_{\chi}^{(2)}(s,s)P\cdot P\pi_{\chi}(h^{-1}g)P=P\pi(s)P\cdot P\pi_{\chi}(h^{-1}g)P.
\end{equation*}
It remains to observe that the operator $P\pi_{\chi}(h^{-1}g)P$ belongs to $N$ since $P\pi_{\chi}(h^{-1}g)P=E_{\mathbb{T}}(\pi_{\chi}(h^{-1}g))P$.

It follows that any operator in the commutant of $P\pi(G_{\widehat{\mathbf{n}}})P$ must belong to the commutant of $P\pi_{\chi}^{(2)}(G_{\widehat{\mathbf{n}}}\times G_{\widehat{\mathbf{n}}})P$ as well. Since the representation $\pi^{(2)}_{\chi}$ of the group $G_{\widehat{\mathbf{n}}}\times G_{\widehat{\mathbf{n}}}$ is irreducible, the $W^*$-subalgebra generated by $P\pi_{\chi}^{(2)}(g,h)P$ is $P\mathcal{B}(\mathcal{H}_{\chi})P$ which can be identified with the $\mathcal{B}(P\mathcal{H}_{\chi})$. Therefore, the restriction of representation $\pi$ to $P\mathcal{H}_{\chi}=[\pi_{\chi}(\mathbb{T}^{\widehat{\mathbf{n}}})\xi_{\chi}]$ is irreducible. Finally, note that $\xi_{\chi}$ is an $\mathfrak{S}_{\widehat{\mathbf{n}}}$-spherical vector. It follows that restrictions of the corresponding spherical function and $\chi$ to $\mathbb{T}^{\widehat{\mathbf{n}}}$ coincide since $\langle\pi(\mathfrak{t})\xi_{\chi},\xi_{\chi}\rangle=\langle\pi_{\chi}(\mathfrak{t})\xi_{\chi},\xi_{\chi}\rangle=\chi(\mathfrak{t})$.
\end{proof}

\begin{Rem}
There is an isometry $\mathcal{H}_{\chi}\to L^2(M,\tr)$ that sends $m\xi_{\chi}$ to $m$ for any $m\in M$ (recall that by construction the unit vector $\xi_{\chi}$ is cyclic and separating). In terms of this identification, Proposition \ref{prop:ind_char_and_spher_func} can restated as follows: the assignment $g\mapsto E_{\mathbb{T}}\pi(g)E_{\mathbb{T}}$ defines an irreducible spherical representation of $G_{\widehat{\mathbf{n}}}$ in $L^2(N,\tr)\subset L^2(M,\tr)$ with spherical vector $I_{\mathcal{H}_{\chi}}$.
\end{Rem}

\end{document}